\documentclass[preprint]{1-elsarticle}
\usepackage{amsmath}
\usepackage{amssymb}
\usepackage{amsthm}
\usepackage{mathrsfs}

\newtheorem{thm}{Theorem}[section]
\newtheorem{lem}[thm]{Lemma}
\newtheorem{cor}[thm]{Corollary}
\newtheorem{defi}[thm]{Definition}
\newtheorem{prop}[thm]{Proposition}
\newtheorem{rema}[thm]{Remark}
\usepackage{color}
\usepackage{mathtools}
%
%
\definecolor{cyan}{cmyk}{1,0,0,0}
\definecolor{lightcyan}{cmyk}{0.5,0,0,0}
\definecolor{pastelcyan}{cmyk}{0.25,0,0,0}
\definecolor{magenta}{cmyk}{0,1,0,0}
\definecolor{yellow}{cmyk}{0,0,1,0}
\definecolor{lightyellow}{cmyk}{0,0,0.5,0}
\definecolor{pastelyellow}{cmyk}{0,0,0.25,0}
\definecolor{black}{cmyk}{0,0,0,1}
\definecolor{darkgray}{cmyk}{0,0,0,0.75}
\definecolor{gray}{cmyk}{0,0,0,0.5}
\definecolor{lightgray}{cmyk}{0,0,0,0.25}
\definecolor{white}{cmyk}{0,0,0,0}
\definecolor{red}{cmyk}{0,1,1,0}
\definecolor{orange}{cmyk}{0,0.5,1,0}
\definecolor{scarlet}{cmyk}{0,1,0.5,0}
\definecolor{brown}{cmyk}{0.5,0.75,1,0}
\definecolor{camel}{cmyk}{0.25,0.375,0.5,0}
\definecolor{cream}{cmyk}{0,0.2,0.3,0}
\definecolor{green}{cmyk}{1,0,1,0}
\definecolor{lightgreen}{cmyk}{0.5,0,0.5,0}
\definecolor{pastelgreen}{cmyk}{0.25,0,0.25,0}
\definecolor{mossgreen}{cmyk}{0.64,0.4,1,0}
\definecolor{yellowgreen}{cmyk}{0.5,0,1,0}
\definecolor{skyblue}{cmyk}{0.4,0.16,0,0}
\definecolor{royal}{cmyk}{1.0,0.5,0,0}
\definecolor{navyblue}{cmyk}{0.9,0.75,0.5,0}
\definecolor{lightnavy}{cmyk}{0.4,0.3,0.2,0}
\definecolor{blue}{cmyk}{1,1,0,0}
\definecolor{lightblue}{cmyk}{0.5,0.5,0,0}
\definecolor{lavender}{cmyk}{0.25,0.25,0,0}
\definecolor{violet}{cmyk}{0.75,1,0.25,0}
\definecolor{purple}{cmyk}{0.5,1,0.5,0}
\definecolor{pink}{cmyk}{0,0.5,0,0}
\definecolor{pastelpink}{cmyk}{0,0.25,0,0}
%
%
%
%

%

\newcommand\NN{{\mathbb N}}
\newcommand\RR{{{\mathbb R}}}

\newcommand\vk{{\bf k}}

\newcommand{\rr}{\mathbb{R}}

\newcommand{\cc}{\mathbb{C}}

\newcommand\cD{\mathcal D}
\newcommand\cE{\mathcal E}

\newcommand\cF{{\mathcal F}}
\newcommand\cL{{\mathcal L}}

\newcommand\cS{{\mathcal S}}
\newcommand\cA{{\mathcal A}}
\newcommand\cB{{\mathcal B}}
\newcommand\cK{{\mathcal K}}

\newcommand\cT{{\mathcal T}}
\newcommand\cH{{\mathcal H}}
\def\SS {\mathbb{S}}

\newcommand\ra{{\rangle}}
\newcommand\la{{\langle}}

\newcommand\pa{\partial}

\begin{document}
\title
{Global solutions in the critical Besov space
for the non cutoff  Boltzmann equation 
}

\author[M]{Yoshinori Morimoto \corref{cor1}}
\address[M]{Graduate School of Human and Environmental Studies,
Kyoto University,
Kyoto, 606-8501, Japan} \ead[M]
{morimoto@math.h.kyoto-u.ac.jp}

\author[S]{Shota Sakamoto}
\address[S]{Graduate School of Human and Environmental Studies,
Kyoto University,
Kyoto, 606-8501, Japan} \ead[S]
{sakamoto.shota.76r@st.kyoto-u.ac.jp}

\cortext[cor1]{corresponding author, tel:81-75-753-6737, fax:81-75-753-2929}

\begin{keyword}{Boltzmann equation, without angular cutoff, 
critical Besov space, global solution}
\end{keyword}

\date{}

\begin{abstract}
The Boltzmann equation is studied without the cutoff assumption. Under a perturbative setting, a unique global solution of the Cauchy problem of the equation is established in a critical Chemin-Lerner space. In order to analyse the collisional term of the equation, a Chemin-Lerner norm is combined with a non-isotropic norm with respect to a velocity variable, which yields an apriori estimate for an energy estimate. Together with local existence following from commutator estimates and the Hahn-Banach extension theorem, the desired solution is obtained. Also, the non-negativity of the solution 
is rigorously shown.
\end{abstract}

\maketitle

\section{Introduction}
We consider the Boltzmann equation in $\mathbb{R}^3$, 
\begin{align}\label{eq: Boltzmann}
\begin{cases}
\partial_t f(x,v,t)+v\cdot \nabla_x f(x,v,t)=Q(f,f)(x,v,t), \\
f(x,v,0)=f_0(x,v),
\end{cases}
\end{align}
where $f=f(x,v,t)$ is a density distribution of particles in a rarefied gas with position $x \in \mathbb{R}^3$ and velocity $v \in \mathbb{R}^3$ at time $t \ge 0$, and $f_0$ is an initial datum.

The aim of this paper is to show the global existence and uniqueness of solution to \eqref{eq: Boltzmann} in an appropriate Besov space under a perturbative framework. There are many references concerning well-posedness of solutions to the Boltzmann equation around the global  Maxwellian equilibrium in different spaces. Recently, Duan, Liu, and Xu \cite{DLX} proved that the Boltzmann equation has a unique global strong solution near Maxwellian in the Chemin-Lerner space $\tilde{L}^\infty_T \tilde{L}^2_v (B^{3/2}_{2,1})$ (defined later: this space is a space-velocity-time Besov space) under the Grad's angular cutoff assumption. 
This was the first result which applied Besov spaces to the Cauchy problem of the Boltzmann equation 
around  the equilibrium. In this paper, we consider \eqref{eq: Boltzmann} without angular cutoff by using 
the triple norm, introduced by Alexandre, Morimoto, Ukai, Xu, and Yang (AMUXY in what follows) \cite{AMUXY1, AMUXY2}.  The triple norm was originally 
adopted to discuss the equation in $L^\infty_t (0,\infty; H_l^k(\mathbb{R}^6_{x,v}))$ 
for suitable $l, k \in \mathbb{N}$.
We combine both ideas \cite{DLX} and \cite{AMUXY2}(see also \cite{amuxy4-2, amuxy4-3, AMUXY3}) 
to discuss the Cauchy problem without angular cutoff. 
For this purpose, we later introduce a Chemin-Lerner type triple norm, which enables us to take 
full advantage of the two ideas.

We discuss \eqref{eq: Boltzmann} in greater detail. The bilinear operator $Q(f,g)$ is called the Boltzmann collision operator and is defined by
\begin{align*}
Q(f,g)(v)=\int_{\mathbb{R}^3}\int_{\mathbb{S}^2} B(v-v_*,\sigma) (f_*'g'-f_*g)d\sigma dv_*,
\end{align*}
where $f_*'=f(v_*')$, $g'=g(v')$, $f_*=f(v_*)$, $g=g(v)$, and
\begin{align*}
v'=\frac{v+v_*}{2}+\frac{\vert v-v_*\vert}{2}\sigma,\ v_*'=\frac{v+v_*}{2}-\frac{\vert v-v_* \vert}{2}\sigma\quad (\sigma \in \mathbb{S}^2).
\end{align*}
The operator $Q$ acts only on $v$ and $Q(f,g)(x,v,t):=Q(f(x,\cdot,t),g(x,\cdot,t))(v)$. 
A collision kernel $B$ is correspondently defined to reflect various physical phenomena. The first assumption is
\begin{align*}
B(v-v_*, \sigma)=b\left(\frac{v-v_*}{\vert v-v_* \vert}\cdot \sigma\right) 
\Phi_\gamma(|v-v_*|)
\ge 0,
\end{align*}
where $\Phi_\gamma(|v-v_*|)=
\vert v-v_*\vert^\gamma$ for $\gamma > -3$. This model is physically derived when the particle interaction obeys the inverse power law $\phi(r)=r^{-(p-1)}$ ($2<p<\infty$), where $r$ is a distance between two interacting particles. Before mentioning to the second assumption, we should recall that $b$ is a function of the deviation angle $\theta \in [0,\pi]$, defined by $\cos \theta =\frac{v-v_*}{\vert v-v_* \vert}\cdot \sigma$, so we write $b\left(\frac{v-v_*}{\vert v-v_* \vert}\cdot \sigma\right)=b(\cos\theta)$. As an usual convention, $b(\cos \theta)$ $(\theta \in [0, \pi])$ is replaced by
\begin{equation*}
\left[b(\cos \theta)+b(\cos(\pi-\theta))\right] \mathbf{1}_{\{ 0\le \theta \le \pi/2\}}
\end{equation*}
so that a domain of $b$ is $[0,\pi/2]$.
The second one is called the non-cutoff assumption: for some $\nu \in (0,2)$ and $K>0$,
\begin{align*}
b\left(\frac{v-v_*}{\vert v-v_* \vert}\cdot \sigma\right) = b(\cos \theta)\sim K \theta^{-2-\nu}\ (\theta \downarrow 0),
\end{align*}
which implies that $b$ is not integrable near $\theta=0$. 
In the physical model of the inverse power law potential,  
relations among $\gamma$, $\nu$, and $p$ are given by $\gamma=(p-5)/(p-1)$ and $\nu=2/(p-1)$. In this paper, we particularly consider the case 
when 
$$\gamma > \max\{-3, -3/2 -\nu\}. $$ The latter component of this below estimate is due to a technical reason which will appear when we derive various estimates for the collision operator and a nonlinear term derived from $Q$. However, it is easily verified that $-1 < \gamma + \nu <1$ as long as $\gamma = (p-5)/(p-1)$ and $\nu = 2/(p-1)$, that is, all the physically possible combinations of $\gamma$ and $\nu$ are contained in our restriction.

We consider the Boltzmann equation around the normalized Maxwellian 
\begin{align*}
\mu=\mu(v)=(2\pi)^{-3/2}e^{-\vert v \vert^2/2},
\end{align*}
thus we set $f=\mu + \mu^{1/2}g$, so that \eqref{eq: Boltzmann} is equivalent to the equation
\begin{align}\label{eq: perturbation}
\partial_tg+ v\cdot \nabla_x g +\cL g = \Gamma(g,g), 
\end{align}
with initial datum $g_0$ defined from $f_0 = \mu + \mu^{1/2}g_0$, 
where  
\begin{align*}
\Gamma (g,h)=\mu^{-1/2} Q(\mu^{1/2}g, \mu^{1/2}h),\quad \cL g=\cL_1g+\cL_2g:=-\Gamma(\mu^{1/2}, g)-\Gamma(g, \mu^{1/2}).
\end{align*}
These are the nonlinear and the linear term of the Boltzmann equation, respectively. It is known that $\cL$ is nonnegative-definite on $L^2_v(\mathbb{R}^3)$ and 
\begin{equation*}
\ker{\cL}=\text{span} \{\mu^{1/2}, \mu^{1/2}v_i\ (i=1,2,3), \mu^{1/2}\vert v \vert\}. 
\end{equation*}
The projection operator on $\ker{\mathcal{L}}$ is denoted by $\mathbf{P}$, therefore for each $g$,
\begin{align*}
\mathbf{P}g(x,v,t)=\left[ a(x,t)+v\cdot b(x,t)+\vert v \vert^2c(x,t)\right] \mu^{1/2}(v).
\end{align*}
The decomposition $g=\mathbf{P}g+(\mathbf{I}-\mathbf{P})g$ is called the macro-micro decomposition, where $\mathbf{I}$ is the identity map.

The energy term and the dissipation term are respectively defined by
\begin{align*}
\mathcal{E}_T(g) &\sim \Vert g \Vert_{\tilde{L}^\infty_T \tilde{L}^2_v(B^{3/2}_x)}\\
\intertext{and}
\mathcal{D}_T(g) &= \Vert \nabla_x(a,b,c) \Vert_{\tilde{L}^2_T(B^{1/2}_x)} + \Vert (\mathbf{I}-\mathbf{P})g\Vert_{\mathcal{T}^{3/2}_{T,2,2}}.
\end{align*}
Here and hereafter, $B^s_x:= B^s_{2,1}(\RR^3_x)$. We have just showed hidden indices, and the above norms will be rigorously defined in Section 2.

The main theorem of this paper is the following.
\begin{thm}\label{main theorem}
Let $0< \nu <2$ and $\gamma > \max\{ -3, -\nu -3/2\}$. There are constants $\varepsilon_0>0$ and $ C>0$ such that if
\begin{equation*}
\Vert g_0 \Vert_{\tilde{L}^2_v (B^{3/2}_x)}\le \varepsilon_0,
\end{equation*}
then there exists a unique global solution $g(x,v,t)$ of \eqref{eq: perturbation} with initial datum $g_0(x,v)$. This solution satisfies 
\begin{equation}\label{apriori-estimate-0}
\mathcal{E}_T(g) +\mathcal{D}_T(g) \le C \Vert g_0 \Vert_{\tilde{L}^2_v (B^{3/2}_x)}
\end{equation}
for any $T>0$. Moreover if $f_0(x,v) = \mu + \sqrt \mu g_0(x,v)  \ge 0$ then 
$f(x,v,t) = \mu + \sqrt \mu g(x,v,t)  \ge 0$.
\end{thm}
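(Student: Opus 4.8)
The plan is to prove Theorem~\ref{main theorem} by the standard continuity/bootstrap argument built on two pillars: a local existence statement and a global \emph{a priori} estimate of the form \eqref{apriori-estimate-0}. First I would establish the closed energy inequality: assuming a smooth solution $g$ on $[0,T]$ with $\cE_T(g)$ small, one derives
\begin{equation*}
\cE_T(g)^2 + \cD_T(g)^2 \lesssim \Vert g_0\Vert_{\tilde L^2_v(B^{3/2}_x)}^2 + \cE_T(g)\,\cD_T(g)^2,
\end{equation*}
which, once $\cE_T(g) \le \eps_0$ with $\eps_0$ small enough to absorb the cubic term, closes to give $\cE_T(g)+\cD_T(g) \le C\Vert g_0\Vert_{\tilde L^2_v(B^{3/2}_x)}$. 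The ingredients here are: (i) the dyadic localization $\Delta_q$ applied to \eqref{eq: perturbation}, taking the $\tilde L^2_v$ inner product with $\Delta_q g$, summing the $2^{3q/2}$-weighted $\ell^1$ norm in $q$; (ii) the coercivity $\langle \cL h, h\rangle_{L^2_v} \gtrsim |\!|\!| (\mathbf I-\mathbf P)h |\!|\!|^2$ in the non-isotropic (triple) norm, which is where the non-cutoff structure and the restriction $\gamma > \max\{-3,-\nu-3/2\}$ enter; (iii) the macroscopic estimates recovering $\Vert\nabla_x(a,b,c)\Vert_{\tilde L^2_T(B^{1/2}_x)}$ from the local conservation laws by the standard $13$-moments / Kawashima-type argument adapted to the Chemin--Lerner setting; and (iv) the trilinear estimate $|(\Gamma(g,g),(\mathbf I-\mathbf P)g)| \lesssim \cE_T(g)\,\cD_T(g)^2$, which must be proved via the Chemin--Lerner triple norm together with Bony decomposition in $x$ so that the derivative-loss in $v$ is controlled by the dissipation norm $\cT^{3/2}_{T,2,2}$.

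Next I would invoke local existence (stated in the excerpt as coming from commutator estimates and the Hahn--Banach extension theorem): for any $g_0$ with $\Vert g_0\Vert_{\tilde L^2_v(B^{3/2}_x)} < \infty$ there is $T_\ast > 0$ and a unique solution on $[0,T_\ast]$ in the relevant Chemin--Lerner space, with a lower bound on $T_\ast$ depending only on the norm of $g_0$, plus continuity in time of the energy. Then the bootstrap: let $T^\ast$ be the maximal time on which the solution exists and satisfies $\cE_T(g) \le 2C\eps_0$; the a priori estimate upgrades this to $\cE_T(g) \le C\eps_0$ on $[0,T^\ast)$, the local theory extends the solution past $T^\ast$ keeping $\cE \le 2C\eps_0$ (choosing $\eps_0$ small), contradicting maximality unless $T^\ast = \infty$. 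Uniqueness is inherited from the local uniqueness together with the global bound, by a standard difference-equation argument.

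Finally, for non-negativity of $f = \mu + \sqrt\mu\, g$: the clean way is to run the construction directly on $f$ rather than on $g$. One approximates $Q$ by cutoff kernels $B_n$ (truncating $b$ near $\theta = 0$), for which the gain and loss terms split, solves the truncated equation by a monotone iteration $f^{n,0}=0$, $\partial_t f^{n,k+1} + v\cdot\nabla_x f^{n,k+1} + f^{n,k+1}\,L_n[f^{n,k}] = G_n[f^{n,k}]$ along characteristics so that each $f^{n,k} \ge 0$ provided $f_0 \ge 0$, pass to the limit $k\to\infty$ to get a nonnegative solution $f^n$ of the truncated problem, and then pass $n\to\infty$ using the uniform a priori bounds from the first part (which survive truncation) plus a stability/uniqueness estimate identifying the limit with the solution $f = \mu + \sqrt\mu\, g$ already constructed. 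Since $f^n \ge 0$ and $f^n \to f$ in a topology strong enough for a.e.\ convergence along a subsequence, $f \ge 0$ follows.

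I expect the trilinear estimate $|(\Gamma(g,g),(\mathbf I-\mathbf P)g)| \lesssim \cE_T(g)\,\cD_T(g)^2$ in the combined Chemin--Lerner/triple-norm setting to be the main obstacle: one must simultaneously handle the $x$-frequency interactions via paraproducts (so that each factor lands in the right $B^s_x$ scale with no net loss), the $v$-regularity produced by the non-cutoff collision operator (captured only by the non-isotropic triple norm, not by plain $L^2_v$), and the velocity weight $\langle v\rangle^{\gamma/2}$ coming from $\Phi_\gamma$ — it is precisely balancing these against the available $\gamma > -\nu - 3/2$ that forces the technical restriction. The macroscopic estimate is also delicate in the Chemin--Lerner norm because the usual integration-by-parts in the interaction functional must be done frequency-block by frequency-block and then re-summed in $\ell^1_q$.
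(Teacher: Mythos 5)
Your global-existence skeleton — dyadic energy estimate, coercivity of $\cL$ via the triple norm, recovery of the macroscopic dissipation from a fluid-type $(a,b,c)$ system, trilinear control of $\Gamma$ via Bony decomposition in $x$ combined with the Chemin--Lerner triple norm, local existence via Hahn--Banach on a linearized problem, then a continuity/bootstrap — matches the paper's architecture (Lemmas~\ref{trilinear}--\ref{energy estimate}, Lemma~\ref{lemma2}, Theorem~\ref{local-existence}). One technical caveat about how you state the closed inequality: the paper's Lemma~\ref{energy estimate} reads $\cE_T(g)+\cD_T(g)\lesssim\Vert g_0\Vert_{\tilde L^2_v(B^{3/2}_x)}+(\sqrt{\cE_T(g)}+\cE_T(g))\cD_T(g)$, and this form is forced by taking the square root of the block-level energy identity \emph{before} performing the $2^{3q/2}$-weighted $\ell^1_q$ summation. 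Your squared form $\cE_T^2+\cD_T^2\lesssim\Vert g_0\Vert^2+\cE_T\cD_T^2$, if obtained by multiplying the integrated block identity by $2^{3q}$ and summing, controls $\sum_q2^{3q}\sup_t\Vert\Delta_qg\Vert^2$, i.e.\ the $\tilde L^2_v H^{3/2}_x$ norm — which is strictly weaker than the $\tilde L^2_v(B^{3/2}_x)$ Chemin--Lerner norm and in particular does not give the Banach-algebra embedding $B^{3/2}_{2,1}\hookrightarrow L^\infty$ on which the trilinear estimate rests. The order ``square root, then $\ell^1_q$-sum'' is not a cosmetic choice; it is the point of working in the Chemin--Lerner critical scale. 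You do say ``summing the $2^{3q/2}$-weighted $\ell^1$ norm,'' so you likely intend the right thing, but the displayed inequality should reflect it.

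Your non-negativity argument takes a genuinely different route from the paper, and it has a gap you would have to fill. You propose to truncate the angular kernel to $B_n$, run a positivity-preserving Kaniel--Shinbrot-type iteration for the cutoff problem (which requires the cutoff: without it, the gain/loss split $Q=Q^+-fL$ does not exist since $\int b\,d\sigma=\infty$), obtain $f^n\ge 0$, and then let $n\to\infty$. The step you label ``a stability/uniqueness estimate identifying the limit with the solution $f=\mu+\sqrt\mu g$'' is precisely the missing ingredient and is far from routine here: the paper's a priori bounds are built around the non-isotropic triple norm $|\!|\!|\cdot|\!|\!|$ whose $J_1^{\Phi_\gamma}$ part encodes the $H^{\nu/2}$ gain produced by the angular singularity, and for a cutoff $b_n$ that gain degenerates, so it is not true that these bounds ``survive truncation'' uniformly in $n$; moreover the relevant coercivity constant $\lambda_0$ of $\cL$ in Lemma~\ref{linear term} is tied to the non-cutoff kernel. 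Passing the cutoff limit in this Besov framework would require establishing uniform-in-$n$ estimates in a topology compatible with the triple norm and a quantitative cutoff-removal stability estimate, neither of which you provide and neither of which is in the paper. The paper avoids the singular limit entirely: it reuses the approximate sequence $\{g^n\}$ already produced by the local-existence iteration \eqref{itere} (no kernel modification), notes that $f^{n+1}=\mu+\mu^{1/2}g^{n+1}$ solves the linear problem $\partial_t f^{n+1}+v\cdot\nabla_x f^{n+1}=Q(f^n,f^{n+1})$, and proves $f^n\ge 0\Rightarrow f^{n+1}\ge 0$ by testing the equation for $\tilde f^{n+1}=\mu^{-1/2}f^{n+1}$ against $\beta_s(\tilde f^{n+1})\varphi(v,x)^{-2}$ with $\beta(s)=\tfrac12(s^-)^2$ and a polynomial weight $\varphi$. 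After splitting $\tilde f^{n+1}$ into positive and negative parts, the gain contribution is non-positive by the induction hypothesis, and the remaining terms close a Gr\"onwall inequality using Lemma~\ref{linear term}, Corollary~\ref{AMUXYtrilinear}, and the weighted commutator estimate of Corollary~\ref{cor-weight-commutator}. This uses only ingredients already set up for the non-cutoff equation and is the argument you should adopt in place of the cutoff-limit strategy.
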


Let us make some comments on the theorem. We note that Besov embedding $B^{3/2}_{2,1}(\mathbb{R}^3) \hookrightarrow L^\infty (\mathbb{R}^3)$ ensures that $B^{3/2}_{2,1}(\mathbb{R}^3)$ is a Banach algebra, that is, if $f$ and $g$ are elements of $B^{3/2}_{2,1}(\mathbb{R}^3)$, then so is the product $fg$ (see Corollary 2.86 of \cite{BCD} for instance). This is the reason why we employed this space with respect to $x$, combined with $L^\infty_T L^2_v$. In this sense, we used the word \textit{critical} in the title; compare with the well-known fact that we need $s>3/2$ for $H^s(\mathbb{R}^3) \hookrightarrow L^\infty (\mathbb{R}^3)$. We also remark that the Chemin-Lerner space $\tilde{L}^\infty_T \tilde{L}^2_v (B^{3/2}_{2,1})$ defines stronger topology than $L^\infty_T L^2_v (B^{3/2}_{2,1})$. Thus we shall adopt the Chemin-Lerner norm to the problem.

We review some known results, confining  ourselves to research on the Cauchy problem around the global Maxwellian. The first fruitful work was done by Ukai \cite{U1}, \cite{U2}, 
based on spectral analysis of the collision operator and the bootstrap argument. This is the first result 
obtaining a global mild solution to the equation. Notable works following this perspective are, 
for example, Nishida and Imai \cite{NI}, and Ukai and Asano \cite{UA}. Most of these results were secured in 
the 70's and the 80's. They adopted $L^\infty$-norm to estimate a solution with respect to $v$. 
Recent study based on $L^2_v$-framework has started from similar but independent results 
by both Guo\cite{G} and  Liu and Yu\cite{LY}, Liu, Yang, and Yu\cite{LYY}. They applied the macro-micro decomposition, which clarifies dissipative and equlibrating effects of the equation. 
This allowed us to utilize the energy method. They succeeded in constructing 
a classical solution in a $x$- and $v$-Sobolev space. 

There are two independent results concerning global existence around the equilibrium for the non-cutoff Boltzmann equation. 
To establish a global solution to the non cut-off Boltzmann equation in the whole space, AMUXY \cite{AMUXY1, amuxy4-2, AMUXY2} followed Guo's framework with intensive pseudo-differential calculus on the collisional term,
in both general hard potential case $\gamma + \nu >0$ and general soft potential case $\gamma + \nu \le 0$.
More precisely, when $\gamma +\nu >0$,  \cite{amuxy4-2} constructed global solutions  in 
$L^\infty_t (0,\infty; H_l^k(\mathbb{R}^6_{x,v}))$ with $k\ge 6$ and $l>3/2+\gamma + \nu$. Here, $H^k_l(\mathbb{R}^6_{x,v}) =\{ f\in \mathcal{S}' (\mathbb{R}^6_{x,v}); \langle v \rangle^l f \in H^k (\mathbb{R}^6_{x,v})\}$. 
For the general soft potential case 
under an additional condition 
$\gamma > \max\{ -3, -\nu -3/2\}$,  
AMUXY \cite{AMUXY2} found a solution in $L^\infty_t (0,\infty; L^2_v (\mathbb{R}^3 ; H^N_x (\mathbb{R}^3)))$ with 
$\NN \ni N \ge 4$ and another solution in $L^\infty_t (0,\infty; \tilde{\cH}_{\ell}^k(\RR^6_{x,v}))$
with $4 \le k \in \NN, \ell \ge k$, where 
\[\tilde{\cH}_{\ell}^k(\RR^6_{x,v}) = \{f \in \cS'(\RR^6_{x,v}); \sup_{|\alpha + \beta|\le k} \|\la v \ra^{(\ell-|\beta| )|\gamma+ \nu|}
\pa_x^\alpha \pa_v^\beta f\|^2_{L^2(\RR^6)} < \infty\}.\]
AMUXY \cite{amuxy4-3} also studied for qualitative
properties including smoothing effect, uniqueness, non-negativity, and convergence rate towards 
the equlibrium. When the space variable moves in a torus, the global existence was shown by Gressman and Strain \cite{GS} with convergence rates.
They employed an anisotropic 
metric 
on the ``lifted" paraboloid $d(v,v')=\sqrt{\vert v-v'\vert^2 + (\vert v\vert^2 -\vert v'\vert^2)^2/4}$ 
to capture changes in the power of the weight. 
For the general hard potential case, their result is sharper than \cite{amuxy4-2} and 
covers the global well-posedness in non-weighted and non-velocity derivative energy 
functional spaces $L^\infty_t (0,\infty; L^N_v (\mathbb{R}^3 ; H^2_x (\mathbb{T}^3)))$ with $N \ge 2$. 
It should be mentioned that the soft potential case was also studied  by \cite{GS} in the space
$L^\infty_t (0,\infty; \tilde{\cH}_{\ell}^k(\mathbb{T}^3_x \times \RR^3_{v}))$
with $4 \le k \in \NN, \ell \ge 0$, 
without additional condition $\gamma > \max\{ -3, -\nu -3/2\}$.
The main theorem of  the present paper improves the results
in  non-weighted and non-velocity derivative energy 
functional spaces, given in \cite{AMUXY2} for the general soft potential case and  in \cite{GS}
for the general hard potentical case, respectively.

Our work is motivated by Duan, Liu, and Xu \cite{DLX}. Under the cut-off assumption, they studied the hard potential case $\gamma >0$ by using the Chemin-Lerner space 
$\tilde{L}^\infty_T \tilde{L}^2_v (B^{3/2}_x)$, which was originally invented in \cite{CL} to investigate 
the existence of the flow for solutions of the Navier-Stokes equations.
One advantage of this time-velocity-space Besov space over a usual space $L^\infty_T L^2_v B^{3/2}_x$ 
can be explained as follows: in general, it is  easier to bound dyadic blocks of 
the Littlewood-Paley decomposition in $L^\infty_T L^2_v L^2_x$ rather than to estimate 
directly the solution for the equation in  $L^\infty_T L^2_v B^{3/2}_x$.  
Since the global well-posed theory in $\tilde{L}^\infty_T \tilde{L}^2_v (B^{3/2}_x)$ was established  by
\cite{DLX} for the cutoff case, 
we extend their methods to the non cut-off 
case by using upper bound estimates of the nonlinear term and calculation of commutators which were devised in a series of AMUXY's works.

There are some other papers applying the Besov space to analysis of the Boltzmann equation. 
We will simply introduce some of them concerning the Cauchy problem. Recently, Tang and Liu \cite{TL} 
followed \cite{DLX} under the cut-off assumption and improved the result by replacing $B^{3/2}_{2,1}$ 
with narrower spaces $B^s_{2,r}$ ($(s,r)\in (3/2,\infty) \times [1,2]$ or $=(3/2, 1)$). 
For other research, see  Alexandre \cite{A}, Ars\'{e}nio and Masmoudi \cite{AM}, Sohinger and Strain \cite{SS}.

This paper is organized as follows. In Section 2, we review definitions of the Littlewood-Paley decomposition, Besov spaces, and Chemin-Lerner spaces. Taking into account the triple norm defined in 
\cite{AMUXY1, AMUXY2}, 
we also define a Chemin-Lerner type triple norm. In Section 3, we deduce Besov-type trilinear estimates. 
We have to derive three different estimates of 
Besov-type for the nonlinear term $\Gamma$, which lead us to the global and local existence
of solutions.
Since $\sigma$-integration on the unit sphere is significant in the non cutoff case,
the nonlinear term can not be estimated by separating the so-called gain and loss
terms, different from the cut-off case in \cite{DLX}.
To get estimates analogous to those in Lemma 3.1 of \cite{DLX}, 
the $\sigma$-integration should be dealt with by the triple norm, involving two velocity variables $v, v'$. 
Section 4 is devoted to deduce an a priori estimate for the global existence. The microscopic part $(\mathbf{I}-\mathbf{P})g$ is handled by the estimates established in Section 3, however, 
we cannot adopt them for the macroscopic part $\mathbf{P}g$. To overcome this difficulty, we will bring in a fluid-type system of $(a, b, c)$. Applying the energy method to it, we will acquire an estimate of the macroscopic dissipation term. Local existence of a solution will be shown in Section 5. We want to establish 
a solution by the duality argument and the Hahn-Banach extension theorem, but a problem arises since, 
as far as we know, the dual space of $\tilde{L}^\infty_T \tilde{L}^2_v(B^{3/2}_x)$ is unknown. Therefore, 
we first find a solution to a linearized equation in a wider space $L^\infty(0,T; L^2(\mathbb{R}^6_{x,v}))$ and construct a approximate sequence. Then we show that a limit of the sequence is a solution to the equation, 
and it belongs to a suitable solution space indeed. This is our strategy, however, we have to employ 
very delicate commutator estimates involving the nonlinear term $\Gamma$ and cut-off functions 
with respect to both $x$ and $v$. Proofs of these estimates are fairly technical, so in this section 
we postpone proving them and focus on solving the equation assuming that the necessary lemmas 
are evidenced. Such lemmas are collected in Appendix with proofs. Non-negativity of the solution 
obtained will be given in Section 6. Known inequalities of Besov spaces and the triple norm used in 
this paper are also collected in Appendix.

\section{Preliminaries}\label{S2}
\setcounter{equation}{0}
In this section, we define some function spaces for later use. Readers may consult \cite{BCD} for this topic. First, we introduce a dyadic partition of unity, also known as the Littlewood-Paley decomposition. Let $\mathcal{C}$ be the annulus $\{ \xi \in \mathbb{R}^3 \ \vert\  3/4 \le \vert \xi \vert \le 8/3\}$ and $\mathcal{B}$ be the ball $B(0,4/3)$. Then there exist radial functions $\chi \in C^\infty_0(\mathcal{B})$ and $\phi \in C^\infty_0(\mathcal{C})$ satisfying the following conditions:
\begin{gather*}
0 \le \chi(\xi), \phi(\xi) \le 1 \quad \mathrm{for \ any} \ \xi \in \mathbb{R}^3,\\
\chi(\xi)+\sum_{q \ge 0} \phi(2^{-q}\xi)=1\quad \mathrm{for \ any}\   \xi \in \mathbb{R}^3,\\
\sum_{q \in \mathbb{Z}} \phi(2^{-q}\xi)=1\quad \mathrm{for\  any}\  \xi \in \mathbb{R}^3\setminus\{0\},\\
\vert q-q'\vert \ge 2 \Rightarrow  \mathrm{supp}\ \phi(2^{-q}\cdot)\cap\mathrm{supp}\ \phi(2^{-q'}\cdot) =\emptyset,\\
q\ge 1 \Rightarrow \mathrm{supp}\ \chi \cap \mathrm{supp}\ \phi(2^{-q}\cdot )=\emptyset,\\
\vert q-q'\vert \ge 5 \Rightarrow 2^{q'}\tilde{\mathcal{C}}\cap 2^q\mathcal{C}=\emptyset,
\end{gather*}
where $\tilde{\mathcal{C}}:=B(0,2/3)+\mathcal{C}$.
We fix these functions and write $h:=\mathcal{F}^{-1}\phi$ and $\tilde{h}:=\mathcal{F}^{-1}\chi$. For each $f \in \mathcal{S}'(\mathbb{R}^3_x)$, the nonhomogeneous dyadic blocks $\Delta_q$ are defined by
\begin{align*}
&\Delta_{-1}f:= \chi(D)f =\int_{\mathbb{R}^3} \tilde{h}(y)f(x-y)dy,\\
&\Delta_qf:= \phi(2^{-q}D)f= 2^{3q}\int_{\mathbb{R}^3} h(2^{q}y) f(x-y) dy \quad (q\in \mathbb{N}\cup \{0\})
\end{align*}
and $\Delta_qf:=0$ if $q\le -2$. The nonhomogeneous low-frequency cutoff operator $S_q$ is defined by
\begin{align*}
S_qf=\sum_{q' \le q-1} \Delta_{q'} f.
\end{align*}

We denote the set of all polynomials on $\mathbb{R}^3$ by $\mathcal{P}$. Regarding any polynomial $P$ as a distribution, we notice that  $\dot{\Delta}_q P =0$. Therefore, the homogeneous dyadic blocks $\dot{\Delta}_q$ are defined by
\begin{align*}
\dot{\Delta}_qf:= \phi(2^{-q}D)f:=2^{3q} \int_{\mathbb{R}^3} h(2^{q}y)f(x-y)dy
\end{align*}
for any $f \in \mathcal{S}'(\mathbb{R}^3_x)/\mathcal{P}$ and $q \in \mathbb{Z}$. 

We now give the definition of nonhomogeneous Besov spaces as follows.
\begin{defi}
Let $1 \le p,r \le \infty$ and $s \in \mathbb{R}$. The nonhomogeneous Besov space $B^s_{pr}$ is defined by
\begin{align*}
B^s_{pr}:= \left\{ f \in \mathcal{S}'(\mathbb{R}^3_x) \ \vert \ \Vert f \Vert_{B^s_{pr}} := \left\Vert (2^{qs} \Vert \Delta_q f \Vert_{L^p_x} )_{q \ge -1} \right\Vert_{\ell^r} < \infty \right\}.
\end{align*}
When $r=\infty$, we set $\Vert f \Vert_{B^s_{p\infty}}:={\displaystyle\sup_{q \ge -1}2^{qs}} \Vert \Delta_q f \Vert_{L^p_x} $.
\end{defi}
Here $L^p_x:=L^p(\mathbb{R}^3_x)$ and this kind of abuse of notation will be used throughout the paper.

The definition of homogeneous Besov spaces is as follows:
\begin{defi}
Let $1 \le p,r \le \infty$ and $s \in \mathbb{R}$. The homogeneous Besov space $\dot{B}^s_{pr}$ is defined by
\begin{align*}
\dot{B}^s_{pr}:= \left\{ f \in \mathcal{S}'(\mathbb{R}^3_x)/ \mathcal{P} \ \vert \ \Vert f \Vert_{\dot{B}^s_{pr}} := \left\Vert (2^{qs} \Vert \dot{\Delta}_q f \Vert_{L^p_x} )_{q \in \mathbb{Z}} \right\Vert_{\ell^r} < \infty \right\}.
\end{align*}
When $r=\infty$, we set $\Vert f \Vert_{\dot{B}^s_{pr}} := {\displaystyle\sup_{q \in \mathbb{Z}}2^{qs}} \Vert \dot{\Delta}_q f \Vert_{L^p_x} $.
\end{defi}

For simplicity of notations, we will write $B^s_{2,1}=:B^s_x$ and $\dot{B}^s_{2,1}=:\dot{B}^s_x$,
as stated in the introduction. 

Next, we define Chemin-Lerner spaces, which is a generalization of the Besov space.
\begin{defi}
Let $ 1\le p,r,\alpha, \beta \le \infty$ and $s\in \mathbb{R}$. For $T \in [0,\infty)$, the Chemin-Lerner space $\tilde{L}^\alpha_T \tilde{L}^\beta_v (B^s_{pr})$ is defined by
\begin{align*}
\tilde{L}^\alpha_T \tilde{L}^\beta_v (B^s_{pr}):= \left\{ f(\cdot,v,t) \in \mathcal{S}' \ \vert\ \Vert f \Vert_{\tilde{L}^\alpha_T \tilde{L}^\beta_v (B^s_{pr})}  < \infty \right\},
\end{align*}
where
\begin{gather*}
\Vert f \Vert_{\tilde{L}^\alpha_T \tilde{L}^\beta_v (B^s_{pr})} := \left\Vert (2^{qs} \Vert \Delta_q f \Vert_{L^\alpha_T L^\beta_v L^p_x} )_{q \ge -1} \right\Vert_{\ell^r},\\
\Vert \Delta_q f \Vert_{L^\alpha_T L^\beta_v L^p_x}:= \left( \int_0^T \left( \int_{\mathbb{R}^3} \left( \int_{\mathbb{R}^3} \vert \Delta_q f(x,v,t) \vert^p dx \right)^{\beta/p} dv \right)^{\alpha/\beta} dt \right)^{1/\alpha}
\end{gather*}
with the usual convention when at least one of $p, r, \alpha, \beta$ is equal to $\infty$.
We also define $\tilde{L}^\alpha_T \tilde{L}^\beta_v (\dot{B}^s_{pr})$ similarly.
\end{defi}

We denote $\tilde{L}^\alpha_T \tilde{L}^\beta_v (B^s_{2,1})$ by $\tilde{L}^\alpha_T \tilde{L}^\beta_v (B^s_x)$, and $\tilde{L}^\alpha_T \tilde{L}^\beta_v (\dot{B}^s_{2,1})$ by $\tilde{L}^\alpha_T \tilde{L}^\beta_v (\dot{B}^s_x)$.
Finally, we give the definition of the non-isotropic norm $|\!|\!| \cdot |\!|\!|$ and the space $\mathcal{T}^s_{Tpr}$ and $ \dot{\mathcal{T}}^s_{Tpr}$, which are endowed with the ``Chemin-Lerner type triple norm''.

\begin{defi}
Let $1\le p, r \le \infty$, $T>0$ and $s\in \mathbb{R}$. $|\!|\!| f |\!|\!|$ is defined by
\begin{align*}
|\!|\!| f |\!|\!|^{2} :=& \iiint B(v-v_*,\sigma) \mu_* (f'-f)^2 dvdv_*d\sigma \\
&+ \iiint B(v-v_*,\sigma) f^2_* (\sqrt{\mu'}-\sqrt{\mu})^2 dvdv_* d\sigma\\
=& J_1^{\Phi_\gamma}(g) + J_2^{\Phi_\gamma}(g)\,,
\end{align*}
and the space $\mathcal{T}^s_{Tpr}$ is defined by
\begin{align*}
\mathcal{T}^s_{Tpr} := \left\{ f \ \vert \ \Vert f \Vert_{\mathcal{T}^s_{Tpr}} = \left\Vert (2^{qs} \Vert |\!|\!| \Delta_q f |\!|\!| \Vert_{L^{p}_T L^r_x} )_{q \ge -1} \right\Vert_{\ell^{1}} <\infty \right\}.
\end{align*}
$\dot{\mathcal{T}}^s_{Tpr}$ is defined in the same manner.
\end{defi}

$|\!|\!| \cdot |\!|\!|$ is called the triple norm, and it is known that this norm  is estimated from both above and below by weighted Sobolev norms (see \cite[Proposition 2.2]{AMUXY2}):
\begin{equation}\label{up-down-triple}
\Vert f \Vert^2_{H^{\nu/2}_{\gamma/2}} +\Vert f \Vert^2_{L^2_{(\nu+\gamma)/2}} \lesssim  |\!|\!| f |\!|\!|^{2} \lesssim \Vert f \Vert^2_{H^{\nu/2}_{(\nu+\gamma)/2}}.
\end{equation}
If $p$ or $r$ is an explicit number, $\mathcal{T}^s_{Tpr}$ is denoted by $\mathcal{T}^s_{T,p,r}$ to avoid ambiguity.

In order to deduce Chemin-Lerner estimates in the following sections, we will use some properties of the above spaces. See Appendix for these properties. It must be emphasized again that $\Vert \cdot \Vert_{\tilde{L}^\alpha_T \tilde{L}^\beta_v (B^s_{pr})}$ is usually easier to handle than $\Vert \cdot \Vert_{L^\alpha_T L^\beta_v (B^s_{pr})}$.

\section{Chemin-Lerner type trilinear estimates}\label{3}
\setcounter{equation}{0}
We will show an imporatnt estimate of the nonlinear term $\Gamma$ in the suitable Chemin-Lerner space. This estimate will be used many times throughout the paper. We denote the usual $L^2(\mathbb{R}^3_x \times \mathbb{R}^3_v)$ and $L^2(\mathbb{R}^3_v)$ inner product by $(\cdot,\cdot)_{x,v}$ and $(\cdot, \cdot)_v$, respectively.
\begin{lem}\label{trilinear}
Let $0<s\le \frac{3}{2}$ and $0<T \le \infty$. Define 
\begin{equation*}
A_T(f, g, h):=\sum_{q \ge -1} 2^{qs}\left( \int^T_0 \vert ( \Delta_q \Gamma (f,g), \Delta_qh)_{x,v} \vert dt \right)^{1/2}.
\end{equation*}
Then the following trilenear estimates hold:

\begin{align}\label{tri1}
A_T&(f, g, h) \lesssim \Vert f \Vert_{\tilde{L}^\infty_T \tilde{L}^2_v(B^{3/2}_x)}^{1/2} \Vert g \Vert_{\mathcal{T}^{3/2}_{T,2,2}}^{1/2} \Vert h \Vert_{\mathcal{T}^s_{T,2,2}}^{1/2} , \\
\label{tri2} A_T&(f, g, h)  \lesssim   \left( \Vert f \Vert_{L^2_T L^2_v(\dot{B}^{3/2}_x)}^{1/2} \Vert g \Vert_{\mathcal{T}^s_{T,\infty,2}}^{1/2} + \Vert f \Vert_{\tilde{L}^\infty_T \tilde{L}^2_v(B^s_x)}^{1/2} \Vert g \Vert_{\dot{\mathcal{T}}^{3/2}_{T,2,2}}^{1/2} \right)\Vert h \Vert_{\mathcal{T}^s_{T,2,2}}^{1/2}.
\end{align}
Moreover, if $\gamma + \nu \le 0$, we have
\begin{align}\label{tri3}
A_T(f, g, h) \notag
 \lesssim &\, \bigg(\Vert \mu^{1/10}f \Vert_{L^2_T L^2_v(\dot{B}^{3/2}_x)}^{1/2} \Vert g \Vert_{\mathcal{T}^s_{T,\infty,2}}^{1/2}
\\
 &\quad \qquad +\Vert \mu^{1/10}f \Vert_{\tilde{L}^\infty_T \tilde{L}^2_v(B^s_x)}^{1/2} \Vert g \Vert_{\dot{\mathcal{T}}^{3/2}_{T,2,2}}^{1/2}\bigg)\Vert h \Vert_{\mathcal{T}^s_{T,2,2}}^{1/2}
\notag\\
 & +\bigg( \Vert f \Vert_{L^2_T L^2_{v, (\nu+\gamma)/2}(\dot{B}^{3/2}_x)}^{1/2} \Vert g \Vert_{\tilde{L}^\infty_T \tilde{L}^2_v(B^{s}_x)}^{1/2}
\notag\\
 &\quad \qquad +\Vert f \Vert_{\tilde{L}^\infty_T \tilde{L}^2_{v, (\nu+\gamma)/2}(B^s_x)}^{1/2} \Vert g \Vert_{L^2_T L^2_v(\dot{B}^{3/2}_x)}^{1/2}\bigg)\Vert h \Vert_{\mathcal{T}^s_{T,2,2}}^{1/2}.
\end{align}

\end{lem}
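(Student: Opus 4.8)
The plan is to reduce everything to the dyadic inner products $(\Delta_q \Gamma(f,g),\Delta_q h)_{x,v}$ and to exploit the Bony paraproduct decomposition in the $x$-variable, writing $fg = T_f g + T_g f + R(f,g)$, so that each product $\Delta_q(fg)$ is controlled by sums over nearby frequencies of low-frequency pieces $S_{q'-1}$ acting on high-frequency pieces $\Delta_{q'}$. The starting point is the upper bound in the commutator/trilinear estimate of AMUXY for $\Gamma$ in the velocity variable: after localizing in $x$, one has $|(\Delta_q\Gamma(F,G),\Delta_q H)_v| \lesssim \bigl(\|F\|_{L^2_v} + \|\mu^{1/10}F\|_{L^2_v}\bigr)\,|\!|\!|G|\!|\!|\;|\!|\!|H|\!|\!|$ up to the usual weight book-keeping — this is exactly the mechanism that produced \eqref{tri1} and \eqref{tri2}. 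The new feature for \eqref{tri3} is that in the soft-potential regime $\gamma+\nu\le 0$ the kernel $\Phi_\gamma$ is singular at $v=v_*$, so the naive estimate loses a weight; one splits $\Phi_\gamma = \Phi_\gamma\mathbf 1_{|v-v_*|\le 1} + \Phi_\gamma \mathbf 1_{|v-v_*|\ge 1}$ (or uses a smooth version of this partition), and treats the two contributions differently. The near-singularity part $|v-v_*|\le1$ is integrable against the Gaussian factors coming from $\mu^{1/2}$ in $\Gamma$ and can be absorbed by putting a small Gaussian weight $\mu^{1/10}$ on $f$, yielding the first pair of terms on the right of \eqref{tri3}; the far part $|v-v_*|\ge1$ behaves essentially like a bounded kernel but forces a polynomial weight $\langle v\rangle^{(\nu+\gamma)/2}$ (note $(\nu+\gamma)/2\le0$ so this is a decay weight) on $f$, which is where the $L^2_{v,(\nu+\gamma)/2}$ norms in the second pair of terms come from.

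Concretely, the steps I would carry out are: (i) fix $q$ and expand $\Delta_q\Gamma(f,g) $ using the paraproduct in $x$ so that it is a sum of $\Gamma(S_{q'-1}f,\Delta_{q'}g)$-type terms and $\Gamma(\Delta_{q'}f, S_{q'-1}g)$-type terms with $|q-q'|$ bounded (plus the remainder term with $q'\ge q-N$); this is legitimate because $\Gamma$ acts only in $v$ and hence commutes with the $x$-Littlewood–Paley projections. (ii) For each such term, apply the AMUXY pointwise-in-$x$ velocity estimate in the soft-potential form, decomposing the kernel as above; this turns the $v$-integral into a product of an $L^2_v$ (or weighted $L^2_v$) norm of the $f$-factor, a triple norm of the $g$-factor, and a triple norm of the $h$-factor, all still depending on $x$. (iii) Integrate in $x$: use Hölder in $x$ with the Banach-algebra/Besov embedding $B^{3/2}_{2,1}\hookrightarrow L^\infty_x$ to put the low-frequency factor $S_{q'-1}(\cdot)$ in $L^\infty_x$ (this is exactly where $B^{3/2}_x$ and the exponent $3/2$ enter and why one factor always carries the $B^{3/2}$ regularity) and keep the two high-frequency factors in $L^2_x$. (iv) Integrate in $t$, applying Cauchy–Schwarz in $t$ to distribute the time integrability as $L^2_T\times L^2_T$ (or $L^\infty_T\times L^2_T$, which is the origin of the $\mathcal T^s_{T,\infty,2}$ norm appearing paired with the $L^2_T L^2_v(\dot B^{3/2}_x)$ norm of $f$). (v) Finally take the square root — recall $A_T$ carries an exponent $1/2$ on each dyadic inner product — sum the resulting $\ell^1$ sequence in $q$, using the standard convolution inequality for $\ell^1$ sequences coming from $|q-q'|\lesssim 1$, and identify the product of norms; here the constraint $0<s\le 3/2$ guarantees the shifted summation does not spoil the $\ell^1$ summability and that $s$ can be carried on the $g$ and $h$ factors while $f$ keeps $3/2$.

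I expect the main obstacle to be step (ii)–(iii) combined: one must choose the kernel splitting and the distribution of weights so that, after the Gaussian factors in $\Gamma(\mu^{1/2}\,\cdot,\mu^{1/2}\,\cdot)$ are used to control the singular part, exactly the weight $\langle v\rangle^{(\nu+\gamma)/2}$ (and no more) survives on $f$ in the far region, and so that the $\sigma$-integration over $\SS^2$ — which cannot be split into gain and loss terms in the non-cutoff setting — is handled entirely through the triple norm via the pre-post collisional change of variables $v\mapsto v'$ built into $|\!|\!|\cdot|\!|\!|$. Keeping track of which of the three slots (the $f$, $g$, or $h$ factor) receives the $B^{3/2}$ regularity versus the parameter $s$, in each paraproduct piece, and verifying that the two ``cross'' arrangements in each pair of terms in \eqref{tri3} exhaust all the cases, is the bookkeeping-heavy core of the argument; the rest is a routine adaptation of the proof of \eqref{tri2}, with the hard-potential kernel bound replaced by the soft-potential one from \cite[Proposition 2.2]{AMUXY2} and \eqref{up-down-triple}.
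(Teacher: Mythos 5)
Your proposal follows essentially the same route as the paper: Bony decomposition in $x$ into $\Gamma^1+\Gamma^2+\Gamma^3$, the AMUXY velocity trilinear bound (Corollary \ref{AMUXYtrilinear} for \eqref{tri1}--\eqref{tri2} and the weighted variant Corollary \ref{another-type} for \eqref{tri3}) applied pointwise in $x$, H\"older together with the embedding $B^{3/2}_x\hookrightarrow L^\infty_x$, Cauchy--Schwarz in $t$, and a convolution inequality in $q$ to sum the dyadic $\ell^1$ series. The only small inaccuracy is your gloss on the internal proof of the weighted velocity bound --- assigning the $\mu^{1/10}$-weighted term to the near-singularity kernel piece and the $\langle v\rangle^{(\nu+\gamma)/2}$-weighted term to the far piece, whereas in the paper's appendix proof of Lemma \ref{MS-general} both $\Phi_c$ and $\Phi_{\bar c}$ contribute to both terms --- but since the paper, like you, invokes Corollary \ref{another-type} as a black box when proving \eqref{tri3}, this does not affect the derivation of Lemma \ref{trilinear} itself.
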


\begin{proof}
Before starting the proof of this lemma, we recall the Bony decomposition. Since $\sum_j \Delta_j =\mathrm{Id}$, we have, at least formally, $fg=\sum_{j,j'} \Delta_j f \Delta_{j'} g$. Dividing this summation according to the frequencies, we obtain the following Bony decomposition:
\begin{align*}
 fg=\mathcal{T}_fg+\mathcal{T}_gf+\mathcal{R}(f,g),
\end{align*}
where 
\begin{align*}
\mathcal{T}_fg:=\sum_j S_{j-1}f\Delta_jg,\ \mathcal{T}_gf := \sum_j \Delta_j f S_{j-1} g,\, \\
\mathcal{R}(f,g):=\sum_j \left(\sum_{\vert j-j'\vert \le 1} \Delta_{j'}f\Delta_jg\right).
\end{align*}

Using this decomposition, we divide the inner product into three parts:
\begin{align*}
( \Delta_q \Gamma (f,g), \Delta_qh) = ( \Delta_q(\Gamma^1(f,g)+\Gamma^2(f,g)+\Gamma^3(f,g)), \Delta_q h),
\end{align*}
where $\Gamma^1(f,g) :=\sum_{j} \Gamma(S_{j-1}f, \Delta_j g) = \sum_j \Gamma^1_j(f,g)$, and $\Gamma^2(f,g)$ and $\Gamma^3(f,g)$ are similarly defined.

First, we treat $\Gamma^1(f,g)$. From shapes of supports of $\chi$ and $\phi(2^{-j}\cdot)$, we notice
\begin{align*}
\Delta_q \sum_j (S_{j-1}f\Delta_jg) = \Delta_q \sum_{\vert j-q \vert \le 4} (S_{j-1}f\Delta_jg),
\end{align*} 
that is, when $\Delta_q$ is applied to, the above summation with respect to $j$ becomes finite.
Hence we have
\begin{align*}
\left(\Delta_q \sum_j \Gamma^1_j(f,g), \Delta_q h \right)_{x,v} &=\sum_{\vert j-q \vert \le 4} (\Delta_q \Gamma^1_j(f,g), \Delta_q h)_{x,v} \\ \notag
&= \sum_{\vert j-q \vert \le 4} ( \Gamma^1_j(f,g), \Delta_q^2 h)_{x,v} \\ \notag
& \lesssim \sum_{\vert j-q \vert \le 4}\int_{\mathbb{R}^3} \Vert S_{j-1}f \Vert_{L^2_v} |\!|\!| \Delta_j g |\!|\!| |\!|\!| \Delta_q^2 h |\!|\!| dx \\ \notag
&\lesssim \sum_{\vert j-q \vert \le 4} \Vert f \Vert_{L^2_v L^\infty_x} \Vert |\!|\!| \Delta_j g |\!|\!| \Vert_{L^2_x} \Vert |\!|\!| \Delta_q h |\!|\!| \Vert_{L^2_x}, \notag
\end{align*}
where we used Corollary \ref{AMUXYtrilinear} and Lemma \ref{bounded operators}. We also used the inequlaity 
$$ \Vert |\!|\!| \Delta_q^2 h |\!|\!| \Vert_{L^2_x} \lesssim \Vert |\!|\!| \Delta_q h |\!|\!| \Vert_{L^2_x}, $$
which is verified by direct calculation.
Hence, we have so far
\begin{align}\label{ineq: AfterTrilinear1}\notag
&\sum_{q \ge -1} 2^{qs}\left( \int^T_0 \vert ( \Delta_q \Gamma^1 (f,g), \Delta_qh)_{x,v} \vert dt \right)^{1/2} \\
&\lesssim \sum_{q \ge -1}2^{qs} \left(\int_0^T \sum_{\vert j-q \vert \le 4} \Vert f \Vert_{L^2_v L^\infty_x} \Vert |\!|\!| \Delta_j g |\!|\!| \Vert_{L^2_x} \Vert |\!|\!| \Delta_q h |\!|\!| \Vert_{L^2_x} dt \right)^{1/2}.
\end{align}
Starting from \eqref{ineq: AfterTrilinear1}, we shall estimates $\sum_{q} 2^{qs}\left( \int^T_0 \vert ( \Delta_q \Gamma^1 (f,g), \Delta_qh)_{x,v} \vert dt \right)^{1/2}$ in two different ways. Since appearing terms will be lengthy, we set
\begin{equation*}
X_j = \left( \int_0^T \Vert |\!|\!| \Delta_j g |\!|\!| \Vert_{L^2_x}^2 dt \right)^{1/2},\quad Y_q=\left(\int_0^T \Vert |\!|\!| \Delta_q h |\!|\!| \Vert_{L^2_x}^2 dt \right)^{1/2}.
\end{equation*}

 Using the Cauchy-Schwarz inequality, embedding $ B^{3/2}_x \hookrightarrow L^\infty_x$ and Lemma \ref{Besov and Chemin-Lerner}, we have
\begin{align}\label{tri-11}\notag
&\sum_{q \ge -1} 2^{qs}\left( \int^T_0 \vert ( \Delta_q \Gamma^1 (f,g), \Delta_qh)_{x,v} \vert dt \right)^{1/2} \\ \notag
&{\lesssim} \sum_{q \ge -1}2^{qs} \Vert f \Vert_{L^\infty_T L^2_v L^\infty_x}^{1/2} \left( \sum_{\vert j-q \vert \le 4} (X_j)^2 \right)^{1/4} (Y_q)^{1/2} \\
\notag
&\lesssim \Vert f \Vert_{\tilde{L}^\infty_T \tilde{L}^2_v (B^{3/2}_x)}^{1/2} \left( \sum_{q \ge -1} 2^{qs} \left( \sum_{\vert j-q \vert \le 4} X_j \right)^{1/2} \right)^{1/2} \notag \left( \sum_{q \ge -1} 2^{qs} Y_q \right)^{1/2}   \\ 
&\le \Vert f \Vert_{\tilde{L}^\infty_T \tilde{L}^2_v (B^{3/2}_x)}^{1/2} \Vert h \Vert_{\mathcal{T}^s_{T,2,2}}^{1/2} \left( \sum_{q \ge -1} 2^{qs}  \sum_{\vert j-q \vert \le 4} 2^{-js}c_j \right)^{1/2} \Vert g \Vert_{\mathcal{T}^s_{T,2,2}}^{1/2}.
\end{align}
Here, we defined a $\ell^1$-sequence $\{c_j\}$ by $c_j := 2^{js} X_j/\Vert g \Vert_{\mathcal{T}^s_{T,2,2}}$.
We will use abuse of notation $c_j$ in the sequal to express similar sequences, defined by other appropriate $\mathcal{T}^s_{Tpr}$-norms in each inequality. Note that all of them are in $\ell^1$. Now, the double summation is finite because by Fubini's theorem and Young's inequality, we have
\begin{align*}
\sum_{q \ge -1} 2^{qs}  \sum_{\vert j-q \vert \le 4} 2^{-js}c_j 
&=\sum_{q\ge -1} [(\mathbf{1}_{\vert j \vert\le 4}2^{js})*c_j](q)\\
&\le  \sum_j \mathbf{1}_{\vert j\vert\le 4}2^{js}\cdot\sum_j c_j <\infty.
\end{align*}
Therefore, we have
\begin{align*}
\sum_{q \ge -1} 2^{qs}\left( \int^T_0 \vert ( \Delta_q \Gamma^1 (f,g), \Delta_qh)_{x,v} \vert dt \right)^{1/2}\lesssim \Vert f \Vert_{\tilde{L}^\infty_T \tilde{L}^2_v (B^{3/2}_x)}^{1/2} \Vert g \Vert_{\mathcal{T}^s_{T,2,2}}^{1/2}\Vert h \Vert_{\mathcal{T}^s_{T,2,2}}^{1/2}.
\end{align*}

The $\Gamma^1$ part of the second  estimate \eqref{tri2} is as follows. In this case, we apply embedding $ \dot{B}^{3/2}_x \hookrightarrow L^\infty_x$ to \eqref{ineq: AfterTrilinear1}.
\begin{align*}
&\sum_{q \ge -1} 2^{qs}\left( \int^T_0 \vert ( \Delta_q \Gamma^1 (f,g), \Delta_qh)_{x,v} \vert dt \right)^{1/2} \\
&\lesssim  \Vert f \Vert_{L^2_TL^2_v (\dot{B}^{3/2}_x)}^{1/2}\sum_{q \ge -1} 2^{qs} \left( \sum_{\vert j-q \vert \le 4}  \Vert |\!|\!| \Delta_j g |\!|\!| \Vert_{L^\infty_TL^2_x} \right)^{1/2}  \left(\int_0^T \Vert |\!|\!| \Delta_q h |\!|\!| \Vert_{L^2_x}^2 dt \right)^{1/4} \\
& \le  \Vert f \Vert_{L^2_TL^2_v (\dot{B}^{3/2}_x)}^{1/2} \Vert g \Vert_{\mathcal{T}^s_{T,\infty,2}}^{1/2} \Vert h \Vert_{\mathcal{T}^s_{T,2,2}}^{1/2}\left( \sum_{q \ge -1} 2^{qs}  \sum_{\vert j-q \vert \le 4} 2^{-js}c_j \right)^{1/2} \\
& \lesssim \Vert f \Vert_{L^2_TL^2_v (\dot{B}^{3/2}_x)}^{1/2} \Vert g \Vert_{\mathcal{T}^s_{T,\infty,2}}^{1/2} \Vert h \Vert_{\mathcal{T}^s_{T,2,2}}^{1/2}.
\end{align*}
 
Next, we estimate $\sum_q 2^{qs}\left( \int^T_0 \vert ( \Delta_q \Gamma^2 (f,g), \Delta_qh)_{x,v} \vert dt \right)^{1/2}$. Since $\Gamma^1$ and $\Gamma^2$ have symmetry, we can similarly calculate it and obtain 
\begin{align*}
&\sum_{q \ge -1} 2^{qs}\left( \int^T_0 \vert ( \Delta_q \Gamma^2 (f,g), \Delta_qh)_{x,v} \vert dt \right)^{1/2} \\ 
& \lesssim \sum_{q \ge -1} 2^{qs} \left( \sum_{\vert j-q \vert \le 4}\int^T_0 \Vert |\!|\!| S_{j-1} g |\!|\!| \Vert_{L^\infty_x} \Vert \Delta_j f \Vert_{L^2_{x,v}} \Vert |\!|\!| \Delta_q h |\!|\!| \Vert_{L^2_x} dt \right)^{1/2} \\
& \le \sum_{q \ge -1} 2^{qs} \left( \sum_{\vert j-q \vert \le 4} \Vert \Delta_j f \Vert_{L^\infty_T L^2_{x,v}} \Vert |\!|\!| S_{j-1} g |\!|\!| \Vert_{L^2_TL^\infty_x} \Vert |\!|\!| \Delta_q h |\!|\!| \Vert_{L^2_TL^2_x} \right)^{1/2} \\
& \lesssim \sum_{q \ge -1} 2^{qs}\left( \sum_{\vert j-q \vert \le 4} \Vert \Delta_j f \Vert_{L^\infty_T L^2_{x,v}} \Vert g\Vert_{\dot{\mathcal{T}}^{3/2}_{T,2,2}}\right)^{1/2} \Vert |\!|\!| \Delta_q h |\!|\!| \Vert_{L^2_TL^2_x}^{1/2}  \\
& \le \Vert g\Vert_{\dot{\mathcal{T}}^{3/2}_{T,2,2}}^{1/2} \left(\sum_{q \ge -1} 2^{qs}\sum_{\vert j-q \vert \le 4} \Vert \Delta_j f \Vert_{L^\infty_T L^2_{x,v}} \right)^{1/2} \left( \sum_{q \ge -1} 2^{qs} \Vert |\!|\!| \Delta_q h |\!|\!| \Vert_{L^2_TL^2_x} \right)^{1/2} \\
& \lesssim \Vert f \Vert_{\tilde{L}^\infty_T \tilde{L}^2_v(B^s_x)}^{1/2} \Vert g\Vert_{\dot{\mathcal{T}}^{3/2}_{T,2,2}}^{1/2} \Vert h \Vert_{\mathcal{T}^s_{T,2,2}}^{1/2},
\end{align*}
where we used Lemma \ref{T-estimate}.
Since $\Vert g\Vert_{\dot{\mathcal{T}}^{3/2}_{T,2,2}} \lesssim \Vert g\Vert_{\mathcal{T}^{3/2}_{T,2,2}}$ and $B^{s_1}_{pr} \hookrightarrow B^{s_2}_{pr}$ when $s_2 \le s_1$, we can deduce that 
\begin{align*}
\sum_q 2^{qs}\left( \int^T_0 \vert ( \Delta_q \Gamma^2 (f,g), \Delta_qh)_{x,v} \vert dt \right)^{1/2} \lesssim \Vert f \Vert_{\tilde{L}^\infty_T \tilde{L}^2_v(B^{3/2}_x)}^{1/2} \Vert g \Vert_{\mathcal{T}^{3/2}_{T,2,2}}^{1/2} \Vert h \Vert_{\mathcal{T}^s_{T,2,2}}^{1/2}.
\end{align*}
Therefore the estimates of the second part $\Gamma^2$ for
\eqref{tri2} and  \eqref{tri1} are completed.

Finally, we calculate the third part $\Gamma^3$. Recalling the size of supports of $\mathcal{F}[\Delta_{j'}f]$ and $\mathcal{F}[\Delta_{j} g]$, first we have
\begin{align*}
\Delta_q\left( \sum_j \sum_{\vert j-j'\vert \le 1} (\Delta_{j'} f, \Delta_j g) \right)= \Delta_q \left(\sum_{\max\{j,j'\} \ge q-2 } \sum_{\vert j-j' \vert \le 1} (\Delta_{j'} f, \Delta_j g)\right).
\end{align*}
We notice that the summation concerning $j$ is not finite at this time. However, double summation appearing later can be shown finite. 
Similar to $\Gamma^1$, an estimate of $\Gamma^3$ with respect to $x$ and $v$ is
\begin{align}\label{ineq: AfterTrilinear3}\notag
&\sum_{q \ge -1} 2^{qs}\left( \int^T_0 \vert ( \Delta_q \Gamma^3 (f,g), \Delta_qh)_{x,v} \vert dt \right)^{1/2} \\
& \lesssim \sum_{q \ge -1}2^{qs} \left( \sum_{j \ge q-3} \int^T_0 \Vert f \Vert_{L^2_vL^\infty_x} \Vert |\!|\!| \Delta_j g |\!|\!| \Vert_{L^2_x} \Vert |\!|\!| \Delta_q h |\!|\!| \Vert_{L^2_x} dt \right)^{1/2}.
\end{align}
Using the Cauchy-Schwarz inequality with  respect to $t$-integration, we have 
\begin{align*}
&\sum_{q \ge -1} 2^{qs}\left( \int^T_0 \vert ( \Delta_q \Gamma^3 (f,g), \Delta_qh)_{x,v} \vert dt \right)^{1/2} \\
& \lesssim \sum_{q \ge -1}2^{qs} \Vert f \Vert_{L^\infty_T L^2_v (B^{3/2}_x)}^{1/2} \left( \sum_{j \ge q-3} \Vert |\!|\!| \Delta_j g |\!|\!| \Vert_{L^2_{T,x}} \right)^{1/2} \left(\Vert |\!|\!| \Delta_q h |\!|\!| \Vert_{L^2_{T,x}}\right)^{1/2} \\
& \le \Vert f \Vert_{\tilde{L}^\infty_T \tilde{L}^2_v (B^{3/2}_x)}^{1/2} \Big( \sum_{q \ge -1}\sum_{j \ge q-3} 2^{qs}\Vert |\!|\!| \Delta_j g |\!|\!| \Vert_{L^2_{T,x}}\Big)^{1/2} \Big(\sum_{q \ge -1}2^{qs} \Vert |\!|\!| \Delta_q h |\!|\!| \Vert_{L^2_{T,x}} \Big)^{1/2} \\
& \lesssim \Vert f \Vert_{\tilde{L}^\infty_T \tilde{L}^2_v (B^{3/2}_x)}^{1/2} \Vert g \Vert_{\mathcal{T}^s_{T,2,2}}^{1/2} \Vert h \Vert_{\mathcal{T}^s_{T,2,2}}^{1/2}\left( \sum_{q \ge -1}2^{qs}\sum_{j \ge q-3} 2^{-js}c_j \right)^{1/2} .
\end{align*} 
The last factor of the previous line is finite because
\begin{align*}
\sum_{q \ge -1}2^{qs}\sum_{j \ge q-3} 2^{-js}c_j= \sum_{j \ge -4} c_j \sum_{j-q \ge -3}2^{-(j-q)s} <\infty.
\end{align*}
Here we applied Fubini's theorem and Young's inequality again. This is an estimate corresponding to \eqref{tri1}. Moreover, we calculate that 
\begin{align*}
&\sum_{q \ge -1} 2^{qs}\left( \int^T_0 \vert ( \Delta_q \Gamma^3 (f,g), \Delta_qh)_{x,v} \vert dt \right)^{1/2} \\
& \lesssim \sum_{q \ge -1} 2^{qs} \Vert f \Vert_{L^2_TL^2_v L^\infty_x}^{1/2} \left(\sum_{j \ge q-3} \Vert |\!|\!| \Delta_j g |\!|\!| \Vert_{L^\infty_T L^2_x} \right)^{1/2} \Vert |\!|\!| \Delta_q h |\!|\!| \Vert_{L^2_{T,x}}^{1/2}  \\
& \lesssim  \Vert f \Vert_{L^2_T L^2_v(\dot{B}^{3/2}_x)}^{1/2} \left( \sum_{q \ge -1} \sum_{j \ge q-3}2^{qs}\Vert |\!|\!| \Delta_j g |\!|\!| \Vert_{L^\infty_T L^2_x} \right)^{1/2} \Vert h \Vert_{{\mathcal{T}}^s_{T,2,2}}^{1/2}\\
& \lesssim \Vert f \Vert_{L^2_T L^2_v(\dot{B}^{3/2}_x)}^{1/2}\Vert g \Vert_{\mathcal{T}^s_{T,\infty,2}}^{1/2} \Vert h \Vert_{\mathcal{T}^s_{T,2,2}}^{1/2}.
\end{align*}
Combining the above estimates properly, we obtain the second estimate \eqref{tri2}.
For the third estimate \eqref{tri3}, we apply Corollary \ref{another-type} instead of Corollary  \ref{AMUXYtrilinear}. Then the completely same argument of 
\eqref{tri2} yields \eqref{tri3}. Notice that all we have to do is to replace $f$ with $\mu^{1/10}f$ in the first term on the right hand side of 
\eqref{tri3}, and to replace $\mathcal{T}^s_{T,\infty,2}$ and $\dot{\mathcal{T}}^{3/2}_{T,2,2}$  with $\tilde{L}^\infty_T \tilde{L}^2_v(B^{s}_x)$ and $L^2_T L^2_v(\dot{B}^{3/2}_x)$ respectively in the second term.
\end{proof}

\section{Estimate on nonlinear term and a priori estimate}\label{S4}
\setcounter{equation}{0}
Inserting certain functions into the inequalities of Lemma \ref{trilinear}, we first estimate some nonlinear terms by the energy and the dissipation term. After that, we will derive an apriori estimate. In this section, we decompose $f$ into $\mu + \mu^{1/2}g$, and all Lemmas are statements on this $g$.
 
\begin{lem}\label{4-1}
Assume $0<s\le\frac{3}{2}$. Then we have
\begin{align*}
\sum_{q \ge -1} 2^{qs}\left( \int^T_0 \vert ( \Delta_q \Gamma (g,g), \Delta_q(\mathbf{I}-\mathbf{P})g) \vert dt \right)^{1/2} \lesssim \sqrt{\mathcal{E}_T(g)}\mathcal{D}_T(g).
\end{align*}
\end{lem}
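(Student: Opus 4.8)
The plan is to apply the trilinear estimates of Lemma \ref{trilinear} with the choice $f = g$, $g = g$, $h = (\mathbf{I}-\mathbf{P})g$, and then to bound each resulting factor by $\sqrt{\mathcal{E}_T(g)}$ and $\mathcal{D}_T(g)$ using the definitions
\[
\mathcal{E}_T(g) \sim \Vert g \Vert_{\tilde{L}^\infty_T \tilde{L}^2_v(B^{3/2}_x)}, \qquad
\mathcal{D}_T(g) = \Vert \nabla_x(a,b,c)\Vert_{\tilde{L}^2_T(B^{1/2}_x)} + \Vert (\mathbf{I}-\mathbf{P})g\Vert_{\mathcal{T}^{3/2}_{T,2,2}},
\]
distinguishing the hard potential case $\gamma+\nu>0$ (where \eqref{tri1} or \eqref{tri2} suffices) from the soft potential case $\gamma+\nu\le 0$ (where \eqref{tri3} is needed). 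The term on the left is exactly $A_T(g,g,(\mathbf{I}-\mathbf{P})g)$ with $0<s\le 3/2$, so the estimates of Lemma \ref{trilinear} apply verbatim.

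First I would observe that $\Vert h\Vert_{\mathcal{T}^s_{T,2,2}} = \Vert (\mathbf{I}-\mathbf{P})g\Vert_{\mathcal{T}^s_{T,2,2}} \lesssim \Vert (\mathbf{I}-\mathbf{P})g\Vert_{\mathcal{T}^{3/2}_{T,2,2}} \le \mathcal{D}_T(g)$ since $s\le 3/2$; this handles the last factor in every term. For the hard potential case, I would split the middle argument via the macro-micro decomposition $g = \mathbf{P}g + (\mathbf{I}-\mathbf{P})g$. On the $(\mathbf{I}-\mathbf{P})g$ piece, estimate \eqref{tri1} gives $\Vert g\Vert_{\tilde{L}^\infty_T\tilde{L}^2_v(B^{3/2}_x)}^{1/2}\Vert (\mathbf{I}-\mathbf{P})g\Vert_{\mathcal{T}^{3/2}_{T,2,2}}^{1/2}\Vert h\Vert_{\mathcal{T}^s_{T,2,2}}^{1/2} \lesssim \sqrt{\mathcal{E}_T(g)}\,\mathcal{D}_T(g)$ directly. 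On the $\mathbf{P}g$ piece, one uses that $\mathbf{P}g = (a+v\cdot b + |v|^2 c)\mu^{1/2}$, so its triple norm in the $v$-variable is controlled (via the Gaussian weight and \eqref{up-down-triple}) by $\Vert(a,b,c)\Vert$ in an $x$-Besov norm; the point is that the macroscopic coefficients enter $\mathcal{D}_T(g)$ only through $\nabla_x(a,b,c)$, so one must use that $\Delta_q \mathbf{P}g$ for $q\ge 0$ carries a derivative (Bernstein: $\Vert \Delta_q \mathbf{P}g\Vert \lesssim 2^{-q}\Vert \Delta_q \nabla_x \mathbf{P}g\Vert$) to convert $\Vert \mathbf{P}g\Vert_{\mathcal{T}^{3/2}}$-type quantities into $\Vert \nabla_x(a,b,c)\Vert_{\tilde{L}^2_T(B^{1/2}_x)}$, while the low-frequency block $q=-1$ is absorbed using $\Gamma(\mu^{1/2}\cdot,\cdot)$-type cancellations or simply by noting $\Gamma$ annihilates part of $\ker\mathcal{L}$. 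For the soft potential case I would instead feed the split into \eqref{tri3}, where the extra Gaussian factors $\mu^{1/10}f$ and the weighted norms $L^2_{v,(\nu+\gamma)/2}$ on $f$ are harmless on $\mathbf{P}g$ (the Maxwellian absorbs any polynomial-in-$v$ weight) and, on $(\mathbf{I}-\mathbf{P})g$, the weighted Besov norms are dominated by $\Vert(\mathbf{I}-\mathbf{P})g\Vert_{\mathcal{T}^{3/2}_{T,2,2}}$ via the lower bound in \eqref{up-down-triple}, which contains precisely the $L^2_{(\nu+\gamma)/2}$ weight.

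The main obstacle is the treatment of the macroscopic part $\mathbf{P}g$: the trilinear estimates produce norms of $g$ of the form $\Vert g\Vert_{\tilde{L}^\infty_T\tilde{L}^2_v(B^{3/2}_x)}$ and $\Vert g\Vert_{\mathcal{T}^{3/2}_{T,2,2}}$ (or $\Vert g\Vert_{\dot{\mathcal{T}}^{3/2}_{T,2,2}}$, $\Vert g\Vert_{L^2_TL^2_v(\dot B^{3/2}_x)}$), but $\mathcal{D}_T(g)$ only controls the \emph{dissipative} part $\Vert(\mathbf{I}-\mathbf{P})g\Vert_{\mathcal{T}^{3/2}_{T,2,2}}$ and the \emph{gradient} of the macroscopic coefficients, not $\Vert \mathbf{P}g\Vert$ itself in an undifferentiated norm. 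So the bookkeeping must ensure that whenever $\mathbf{P}g$ appears inside a $\mathcal{T}^{3/2}$- or $\dot{\mathcal{T}}^{3/2}$-type factor, a spatial derivative is genuinely available — using homogeneous Besov norms (which is why \eqref{tri2} and \eqref{tri3} are stated with $\dot B^{3/2}_x$ and $\dot{\mathcal{T}}^{3/2}$) and the identity $\Vert \dot\Delta_q \mathbf{P}g\Vert \simeq 2^{-q}\Vert \dot\Delta_q\nabla_x\mathbf{P}g\Vert$, together with $\Vert \mathbf{P}g\Vert_{\dot B^{3/2}_x}\lesssim \Vert \nabla_x(a,b,c)\Vert_{\dot B^{1/2}_x}$ and the absorption of Gaussian $v$-weights. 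A secondary technical point is keeping the powers of $1/2$ consistent: since $A_T$ involves square roots, one repeatedly uses $\sqrt{XY}\le \tfrac12(X+Y)$ or simply $X^{1/2}Y^{1/2}$ directly, and checks that the end product is genuinely $\sqrt{\mathcal{E}_T(g)}\cdot\mathcal{D}_T(g)$ (one factor $\mathcal{E}_T^{1/2}$ from an $L^\infty_T$-in-time norm of $g$, and a full $\mathcal{D}_T$ from the product of two dissipative square-root factors). Assembling these pieces across the three estimates \eqref{tri1}–\eqref{tri3} according to the sign of $\gamma+\nu$ completes the proof.
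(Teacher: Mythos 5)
Your plan coincides with the paper's proof: split the second argument of $\Gamma$ by the macro--micro decomposition, feed $\Gamma(g,(\mathbf{I}-\mathbf{P})g)$ into \eqref{tri1} and $\Gamma(g,\mathbf{P}g)$ into \eqref{tri2} (hard case) or \eqref{tri3} (soft case), then use $|\!|\!| \mathbf{P}g |\!|\!| \lesssim \Vert g \Vert_{L^2_v}$ together with the homogeneous Besov gain
$\Vert \mathbf{P}g \Vert_{\dot{\mathcal{T}}^{3/2}_{T,2,2}} \lesssim \Vert (a,b,c) \Vert_{\tilde L^2_T(\dot B^{3/2}_x)} \sim \Vert \nabla_x(a,b,c) \Vert_{\tilde L^2_T(\dot B^{1/2}_x)} \lesssim \Vert \nabla_x(a,b,c) \Vert_{\tilde L^2_T(B^{1/2}_x)} \le \mathcal{D}_T(g)$,
and similarly $\Vert \mathbf{P}g \Vert_{\mathcal{T}^s_{T,\infty,2}} \lesssim \Vert g \Vert_{\tilde L^\infty_T \tilde L^2_v(B^s_x)} \lesssim \mathcal{E}_T(g)$. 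One clarification: your hedge about absorbing the $q=-1$ block via ``$\Gamma(\mu^{1/2}\cdot,\cdot)$-type cancellations'' or $\ker\mathcal{L}$ is unnecessary — the homogeneous norm in \eqref{tri2}/\eqref{tri3} already handles it, since Lemma~\ref{Besov embedding} gives $\Vert\cdot\Vert_{\tilde L^2_T(\dot B^{1/2})}\lesssim\Vert\cdot\Vert_{\tilde L^2_T(B^{1/2})}$, which is exactly how the paper closes the low-frequency end.
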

\begin{proof}
We devide $\Gamma(g,g)$ into $\Gamma(g,\mathbf{P}g)$ and $\Gamma(g, (\mathbf{I}-\mathbf{P})g)$, and estimate the respective terms. Using \eqref{tri1} of Lemma \ref{trilinear}, we obtain 
\begin{align*}
\sum_{q \ge -1} &2^{qs}\left( \int^T_0 \vert ( \Delta_q \Gamma (g,(\mathbf{I}-\mathbf{P})g), \Delta_q(\mathbf{I}-\mathbf{P})g) \vert dt \right)^{1/2} \\
& \lesssim \Vert g \Vert_{\tilde{L}^\infty_T \tilde{L}^2_v (B^{3/2}_x)}^{1/2} \Vert (\mathbf{I}-\mathbf{P})g \Vert_{\mathcal{T}^s_{T,2,2}} \lesssim \sqrt{\mathcal{E}_T(g)}\mathcal{D}_T(g).
\end{align*}
When $\gamma+ \nu >0$, \eqref{tri2} of Lemma \ref{trilinear} yields 
\begin{align*}
\sum_{q \ge -1} &2^{qs}\left( \int^T_0 \vert ( \Delta_q \Gamma (g,\mathbf{P}g), \Delta_q(\mathbf{I}-\mathbf{P})g) \vert dt \right)^{1/2} \\
& \lesssim \Vert(\mathbf{I}-\mathbf{P})g\Vert_{\mathcal{T}^s_{T,2,2}}^{1/2}  \left( \Vert g \Vert_{L^2_T L^2_v(\dot{B}^{3/2}_x)}^{1/2} \Vert \mathbf{P}g \Vert_{\mathcal{T}^s_{T,\infty,2}}^{1/2} + \Vert g \Vert_{\tilde{L}^\infty_T \tilde{L}^2_v(B^s_x)}^{1/2} \Vert \mathbf{P}g \Vert_{\dot{\mathcal{T}}^{3/2}_{T,2,2}}^{1/2} \right) \\
& \lesssim \sqrt{\mathcal{E}_T(g)}\mathcal{D}_T(g),
\end{align*}
where we used the following inequalities:
\begin{gather*}
\Vert g \Vert_{L^2_T L^2_v(\dot{B}^{3/2}_x)} \lesssim \Vert g \Vert_{\dot{\mathcal{T}}^{3/2}_{T,2,2}} \lesssim \Vert \mathbf{P}g \Vert_{\dot{\mathcal{T}}^{3/2}_{T,2,2}}+ \Vert (\mathbf{I}-\mathbf{P})g \Vert_{\mathcal{T}^{3/2}_{T,2,2}} \sim \mathcal{D}_T(g), \\
\Vert \mathbf{P}g \Vert_{\mathcal{T}^s_{T,\infty,2}} \lesssim \Vert g \Vert_{\tilde{L}^\infty_T \tilde{L}^2_v (B^s_x)}.
\end{gather*}
The first inequality is deduced by Lemmas \ref{Besov embedding}-\ref{Besov and Chemin-Lerner}, and the fact that $\Vert \cdot \Vert_{L^2_v} \le \Vert \cdot \Vert_{L^2_{v, (\gamma+\nu)/2}} \lesssim |\!|\!| \cdot |\!|\!|$ when $\gamma + \nu > 0$. We recall  $|\!|\!| \mathbf{P}f |\!|\!| 
\lesssim \Vert f \Vert_{L^2_v}$ and Lemma \ref{Besov and Chemin-Lerner} for the second one.

When $\gamma+ \nu \le 0$, \eqref{tri3} of Lemma  \ref{trilinear}  similarly yields
\begin{align*}
\sum_{q \ge -1} &2^{qs}\left( \int^T_0 \vert ( \Delta_q \Gamma (g,\mathbf{P}g), \Delta_q(\mathbf{I}-\mathbf{P})g) \vert dt \right)^{1/2} \\
&\lesssim  \Vert \mu^{1/10}g \Vert_{L^2_T L^2_v(\dot{B}^{3/2}_x)}^{1/2} \Vert \mathbf{P}g \Vert_{\mathcal{T}^s_{T,\infty,2}}^{1/2}\Vert(\mathbf{I}-\mathbf{P})g\Vert_{\mathcal{T}^s_{T,2,2}}^{1/2} \\
&+ \Vert \mu^{1/10}g \Vert_{\tilde{L}^\infty_T \tilde{L}^2_v(B^s_x)}^{1/2} \Vert \mathbf{P}g \Vert_{\dot{\mathcal{T}}^{3/2}_{T,2,2}}^{1/2}\Vert(\mathbf{I}-\mathbf{P})g\Vert_{\mathcal{T}^s_{T,2,2}}^{1/2}   \\
&+ \Vert g \Vert_{L^2_T L^2_{v, (\nu+\gamma)/2}(\dot{B}^{3/2}_x)}^{1/2} \Vert \mathbf{P}g \Vert_{\tilde{L}^\infty_T \tilde{L}^2_v(B^{s}_x)}^{1/2} \Vert (\mathbf{I}-\mathbf{P})g \Vert_{\mathcal{T}^s_{T,2,2}}^{1/2}\\
&+ \Vert g \Vert_{\tilde{L}^\infty_T \tilde{L}^2_{v, (\nu+\gamma)/2}(B^s_x)}^{1/2} \Vert \mathbf{P}g \Vert_{L^2_T L^2_v(\dot{B}^{3/2}_x)}^{1/2}\Vert (\mathbf{I}-\mathbf{P})g \Vert_{\mathcal{T}^s_{T,2,2}}^{1/2} \\
& \lesssim \sqrt{\mathcal{E}_T(g)}\mathcal{D}_T(g).
\end{align*}
Owing to the fact $|\!|\!| \mathbf{P}g |\!|\!| \lesssim \Vert g \Vert_{L^2_v}$, $\Vert \mathbf{P}g \Vert_{\tilde{L}^\infty_T \tilde{L}^2_v(B^{s}_x)}$ and $\Vert \mathbf{P}g \Vert_{\mathcal{T}^s_{T,\infty,2}}$ are similarly estimated. So are $\Vert \mathbf{P}g \Vert_{\dot{\mathcal{T}}^{3/2}_{T,2,2}}$ and $\Vert \mathbf{P}g \Vert_{L^2_T L^2_v(\dot{B}^{3/2}_x)}$.

\end{proof}

\begin{lem}\label{4-2}
Let $\phi \in \mathcal{S}(\mathbb{R}^3_v)$ and $0<s\le \frac{3}{2}$. Then we have
\begin{align*}
\sum_{q \ge -1} 2^{qs}\left( \int^T_0 \Vert\Delta_q (\Gamma (g,g), \phi) \Vert^2_{L^2_x}dt \right)^{1/2} \lesssim \mathcal{E}_T(g)\mathcal{D}_T(g).
\end{align*}
\end{lem}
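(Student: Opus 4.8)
\noindent\emph{Proof plan.}
The plan is to run essentially the argument of Lemma~\ref{trilinear}, now with the fixed function $\phi$ playing the role of the third factor $h$. Since $\Delta_q$ acts only on $x$, it commutes with the velocity pairing, so $\Delta_q(\Gamma(g,g),\phi)_v=(\Delta_q\Gamma(g,g),\phi)_v$; after the Bony decomposition $\Gamma(g,g)=\Gamma^1+\Gamma^2+\Gamma^3$ in the $x$-variable (exactly as in the proof of Lemma~\ref{trilinear}) one is reduced to bounding, for each dyadic block, integrals $\int_{\RR^3}\|\cdot\|_{L^2_v}\,|\!|\!|\cdot|\!|\!|\,|\!|\!|\phi|\!|\!|\,dx$ by Corollary~\ref{AMUXYtrilinear}. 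Here $|\!|\!|\phi|\!|\!|\lesssim\|\phi\|_{H^{\nu/2}_{(\nu+\gamma)/2}}<\infty$ by \eqref{up-down-triple}, so $\phi$ contributes only a harmless constant. The essential difference with Lemma~\ref{trilinear} is that $\phi$ carries no Littlewood--Paley structure: both the $\ell^1$-summability in $q$ and the two full powers of $\cE_T(g)$ and $\cD_T(g)$ must be produced entirely by the two copies of $g$.

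To distribute these two powers I would use the macro--micro decomposition $\Gamma(g,g)=\Gamma(g,(\mathbf I-\mathbf P)g)+\Gamma((\mathbf I-\mathbf P)g,\mathbf P g)+\Gamma(\mathbf P g,\mathbf P g)$. For $\Gamma(g,(\mathbf I-\mathbf P)g)$ the microscopic factor sits in the second slot, hence is measured by $|\!|\!|\cdot|\!|\!|$ and, summing the dyadic pieces as in \eqref{tri-11}, by $\|(\mathbf I-\mathbf P)g\|_{\cT^s_{T,2,2}}\lesssim\|(\mathbf I-\mathbf P)g\|_{\cT^{3/2}_{T,2,2}}\lesssim\cD_T(g)$ (using $s\le 3/2$), while the remaining factor $g$ goes into $\tilde L^\infty_T\tilde L^2_v(B^{3/2}_x)$, i.e. into $\cE_T(g)$, via the Besov embedding $B^{3/2}_x\hookrightarrow L^\infty_x$ and Lemmas~\ref{Besov embedding}--\ref{Besov and Chemin-Lerner}. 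For $\Gamma((\mathbf I-\mathbf P)g,\mathbf P g)$ one instead puts $(\mathbf I-\mathbf P)g$ (first slot, measured in $L^2_v$) into a $\dot{\cT}^{3/2}$-type norm after a low-frequency cutoff and Bernstein's inequality (controlled by $\cD_T(g)$), and handles $\mathbf P g$ (second slot) through $|\!|\!|\mathbf P g|\!|\!|\lesssim\|\mathbf P g\|_{L^2_v}\sim|(a,b,c)|$ and hence by $\|(a,b,c)\|_{\tilde L^\infty_T(B^s_x)}\lesssim\|(a,b,c)\|_{\tilde L^\infty_T(B^{3/2}_x)}\lesssim\cE_T(g)$. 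In the general soft potential case $\gamma+\nu\le0$, where $\|\cdot\|_{L^2_v}\lesssim|\!|\!|\cdot|\!|\!|$ fails, I would replace $f$ by $\mu^{1/10}f$ and use the weighted trilinear estimate (Corollary~\ref{another-type}) and the weighted norms $L^2_{v,(\nu+\gamma)/2}$, exactly as in the passage from \eqref{tri2} to \eqref{tri3}; the $\Gamma^2,\Gamma^3$ pieces are treated by the same bookkeeping (Young's inequality and Fubini for the shifted dyadic series).

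The genuinely delicate term is the macroscopic--macroscopic interaction $\Gamma(\mathbf P g,\mathbf P g)$, for which neither argument is microscopic. Here I would use that $\mathbf P g=(a+v\cdot b+|v|^2c)\mu^{1/2}$, so that $(\Gamma(\mathbf P g,\mathbf P g),\phi)_v$ is an explicit bilinear form in $(a,b,c)(x,t)$ with coefficients depending only on $\phi$ and $B$; moreover every term involving the density component $a$ drops out, because $\mu^{1/2},\,v_i\mu^{1/2},\,|v|^2\mu^{1/2}\in\ker\cL$ forces the relevant combinations (e.g. $\Gamma(\mu^{1/2},v_i\mu^{1/2})+\Gamma(v_i\mu^{1/2},\mu^{1/2})=-\cL(v_i\mu^{1/2})=0$, and $\Gamma(\mu^{1/2},\mu^{1/2})=0$) to vanish. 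The surviving expression, a quadratic form in $(b,c)$, is then estimated in $\tilde L^2_T(B^s_x)$ by the Bony decomposition together with the Banach-algebra property $B^{3/2}_{2,1}(\RR^3)\hookrightarrow L^\infty(\RR^3)$, placing one $(b,c)$-factor in $\tilde L^\infty_T(B^{3/2}_x)\hookrightarrow\cE_T(g)$ and pairing the other with the macroscopic information available in $\cD_T(g)$. This matching is the main obstacle: one must check that the macroscopic factor produced by $\Gamma(\mathbf P g,\mathbf P g)$ is genuinely absorbed into $\cD_T(g)$ rather than into a second copy of $\cE_T(g)$ — this is precisely where the cancellation of the $a$-terms and the precise form of $\cD_T(g)$ are used. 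Collecting the contributions of $\Gamma^1,\Gamma^2,\Gamma^3$ over the three pieces of the macro--micro splitting then yields the claimed bound $\lesssim\cE_T(g)\cD_T(g)$.
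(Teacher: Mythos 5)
Your route genuinely diverges from the paper's, so let me compare.

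\smallskip
\textbf{What the paper does.} After the generalized Minkowski step, the paper applies the Bony decomposition \emph{first}, producing $I^1_q, I^2_q, I^3_q$, and then performs the macro--micro splitting \emph{only} on the second (triple-norm) slot of $\Gamma$: in $I^1_q$ one writes $\Delta_j g = \Delta_j\mathbf P g + \Delta_j(\mathbf I-\mathbf P)g$ and keeps the full $g$ in the first slot, which is then placed in $L^2_T L^2_v(\dot B^{3/2}_x)$. The homogeneous $\dot B^{3/2}_x$ norm of the \emph{whole} $g$ is what makes this work: the macro part of it is governed by $\|\nabla_x(a,b,c)\|_{\tilde L^2_T(B^{1/2}_x)}$ and the micro part by $\|(\mathbf I-\mathbf P)g\|_{\mathcal T^{3/2}_{T,2,2}}$, so $\|g\|_{L^2_TL^2_v(\dot B^{3/2}_x)}\lesssim\cD_T(g)$ directly. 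As a consequence the paper \emph{never isolates a $\Gamma(\mathbf P g,\mathbf P g)$ term}; the macro--macro interaction is silently absorbed in that chain of inequalities.

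\smallskip
\textbf{What you do differently.} You split \emph{both} slots macro/micro before touching frequencies, which creates the extra term $\Gamma(\mathbf P g,\mathbf P g)$. You then propose to handle it by (i) cancelling all terms involving the density $a$, and (ii) estimating the remaining quadratic form in $(b,c)$ via the Banach algebra structure of $B^{3/2}_{2,1}$. Your other two pieces, $\Gamma(g,(\mathbf I-\mathbf P)g)$ and $\Gamma((\mathbf I-\mathbf P)g,\mathbf P g)$, are handled as in Lemma~\ref{trilinear}, and these are essentially fine (including your observation that one should switch to Corollary~\ref{another-type} when $\gamma+\nu\le 0$).

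\smallskip
\textbf{Two remarks on the macro--macro step.} First, the $a$-cancellation you describe (via $\Gamma(\mu^{1/2},\phi)+\Gamma(\phi,\mu^{1/2})=-\cL\phi=0$ for $\phi\in\ker\cL$ and $\Gamma(\mu^{1/2},\mu^{1/2})=0$) is \emph{correct but not actually needed}: the bilinear estimate one must run for $e_\alpha e_\beta$ with $e_\alpha,e_\beta\in\{b_i,c\}$ works identically when $a$ is present, because the only thing that matters is which Chemin--Lerner/Besov slot each factor is placed in, not which physical moment it is. So the cancellation is an unnecessary detour and not the mechanism by which one factor is ``absorbed into $\cD_T(g)$.'' Second, the phrase ``by the Bony decomposition together with the Banach-algebra property'' under-specifies the step that actually does the work: the Banach algebra inequality $\|fg\|_{B^{3/2}}\lesssim\|f\|_{B^{3/2}}\|g\|_{B^{3/2}}$ is symmetric and pointwise in time, so it cannot by itself deliver the asymmetric $\tilde L^\infty_T\times\tilde L^2_T$ split you need. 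One must go inside the Bony decomposition, bound the low-frequency factor $S_{j-1}e_\alpha$ in $L^2_T L^\infty_x$ through the \emph{homogeneous} embedding $\dot B^{3/2}_{2,1}\hookrightarrow L^\infty$ (so that it lands in $\|(a,b,c)\|_{\tilde L^2_T(\dot B^{3/2}_x)}\lesssim\cD_T(g)$, low frequencies included), and bound the dyadic block $\Delta_j e_\beta$ in $L^\infty_T L^2_x$ (so it lands in $\cE_T(g)$). With the remainder term one also needs Bernstein at the level of the homogeneous blocks. This is all doable, but it is precisely the bookkeeping the paper's route avoids by not separating out $\Gamma(\mathbf P g,\mathbf P g)$ in the first place. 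In short: your plan is workable, but the macro--macro term is a self-inflicted complication, the cancellation is a red herring, and the bilinear time-space estimate needs to be spelled out more carefully than ``Banach algebra''.
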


\begin{proof}
First, we consider the case $\gamma+\nu >0$. Together with the decomposition $\Gamma(g,g)=\Gamma^1(g,g)+\Gamma^2(g,g)+\Gamma^3(g,g)$, 
the generalized Minkowski inequality
gives
\begin{align*}
&\left( \int^T_0 \Vert\Delta_q (\Gamma (g,g), \phi) \Vert^2_{L^2_x}dt \right)^{1/2} 
\le \sum_{i=1}^3\left( \int^T_0 \Vert\Delta_q (\Gamma^i (g,g), \phi) \Vert^2_{L^2_x}dt  \right)^{1/2} \\
&\le \sum_{\vert j-q \vert \le 4} \left( \int^T_0 \Vert 
\Delta_q (\Gamma (S_{j-1}g,\Delta_jg),  \phi) \Vert^2_{L^2_x}
dt \right)^{1/2} \\
&+\sum_{\vert j-q \vert \le 4} \left( \int^T_0  \Vert 
\Delta_q (\Gamma (\Delta_j g, S_{j-1}g),  \phi) \Vert^2_{L^2_x}
dt \right)^{1/2} \\
&+\sum_{\max\{j,j'\} \ge q-2}  \sum_{\vert j-j' \vert \le 1} \left( \int^T_0
 \Vert \Delta_q (\Gamma ( \Delta_{j'} g,\Delta_jg),  \phi) \Vert^2_{L^2_x} dt \right)^{1/2} \\
&=: I^1_q+I^2_q+I^3_q.
\end{align*}
By using 
Corollary \ref{AMUXYtrilinear} and
Lemma \ref{bounded operators},  each term is calculated as follows:
We use the macro-micro decomposition $g= \mathbf{P}g+ (\mathbf{I}-\mathbf{P})g$ to estimate the first and third term properly.
Then we have
\begin{align*}
\sum_{q \ge -1} 2^{qs} I^1_q &\le  
\sum_{q \ge -1} 2^{qs}\sum_{\vert j-q \vert \le 4} \left( \int^T_0\int \Vert S_{j-1}g \Vert_{L^2_v}^2 |\!|\!| \Delta_j \mathbf{P}g |\!|\!|^2 dx dt \right)^{1/2}\\
&+\sum_{q \ge -1} 2^{qs}  \sum_{\vert j-q \vert \le 4} 
\left( \int^T_0\int \Vert S_{j-1}g \Vert_{L^2_v}^2 |\!|\!| \Delta_j (\mathbf{I}-\mathbf{P})g |\!|\!|^2 dx dt \right)^{1/2} 
\\
&\le \sum_{q \ge -1} 2^{qs}\sum_{\vert j-q \vert \le 4} 
\left(\int_0^T \Vert g 
\Vert_{L^2_vL^\infty_x} ^2 dt\right)^{1/2}
\Vert\Delta_j \mathbf{P}g \Vert_{L_T^\infty L^2_v L^2_x}\\ 
&+\sum_{q \ge -1} 2^{qs}  \sum_{\vert j-q \vert \le 4} \Vert g \Vert_{L^\infty_T L^2_{v}L^\infty_x} 
\left( \int^T_0\Vert |\!|\!| \Delta_j (\mathbf{I}-\mathbf{P})g |\!|\!| \Vert_{L^2_x}^2 dt \right)^{1/2} \\
&\lesssim \Vert g \Vert_{L^2_T L^2_v(\dot{B}^{3/2}_x)} \Vert \mathbf{P}g \Vert_{\tilde L^\infty_T \tilde L^2_v(B_x^{s})}
+\Vert g \Vert_{\tilde{L}^\infty_T \tilde{L}^2_v (B^{3/2}_x)} \Vert (\mathbf{I}-\mathbf{P})g \Vert_{\mathcal{T}^s_{T,2,2}}  \\
& \lesssim \mathcal{E}_T(g)\mathcal{D}_T(g),
\end{align*}

\begin{align*}
\sum_{q \ge -1} 2^{qs} I^2_q &\le \sum_{q \ge -1} 2^{qs}  \sum_{\vert j-q \vert \le 4} \Vert  \Delta_j g \Vert_{L^\infty_T L^2_{x,v}} \left( \int^T_0 \Vert |\!|\!| S_{j-1}g |\!|\!| \Vert_{L^\infty_x}^2 dt \right)^{1/2}\\
&\lesssim \Vert g \Vert_{\tilde L^\infty_T \tilde L^2_v(B^s_x)}\Vert g \Vert_{\dot{\mathcal{T}}^{3/2}_{T,2,2}}\lesssim \mathcal{E}_T(g)\mathcal{D}_T(g),
\end{align*}

\begin{align*}
\sum_{q \ge -1} 2^{qs} I^3_q &\le \sum_{q \ge -1}  \sum_{j \ge q-3} 2^{qs} \Vert \Delta_j g \Vert_{L^\infty_T L^2_{x,v}} \left( \int^T_0 \Vert |\!|\!|  \mathbf{P}g |\!|\!| \Vert_{L^\infty_x}^2 dt \right)^{1/2}\\
&
+
\sum_{q \ge -1} \sum_{j' \ge q-3} 2^{qs}\Vert g \Vert_{L^\infty_T L^2_v L^\infty_x} \left( \int^T_0 \Vert |\!|\!| \Delta_{j'} (\mathbf{I}-\mathbf{P})g |\!|\!| \Vert_{L^2_x}^2 dt \right)^{1/2} 
\\
& \lesssim \Vert g \Vert_{\tilde L^\infty_T \tilde L^2_v(B^s_x)} \Vert \mathbf{P}g \Vert_{\dot{\mathcal{T}}^{3/2}_{T,2,2}} +
\Vert g \Vert_{{L}^\infty_T {L}^2_v (B^{3/2}_x)} \Vert (\mathbf{I}-\mathbf{P})g \Vert_{\mathcal{T}^s_{T,2,2}} \\
& \lesssim \mathcal{E}_T(g)\mathcal{D}_T(g).
\end{align*}

When $\gamma+\nu \le 0$, apply Corollary \ref{another-type}
instead of Corollary \ref{AMUXYtrilinear} for the estimate of $I_q^1$.  Then we have 
\begin{align*}
\sum_{q \ge -1} 2^{qs} I^1_q &\le  
\sum_{q \ge -1} 2^{qs}\sum_{\vert j-q \vert \le 4} \left( \int^T_0\int \Vert 
\la v \ra^{(\gamma+ \nu)/2} S_{j-1}g \Vert_{L^2_v}^2 |\!|\!| \Delta_j \mathbf{P}g |\!|\!|^2 dx dt \right)^{1/2}\\
&+\sum_{q \ge -1} 2^{qs}  \sum_{\vert j-q \vert \le 4} 
\left( \int^T_0\int \Vert S_{j-1}g \Vert_{L^2_v}^2 |\!|\!| \Delta_j (\mathbf{I}-\mathbf{P})g |\!|\!|^2 dx dt \right)^{1/2} 
\\
&\lesssim \Vert \la v \ra^{(\gamma+ \nu)/2}g \Vert_{L^2_T L^2_v(\dot{B}^{3/2}_x)} \Vert \mathbf{P}g \Vert_{\tilde L^\infty_T \tilde L^2_v(B_x^{s})}\\
&+\Vert  g \Vert_{\tilde{L}^\infty_T \tilde{L}^2_v (B^{3/2}_x)} \Vert (\mathbf{I}-\mathbf{P})g \Vert_{\mathcal{T}^s_{T,2,2}}  \\
& \lesssim \mathcal{E}_T(g)\mathcal{D}_T(g).
\end{align*}
Other parts follow in the same manner.
\end{proof}

As for the upper bound of the linear term $\cL$ we have
\begin{lem}\label{4-3}
Let $\phi \in \mathcal{S}(\mathbb{R}^3_v) $ and $s>0$. Then we have
\begin{align*}
\sum_{q \ge -1} 2^{qs}\left( \int^T_0 \Vert\Delta_q (\cL(\mathbf{I}-\mathbf{P})g, \phi) \Vert^2_{L^2_x}dt \right)^{1/2} \lesssim \Vert (\mathbf{I}-\mathbf{P})g \Vert_{\mathcal{T}^s_{T,2,2}}.
\end{align*}
\end{lem}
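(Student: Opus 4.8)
The plan is to reduce the whole statement to a single pointwise‑in‑$(x,t)$ estimate for the $v$‑pairing. Since the dyadic block $\Delta_q$ acts only on $x$, while $\cL$ and integration against $\phi(v)$ act only on $v$, they commute, so that $\Delta_q(\cL(\mathbf{I}-\mathbf{P})g,\phi)=(\cL\,\Delta_q(\mathbf{I}-\mathbf{P})g,\phi)_v$. Hence it suffices to establish, for a fixed function $h=h(v)$,
\[
\big|\big(\cL h,\phi\big)_v\big|\lesssim |\!|\!| h |\!|\!|
\]
with a constant depending only on $\phi$. Granting this with $h=\Delta_q(\mathbf{I}-\mathbf{P})g(x,\cdot,t)$ and then taking the $L^2_x$ and $L^2_T$ norms gives $\big(\int_0^T\|\Delta_q(\cL(\mathbf{I}-\mathbf{P})g,\phi)\|_{L^2_x}^2\,dt\big)^{1/2}\lesssim\Vert |\!|\!|\Delta_q(\mathbf{I}-\mathbf{P})g|\!|\!| \Vert_{L^2_TL^2_x}$, and multiplying by $2^{qs}$ and summing over $q\ge-1$ is exactly $\|(\mathbf{I}-\mathbf{P})g\|_{\mathcal{T}^s_{T,2,2}}$ by the definition of that norm. (The hypothesis $s>0$ is not actually used here; it only fixes which norm appears on the right.)

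For the pointwise estimate, I would first note $|\!|\!|\phi|\!|\!|<\infty$, which is immediate from the upper bound in \eqref{up-down-triple} since $\phi\in\cS(\RR^3_v)$. I would then use that $\cL$ is self‑adjoint (and nonnegative) on $L^2_v$, so $(\cL h,\phi)_v=(h,\cL\phi)_v$, together with the standard fact that $\cL$ maps Schwartz data to functions decaying faster than any polynomial. Transferring a weight and invoking the lower bound $\|h\|_{L^2_{(\nu+\gamma)/2}}\lesssim|\!|\!| h|\!|\!|$ of \eqref{up-down-triple} yields
\[
\big|(h,\cL\phi)_v\big|\le\|h\|_{L^2_{(\nu+\gamma)/2}}\,\big\Vert\la v\ra^{-(\nu+\gamma)/2}\cL\phi\big\Vert_{L^2_v}\lesssim|\!|\!| h|\!|\!|,
\]
the last factor being finite because $-(\nu+\gamma)/2<3/4$ under our hypotheses (as $\gamma>-\nu-3/2$) while $\cL\phi$ decays super‑polynomially. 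Equivalently, one may split $\cL h=-\Gamma(\mu^{1/2},h)-\Gamma(h,\mu^{1/2})$, bound the first term by the AMUXY trilinear inequality (Corollary \ref{AMUXYtrilinear}) as $\|\mu^{1/2}\|_{L^2_v}|\!|\!| h|\!|\!|\,|\!|\!|\phi|\!|\!|$, and bound the second by a boundedness property of $\cL_2=-\Gamma(\cdot,\mu^{1/2})$ recorded in the AMUXY references and the appendix.

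The only point requiring genuine care is the term $\Gamma(h,\mu^{1/2})$: the trilinear estimate of Corollary \ref{AMUXYtrilinear} controls $\Gamma(f,g)$ through $\|f\|_{L^2_v}\,|\!|\!| g|\!|\!|$, i.e. it places the triple norm on the second slot, whereas here $h$ sits in the first. This is exactly why I would route the argument through the self‑adjointness of $\cL$ (or, equivalently, a boundedness lemma for $\cL_2$) rather than through the Bony/$\Gamma^i$ decomposition used in Lemmas \ref{4-1}--\ref{4-2}; once that term is disposed of, everything else is the bookkeeping of the first paragraph, and no $x$‑frequency analysis of $\Gamma$ is needed since $\phi$ carries no $x$‑dependence.
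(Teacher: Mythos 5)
Your reduction to a pointwise-in-$(x,t)$ bound $|(\cL h,\phi)_v|\lesssim |\!|\!| h|\!|\!|$ is exactly the structure of the paper's proof, and your ``equivalently'' backup route through the split $\cL h=-\Gamma(\mu^{1/2},h)-\Gamma(h,\mu^{1/2})$ is essentially what the paper does: Corollary~\ref{AMUXYtrilinear} gives $|(\Gamma(\mu^{1/2},h),\phi)_v|\lesssim\|\mu^{1/2}\|_{L^2_v}|\!|\!| h|\!|\!|\,|\!|\!|\phi|\!|\!|$, and the paper handles the transposed term with Corollary~\ref{another-type}, obtaining $|\!|\!| h|\!|\!|+\|h\|_{L^2_{(\gamma+\nu)/2}}$, which is then absorbed into $|\!|\!| h|\!|\!|$ via \eqref{up-down-triple}. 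Your alternative of citing the appendix $\cL_2$-bound (Lemma~\ref{linear term}) also works, since $|(\cL_2 h,\phi)_v|\lesssim\|\mu^{10^{-3}}h\|_{L^2_v}\lesssim\|h\|_{L^2_{(\gamma+\nu)/2}}\lesssim|\!|\!| h|\!|\!|$; both choices are equally serviceable. Your observation that $s>0$ is not actually used is correct.

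Your primary route through self-adjointness, $(\cL h,\phi)_v=(h,\cL\phi)_v$, has a genuine gap: it rests on the claim that $\cL\phi$ decays faster than any polynomial for Schwartz $\phi$, which you invoke as a ``standard fact.'' For the non-cutoff linearized operator this is not a one-liner. Writing $\cL\phi=-\mu^{-1/2}[Q(\mu,\mu^{1/2}\phi)+Q(\mu^{1/2}\phi,\mu)]$, the factor $\mu^{-1/2}(v)$ grows exponentially and must be cancelled by Maxwellian factors inside the collision integral via the energy identity $|v'|^2+|v'_*|^2=|v|^2+|v_*|^2$, while the angular singularity of $b$ prevents separating gain and loss terms; establishing the asserted decay thus requires a substantive argument that is nowhere in this paper. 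The trilinear bounds the paper uses are designed precisely to avoid such a detour, so you should lean on your backup route. A minor side remark: once one assumes super-polynomial decay of $\cL\phi$, your numerical check $-(\nu+\gamma)/2<3/4$ is correct but unnecessary; it would only become relevant if $\cL\phi$ were merely known to decay at a fixed polynomial rate.
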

\begin{proof} Since it follows from Corollaries  \ref{AMUXYtrilinear} and \ref{another-type}
that 
\begin{align*}
&\left |\Delta_q (\cL(\mathbf{I}-\mathbf{P})g, \phi)_{L^2_v}\right |\\
& \le \left
| (\Gamma(\mu^{1/2}, \Delta_q(\mathbf{I}-\mathbf{P})g), \phi)_{L^2_v}\right |
+\left | (\Gamma( \Delta_q(\mathbf{I}-\mathbf{P})g, \mu^{1/2}), \phi)_{L^2_v}\right |\\
& \lesssim |\!|\!|\Delta_q(\mathbf{I}-\mathbf{P})g|\!|\!| + \| \Delta_q(\mathbf{I}-\mathbf{P})g\|_{L^2_{(\gamma
+\nu)/2}(\RR^3_v)},
\end{align*}
we obtain the desired estimate. 
\end{proof}

We next estimate derivative of the macroscopic part.
\begin{lem}\label{macro-micro}
It holds that
\begin{align}\label{macro-apriori}\nonumber
&\Vert \nabla_x (a,b,c) \Vert_{\tilde{L}^2_T(B^{1/2}_x)} \\
&\quad \quad  \lesssim \Vert g_0 \Vert_{\tilde{L}^2_v(B^{3/2}_x)} +\mathcal{E}_T(g) +\Vert (\mathbf{I}-\mathbf{P})g \Vert_{\mathcal{T}^{3/2}_{T,2,2}}+\mathcal{E}_T(g)\mathcal{D}_T(g).
\end{align}
\end{lem}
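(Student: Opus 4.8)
The plan is to reduce \eqref{macro-apriori} to an interactive energy estimate for a fluid-type system satisfied by $(a,b,c)$, in the macro--micro spirit of Guo's method and its Chemin--Lerner adaptation in \cite{DLX}, with the non-cutoff terms $\cL(\mathbf{I}-\mathbf{P})g$ and $(\Gamma(g,g),\phi)_v$ absorbed through \eqref{up-down-triple} and Lemmas \ref{4-2}--\ref{4-3}.

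\textbf{Step 1: the fluid-type system.} First I would take velocity moments of \eqref{eq: perturbation}. Pairing with the collision invariants $\mu^{1/2}$, $v_i\mu^{1/2}$, $|v|^2\mu^{1/2}$ kills $\cL g$ and $\Gamma(g,g)$ and yields local conservation laws of the schematic form $\partial_t(a,b,c)=\nabla_x(\text{linear combination of }a,b,c)+\nabla_x(\text{moments of }(\mathbf{I}-\mathbf{P})g)$; pairing with the non-invariant moments $v_iv_j\mu^{1/2}$, $v_i|v|^2\mu^{1/2}$ yields further equations in which $\cL$ sees only $(\mathbf{I}-\mathbf{P})g$ (since $\cL\mathbf{P}g=0$) while $\Gamma(g,g)$ enters as a genuine source. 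Eliminating redundant quantities and applying $\Delta_q$ (which commutes with all these operations, acting on $v$, and in particular $\Delta_q\mathbf{P}g=\mathbf{P}\Delta_q g$), one obtains
\begin{align*}
\nabla_x\Delta_q(a,b,c)&=\partial_t\Delta_q\Phi\big((\mathbf{I}-\mathbf{P})g\big)+\nabla_x\Delta_q\Psi\big((\mathbf{I}-\mathbf{P})g\big)\\
&\quad+\Delta_q\ell\big((\mathbf{I}-\mathbf{P})g\big)+\Delta_q\big(\Gamma(g,g),\psi\big)_v,
\end{align*}
where $\Phi,\Psi$ are fixed vectors whose components are velocity moments $(\phi,\cdot)_v$ with $\phi\in\cS(\RR^3_v)$, $\psi\in\cS(\RR^3_v)$, and $\ell$ is a vector of moments of $\cL(\cdot)$, so that by \eqref{up-down-triple} and the mapping properties of $\cL$ (as in the proof of Lemma \ref{4-3}) one has, pointwise in $x$, $|\Delta_q\ell((\mathbf{I}-\mathbf{P})g)|\lesssim|\!|\!|\Delta_q(\mathbf{I}-\mathbf{P})g|\!|\!|$.

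\textbf{Step 2: interactive functional.} For each $q\ge-1$ set $G_q(t):=-\big(\nabla_x\Delta_q(a,b,c)(t),\Delta_q\Phi((\mathbf{I}-\mathbf{P})g)(t)\big)_{L^2_x}$, up to a fixed linear combination of components with signs chosen so that the coercive terms add. Differentiating in $t$, using the identity of Step 1 to replace $\partial_t\Delta_q\Phi$, the conservation laws to replace $\partial_t\nabla_x\Delta_q(a,b,c)$, integrating by parts in $x$, and then applying Young's inequality and absorbing small multiples of $\Vert\nabla_x\Delta_q(a,b,c)\Vert_{L^2_x}^2$ into the left-hand side, one reaches
\begin{align*}
\frac{d}{dt}G_q(t)+\kappa\,\Vert\nabla_x\Delta_q(a,b,c)(t)\Vert_{L^2_x}^2\lesssim\big(1+2^{2q}\big)\,\Vert\,|\!|\!|\Delta_q(\mathbf{I}-\mathbf{P})g|\!|\!|\,\Vert_{L^2_x}^2+\Vert\Delta_q\big(\Gamma(g,g),\psi\big)_v\Vert_{L^2_x}^2
\end{align*}
for some $\kappa>0$, the factor $2^q$ coming from Bernstein's inequality applied to $\nabla_x$ on the moments of $(\mathbf{I}-\mathbf{P})g$. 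Moreover $|G_q(t)|\lesssim\Vert\nabla_x\Delta_q(a,b,c)(t)\Vert_{L^2_x}\Vert\Delta_q(\mathbf{I}-\mathbf{P})g(t)\Vert_{L^2_v L^2_x}\lesssim 2^{q}\Vert\Delta_q g(t)\Vert_{L^2_v L^2_x}^2$, using Bernstein's inequality again together with $\Vert\mathbf{P}h\Vert_{L^2_v},\Vert(\mathbf{I}-\mathbf{P})h\Vert_{L^2_v}\le\Vert h\Vert_{L^2_v}$.

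\textbf{Step 3: integrate and sum.} Integrating the differential inequality on $[0,T]$ gives $\int_0^T\Vert\nabla_x\Delta_q(a,b,c)\Vert_{L^2_x}^2\,dt\lesssim|G_q(0)|+|G_q(T)|+(1+2^{2q})\Vert\,|\!|\!|\Delta_q(\mathbf{I}-\mathbf{P})g|\!|\!|\,\Vert_{L^2_T L^2_x}^2+\int_0^T\Vert\Delta_q(\Gamma(g,g),\psi)_v\Vert_{L^2_x}^2\,dt$. Taking square roots, multiplying by $2^{q/2}$, and summing over $q\ge-1$: by the bound on $G_q$ from Step 2 and the embedding $B^{3/2}_x\hookrightarrow B^1_x$,
\begin{align*}
\sum_{q\ge-1}2^{q/2}\big(|G_q(0)|^{1/2}+|G_q(T)|^{1/2}\big)\lesssim\Vert g_0\Vert_{\tilde{L}^2_v(B^1_x)}+\Vert g\Vert_{\tilde{L}^\infty_T\tilde{L}^2_v(B^1_x)}\lesssim\Vert g_0\Vert_{\tilde{L}^2_v(B^{3/2}_x)}+\mathcal{E}_T(g);
\end{align*}
the micro term gives $\sum_q (2^{q/2}+2^{3q/2})\Vert\,|\!|\!|\Delta_q(\mathbf{I}-\mathbf{P})g|\!|\!|\,\Vert_{L^2_T L^2_x}\lesssim\Vert(\mathbf{I}-\mathbf{P})g\Vert_{\mathcal{T}^{3/2}_{T,2,2}}$ (since $2^{q/2}\lesssim 2^{3q/2}$ for $q\ge-1$); and the nonlinear term gives $\sum_q 2^{q/2}\big(\int_0^T\Vert\Delta_q(\Gamma(g,g),\psi)_v\Vert_{L^2_x}^2\,dt\big)^{1/2}\lesssim\mathcal{E}_T(g)\mathcal{D}_T(g)$ by Lemma \ref{4-2} with $s=1/2$. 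Since $\Vert\nabla_x(a,b,c)\Vert_{\tilde{L}^2_T(B^{1/2}_x)}=\sum_q 2^{q/2}\Vert\nabla_x\Delta_q(a,b,c)\Vert_{L^2_T L^2_x}$, adding these three contributions yields \eqref{macro-apriori}.

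\textbf{Main obstacle.} The delicate point is the bookkeeping in Step 2: one must choose $G_q$, and the coefficients coupling $\partial_i a$, $\partial_i b_j$, $\partial_i c$, so that after substituting the conservation laws every resulting term is either a perfect time derivative, a small multiple of $\Vert\nabla_x\Delta_q(a,b,c)\Vert_{L^2_x}^2$, a moment of $(\mathbf{I}-\mathbf{P})g$ (possibly with one $x$-derivative, hence a factor $2^q$), or a moment of $\Gamma(g,g)$ — and then to check that the $x$-frequency weights produced by the two boundary terms match $B^{3/2}_x$ and $\mathcal{E}_T(g)$, while those from the micro and nonlinear pieces match $\mathcal{T}^{3/2}_{T,2,2}$ and $\mathcal{E}_T(g)\mathcal{D}_T(g)$ exactly. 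The genuinely non-cutoff feature — that neither $\cL(\mathbf{I}-\mathbf{P})g$ nor $(\Gamma(g,g),\phi)_v$ can be bounded by ordinary Sobolev norms — enters only through \eqref{up-down-triple} and Lemmas \ref{4-2}--\ref{4-3}, so no analytic input beyond Section \ref{3} is needed here.
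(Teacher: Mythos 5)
Your overall plan matches the paper's proof: derive the fluid-type system by taking moments, introduce an interactive functional $E_q$, obtain a dissipative differential inequality for $\Vert\nabla_x\Delta_q(a,b,c)\Vert^2_{L^2_x}$, integrate, weight by $2^{q/2}$, sum, and close the error terms via Lemmas~\ref{4-2}--\ref{4-3} and the weight relation $\Vert\cdot\Vert_{L^2_{(\gamma+\nu)/2}}\lesssim|\!|\!|\cdot|\!|\!|$. The bookkeeping of the Bernstein factor $2^q$ and the final reduction to $\mathcal{E}_T(g)$, $\Vert(\mathbf{I}-\mathbf{P})g\Vert_{\mathcal{T}^{3/2}_{T,2,2}}$, and $\mathcal{E}_T(g)\mathcal{D}_T(g)$ also matches.

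The gap is in Step~2: the interactive functional you propose, $G_q=-\big(\nabla_x\Delta_q(a,b,c),\,\Delta_q\Phi((\mathbf{I}-\mathbf{P})g)\big)_{L^2_x}$, pairs macroscopic gradients exclusively with moments of the microscopic part. Such macro--micro pairings (the paper's $E^1_q$ and $E^2_q$) deliver coercivity for $\Vert\nabla_x\Delta_q c\Vert^2$ and $\Vert\nabla_x\Delta_q b\Vert^2$, but they produce $\Vert\nabla_x\Delta_q a\Vert^2$ only with an arbitrarily small prefactor $\varepsilon$ and therefore cannot supply the coercivity for $a$. The paper obtains that coercivity from the purely \emph{macro--macro} term $E^3_q=(\Delta_q b,\nabla_x\Delta_q a)$: differentiating and inserting $\eqref{eq: fluid-type2}_2$, one finds $(\partial_t\Delta_q b,\nabla_x\Delta_q a)=-\Vert\nabla_x\Delta_q a\Vert^2_{L^2_x}+\ldots$, which is the only place $\Vert\nabla_x\Delta_q a\Vert^2$ enters with a fixed negative sign. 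This term is not of the form $(\nabla_x(\text{macro}),\text{moment of }g_2)$ --- neither $\Delta_q b$ nor $\nabla_x\Delta_q a$ is a moment of $(\mathbf{I}-\mathbf{P})g$ --- so no "fixed linear combination of components" of your $G_q$ can reproduce it. You would need to enlarge the ansatz for $G_q$ to include this macro--macro piece (weighted by a small constant $\delta_3$, with the macro--micro pieces weighted $1$ and $\delta_2$, $\delta_3\ll\delta_2\ll1$, so that the $\varepsilon$-errors from each block are absorbed by the coercivity of the earlier blocks). This is exactly the "delicate bookkeeping" you flag as the main obstacle, but as written your $G_q$ cannot be completed to close the estimate for $\nabla_x a$.
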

\begin{proof}
We start from \eqref{eq: Boltzmann}. Multiplying the equation by $1$, $v$, and $\vert v \vert^2$ and taking $v$-integration, we have the following local macroscopic balance laws:
\begin{align*}
&\partial_t \int_{\mathbb{R}^3_v} fdv + \nabla_x \cdot \int_{\mathbb{R}^3_v} vfdv =0, \\
&\partial_t \int_{\mathbb{R}^3_v} vfdv + \nabla_x\cdot \int_{\mathbb{R}^3_v} v\otimes vfdv =0,\\
&\partial_t \int_{\mathbb{R}^3_v} \vert v\vert^2fdv + \nabla_x \cdot \int_{\mathbb{R}^3_v} \vert v\vert^2 vfdv =0.
\end{align*}
We decompose $f$ into $f = \mu + \mu^{1/2}g$ and further decompose $g$ into $g=\mathbf{P}g + (\mathbf{I-P})g =g_1+g_2$. In order to express the above balance laws with the macroscopic functions ($a$, $b$, $c$), we need to calculate some moments of Maxwellian $\mu$.
\begin{align*}
&(1, \mu)=1, (\vert v_i \vert^2, \mu)=1, (\vert v \vert^2, \mu)=3,\\
&(\vert v_i \vert^2 \vert v_j \vert^2, \mu)=1\ (i \neq j), (\vert v_i \vert^4, \mu ) =3, \\
&(\vert v \vert^2 \vert v_i \vert^2, \mu) = 5, (\vert v \vert^4, \mu) = 15, ( \vert v\vert^4 \vert v_i \vert^2 \mu )=35.
\end{align*}
Keeping these values in mind, we compute that 
\begin{align*}
&\int_{\mathbb{R}^3_v} fdv = 1 + (a+ 3c),\\
&\int_{\mathbb{R}^3_v} vfdv = b,\\
&\int_{\mathbb{R}^3_v} \vert v\vert^2fdv = 3 + 3(a+5c),\\
&\int_{\mathbb{R}^3_v} v_i v_j fdv = \delta_{ij} + (a+5c)\delta_{ij} + (v_i v_j \mu^{1/2}, g_2),\\
&\int_{\mathbb{R}^3_v} \vert v\vert^2 vfdv = 5b + (\vert v \vert^2 v \mu^{1/2}, g_2).
\end{align*}
Here, $\delta_{ij}$ is Kronecker's delta. Inserting these identities into the balance laws, we have
\begin{align*}
&\partial_t (a+3c) + \nabla_x \cdot b=0,\\
&\partial_t b + \nabla_x (a+5c) + \nabla_x \cdot (v\otimes v \mu^{1/2}, g_2) =0,\\
&3\partial_t(a+5c) +5\nabla_x \cdot b + \nabla_x \cdot (\vert v\vert^2 v \mu^{1/2} g_2)=0. 
\end{align*}
This is equivalent to the system
\begin{align*}
&\partial_t a - \frac{1}{2} \nabla_x \cdot(\vert v \vert^2 v\mu^{1/2}, g_2)=0,\\
&\partial_t b + \nabla_x (a+5c) + \nabla_x \cdot (v\otimes v \mu^{1/2}, g_2) =0,\\
&\partial_t c + \frac{1}{3}\nabla_x \cdot b +\frac{1}{6} \nabla_x \cdot (\vert v \vert^2 v \mu^{1/2}, g_2) =0.
\end{align*}
Next, we rewrite \eqref{eq: perturbation} as 
\begin{align*}
\partial_t g_1 +v\cdot \nabla_x g_1 =-\partial_t g_2 + R_1+ R_2,
\end{align*}
where $R_1= -v \cdot \nabla_x g_2$ and $R_2 = -\mathcal{L}g_2 + \Gamma(g, g)$. We rewrite this equation once again so that we express the equation in terms of ($a$, $b$, $c$).
\begin{align}\label{eq: macro perturbation}\notag
&\partial_t a \mu^{1/2} + (\partial_t b + \nabla_x a)\cdot v \mu^{1/2} + \sum_i  (\partial_t c+\partial_i b_i) \vert v_i \vert^2 \mu^{1/2} \\
&\quad+\sum_{i< j} (\partial_j b_i +\partial_i b_j) v_i v_j \mu^{1/2} + \nabla_x c \cdot \vert v \vert^2 v \mu^{1/2} =-\partial_t g_2 + R_1+ R_2. 
\end{align}
Here, $\partial_i = \partial_{x_i}$. We define the high-order moment functions $A(g)=(A_{ij}(g))_{3\times 3}$ and $B(g) = (B_i(g))_{1\times 3}$ by
\begin{align*}
A_{ij}(g) = ( (v_iv_j -\delta_{ij})\mu^{1/2}, g), B_i(g) = \frac{1}{10}( (\vert v\vert^2-5)v_i \mu^{1/2}, g).
\end{align*}
Notice that these functions operate only on $v$ and 
$$\vert A_{ij}(g) \vert, \vert B_i(g) \vert \le C\Vert g \Vert_{L^2_{v,(\gamma+\nu)/2}}$$
since we take inner products of $g$ and rapidly decreasing functions.
Applying $A_{ij}$ and $B_i$ to both sides of \eqref{eq: macro perturbation}, we have
\begin{align*}
&\partial_t (A_{ij}(g_2)+2c\delta_{ij}) + \partial_j b_i + \partial_i b_j = A_{ij} (R_1+R_2),\\
&\partial_t B_i(g_2) + \partial_i c = B_i(R_1+R_2).
\end{align*}
Thus, we obtained the following system:
\begin{align}\label{eq: fluid-type}
\begin{cases}
\partial_t a - \frac{1}{2} \nabla_x \cdot(\vert v \vert^2 v\mu^{1/2}, g_2)=0,\\
\partial_t b + \nabla_x (a+5c) + \nabla_x \cdot (v\otimes v \mu^{1/2}, g_2) =0,\\
\partial_t c + \frac{1}{3}\nabla_x \cdot b +\frac{1}{6} \nabla_x \cdot (\vert v \vert^2 v \mu^{1/2}, g_2) =0,\\
\partial_t (A_{ij}(g_2)+2c\delta_{ij}) + \partial_j b_i + \partial_i b_j = A_{ij} (R_1+R_2),\\
\partial_t B_i(g_2) + \partial_i c = B_i(R_1+R_2).
\end{cases}
\end{align}
We derive the desired estimate from this system. For later use, we define the energy functional 
\begin{align*}
&E_q(g) = E^1_q(g) + \delta_2 E^2_q(g) + \delta_3 E^3_q(g),\\
&E^1_q(g)=\sum_{i}(B_i (\Delta_q g_2), \partial_i \Delta_q c),\\
&E^2_q(g)=\sum_{i,j} (A_{ij}(\Delta_q g_2 )+2\Delta_q c\delta_{ij}, \partial_j \Delta_qb_i + \partial_i \Delta_qb_j),\\
&E^3_q(g)=(\Delta_q b_,\nabla_x \Delta_q a).
\end{align*}
$\delta_2$ and $\delta_3$ are two small positive numbers, which will be chosen later.
First, we apply $\Delta_q$ with $q \ge -1$ to \eqref{eq: fluid-type}. We have
\begin{align}\label{eq: fluid-type2} 
\begin{cases}
\partial_t \Delta_qa - \frac{1}{2} \nabla_x \cdot(\vert v \vert^2 v\mu^{1/2}, \Delta_qg_2)=0,\\
\partial_t \Delta_qb + \nabla_x\Delta_q(a+5c) + \nabla_x \cdot (v\otimes v \mu^{1/2}, \Delta_qg_2) =0,\\
\partial_t \Delta_qc + \frac{1}{3}\nabla_x \cdot \Delta_qb +\frac{1}{6} \nabla_x \cdot (\vert v \vert^2 v \mu^{1/2}, \Delta_qg_2) =0,\\
\partial_t (A_{ij}(\Delta_qg_2)+2\Delta_qc\delta_{ij}) + \partial_j \Delta_qb_i + \partial_i \Delta_qb_j = A_{ij} (\Delta_qR_1+\Delta_qR_2),\\
\partial_t B_i(\Delta_qg_2) + \partial_i \Delta_qc = B_i(\Delta_qR_1+\Delta_qR_2).
\end{cases}
\end{align}
In what follows, by $\eqref{eq: fluid-type2}_n$ we denote the n-th equation of $\eqref{eq: fluid-type2}$.
Multiplying $\eqref{eq: fluid-type2}_5$ by $\partial_i \Delta_qc$, we have
\begin{align*}
\frac{d}{dt} (B_i (\Delta_q g_2), \partial_i \Delta_qc ) - (B_i (\Delta_q g_2), \partial_i \Delta_q\partial_tc )&+ \Vert \partial_i \Delta_qc \Vert^2_{L^2_x} \\
&= (B_i(\Delta_qR_1+\Delta_qR_2), \partial_i \Delta_qc).
\end{align*}
By Young's inequality it is obvious that
\begin{align*}
(B_i(\Delta_q R_1), \partial_i \Delta_q c) \le \varepsilon \Vert \partial_i \Delta_qc \Vert^2_{L^2_x} + \frac{C}{\varepsilon} \Vert \nabla_x \Delta_q g_2 \Vert^2_{L^2_{v,(\gamma+\nu)/2} L^2_x}
\end{align*}
for small positive $\varepsilon$. We note that if $\Vert g \Vert_{\tilde{L}^\infty_T \tilde{L}^2_v (B^{3/2}_x)} < \infty$, the Cauchy-Schwarz inequality gives $(g(x, \cdot, t), \phi) \in \tilde{L}^\infty_T (B^{3/2}_x)$ for any $\phi \in \mathcal{S}(\mathbb{R}^3_v)$. Also, that $\Vert g \Vert_{\mathcal{T}^{3/2}_{T,2,2}} < \infty$ implies $\sum_q 2^{3/2} \left( \int^T_0 \int \vert (\Delta_q g(x, \cdot, t), \phi) \vert^2 dx \right)^{1/2} < \infty$ for any $\phi \in \mathcal{S}(\mathbb{R}^3_v)$. In order to estimate $(B_i (\Delta_q g_2), \partial_i \Delta_q\partial_tc )$, we use $\eqref{eq: fluid-type2}_3$ and partial integral. As long as $f$, $g \in B^{3/2}_{2,1}$, $(\partial_i \Delta_q f, \Delta_q g) = - (\Delta_q f, \partial_i \Delta_q g)$. Indeed, if $f\in B^{3/2}_{2,1}$, $\Delta_q f$ is also in $B^{3/2}_{2,1} \subset H^1$. Moreover, that $f \in L^\infty$ implies $\Delta_q f \in C^\infty_b$. So both $(\partial_i \Delta_q f, \Delta_q g)$ and $-(\Delta_q f, \partial_i \Delta_q!
  g)$ exist and the same.
\begin{align*}
\vert (B_i (\Delta_q g_2), \partial_i \Delta_q\partial_tc ) \vert &= \left\vert \left(\partial_i B_i (\Delta_q g_2), \frac{1}{3}\nabla_x \cdot \Delta_qb +\frac{1}{6} \nabla_x \cdot (\vert v \vert^2 v \mu^{1/2}, \Delta_qg_2)\right)\right\vert \\
&\le \varepsilon \Vert \nabla_x \Delta_q b \Vert^2_{L^2_x} + \frac{C}{\varepsilon} \Vert \nabla_x \Delta_q g_2 \Vert^2_{L^2_{v,(\gamma+\nu)/2} L^2_x}.
\end{align*}
Here we set $\Vert \nabla_x \Delta_q b \Vert_{L^2x} :=\Vert \nabla_x \otimes \Delta_q b \Vert_{L^2x}$ for brevity. From this inequality, we have for small $\lambda'>0$ and $\varepsilon_1>0$,
\begin{align}\label{energy1}\notag
\frac{d}{dt}E^1_q(g)+& \lambda' \Vert \nabla_x \Delta_qc \Vert^2_{L^2_x} \le \varepsilon_1  \Vert \nabla_x \Delta_q b \Vert^2_{L^2_x} \\
&+ \frac{C}{\varepsilon_1} \Vert \nabla_x \Delta_q g_2 \Vert^2_{L^2_{v,(\gamma+\nu)/2} L^2_x} + \frac{C}{\varepsilon_1}\sum_i\Vert B_i(\Delta_q R_2) \Vert^2_{L^2_x}. 
\end{align}
Next, we multiply $\eqref{eq: fluid-type2}_4$ by $\partial_j \Delta_qb_i + \partial_i \Delta_qb_j$ and take summation with $i$ and $j$. We remark that 
\begin{align*}
\Vert \partial_j \Delta_qb_i + \partial_i \Delta_qb_j \Vert^2_{L^2_x} = \Vert \partial_j \Delta_qb_i \Vert^2_{L^2_x} + \Vert \partial_i \Delta_qb_j \Vert^2_{L^2_x} +2(\partial_j \Delta_qb_i,\partial_i \Delta_qb_j)\\
= \Vert \partial_j \Delta_qb_i \Vert^2_{L^2_x} + \Vert \partial_i \Delta_qb_j \Vert^2_{L^2_x} +2(\partial_i \Delta_qb_i,\partial_j \Delta_qb_j).
\end{align*}
Thus,
\begin{align*}
\sum_{i,j}\Vert \partial_j \Delta_qb_i + \partial_i \Delta_qb_j \Vert^2_{L^2_x} = 2\Vert \nabla_x  \Delta_q b\Vert^2_{L^2_x} + 2\Vert \nabla_x \cdot \Delta_q b\Vert^2_{L^2_x},
\end{align*}
and this implies
\begin{align*}
&\frac{d}{dt} E^2_q(g) - \sum_{i,j}(A_{ij}(\Delta_q g_2 )+2\Delta_qc\delta_{ij}, \partial_j \Delta_q\partial_tb_i + \partial_i \Delta_q\partial_tb_j) \\
&+ 2\Vert \nabla_x  \Delta_q b\Vert^2_{L^2_x} + 2\Vert \nabla_x \cdot \Delta_q b\Vert^2_{L^2_x} = ( A_{ij} (\Delta_qR_1+\Delta_qR_2), \partial_j \Delta_qb_i + \partial_i \Delta_qb_j).
\end{align*}
Substituting $\eqref{eq: fluid-type2}_2$ to eliminate $\partial_t \Delta_q b$, we have
\begin{align*}
&\vert (A_{ij}(\Delta_q g_2 )+2\Delta_qc\delta_{ij}, \partial_j \Delta_q\partial_tb_i) \vert \\
&= \left\vert \left( \partial_j A_{ij}(\Delta_q g_2 )+2\partial_j \Delta_q c\delta_{ij}, \partial_i \Delta_q(a+5c) + \partial_i \cdot (\sum_l v_iv_l \mu^{1/2}, \Delta_qg_2) \right) \right\vert \\
&\le \varepsilon \Vert \nabla_x \Delta_qa \Vert^2_{L^2_x}+ \frac{C}{\varepsilon} \Vert \nabla_x \Delta_qc \Vert^2_{L^2_x} + \frac{C}{\varepsilon} \Vert \nabla_x \Delta_q g_2 \Vert^2_{L^2_{v,(\gamma+\nu)/2} L^2_x}
\end{align*}
and other terms on the left hand side are similarly estimated. Hence, for small $\varepsilon_2 >0$ we have
\begin{align}\label{energy2}\notag
\frac{d}{dt} \sum_{i,j} E^2_q(g) +&\lambda' \Vert \nabla_x \Delta_qb \Vert^2_{L^2_x} \le \varepsilon_2 \Vert \nabla_x \Delta_qa \Vert^2_{L^2_x} + \frac{C}{\varepsilon_2} \Vert \nabla_x \Delta_qc \Vert^2_{L^2_x} \\
&+ \frac{C}{\varepsilon_2} \Vert \nabla_x \Delta_q g_2 \Vert^2_{L^2_{v,(\gamma+\nu)/2} L^2_x} + \sum_{i,j} \Vert A_{ij}(\Delta_q R_2) \Vert^2_{L^2_x}. 
\end{align}

Lastly, from $\eqref{eq: fluid-type2}_2$ we have
\begin{align*}
\frac{d}{dt} E^3_q(g)- \sum_{i}(\Delta_q b_i,\partial_i \Delta_q \partial_t a) +\Vert   \nabla_x \Delta_qa \Vert^2_{L^2_x} = -5\sum_{i} (\partial_i \Delta_q c,  \partial_i \Delta_q a) \\
- \sum_{i,j} (\partial_j(v_i v_j \mu^{1/2} ,\Delta_q g_2), \partial_i \Delta_q a).
\end{align*} 
Eliminating $\partial_t \Delta_q a$ by $\eqref{eq: fluid-type2}_1$, we have
\begin{align}\label{energy3}
\frac{d}{dt} E^3_q(g) + \lambda' \Vert \nabla_x \Delta_qa \Vert^2_{L^2_x} \le C \Vert \nabla_x \Delta_q(b,c) \Vert^2_{L^2_x} +C \Vert \nabla_x \Delta_q g_2 \Vert^2_{L^2_{v,(\gamma+\nu)/2} L^2_x}.
\end{align}
For sufficiently small $\delta_2$ and $\delta_3$ with $0 < \delta_3 \ll \delta_2 \ll 1$, taking summation $\eqref{energy1}+\delta_2\eqref{energy2}+ \delta_3\eqref{energy3}$ and then choosing small $\varepsilon_1$ and $\varepsilon_2$, we obtain for small $\lambda >0$,
\begin{align}\label{energy4}\notag
\frac{d}{dt} E_q(g(t))+\lambda &\Vert \nabla_x \Delta_q(a, b, c) \Vert^2_{L^2_x} \lesssim \Vert \nabla_x \Delta_q g_2 \Vert^2_{L^2_{v,(\gamma+\nu)/2} L^2_x} \\
&\quad + \sum_{i,j} \Vert A_{ij}(\Delta_q R_2) \Vert^2_{L^2_x} + \sum_{i} \Vert B_i(\Delta_q R_2) \Vert^2_{L^2_x}. 
\end{align}

Integrating \eqref{energy4} on $[0,T]$ and taking the square root of the resultant inequality, we have
\begin{align}\label{addition}\notag
\Vert \nabla_x \Delta_q(a, b, c) \Vert_{L^2_T L^2_x} &\lesssim \sqrt{\left|E_q(g(0))\right|} + \sqrt{\left|E_q(g(T))\right|} \\
&\notag + \Vert \nabla_x \Delta_q g_2 \Vert_{L^2_T L^2_{v,(\gamma+\nu)/2} L^2_x}
+ \sum_{i,j} \Vert A_{ij}(\Delta_q R_2) \Vert_{L^2_T L^2_x} \\&  + \sum_{i} \Vert B_i(\Delta_q R_2) \Vert_{L^2_T L^2_x}.
\end{align}
Multiplying this inequality by $2^{q/2}$ and taking summation over $q \ge -1$ yield
\begin{align}\label{ineq: macro-nexttolast}\notag
\Vert \nabla_x (a,b,c) \Vert_{\tilde{L}^2_T(B^{1/2}_x)} &\lesssim \sum_{q\ge -1}2^{q/2}\sqrt{\vert E_q(g(0))\vert} + \sum_{q\ge -1}2^{q/2}\sqrt{\vert E_q(g(T))\vert} \\
&+ \Vert g_2 \Vert_{\tilde{L}^2_T \tilde{L}^2_{v,(\gamma+\nu)/2} (B^{3/2}_x)} + \sum_{i,j}\sum_{q\ge -1}2^{q/2} \Vert A_{ij}(\Delta_q R_2) \Vert_{L^2_T L^2_x} \notag\\
&+ \sum_{i}\sum_{q\ge -1}2^{q/2} \Vert B_i(\Delta_q R_2) \Vert_{L^2_T L^2_x}.
\end{align}
Here we used Lemma \ref{Besov embedding} to estimate the sum
corresponding to 
the third 
term of the right hand side of \eqref{addition} by 
$\Vert g_2 \Vert_{\tilde{L}^2_T \tilde{L}^2_{v,(\gamma+\nu)/2} (B^{3/2}_x)}$. This term is governed by $\Vert g_2\Vert_{\mathcal{T}^{3/2}_{T,2,2}}$ since $\Vert \cdot \Vert_{L^2_{v,(\gamma+\nu)/2}} \lesssim |\!|\!| \cdot |\!|\!| $. The other four terms on the right hand side are estimated as follows. The Cauchy-Schwarz inequality yields
\begin{align*}
\sum_{q\ge -1}2^{q/2}\sqrt{\vert E_q(g(t))\vert} \lesssim \sum_{q\ge -1}2^{q/2} \left[ \Vert \nabla_x \Delta_q (a,b,c)(t) \Vert_{L^2_x} + \Vert \Delta_q (b,c)(t)\Vert_{L^2_x} \right. \\
\left.+\Vert \nabla_x \Delta_q g_2(t)\Vert_{L^2_vL^2_x}\right].
\end{align*}
Using Lemma \ref{Besov embedding} once again, we have
\begin{align*}
\sum_{q\ge -1}2^{q/2}\sqrt{\vert E_q(g(0))\vert} \lesssim \Vert g_0 \Vert_{\tilde{L}^2_v(B^{3/2}_x)}, \quad \sum_{q\ge -1}2^{q/2}\sqrt{\vert E_q(g(t))\vert} \lesssim \mathcal{E}_T(g).
\end{align*}
Applying Lemmas \ref{4-2} and \ref{4-3} with $\phi(v) =(v_iv_j -\delta_{ij})\mu^{1/2}(v)$ and $\phi(v)=\frac{1}{10}(\vert v \vert^2-5)v_i \mu^{1/2}(v)$ implies
\begin{align*}
\sum_{i,j}\sum_{q\ge -1}2^{q/2} \Vert A_{ij}(\Delta_q R_2) \Vert_{L^2_T L^2_x} + \sum_{i}\sum_{q\ge -1}2^{q/2} \Vert B_i(\Delta_q R_2) \Vert_{L^2_T L^2_x}\\
\lesssim \Vert g_2 \Vert_{\mathcal{T}^{3/2}_{T,2,2}} + \mathcal{E}_T(g)\mathcal{D}_T(g).
\end{align*}
Therefore, substituting the above four inequalities into \eqref{ineq: macro-nexttolast}, we finally obtained the desired estimate.
\end{proof}

At the end of this section, we derive an a priori estimate of the energy and the dissipation terms.
\begin{lem}\label{energy estimate}
It holds that
\begin{align*}
\mathcal{E}_T(g) + \mathcal{D}_T(g) \lesssim \Vert g_0 \Vert_{\tilde{L}^2_v(B^{3/2}_x)} +\left( \mathcal{E}_T(g) + \sqrt{\mathcal{E}_T(g)} \right) \mathcal{D}_T(g).
\end{align*}
\end{lem}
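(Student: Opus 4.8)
The plan is to derive a dyadic macro--micro energy estimate directly from \eqref{eq: perturbation} and then couple it with Lemma~\ref{macro-micro}. First I would apply $\Delta_q$ (for $q\ge-1$) to \eqref{eq: perturbation} and take the $L^2(\RR^3_x\times\RR^3_v)$ inner product with $\Delta_q g$. Since $v\cdot\nabla_x$ is skew-adjoint on $L^2_{x,v}$, the transport term drops out and one is left with
\begin{equation*}
\frac{1}{2}\frac{d}{dt}\Vert\Delta_q g(t)\Vert_{L^2_{x,v}}^2+(\cL\Delta_q g,\Delta_q g)_{x,v}=(\Delta_q\Gamma(g,g),\Delta_q g)_{x,v}.
\end{equation*}
Because $\mathbf{P}$ acts only on $v$, it commutes with $\Delta_q$, and $\Gamma(g,g)$ is orthogonal to $\ker\cL$ in $L^2_v$ for every fixed $x$ (the collision invariants), so the right-hand side equals $(\Delta_q\Gamma(g,g),\Delta_q(\mathbf{I}-\mathbf{P})g)_{x,v}$. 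Using the coercivity of $\cL$, namely $(\cL h,h)_{L^2_v}\gtrsim|\!|\!|(\mathbf{I}-\mathbf{P})h|\!|\!|^2$ (a standard coercivity estimate; cf. \eqref{up-down-triple} and \cite{AMUXY2}), the second term on the left is bounded below by $\lambda\Vert|\!|\!|\Delta_q(\mathbf{I}-\mathbf{P})g|\!|\!|\Vert_{L^2_x}^2$ for some $\lambda>0$.

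Next I would integrate on $[0,t]$, take the supremum over $t\in[0,T]$ of the left-hand side, use $\sqrt{a+b}\le\sqrt a+\sqrt b$, multiply by $2^{3q/2}$, and sum over $q\ge-1$. Recognizing $\sum_{q\ge-1}2^{3q/2}\Vert\Delta_q g\Vert_{L^\infty_TL^2_{x,v}}\sim\mathcal{E}_T(g)$ and $\sum_{q\ge-1}2^{3q/2}\Vert|\!|\!|\Delta_q(\mathbf{I}-\mathbf{P})g|\!|\!|\Vert_{L^2_TL^2_x}=\Vert(\mathbf{I}-\mathbf{P})g\Vert_{\mathcal{T}^{3/2}_{T,2,2}}$, and bounding the residual nonlinear sum $\sum_{q\ge-1}2^{3q/2}\big(\int_0^T|(\Delta_q\Gamma(g,g),\Delta_q(\mathbf{I}-\mathbf{P})g)|\,dt\big)^{1/2}$ by Lemma~\ref{4-1} with $s=3/2$, I obtain
\begin{equation*}
\mathcal{E}_T(g)+\Vert(\mathbf{I}-\mathbf{P})g\Vert_{\mathcal{T}^{3/2}_{T,2,2}}\lesssim\Vert g_0\Vert_{\tilde{L}^2_v(B^{3/2}_x)}+\sqrt{\mathcal{E}_T(g)}\,\mathcal{D}_T(g).
\end{equation*}

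Finally, I would insert this inequality into the right-hand side of Lemma~\ref{macro-micro} to eliminate the terms $\mathcal{E}_T(g)$ and $\Vert(\mathbf{I}-\mathbf{P})g\Vert_{\mathcal{T}^{3/2}_{T,2,2}}$ appearing there, which yields
\begin{equation*}
\Vert\nabla_x(a,b,c)\Vert_{\tilde{L}^2_T(B^{1/2}_x)}\lesssim\Vert g_0\Vert_{\tilde{L}^2_v(B^{3/2}_x)}+\big(\mathcal{E}_T(g)+\sqrt{\mathcal{E}_T(g)}\big)\mathcal{D}_T(g).
\end{equation*}
Adding this to the previous estimate and recalling $\mathcal{D}_T(g)=\Vert\nabla_x(a,b,c)\Vert_{\tilde{L}^2_T(B^{1/2}_x)}+\Vert(\mathbf{I}-\mathbf{P})g\Vert_{\mathcal{T}^{3/2}_{T,2,2}}$ gives the asserted bound.

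The main obstacle is the Chemin--Lerner bookkeeping in the middle step: one must pass from the $q$-by-$q$ differential inequality to a summed estimate in exactly the right shape ($\ell^1$ in $q$ with a square root inside), which requires care in interchanging $\sup_t$, the square root, and $\sum_q$, and in verifying that the surviving nonlinear contribution is precisely of the form controlled by Lemma~\ref{4-1}. One also has to justify carefully the orthogonality $(\Gamma(g,g),\Delta_q\mathbf{P}g)_{x,v}=0$ and the applicability of the triple-norm coercivity of $\cL$ after the $x$-integration; the $t$-differentiation of $\Vert\Delta_q g(t)\Vert_{L^2_{x,v}}^2$ is legitimate here because the estimate is a priori, i.e. $g$ is assumed to be a sufficiently regular solution.
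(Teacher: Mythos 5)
Your proposal is correct and follows essentially the same route as the paper: the dyadic energy identity, orthogonality of $\Gamma(g,g)$ to $\Delta_q\mathbf{P}g$ (which the paper verifies by symmetrizing the collision integral and using that $\mu^{1/2}\mathbf{P}g$ involves collision invariants), coercivity $(\cL_1 h,h)\ge\lambda_0|\!|\!|(\mathbf{I}-\mathbf{P})h|\!|\!|^2$, then integrate–square-root–weight–sum and invoke Lemma~\ref{4-1} to reach $\mathcal{E}_T+\Vert(\mathbf{I}-\mathbf{P})g\Vert_{\mathcal{T}^{3/2}_{T,2,2}}\lesssim\Vert g_0\Vert+\sqrt{\mathcal{E}_T}\,\mathcal{D}_T$, and finally couple with Lemma~\ref{macro-micro}. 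The only cosmetic difference is in the last step: you substitute the microscopic bound into the right side of Lemma~\ref{macro-micro} and then add, whereas the paper takes a weighted sum $\delta\cdot\eqref{macro-apriori}+\eqref{ineq: before-apriori}$ with $\delta$ small; the two are algebraically equivalent, and yours is if anything a bit more direct.
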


\begin{proof}
Applying $\Delta_q$ to \eqref{eq: perturbation} and taking the inner product with $\Delta_q g$ over $\mathbb{R}^3_x \times \mathbb{R}^3_v$, we have
\begin{align*}
(\partial_t \Delta_q g, \Delta_q g) + (v\cdot \Delta_q \nabla_x g, \Delta_q g) +(\mathcal{L}(\Delta_q g), \Delta_q g) = (\Delta_q \Gamma(g,g), \Delta_q g).
\end{align*}
Since $\Delta_q \nabla_x g= \nabla_x \Delta_q g$, we have $(v\cdot \Delta_q \nabla_x g, \Delta_q g)_{L^2_x}=0$. Moreover,
\begin{align*}
&(\Delta_q \Gamma(g, g), \Delta_q \mathbf{P}g)_{L^2_v}\\
&=\int_{\mathbb{R}^3_v \times \mathbb{R}^3_{v_*}\times \mathbb{S}^2} B \mu^{1/2}_* (\Delta_q (g_*' g') -\Delta_q (g_* g)) \Delta_q \mathbf{P}g dvdv_* d\sigma \\
&=\frac{1}{2} \int_{\mathbb{R}^3_v \times \mathbb{R}^3_{v_*}\times \mathbb{S}^2} B  \Delta_q (g_* g) \Delta_q ((\mathbf{Pg})'\mu'^{1/2}_* + (\mathbf{Pg})'_*\mu'^{1/2} \\
&\qquad \qquad \qquad \qquad \qquad \qquad \qquad \qquad-\mathbf{Pg}\mu^{1/2}_* -(\mathbf{Pg})_*\mu^{1/2})  dvdv_* d\sigma \\
&=0.
\end{align*}
Therefore, we have 
\begin{align*}
\frac{1}{2}\frac{d}{dt} \Vert \Delta_q g \Vert^2_{L^2_v L^2_x} +2\lambda_0 \Vert |\!|\!|\Delta_q (\mathbf{I}-\mathbf{P}) g) |\!|\!| \Vert^2_{L^2_x} \le \vert (\Delta_q \Gamma(g,g), \Delta_q (\mathbf{I}-\mathbf{P})g) )\vert,
\end{align*}
where $\lambda_0$ is taken in Lemma \ref{linear term}.

Integrate this inequality over $[0, t]$ with $0 \le t \le T$, take the square root of the resultant inequality and multiply it by $2^{3q/2}$. Then we have
\begin{align*}
2^{3q/2} \Vert \Delta_q g(t) \Vert_{L^2_v L^2_x} + \sqrt{\lambda_0} 2^{3q/2} \left( \int^t_0 \Vert |\!|\!| (\mathbf{I}-\mathbf{P})\Delta_q g(\tau) |\!|\!| \Vert^2_{L^2_x} d\tau \right)^{1/2} \\
\le 2^{3q/2} \Vert \Delta_q g_0 \Vert_{L^2_v L^2_x} + 2^{3q/2} \left( \int^t_0 \vert (\Delta_q \Gamma(g,g), \Delta_q (\mathbf{I}-\mathbf{P})g) )\vert d\tau \right)^{1/2}.
\end{align*}
We take supremum over $0\le t \le T$ on the left hand side and take the summation over $q \ge -1$. Together with Lemma \ref{4-1}, we have
\begin{align}\label{ineq: before-apriori}
\Vert g \Vert_{\tilde{L}^\infty_T \tilde{L}^2_v (B^{3/2}_x)} + \sqrt{\lambda_0} \Vert (\mathbf{I}-\mathbf{P})g \Vert_{\mathcal{T}^{3/2}_{T,2,2}} \le \Vert g_0 \Vert_{\tilde{L}^2_v (B^{3/2}_x)} + \sqrt{\mathcal{E}_T(g)}\mathcal{D}_T(g).
\end{align}
We now use Lemma \ref{macro-micro}. Taking summation $\delta \eqref{macro-apriori} + \eqref{ineq: before-apriori}$ with small positive $\delta$, we have
\begin{align*}
(1 -\delta) \mathcal{E}_t(g) + (\sqrt{\lambda_0} -\delta) \Vert (\mathbf{I}-\mathbf{P})g \Vert_{\mathcal{T}^{3/2}_{T,2,2}} + \delta \Vert \nabla_x (a, b, c) \Vert_{\tilde{L}^2_T (B^{1/2}_x)}\\
\lesssim \Vert g_0 \Vert_{\tilde{L}^2_v (B^{3/2}_x)} + (\sqrt{\mathcal{E}_T(g)}+\mathcal{E}_T(g))\mathcal{D}_T(g).
\end{align*}
The proof is complete by choosing sufficiently small $\delta$.
\end{proof}

Since \eqref{apriori-estimate-0} follows from Lemma \ref{energy estimate}, the existence of a unique 
global solution in Theorem \ref{main theorem} is a direct consequence of  Theorem \ref{local-existence}
 in the next section, concerning the existence of a local solution. The non-negativity of solutions to
the Cauchy problem \eqref{eq: Boltzmann} is sent to Section \ref{S6}.

\section{Local existence}\label{S5}
\setcounter{equation}{0}

\subsection{Local existence of linear and nonlinear equations}
\begin{lem}[Local existence for a linear equation]\label{lemma2}
There exist some
$C_0 > 1$, $\epsilon_0>0$, $T_0 >0$ such that for all $0 < T \le T_0$, $g_0 \in \tilde L^2_v(B_x^{3/2})$, $f \in 
\tilde L_T^{\infty} \tilde L_v^2(B_x^{3/2})$ 
satisfying
$$ \|f\|_{\tilde L_T^{\infty} \tilde L_v^2(B_x^{3/2})} \leq \epsilon_0,$$
the Cauchy problem
\begin{align}\label{l-e-c}\begin{cases}
\partial_tg+v\cdot \nabla_{x}g+\mathcal{L}_1g=\Gamma(f,g) -\mathcal{L}_2f 
,\\
g|_{t=0}=g_0,
\end{cases}\end{align}
admits a weak solution $g \in L^{\infty}([0,T]; L^2(\RR_{x,v}^6))$ satisfying
\begin{equation}\label{energy-important}
\|g\|_{\tilde L_T^{\infty} \tilde L^2_v (B_x^{3/2})}
+\|g\|_{\mathcal{T}_{T,2,2}^{3/2}} \le  C_0\Big( \|g_0\|_{\tilde L^2_v(B_x^{3/2})} + \sqrt{ T} \|f\|_{\tilde L_T^{\infty} \tilde L^2_v (B_x^{3/2})}\Big)
\end{equation}
\end{lem}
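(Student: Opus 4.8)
The plan is to split the argument into an a priori estimate in the Chemin-Lerner and triple norms and a separate existence construction carried out in the larger space $L^\infty([0,T];L^2(\RR^6_{x,v}))$. This splitting is forced by the function-space issue noted in the introduction: the duality/Hahn-Banach method needs a space that is its own dual (or whose dual is identified), which $L^2(\RR^6_{x,v})$ is but $\tilde L_T^\infty\tilde L_v^2(B_x^{3/2})$ is not; so one first produces a weak $L^\infty_T L^2_{x,v}$ solution of \eqref{l-e-c} and only afterwards shows it obeys \eqref{energy-important}. Throughout, $f$ is a fixed datum and the equation is linear in $g$; the role of the hypothesis $\|f\|\le\epsilon_0$ and of $T\le T_0$ is to make the contributions of $\Gamma(f,\cdot)$ and of the source $-\cL_2 f$ absorbable.

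For the a priori estimate, assume first that $g$ is regular enough to justify the manipulations. Apply $\Delta_q$ to \eqref{l-e-c}: being an $x$-Fourier multiplier, $\Delta_q$ commutes with $v\cdot\nabla_x$ and with $\cL_1$ (which acts only on $v$), while $\Delta_q(\cL_2 f)=\cL_2(\Delta_q f)$ since $\mu^{1/2}$ is $x$-independent. Pairing with $\Delta_q g$ in $L^2(\RR^6_{x,v})$, the transport term drops, $(v\cdot\nabla_x\Delta_q g,\Delta_q g)_{L^2_x}=0$; the linear term gives, by the coercivity of $\cL_1$ from Lemma \ref{linear term}, a dissipation $\lambda_0\|\,|\!|\!|\Delta_q g|\!|\!|\,\|^2_{L^2_x}$ modulo a lower-order $\|\Delta_q g\|^2_{L^2_{x,v}}$ term which only costs a factor $e^{CT_0}$ through Gronwall. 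After integrating in $t$, taking square roots and applying $\sum_{q\ge-1}2^{3q/2}(\cdot)$, the bilinear term $(\Delta_q\Gamma(f,g),\Delta_q g)$ is bounded via \eqref{tri1} of Lemma \ref{trilinear} with $s=3/2$ (i.e. $A_T(f,g,g)$) by $\|f\|^{1/2}_{\tilde L_T^\infty\tilde L_v^2(B_x^{3/2})}\|g\|_{\mathcal{T}^{3/2}_{T,2,2}}\le\epsilon_0^{1/2}\|g\|_{\mathcal{T}^{3/2}_{T,2,2}}$, which is absorbed into the dissipation once $\epsilon_0$ is small. The source $-\cL_2 f=\Gamma(f,\mu^{1/2})$ is handled with the Cauchy-Schwarz inequality in $t$ and the $L^2_v$-mapping properties of $\Gamma(\cdot,\mu^{1/2})$ (Corollary \ref{AMUXYtrilinear}); the $t$-integration on $[0,T]$ produces a $\sqrt T$ factor, and Young's inequality then turns the result into the $\sqrt T\,\|f\|_{\tilde L_T^\infty\tilde L_v^2(B_x^{3/2})}$ term of \eqref{energy-important} plus a piece absorbable into $\|g\|_{\mathcal{T}^{3/2}_{T,2,2}}$ (using the lower bound $\|\cdot\|_{L^2_{(\gamma+\nu)/2}}\lesssim|\!|\!|\cdot|\!|\!|$ of \eqref{up-down-triple}). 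Collecting the $q$-summed inequalities gives \eqref{energy-important} for all $0<T\le T_0$.

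For existence, we use the duality argument and the Hahn-Banach extension theorem in the spirit of Lions. The backward-in-time adjoint problem of \eqref{l-e-c} has the same structure — its linear part contributes the same coercive term, and the bilinear term contributes $(\Phi,\Gamma(f,\Phi))_{x,v}$, which is again controlled by $A_T(f,\Phi,\Phi)$ through Lemma \ref{trilinear} — so the same energy estimate yields an a priori bound for the adjoint problem on a space of test functions vanishing at $t=T$. Hence the functional $\Phi\mapsto(g_0,\Phi|_{t=0})_{x,v}-\int_0^T(\cL_2 f,\Phi)_{x,v}\,dt$ is bounded there, and Hahn-Banach followed by Riesz representation produces $g\in L^\infty([0,T];L^2(\RR^6_{x,v}))$ solving \eqref{l-e-c} weakly. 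To see that this $g$ in fact lies in $\tilde L_T^\infty\tilde L_v^2(B_x^{3/2})\cap\mathcal{T}^{3/2}_{T,2,2}$ and satisfies \eqref{energy-important}, regularize \eqref{l-e-c} by cutting off in $x$ and in $v$, obtaining genuinely smooth approximations to which the a priori estimate above applies rigorously and uniformly; passing to the limit requires that the commutators of $\Gamma(f,\cdot)$, of $v\cdot\nabla_x$, and of $\cL_1$ with the $x$- and $v$-cutoffs tend to $0$ in the relevant norms. These commutator estimates are exactly the technical lemmas postponed to the Appendix; granting them, the limit inherits the uniform bound \eqref{energy-important}.

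The main obstacle is this last step: the non-cutoff operator $\Gamma$ does not localize well, so controlling its commutators with the $x$- and $v$-cutoffs — and reconciling the weak $L^\infty_T L^2_{x,v}$ solution obtained by duality with the finer Chemin-Lerner regularity — is the delicate part, whereas the $\Delta_q$ energy computation itself is, given Lemma \ref{trilinear} and Lemma \ref{linear term}, essentially routine.
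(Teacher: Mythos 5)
Your overall architecture---an a~priori energy estimate in Chemin-Lerner/triple norms, a duality plus Hahn-Banach construction of a weak $L^\infty_T L^2_{x,v}$ solution, and a bootstrap via commutator estimates---does match the paper's proof. But the bootstrapping step, which you rightly flag as the delicate part, is glossed over in a way that conceals the real obstruction. Your scheme (mollify in $x$ and $v$ to get a smooth approximation to which ``the a~priori estimate applies rigorously and uniformly,'' then pass to the limit by commutators) does not directly close: the formal estimate \eqref{local-energy} absorbs $C'\Vert f\Vert^{1/2}_{\tilde L^\infty_T\tilde L^2_v(B^{3/2}_x)}\Vert g\Vert_{\cT^{3/2}_{T,2,2}}$ into the left side, which is only legitimate if one already knows $\Vert g\Vert_{\cT^{3/2}_{T,2,2}}<\infty$, and smoothing $g$ by itself provides neither that finiteness nor uniformity in the mollification parameter. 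The paper's resolution is different in two essential ways: it truncates the \emph{coefficient}, replacing $f$ by $f_M=S_M f$ so the $\Gamma^2$ and $\Gamma^3$ pieces of the Bony decomposition involve only finitely many blocks of $f$ and can be bounded by $C_M\Vert\,|\!|\!| g_M|\!|\!|\,\Vert_{L^2_TL^2_x}$ (Lemma \ref{modify}); it then proves separately the finiteness \eqref{triple-norm-finite} of $\Vert\,|\!|\!| g_M|\!|\!|\,\Vert_{L^2_TL^2_x}$ by testing the equation against $W_{\delta'}(v)S_\delta(D_x)(M^\delta(D_v))^2 S_\delta(D_x)W_{\delta'}(v)g$ and invoking the Appendix commutator lemmas; and it runs the $\Delta_q$-energy estimate with the tempered weight $2^{3q/2}/(1+\kappa 2^{3q})$, which is a priori summable thanks to that finiteness, before sending $\kappa\to 0$ and then $M\to\infty$. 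The $f_M$-truncation and the $\kappa$-taming are precisely what legalize the absorption step, and both are missing from your outline.

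Two smaller points. First, you propose to absorb the $\cL_2 f$ remainder into $\Vert g\Vert_{\cT^{3/2}_{T,2,2}}$ via $\Vert\cdot\Vert_{L^2_{(\gamma+\nu)/2}}\lesssim|\!|\!|\cdot|\!|\!|$; that bound is weaker than the plain $L^2_v$ norm once $\gamma+\nu\le 0$, so the absorption fails for soft potentials. The paper instead puts this contribution into $\sqrt{CT}\,\Vert g\Vert_{\tilde L^\infty_T\tilde L^2_v(B^{3/2}_x)}$ and chooses $T$ small, which works uniformly. Second, the a priori bound used to set up Hahn-Banach is a purely $L^2_{x,v}$ estimate on the adjoint, obtained from Corollary \ref{AMUXYtrilinear} pointwise in $x$ with the factor $\Vert f\Vert_{L^\infty_x L^2_v}$, not from the Chemin-Lerner functional $A_T$ of Lemma \ref{trilinear} as your sketch suggests.
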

\begin{proof}
Consider
$$\mathcal{Q}=-\partial_t+(v\cdot \nabla_{x}+\mathcal{L}_1-\Gamma(f,\cdot))^*,$$
where the adjoint operator $(\cdot)^*$ is taken with respect to the scalar product in $L^2(\RR_{x,v}^6)$. 
Then,
for all $h \in C^{\infty}([0,T], \mathcal{S}(\rr_{x,v}^6))$, with $h(T)=0$ and~$0 \leq t \leq T$,
\begin{align*}
& \textrm{Re}\big(h(t),\mathcal{Q}h(t)\big)_{x,v} =   -\frac{1}{2}\frac{d}{dt}(\|h\|^2_{L^2_{x,v}})\\ \notag
&  \quad +\textrm{Re}(v\cdot\nabla_{x}h,h)_{{x,v}}+\textrm{Re}(\mathcal{L}_1 h,h)_{{x,v}}-\textrm{Re}(\Gamma(f,h),h)_{{x,v}} \\ \notag
\geq& \ -\frac{1}{2}\frac{d}{dt}\big(\|h(t)\|^2_{{L^2_{x,v}}} \big)
+\frac{1}{C}\Vert  |\!|\!| h(t) |\!|\!| \Vert^2_{L^2_x}- C \|h(t)\|_{{L^2_{x,v}}}^2\\
& \qquad -C\|f\|_{L^{\infty}([0,T] \times \RR_x^3; L^2(\rr_{v}^2))} 
\Vert |\!|\!| h(t) |\!|\!| \Vert^2_{L^2_x}\,,
\end{align*}
because $\mathcal{L}_1$ is a selfadjoint operator and $\textrm{Re}(v\cdot \nabla_{x}h,h)_{L^2_{x,v}}=0$.

Since $\tilde L_T^{\infty} \tilde L_v^2(B_x^{3/2}) \subset L^{\infty}([0,T] \times \RR_x^3; L^2(\rr_{v}^3))$, we have 
\begin{align*}
-\frac{d}{dt}\big(e^{2Ct}\|h(t)\|_{{L^2_{x,v}}}^2\big)+&\frac{1}{C}e^{2Ct}\Vert
 |\!|\!| h(t) |\!|\!| \Vert^2_{L^2_x} \\
&\leq 2e^{2Ct}\|h(t)\|_{{L^2_{x,v}}}\|\mathcal{Q}h(t)\|_{{L^2_{x,v}}},
\end{align*}
if $\varepsilon_0$ is sufficiently small. Since $h(T)=0$, for all $t \in [0,T]$ we have
\begin{align*}
& \ \|h(t)\|_{{L^2_{x,v}}}^2+\frac{1}{C}
\Vert |\!|\!| h |\!|\!| \Vert^2_{L^2([t,T]\times \RR^3_x)}\\
&\quad \leq  \  2\int_t^Te^{2C(\tau-t)}\|h(\tau)\|_{{L^2_{x,v}}}\|\mathcal{Q}h(\tau)\|_{{{L^2_{x,v}}}}d\tau \\
&\quad \leq 2e^{2CT}\|h\|_{L^{\infty}([0,T] ;L^2(\rr_{x,v}^6))}\|\mathcal{Q}h\|_{L^{1}([0,T],L^2(\rr_{x,v}^6))}, \enskip \text{so that}
\end{align*}
\begin{equation}\label{tr7} \|h\|_{L^{\infty}([0,T] ;L^2(\rr_{x,v}^6))} \leq 2 e^{2CT}\|\mathcal{Q}h\|_{L^{1}([0,T],L^2(\rr_{x,v}^6))}.
\end{equation}

Consider the vector subspace
\begin{align*}
\mathbb{W}&=\{w=\mathcal{Q}h : h \in C^{\infty}([0,T],\mathcal{S}(\rr_{x,v}^6)), \ h(T)=0\} \\
&\subset L^{1}([0,T],
L^2(\rr_{x,v}^6)).
\end{align*}
This inclusion holds because  it follows from Proposition \ref{upper-recent} that for $g \in L^2_{x,v}$
\begin{align*}
|(\Gamma(f,\cdot)^*h,g)_{L^2_{x,v}}|=|(h,\Gamma(f,g))_{L^2_{x,v}}|
\lesssim \|f \|_{L^\infty_x(L^2_v)}
\|g\|_{L^2_{x,v}} \|h\|_{L^2_x(H^{\nu}_{\gamma +\nu})}\,,
\end{align*}
and hence, for all $t \in [0,T]$,
$$\|\Gamma(f,\cdot)^*h\|_{L^2_{x,v}}  \lesssim
\|f \|_{L^\infty_x(L^2_v)} \|h\|_{L^2_x(H^{\nu}_{\gamma +\nu})}.$$
Since $g_0 \in L^2(\rr_{x,v}^6)$ we define 
the linear functional
\begin{align*}
\mathcal{G} \ : \qquad &\mathbb{W} \enskip \longrightarrow \cc\\ \notag
w=&\mathcal{Q}h \mapsto (g_0,h(0))_{L^2_{x,v}} -
(\mathcal{L}_2 f\,, h)_{L^2([0,T]; L^2_{x,v})}\,,
\end{align*}
where $h \in C^{\infty}([0,T],\mathcal{S}(\rr_{x,v}^6))$, with $h(T)=0$.
According to (\ref{tr7}), the operator $\mathcal{Q}$ is injective. The linear functional $\mathcal{G}$ is therefore well-defined. It follows from (\ref{tr7}) that $\mathcal{G}$ is a continuous linear form on $(\mathbb{W},\|\cdot\|_{L^{1}([0,T]; L^2(\rr_{x,v}^6))})$,
\begin{align*}
|\mathcal{G}(w)| &\leq \|g_0\|_{L^2_{x,v}}\|h(0)\|_{L^2_{x,v}}
+ C_T \|f\|_{L^\infty([0,T]; L^2_{x,v})} \|h\|_{L^{1}([0,T]; L^2(\rr_{x,v}^6))}
\\
&\le C'_T \Big ( \|g_0\|_{L^2_{x,v}}+ \|f\|_{L^\infty([0,T]; L^2_{x,v})}
\Big)
\|\mathcal{Q}h\|_{L^1([0,T];L^2(\rr_{x,v}^2))}\\
& = C'_T\Big ( \|g_0\|_{L^2_{x,v}}+ \|f\|_{L^\infty([0,T]; L^2_{x,v})}
\Big)\|w\|_{L^1([0,T];L^2(\rr_{x,v}^2))}\,.
\end{align*}
By using the Hahn-Banach theorem, $\mathcal{G}$ may be extended as a continuous linear form on
$$L^{1}([0,T]; L^2(\rr_{x,v}^6)),$$
with a norm smaller than 
$ C'_T \Big ( \|g_0\|_{L^2_{x,v}}+ \|f\|_{L^\infty([0,T]; L^2_{x,v})}
\Big)$.
It follows that there exists $g \in L^{\infty}([0,T]; L^2(\rr_{x,v}^6))$ satisfying
$$\|g\|_{L^{\infty}([0,T],L^2(\rr_{x,v}^6))} \leq 
 C'_T \Big ( \|g_0\|_{L^2_{x,v}}+ \|f\|_{L^\infty([0,T]; L^2_{x,v})}
\Big),$$
such that
$$\forall w \in L^{1}([0,T]; L^2(\rr_{x,v}^6)), \quad \mathcal{G}(w)=\int_0^T(g(t),w(t))_{L^2_{x,v}}dt.$$
This implies
that for all $h \in 
C_0^{\infty}((-\infty,T),\mathcal S(\rr_{x,v}^6))$,
\begin{align*}
\mathcal{G}(\mathcal{Q}h)&=\int_0^T(g(t),\mathcal{Q}h(t))_{L^2_{x,v}}dt\\
&=(g_0,h(0))_{L^2_{x,v}}
- \int_0^T (\mathcal{L}_2 f(t)\,, h(t))_{ L^2_{x,v}}dt\,.
\end{align*}
This shows that $g \in L^{\infty}([0,T]; L^2(\rr_{x,v}^6))$ is a weak solution of the Cauchy problem
\begin{equation}\label{cl1ff1}
\begin{cases}
\partial_tg+v\cdot \nabla_{x}g+\mathcal{L}_1g=\Gamma(f,g) -\mathcal{L}_2f 
,\\
g|_{t=0}=g_0.
\end{cases}
\end{equation}

It remains to prove that $g$ satisfies \eqref{energy-important}. Here we only give
a formal proof, temporarily assuming 
$g \in \tilde L_T^\infty \tilde L^2_v (B^{3/2}) \cap \cT^{3/2}_{T,2,2}$, 
because too many ingredients are required for its rigorous proof, which 
will be given at the end of this section.

\noindent
{\it Formal proof of \eqref{energy-important}}:
Applying $\Delta_q ( q \ge -1)$ to \eqref{cl1ff1} and taking the inner product $2^{3q} \Delta_q g$ over $\RR^6_{x,v}$, we obtain
\begin{align*}
&\frac{d }{dt}2^{3q} \|\Delta_q g\|^2_{x,v}
 + \frac{2^{3q}}{C} \| |\!|\!|\Delta_q g|\!|\!| \|_{L^2_x}^2 \\
&\quad \le 2^{3q+1} \left(\Delta_q \Gamma(f,g), \Delta_q g\right)_{x,v} +  2^{3q} \|\Delta_q f\|^2_{x,v}
+ C 2^{3q}\|\Delta_q g\|^2_{x,v}.
\end{align*}
Integrate this with respect to the time variable over $[0,t]$ with $0 \le t \le T$,
take the square root of
both sides of the resulting inequality and sum up over $q\ge -1$. Then it follows from \eqref{tri1} of Lemma \ref{trilinear} that
\begin{align}\label{local-energy}
&\|g\|_{\tilde L_T^\infty \tilde L^2_v (B^{3/2}) }+ \frac{1}{\sqrt C}\|g\|_{\cT^{3/2}_{T,2,2}}\le 
C'\|f\|^{1/2}_{\tilde L_T^\infty \tilde L^2_v (B^{3/2}) }\|g\|_{\cT^{3/2}_{T,2,2}} \notag \\
&\quad  + 2\Big( \|g_0\|_{\tilde L^2_v (B^{3/2}) } + \sqrt{ T} \|f\|_{\tilde L_T^\infty \tilde L^2_v (B^{3/2}) }  + 
 \sqrt{ CT} \|g\|_{\tilde L_T^\infty \tilde L^2_v (B^{3/2}) }\Big)\,.
\end{align}

\end{proof}

\begin{thm}[Local Existence]
\label{local-existence}
There exist $\varepsilon_1$, $T>0$ such that  if $ g_0 \in \tilde{L}^2_v (B^{3/2}_x)$ and 
\begin{equation*}
\Vert g_0 \Vert_{\tilde{L}^2_v (B^{3/2}_x)}\le \varepsilon_1,
\end{equation*}
then the Cauchy problem \eqref{cl1ff1}
admits a  unique solution 
$$g(x,v,t) \in \tilde L^\infty_T \tilde L^2_v (B^{3/2}) \, \mathop{\cap} \,  \cT_{T,2,2}^{3/2}\,\,.
$$
\end{thm}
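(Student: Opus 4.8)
The plan is to construct the local solution by a standard iteration scheme built on the linear existence result of Lemma \ref{lemma2}, using the energy estimate \eqref{energy-important} as the source of uniform bounds. First I would fix $\varepsilon_1$ small (to be chosen) and, given $g_0$ with $\|g_0\|_{\tilde L^2_v(B^{3/2}_x)}\le\varepsilon_1$, define the Picard-type sequence $g^{(0)}:=0$ and, for $n\ge 0$, let $g^{(n+1)}$ be the solution of the linear Cauchy problem
\begin{align*}
\partial_t g^{(n+1)}+v\cdot\nabla_x g^{(n+1)}+\mathcal L_1 g^{(n+1)}=\Gamma(g^{(n)},g^{(n+1)})-\mathcal L_2 g^{(n)},\qquad g^{(n+1)}|_{t=0}=g_0,
\end{align*}
furnished by Lemma \ref{lemma2}, provided $\|g^{(n)}\|_{\tilde L^\infty_T\tilde L^2_v(B^{3/2}_x)}\le\epsilon_0$. (Strictly, Lemma \ref{lemma2} solves the equation with $\Gamma(f,g)-\mathcal L_2 f$ for a \emph{given} $f$; here $f=g^{(n)}$, so this is exactly the iteration the lemma was designed to feed.) By \eqref{energy-important},
\begin{align*}
\|g^{(n+1)}\|_{\tilde L^\infty_T\tilde L^2_v(B^{3/2}_x)}+\|g^{(n+1)}\|_{\mathcal T^{3/2}_{T,2,2}}\le C_0\bigl(\|g_0\|_{\tilde L^2_v(B^{3/2}_x)}+\sqrt T\,\|g^{(n)}\|_{\tilde L^\infty_T\tilde L^2_v(B^{3/2}_x)}\bigr).
\end{align*}
Choosing first $\varepsilon_1$ so small that $2C_0\varepsilon_1\le\min\{\epsilon_0,\varepsilon_0\}$ and then $T=T_\ast$ so small that $C_0\sqrt{T_\ast}\le 1/2$, an induction shows $\|g^{(n)}\|_{\tilde L^\infty_T\tilde L^2_v(B^{3/2}_x)}+\|g^{(n)}\|_{\mathcal T^{3/2}_{T,2,2}}\le 2C_0\varepsilon_1$ for all $n$, so the sequence stays in the closed ball where the scheme is defined.

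Next I would prove that $(g^{(n)})$ is Cauchy, but — since the dual of $\tilde L^\infty_T\tilde L^2_v(B^{3/2}_x)$ is not available and the map $g^{(n)}\mapsto g^{(n+1)}$ is only known to be solvable weakly in $L^\infty([0,T];L^2(\RR^6_{x,v}))$ — I would estimate the difference $w^{(n)}:=g^{(n+1)}-g^{(n)}$ in the \emph{weaker} space $L^\infty_T L^2_v L^2_x$ (equivalently, not summing the dyadic blocks with weights). Subtracting the equations for two consecutive iterates gives
\begin{align*}
\partial_t w^{(n)}+v\cdot\nabla_x w^{(n)}+\mathcal L_1 w^{(n)}=\Gamma(g^{(n)},w^{(n)})+\Gamma(w^{(n-1)},g^{(n)})-\mathcal L_2 w^{(n-1)},
\end{align*}
with $w^{(n)}|_{t=0}=0$. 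Testing against $w^{(n)}$, using coercivity of $\mathcal L_1$ modulo $\|w^{(n)}\|_{L^2}^2$ (as in the proof of Lemma \ref{lemma2}), the bound $|(\Gamma(g^{(n)},w^{(n)}),w^{(n)})|\lesssim\|g^{(n)}\|_{L^2_vL^\infty_x}\,\||\!|\!| w^{(n)}|\!|\!|\|_{L^2_x}^2$ (Corollary \ref{AMUXYtrilinear}), absorbing the triple-norm term into the dissipation once $\varepsilon_1$ is small, and controlling $\Gamma(w^{(n-1)},g^{(n)})$ and $\mathcal L_2 w^{(n-1)}$ by $\|w^{(n-1)}\|_{L^2_vH^{\nu}_{\gamma+\nu}}$-type quantities times the $\tilde L^\infty\tilde L^2_v(B^{3/2}_x)$-norm of $g^{(n)}$, one obtains a Gronwall inequality of the form
\begin{align*}
\sup_{[0,T]}\|w^{(n)}(t)\|_{L^2_{x,v}}^2+\||\!|\!| w^{(n)}|\!|\!|\|_{L^2([0,T]\times\RR^3_x)}^2\le C\bigl(\varepsilon_1+\sqrt T\bigr)\sup_{[0,T]}\|w^{(n-1)}(t)\|_{L^2_{x,v}}^2.
\end{align*}
Shrinking $\varepsilon_1$ and $T$ so the prefactor is $\le 1/2$ makes the iteration a contraction in $L^\infty_T L^2_{x,v}$, hence $g^{(n)}\to g$ strongly there.

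It remains to upgrade the limit to the stated solution space and prove uniqueness. Since $(g^{(n)})$ is bounded in $\tilde L^\infty_T\tilde L^2_v(B^{3/2}_x)\cap\mathcal T^{3/2}_{T,2,2}$ and these spaces have the Fatou-type property that a bounded sequence converging in the weaker topology $L^\infty_T L^2_{x,v}$ (after passing to a weak-$\ast$ limit block-by-block) has its limit in the space with no increase of norm — the dyadic blocks $\Delta_q g^{(n)}$ converge in $L^\infty_T L^2_{x,v}$, so by lower semicontinuity $\sum_q 2^{3q/2}\|\Delta_q g\|_{L^\infty_T L^2_{x,v}}\le\liminf_n(\cdots)\le 2C_0\varepsilon_1$, and similarly for the $\mathcal T^{3/2}_{T,2,2}$-norm. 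Passing to the limit in the weak formulation (the trilinear and linear terms are continuous under the strong $L^2_{x,v}$ convergence plus the uniform Besov bound, via the estimates of Lemma \ref{trilinear} and Proposition \ref{upper-recent} applied to differences) shows $g$ solves \eqref{cl1ff1}. Uniqueness follows from the same difference estimate as above applied to two solutions in the ball. The main obstacle I anticipate is exactly the passage from the weak $L^\infty_T L^2_{x,v}$ solution to membership in $\tilde L^\infty_T\tilde L^2_v(B^{3/2}_x)\cap\mathcal T^{3/2}_{T,2,2}$ together with the genuine (non-formal) justification of \eqref{energy-important} for that solution: because the Chemin-Lerner triple norm space has an unknown dual, one cannot test the equation directly against $2^{3q}\Delta_q g$ inside it, and one must instead regularize (mollify in $x$ and $v$, truncate frequencies) and handle the commutators between $\Delta_q$, the mollifiers, and the non-local operators $\Gamma,\mathcal L$ — these are precisely the delicate commutator lemmas the paper defers to the Appendix. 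Once those commutator estimates are in hand, the energy identity holds for the regularized problem, the a priori bound survives the limit, and the theorem follows.
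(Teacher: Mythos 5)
Your iteration scheme and smallness/induction argument for the uniform bound \eqref{iterate-sol} coincide exactly with the paper's, and you correctly locate the genuinely hard step (justifying that each iterate truly lies in $\tilde L^\infty_T\tilde L^2_v(B^{3/2}_x)\cap\cT^{3/2}_{T,2,2}$, i.e.\ the rigorous form of \eqref{energy-important}), which the paper handles in Section 5.2 via the $S_M$-truncation, the modified norm $\|\cdot\|_{\cT^{s,\kappa}_{T,2,2}}$, and the commutator estimates in the Appendix. Where you diverge is in the convergence of the iterates: the paper derives a contraction inequality \emph{directly in} $\tilde L^\infty_T\tilde L^2_v(B^{3/2})\cap\cT^{3/2}_{T,2,2}$ (``Similar to the computation for \eqref{local-energy}\ldots''), whereas you contract in the weaker $L^\infty_T L^2_{x,v}$ metric (plus the $L^2_{T,x}$ triple-norm dissipation) and then upgrade the limit by a Fatou-type lower-semicontinuity argument using the uniform bound. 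Both routes close. Yours spares you from re-running the full Chemin--Lerner energy argument on the difference equation, but it costs you the Fatou step, and one detail there deserves care: strong convergence of $\Delta_q g^{(n)}$ in $L^\infty_T L^2_{x,v}$ does \emph{not} give convergence of $\||\!|\!|\Delta_q g^{(n)}|\!|\!|\|_{L^2_T L^2_x}$, since the triple norm carries $v$-derivatives and weights; you should instead extract a weak limit in the weighted Sobolev space controlling $|\!|\!|\cdot|\!|\!|$ (using \eqref{up-down-triple}) and invoke weak lower semicontinuity block by block, then Fatou for the series. With that repair, your argument is correct and somewhat more elementary than the paper's contraction-in-the-strong-space route, though it leans on the same deferred Section 5.2 machinery for the a priori membership of each $g^{(n)}$ in the Chemin--Lerner space.
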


\begin{proof}
Consider the sequence of approximate solutions defined by
\begin{equation}\label{itere}\begin{cases}
\partial_tg^{n+1}+v\cdot \nabla_{x}g^{n+1}+\mathcal{L}_1g^{n+1}=\Gamma(g^n,g^{n+1}) -\mathcal{L}_2g^n 
,\\
g^{n+1}|_{t=0}=g_0,   \hskip1.5cm ( n = 0,1,2,\cdots, \enskip g^0 = 0)\,.
\end{cases}
\end{equation}
Use Lemma \ref{lemma2} with $g = g^{n+1}, f = g^n$ and $T = \min \{T_0, 1/(4C_0^2)\}$.
Then we have 
\begin{equation}\label{iterate-sol}
{\|g^n \|_{\tilde L^\infty_T \tilde L^2 (B^{3/2})} + \|g^n\|_{\cT_{T,2,2}^{3/2}} \le \varepsilon_0}, 
\end{equation}
inductively,
if $\varepsilon_1$ is taken such that $2C_0 \varepsilon_1  \le \varepsilon_0$.
It remains to prove the convergence of the sequence 
$$\{g^n\,, \, n \in \NN\} \subset  \tilde L^\infty_T \tilde L^2_v (B^{3/2}) \, \mathop{\cap} \,  \cT_{T,2,2}^{3/2}\,.
$$
Setting $w^n = g^{n+1} - g^n$, from \eqref{itere} we have 
\[
\partial_t w^{n}+v\cdot \nabla_{x}w^{n}+\mathcal{L}_1w^{n} =\Gamma(g^n, w^n)+
\Gamma(w^{n-1},g^{n}) -\mathcal{L}_2w^{n-1}\,,
\]
with $w^{n}|_{t=0}=0$. Similar to the computation for \eqref{local-energy}, we obtain 
\begin{align*}
&\|w^n\|_{\tilde L_T^\infty \tilde L^2_v (B^{3/2}) }+ \frac{1}{\sqrt C}\|w^n\|_{\cT^{3/2}_{T,2,2}}  \le 
C\Big( \|g^n\|^{1/2}_{\tilde L_T^\infty \tilde L^2_v (B^{3/2}) }\|w^n\|_{\cT^{3/2}_{T,2,2}} \\
& \qquad \qquad \qquad  + \|w^{n-1}\|^{1/2}_{\tilde L_T^\infty \tilde L^2_v (B^{3/2}) } \|g^n\|^{1/2}_{\cT^{3/2}_{T,2,2}}
\|w^n\|^{1/2}_{\cT^{3/2}_{T,2,2}}\\
&\qquad \qquad \qquad + \sqrt{ T} \|w^{n-1}\|^{1/2}_{\tilde L_T^\infty \tilde L^2_v (B^{3/2}) }  
\|w^{n}\|^{1/2}_{\tilde L_T^\infty \tilde L^2_v (B^{3/2}) }\Big)\,.
\end{align*}
If $\varepsilon_0$ and $T$ are sufficiently small then we have  
\begin{align*}  \|w^n\|_{\tilde L_T^\infty \tilde L^2_v (B^{3/2}) } + \frac{1}{\sqrt C}\|w^n\|_{\cT^{3/2}_{T,2,2}}
 &\le \lambda \,\,\|w^{n-1}\|_{\tilde L_T^\infty \tilde L^2_v (B^{3/2}) }\\
& \le \lambda^{n-1} \,\,\|w^{1}\|_{\tilde L_T^\infty \tilde L^2_v (B^{3/2}) }
\end{align*}
for some $0< \lambda <1$,  
which concludes that
\[
\mbox{ $\{g^n\}$ is a Cauchy sequence in $\tilde L_T^\infty \tilde L^2_v (B^{3/2})\cap 
\cT^{3/2}_{T,2,2}$, }
\]
and the limit function $g$ is a desired solution to the Cauchy problem
\begin{align*}
\partial _t g + v\cdot\nabla_x g + \cL g=\Gamma(g,\,g), \,\,\,\,  g|_{t=0}=g_0\,.
\end{align*}

\end{proof}

\subsection{Rigorous proof of \eqref{energy-important}}

The preceding proof of \eqref{energy-important} is formal
since we a priori  assumed the left hand side of \eqref{local-energy} is finite. The rigorous proof requires more involved 
procedure. We start by the following lemma. 

\begin{lem}
\label{modify}
Let $0<s\le \frac{3}{2}$ and $0<T \le \infty$. For $ M \in \NN$ put 
$\displaystyle f_M = S_M f = \sum _{q\ge -1}^{M -1} \Delta_q f$. 
If $g$ satisfies 
\begin{align}\label{triple-norm-finite}
\left \Vert |\!|\!| g  |\!|\!| \right \Vert_{L^2_T L^2_x}^2  = \int_0^T \int_{\RR_x^3}
|\!|\!| g|\!|\!| ^2 dx dt < \infty\,,
\end{align}
then there exists a  $C>0$ independent of $M$ such that   for any $\kappa >0$
\begin{align}\label{cut-x}\notag
\sum_{q \ge -1} &\frac{2^{qs}}{1+ \kappa 2^{2qs}}\left( \int^T_0 \vert ( \Delta_q \Gamma (f_M,g), \Delta_q g)_{x,v} \vert dt \right)^{1/2}\\ &\le C \Vert f \Vert_{\tilde{L}^\infty_T \tilde{L}^2_v(B^{{3/2}}_x)}^{1/2} \Vert g \Vert_{\mathcal{T}_{T,2,2}^{s, \kappa}} \notag
 \\
&\qquad + C_M  \Vert f_M \Vert_{\tilde{L}^\infty_T \tilde{L}^2_v(B^{{3/2}}_x)}^{1/2} \left \Vert |\!|\!| g  |\!|\!| \right \Vert_{L^2_T L^2_x}
 \,,  
\end{align}
where $C_M >0$ is a constant depending only on $M$ and 
\[
\Vert g \Vert_{\cT^{s, \kappa}_{T,2,2}} = \sum_{q \ge -1}^\infty \frac{2^{qs}}
{1+\kappa 2^{2qs}} \left \Vert \,
|\!|\!| \Delta_q g|\!|\!|\, \right\Vert_{L^{2}_T L^2_x}  .
\]
\end{lem}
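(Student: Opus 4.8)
The plan is to split the sum defining the left–hand side of \eqref{cut-x} at the output frequency $q\sim M$: the finitely many terms with $q$ small will be bounded crudely (this is where the $M$-dependent constant $C_M$ and the weak quantity $\Vert |\!|\!| g |\!|\!| \Vert_{L^2_T L^2_x}$ enter), while the high-frequency tail will be handled by a weighted version of the $\Gamma^1$-estimate from the proof of \eqref{tri1}. The structural fact that makes this split work concerns the Bony decomposition $\Gamma(f_M,g)=\Gamma^1(f_M,g)+\Gamma^2(f_M,g)+\Gamma^3(f_M,g)$ from the proof of Lemma \ref{trilinear}: since $\Gamma$ acts only on $v$ it is local in $x$, so the $x$-Fourier support of a product equals the sum of those of its factors, and $\Delta_j f_M=\Delta_j S_M f$ vanishes once $j$ exceeds $M$ by an absolute constant; hence the pieces $\Gamma^2(f_M,g)=\sum_j\Gamma(\Delta_j f_M,S_{j-1}g)$ and $\Gamma^3(f_M,g)=\sum_j\sum_{|j-j'|\le 1}\Gamma(\Delta_{j'}f_M,\Delta_j g)$ have $x$-Fourier support in a fixed ball $\{|\xi|\lesssim 2^M\}$. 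Consequently there is an absolute $C_0$, depending only on the supports of $\chi$ and $\phi$, with $\Delta_q\Gamma(f_M,g)=\Delta_q\Gamma^1(f_M,g)$ for every $q\ge M+C_0$.

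For the low range $-1\le q<M+C_0$ I would discard the damping, using $2^{qs}/(1+\kappa 2^{2qs})\le 2^{qs}$, and bound each term crudely on the whole of $\Gamma$: writing $(\Delta_q\Gamma(f_M,g),\Delta_q g)_{x,v}=(\Gamma(f_M,g),\Delta_q^2 g)_{x,v}$ (self-adjointness of $\Delta_q$ in $x$) and applying the pointwise-in-$x$ trilinear bound of Corollary \ref{AMUXYtrilinear}, Cauchy--Schwarz in $x$, the elementary inequality $\Vert |\!|\!|\Delta_q^2 g|\!|\!| \Vert_{L^2_x}\lesssim\Vert |\!|\!| g|\!|\!| \Vert_{L^2_x}$, and Cauchy--Schwarz in $t$, one obtains
\[
\int_0^T|(\Delta_q\Gamma(f_M,g),\Delta_q g)_{x,v}|\,dt\ \lesssim\ \|f_M\|_{L^\infty_T L^2_v L^\infty_x}\,\Vert |\!|\!| g|\!|\!| \Vert_{L^2_T L^2_x}^2\ \lesssim\ \|f_M\|_{\tilde{L}^\infty_T\tilde{L}^2_v(B^{3/2}_x)}\,\Vert |\!|\!| g|\!|\!| \Vert_{L^2_T L^2_x}^2 ,
\]
where the last step uses $B^{3/2}_x\hookrightarrow L^\infty_x$ and Lemma \ref{Besov and Chemin-Lerner}. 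Multiplying by $2^{qs}$, taking the square root, and summing over the finitely many $q<M+C_0$ then produces the second term of \eqref{cut-x}, with $C_M\sim\sum_{q=-1}^{M+C_0-1}2^{qs}$.

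For the high range $q\ge M+C_0$ only $\Gamma^1$ survives, and $\Delta_q\Gamma^1(f_M,g)=\Delta_q\sum_{|j-q|\le 4}\Gamma(S_{j-1}f_M,\Delta_j g)$ exactly as in the proof of \eqref{tri1}. I would rerun that estimate with $h=g$, carrying the weight $2^{qs}/(1+\kappa 2^{2qs})$ in place of $2^{qs}$. The one new ingredient needed is that the weight is slowly varying, namely $2^{qs}/(1+\kappa 2^{2qs})\le C\,2^{js}/(1+\kappa 2^{2js})$ whenever $|j-q|\le 4$ (with $C$ absolute), which allows it to be transferred onto the factor $\Vert |\!|\!|\Delta_j g|\!|\!| \Vert_{L^2_T L^2_x}$; after that, Cauchy--Schwarz in $q$ followed by Fubini's theorem and Young's inequality — as in the proof of \eqref{tri1} — give
\[
\sum_{q\ge M+C_0}\frac{2^{qs}}{1+\kappa 2^{2qs}}\Bigl(\int_0^T|(\Delta_q\Gamma^1(f_M,g),\Delta_q g)_{x,v}|\,dt\Bigr)^{1/2}\ \lesssim\ \|f_M\|_{\tilde{L}^\infty_T\tilde{L}^2_v(B^{3/2}_x)}^{1/2}\,\|g\|_{\cT^{s,\kappa}_{T,2,2}} ,
\]
and one concludes with $\|f_M\|_{\tilde{L}^\infty_T\tilde{L}^2_v(B^{3/2}_x)}\lesssim\|f\|_{\tilde{L}^\infty_T\tilde{L}^2_v(B^{3/2}_x)}$, uniformly in $M$. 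Adding the two ranges yields \eqref{cut-x}; both contributions, and hence the left-hand side, are finite since $\sum_{q\ge -1}2^{qs}/(1+\kappa 2^{2qs})<\infty$ for fixed $\kappa>0$ and $\Vert |\!|\!| g|\!|\!| \Vert_{L^2_T L^2_x}<\infty$ by hypothesis \eqref{triple-norm-finite}.

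The main obstacle is really this frequency bookkeeping. Attempting the full weighted estimate for $\Gamma^2$ and $\Gamma^3$ over all $q$ fails: in $\Gamma^2$ the variable $g$ enters through a low-frequency block $S_{j-1}g$, and in $\Gamma^3$ through an unbounded sum $\sum_{j\ge q-3}$, and neither can be absorbed into $\|g\|_{\cT^{s,\kappa}_{T,2,2}}$, because the weight $2^{qs}/(1+\kappa 2^{2qs})$ first rises and then falls and so is not monotone enough to be transferred across the wide frequency gaps occurring in those pieces. Confining $\Gamma^2$ and $\Gamma^3$ to the band $q\lesssim M$, where the crude estimate applies and costs only the harmless constant $C_M$, is exactly what removes the difficulty.
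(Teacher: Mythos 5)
Your proposal is correct and rests on the same two observations as the paper's proof — that the slowly varying weight $2^{qs}/(1+\kappa 2^{2qs})$ can be carried through the $\Gamma^1$ estimate from \eqref{tri1}, and that replacing $f$ by $f_M$ kills the $\Gamma^2$ and $\Gamma^3$ contributions at high output frequency, so they only cost a constant $C_M$ against the weak norm $\Vert|\!|\!| g|\!|\!|\Vert_{L^2_TL^2_x}$ — but it organizes the argument along a different axis. The paper decomposes along the Bony pieces and handles each of $\Gamma^1,\Gamma^2,\Gamma^3$ separately over all $q$: the $\Gamma^1$ estimate is run with the $\kappa$-weight for every $q\ge -1$, while the $j$-restriction $\Delta_j f_M=0$ for $j>M$ automatically truncates the $q$-sums in $\Gamma^2$ (to $q\le M+4$) and $\Gamma^3$ (to $q\le M+O(1)$), and a crude bound finishes each. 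You instead split the $q$-sum itself at $q\sim M$: below the threshold you apply a single crude trilinear bound to the full $\Delta_q\Gamma(f_M,g)$ without invoking Bony at all, and above the threshold you observe — correctly, by the $x$-Fourier-support argument you give — that only $\Gamma^1$ survives and rerun the weighted estimate there. The result is the same bound with the same two terms; your version is marginally more economical in the low range (one crude estimate instead of three) at the price of making the Fourier-support fact explicit, and it makes visible the reason the weight argument fails for $\Gamma^2,\Gamma^3$ over all $q$ (the non-monotonicity of the weight obstructs the $\dot{\cT}^{3/2}$ absorption of $S_{j-1}g$ and of the tail sum $\sum_{j\ge q-3}$), which the paper leaves implicit. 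Two small remarks: the crude bound in your low range does need the observation that $\Vert|\!|\!|\Delta_q^2 g|\!|\!|\Vert_{L^2_x}\lesssim\Vert|\!|\!| g|\!|\!|\Vert_{L^2_x}$, which the paper also invokes and which follows from Minkowski's integral inequality plus $\ell^1$-boundedness of the kernel of $\Delta_q^2$; and your slowly-varying constant should be allowed to depend on $s$ (but not on $\kappa$ or $M$), which is harmless.
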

\begin{proof} We notice that 
$\Vert g \Vert_{\cT^{s, \kappa}_{T,2,2}} < \infty$  for each $\kappa >0$ follows from \eqref{triple-norm-finite}.
The proof of the lemma is the almost same procedure as the one for \eqref{tri1}
of Lemma \ref{trilinear}. Indeed, recalling the Bony decomposition, for $\Gamma^1 (f_M, g)$
we obtain 
\begin{align*}
&\sum_{q \ge -1} \frac{2^{qs}}{1+\kappa 2^{2qs}}\left( \int^T_0 \vert ( \Delta_q \Gamma^1 (f_M,g), \Delta_q g)_{x,v} \vert dt \right)^{1/2} \\ \notag
&\lesssim \sum_{q \ge -1}\frac{2^{qs}}{1+\kappa 2^{2qs}} \left(\int_0^T \sum_{\vert j-q \vert \le 4} \Vert f \Vert_{L^2_v L^\infty_x} \Vert |\!|\!| \Delta_j g |\!|\!| \Vert_{L^2_x} \Vert |\!|\!| \Delta_q g |\!|\!| \Vert_{L^2_x} dt \right)^{1/2} \\
\notag
&\lesssim \Vert f \Vert_{\tilde{L}^\infty_T \tilde{L}^2_v (B^{3/2}_x)}^{1/2} \left( \sum_{q \ge -1} 
\frac{2^{qs}}{1+\kappa 2^{2qs}}\left( \sum_{\vert j-q \vert \le 4} \int_0^T \Vert |\!|\!| \Delta_j g |\!|\!| \Vert_{L^2_x}^2 dt \right)^{1/2} \right)^{1/2} \\
\notag
& \qquad \qquad \times \left( \sum_{q \ge -1} \frac{2^{qs}}{1+\kappa 2^{2qs}} \left(\int_0^T \Vert |\!|\!| \Delta_q g |\!|\!| \Vert_{L^2_x}^2 dt \right)^{1/2} \right)^{1/2} \\ \notag
&\lesssim \Vert f \Vert_{\tilde{L}^\infty_T \tilde{L}^2_v (B^{3/2}_x)}^{1/2} \Vert g \Vert_{\mathcal{T}^{s, \kappa}_{T,2,2}}, 
\end{align*}
similar as \eqref{tri-11}.
As for $\Gamma^2(f_M,  g)$, we have\begin{align*}
&\sum_{q \ge -1} \frac{2^{qs}}{1+\kappa 2^{2qs}}\left( \int^T_0 \vert ( \Delta_q \Gamma^2 (f_M,g), \Delta_q g)_{x,v} \vert dt \right)^{1/2} \\ 
& \lesssim \sum_{q \ge -1} \frac{2^{qs}}{1+\kappa 2^{2qs}} \left( \sum_{\stackrel{\vert j-q \vert \le 4}{
j \le M}}\int^T_0 \Vert |\!|\!| S_{j-1} g |\!|\!| \Vert_{L^\infty_x} \Vert \Delta_j f_M \Vert_{L^2_{x,v}} \Vert |\!|\!| \Delta_q g |\!|\!| \Vert_{L^2_x} dt \right)^{1/2} \\
& \lesssim \sum_{q \ge -1}^{M+4} \frac{2^{qs}}{1+\kappa 2^{2qs}}\left( 
\Vert f_M \Vert_{\tilde{L}^\infty_T \tilde{L}^2_v (B^{3/2}_x)}^{1/2}\Vert |\!|\!| S_{M+4} g |\!|\!| \Vert_{L^2_TL^2_x} \Vert |\!|\!| \Delta_q g |\!|\!| \Vert_{L^2_TL^2_x} \right)^{1/2} \\
& \le C_M 
\Vert f_M \Vert_{\tilde{L}^\infty_T \tilde{L}^2_v (B^{3/2}_x)}^{1/2} \Vert |\!|\!| g |\!|\!| \Vert_{L^2_TL^2_x} .
\end{align*}
Since 
\begin{align*}
\Delta_q\left( \sum_j \sum_{\vert j-j'\vert \le 1} (\Delta_{j'} f_M, \Delta_j g) \right)&= \Delta_q \left(\sum_{\max\{j,j'\} \ge q-2 } \sum_{\vert j-j' \vert \le 1} (\Delta_{j'} f_M, \Delta_j g)\right)\\
&=0  \enskip \mbox{if $q \geq M+3$},
\end{align*}
the term corresponding to $\Gamma^3$ is estimated as follows:
\begin{align*}
&\sum_{q \ge -1} ^{M+2} \frac{2^{qs}}
{1+ \kappa 2^{2qs}}\left( \int^T_0 \vert ( \Delta_q \Gamma^3 (f_M,g), \Delta_qg)_{x,v} \vert dt \right)^{1/2} \\
& \lesssim \sum_{q \ge -1}^{M+2}  \frac{2^{qs}}
{1+ \kappa 2^{2qs}}\left( \sum_{j \le M+1} \int^T_0 \Vert f_M \Vert_{L^2_vL^\infty_x} \Vert |\!|\!| \Delta_j g |\!|\!| \Vert_{L^2_x} \Vert |\!|\!| \Delta_q g |\!|\!| \Vert_{L^2_x} dt \right)^{1/2} \\
& \le C_M  \Vert f_M \Vert_{\tilde L^\infty_T \tilde L^2_v (B^{3/2}_x)}^{1/2} \Vert |\!|\!|  g |\!|\!| \Vert_{L^2_{T,x}}  .
\end{align*} 
Thus the proof is completed.
\end{proof}

For each $f_M$ $(M \in \NN)$ we consider a weak solution $g_M \in  L^{\infty}([0,T]; L^2(\rr_{x,v}^6))$ to the Cauchy problem
\eqref{l-e-c} with $f$ replaced by $f_M$. If $g_M$ satisfies \eqref{triple-norm-finite}, then by the same procedure as in the formal proof of 
\eqref{energy-important}  we obtain 
\begin{align*}
&\sum_{q \ge -1} \frac{2^{3q/2}}{1+ \kappa 2^{3q}} \sup_{0 \le t \le T } \|\Delta_q g_M(t)\|_{x,v} + \frac{1}{\sqrt C}
\Vert g_M \Vert_{\cT^{3/2, \kappa}_{T,2,2}} \\
&\quad \le C'\|f\|^{1/2}_{\tilde L_T^\infty \tilde L^2_v (B^{3/2}) }\|g_M\|_{\cT^{3/2,\kappa}_{T,2,2}} \notag\\
&\quad  + 2\Big( \sum_{q \ge -1} \frac{2^{3q/2}}{1+ \kappa 2^{3q}} \|\Delta_q g_0\|_{x,v} + \sqrt{ T} \|f\|_{\tilde L_T^\infty \tilde L^2_v (B^{3/2}) } 
+ 
 \sqrt{ CT}\\
& \quad  \times \sum_{q \ge -1} \frac{2^{3q/2}}{1+ \kappa 2^{3q}} \sup_{0 \le t \le T } \|\Delta_q g_M(t)\|_{x,v}
\Big) + C_M  \Vert f_M \Vert_{\tilde{L}^\infty_T \tilde{L}^2_v(B^{{3/2}}_x)}^{1/2} \left \Vert |\!|\!| g_M  |\!|\!| \right \Vert_{L^2_T L^2_x}\,.
\end{align*}
Choose a small $T >0$ independent of $M$ and $\kappa$.  Then letting $\kappa \rightarrow 0$, we get 
\[
\|g_M \|_{\tilde L_T^\infty \tilde L^2_v (B^{3/2}) }+ \|g_M \|_{\cT^{3/2}_{T,2,2}} < \infty\,,
\]
which permits the preceding formal proof and we obtain the energy estimate \eqref{local-energy} for each $g_M$. 
Set $w_{M,M'} = g_M - g_{M'}$, $M, M' \in \NN$.  Then it follows from \eqref{l-e-c} that
\begin{align*}
&\partial_t w_{M,M'} +v\cdot \nabla_{x} w_{M, M'} +\mathcal{L}_1w_{M,M'}\\
& \qquad \qquad =\Gamma(f_M - f_{M'}, g_M)
+ \Gamma(f_{M'}, w_{M,M'}) -\mathcal{L}_2(f_M - f_{M'})\,.
\end{align*}
Since $\{ f_M\}$ is a Cauchy sequence in $\tilde L_T^{\infty} \tilde L_v^2(B_x^{3/2})$, it is easy to see that 
$\{g_M\}$ is a Cauchy sequence in $ \tilde L^\infty_T \tilde L^2_v (B^{3/2}) \, \mathop{\cap} \,  \cT_{T,2,2}^{3/2}$, 
by the similar manipulation as in the proof of Theorem \ref{local-existence}. Since $ \displaystyle g = \lim_{M\rightarrow \infty} g_M$
belongs to $ \tilde L^\infty_T \tilde L^2_v (B^{3/2}) \, \mathop{\cap} \,  \cT_{T,2,2}^{3/2}$, the preceding formal proof is justified. 

It remains to show \eqref{triple-norm-finite} for a weak solution
$g_M \in L^{\infty}([0,T]; L^2(\rr_{x,v}^6))$, under the assumption that 
$\|f_M \|_{L^\infty([0,T]\times \RR_x^3; L_2(\RR^3_v))}$ is sufficiently small, independent of $M$, and
moreover $\|\nabla_x f_M \|_{L^\infty([0,T]\times \RR_x^3; L_2(\RR^3_v))} < \infty$.
For the brevity,
we write $g$ and $f$ instead of $g_M$ and $f_M$, respectively.

Let us take different $1 > \delta, \delta' >0$. We use a weight function
$W_{\delta'}(v) = \la \delta' v \ra^{-N}$ for $N \ge 1$ and mollifiers 
$M^\delta(D_v), S_\delta(D_x)$ defined in subsections \ref{ap-4}, \ref{ap-6}, respectively. 
Multiply
$
W_{\delta'}(v)S_\delta(D_x)\Big(M^\delta(D_v)\Big)^2S_\delta(D_x)W_{\delta'}(v) g
$
by the equation \eqref{l-e-c}, and integrate with respect to $t \in [0,T]$ and $(x,v) \in \RR^6$. 
Notice that 
\begin{align*}
	&\Big(\Gamma(f,g), W_{\delta'}S_\delta\Big(M^\delta\Big)^2S_\delta W_{\delta'} g\Big)_{{x,v}}
- \Big(\Gamma(f, M^\delta S_\delta W_{\delta'}g)
 , M^\delta S_\delta W_{\delta'} g\Big)_{x,v}\\
&=\Big(W_{\delta'}\Gamma(f,g)  - \Gamma(f, W_{\delta'}g) , S_\delta\Big(M^\delta\Big)^2S_\delta W_{\delta'} g\Big)_{x,v}\\
&+\Big( S_\delta \Gamma(f, W_{\delta'}g) - \Gamma(f, S_\delta W_{\delta'}g) , \Big(M^\delta\Big)^2S_\delta W_{\delta'} g\Big)_{x,v}\\
&+ \Big(\Gamma(f, S_\delta W_{\delta'}g)- Q(\mu^{1/2}f, S_\delta W_{\delta'}g)
 , \Big(M^\delta\Big)^2S_\delta W_{\delta'} g\Big)_{x,v}\\
&+ \Big(M^\delta Q(\mu^{1/2}f, S_\delta W_{\delta'}g)-Q(\mu^{1/2}f, M^\delta S_\delta W_{\delta'}g)
 , M^\delta S_\delta W_{\delta'} g\Big)_{x,v}\\
&+ \Big(Q(\mu^{1/2}f, M^\delta S_\delta W_{\delta'}g)-\Gamma(f, M^\delta S_\delta W_{\delta'}g)
 , M^\delta S_\delta W_{\delta'} g\Big)_{x,v}\\
&=\mbox{(1)} +\mbox{(2)} +\mbox{(3)}+\mbox{(4)}+\mbox{(5)}\,.
\end{align*}
It follows from Proposition \ref{weight-commutator} and Lemma \ref{M-bounded-triple} that 
\begin{align}\label{1}\notag
\int_0^T (1) dt &\lesssim {\delta'}^{\nu/2} \|f\|_{L^\infty([0,T]\times \RR^3_x; L^2_v)}
\|\la v\ra^{(\gamma+\nu)/2} W_{\delta'} g\|_{L^2([0,T]\times \RR^6_{x,v})}\\
& \qquad \qquad \qquad \times \| |\!|\!| M^\delta S_\delta W_{\delta'}g|\!|\!| \|_{L^2(([0,T]\times \RR^3_x)}
\end{align}
By means of \eqref{1st}, we have
\begin{align}\label{2} \notag 
\int_0^T (2) dt &\lesssim \delta^{1-\nu/2}
\|\nabla f \|_{L^\infty_{T,x}(L^2_v )}
\|\la v \ra^{|\gamma|/2 + \nu} W_{\delta'} g\|_{L^2([0,T]\times \RR^6_{x,v})}\\
&\qquad\times \|M^\delta S_\delta W_{\delta'}g \|_{L^2([0,T]\times \RR^3_x; 
H^{\nu/2}_{\gamma/2}(\RR^3_v))}
\end{align}
because 
\[
\frac{\delta \la \xi \ra^{\nu}}{(1+ \delta\la \xi \ra)^{N_0}} \le \delta^{1-\nu/2} \la \xi \ra^{\nu/2}.
\]
It follows from \eqref{diff-G-Q} and Lemma \ref{M-bounded-triple} that 
\begin{align}\label{3-5}\notag
\int_0^T (3) + (5) dt &\lesssim \|f\|_{L^\infty([0,T]\times \RR^3_x; L^2_v)}
\|\la v\ra^{(\gamma+\nu)/2} W_{\delta'} g\|_{L^2([0,T]\times \RR^6_{x,v})}\\
& \qquad \qquad \qquad \times \| |\!|\!| M^\delta S_\delta W_{\delta'}g|\!|\!| \|_{L^2(([0,T]\times \RR^3_x)}
\end{align}
Thanks to Proposition \ref{IV-coro-2.15} and Proposition \ref{prop2.9_amuxy3}, for a $0< \nu' < \nu$ 
we have 
\begin{align*}
\int_0^T (4) dt &\lesssim \|f\|_{L^\infty([0,T]\times \RR^3_x; L^2_v)}\Big( \|M^\delta S_\delta W_{\delta'}g \|^2_{L^2([0,T]\times \RR^3_x; 
H^{\nu'/2}_{(\nu+\gamma)^+}(\RR^3_v))}
\\
&\notag \qquad \qquad \qquad + \|\la v\ra^{(\gamma+\nu)^+} W_{\delta'} g\|^2_{L^2([0,T]\times \RR^6_{x,v})}\Big)\,.
\end{align*}
Note 
that for any $\varepsilon, \kappa, \ell >0$ there exist $C_{\kappa, \varepsilon,\ell} >0$
and $N_{\ell, \varepsilon} >0$ such that
\[
\|h\|_{H^{\nu/2-\varepsilon}_{\ell}} \le \kappa \|h\|_{H^{\nu/2}} + C_{\kappa, \varepsilon,\ell}
\|h\|_{L^2_{N_{\ell, \varepsilon} }},
\]
(see for example Lemma 2.4 of \cite{HMUY}).
Therefore we obtain 
\begin{align}\label{4}\notag 
\int_0^T (4) dt &\lesssim \|f\|_{L^\infty([0,T]\times \RR^3_x; L^2_v)}\Big( 
\kappa \|M^\delta S_\delta W_{\delta'}g \|^2_{L^2([0,T]\times \RR^3_x; 
H^{\nu/2}_{\gamma/2}(\RR^3_v))}
\\
&\qquad \qquad \qquad + C_\kappa \|\la v\ra^N W_{\delta'} g\|^2_{L^2([0,T]\times \RR^6_{x,v})}\Big)\,.
\end{align}
for a sufficiently large $N >0$.  Summing up \eqref{1}-\eqref{4}, for any $\kappa >0$ we obtain 
\begin{align}\label{non-linear-2}\notag
&\int_0^T\Big(\Gamma(f,g), W_{\delta'}S_\delta\Big(M^\delta\Big)^2S_\delta W_{\delta'} g\Big)_{x,v}dt 
\le ( C_1\|f\|_{L^\infty_{T,x}(L^2_v)} + \kappa )\\
&  \quad \qquad \times \| |\!|\!| M^\delta S_\delta W_{\delta'}g|\!|\!| \|^2_{L^2(([0,T]\times \RR^3_x)} + C_{\kappa, f}  \|\la v\ra^N W_{\delta'} g\|^2_{L^2([0,T]\times \RR^6_{x,v})},
\end{align}
by means of  \eqref{upper-fundamental1}. Similarly we obtain 
\begin{align}\label{linear-0}\notag 
&\int_0^T\Big(\cL_1 g , W_{\delta'}S_\delta\Big(M^\delta\Big)^2S_\delta W_{\delta'} g\Big)_{x,v}dt\\
& \qquad  \notag  \ge 
\int_0^T\Big(\cL_1 M^\delta S_\delta W_{\delta'} g, M^\delta S_\delta W_{\delta'} g\Big)_{x,v}dt
\\
& \qquad \quad - \frac{\lambda_0}{2} \| |\!|\!| M^\delta S_\delta W_{\delta'}g|\!|\!| \|^2_{L^2(([0,T]\times \RR^3_x)} + C \|\la v\ra^N W_{\delta'} g\|^2_{L^2([0,T]\times \RR^6_{x,v})}.
\end{align}
To handle $v\cdot \nabla_x$ term, we use the similar device as in (3.2.4) of \cite{AMUXY2010}. Indeed,
\begin{align}\label{nabla}\nonumber
&\Big(v \cdot \nabla_x g, W_{\delta'}S_\delta\Big(M^\delta\Big)^2S_\delta W_{\delta'} g\Big)_{x,v}\\
&\quad  =\Big([M^\delta, v] \cdot \nabla_x S_\delta W_{\delta'}g, M^\delta S_\delta W_{\delta'} g\Big)_{x,v}
 \le 2 \|M^\delta S_\delta W_{\delta'} g\|^2_{L^2(\RR_{x,v}^6)},
\end{align}
because $|\big(\nabla_\xi M^\delta(\xi)\big) \cdot \big(\eta S_\delta(\eta)\big)|
\le N_0 M^\delta(\xi) |\delta \eta| S(\delta \eta) \le 2 M^\delta(\xi) S_\delta(\eta)$.
It follows from Lemma \ref{linear term} and \eqref{non-linear-2}-\eqref{nabla}  that we obtain 
\[
\| |\!|\!| M^\delta S_\delta W_{\delta'}g|\!|\!| \|^2_{L^2(([0,T]\times \RR^3_x)} \le C_T\Big(
\|\la v\ra^N W_{\delta'} g\|^2_{L^2([0,T]\times \RR^6_{x,v})} + \|g_0\|^2_{L^2(\RR_{x,v}^6)}\Big),
\]
if $C_1\|f\|_{L^\infty_{T,x}(L^2_v)} \le \lambda_0 /8$. 
For arbitrary but fixed $\delta'>0$, we get 
\[
 \| |\!|\!| W_{\delta'}g|\!|\!| \|^2_{L^2(([0,T]\times \RR^3_x)} < C_{\delta'}
\]
by letting $\delta \rightarrow 0$. 

We repeat the same procedure without the factor $\Big(M^\delta(D_v)\Big)^2$, that is, 
multiply 
$
\Big(W_{\delta'}(v)S_\delta(D_x)\Big)^2 g
$
by the equation, and integrate with respect to $t \in [0,T]$ and $(x,v) \in \RR^6$.  By the same way as 
\eqref{1}, we get 
\begin{align*}
	&\int_0^T \left|\Big(\Gamma(f,g),  \Big(W_{\delta'}S_\delta \Big)^2g\Big)_{x,v}
- \Big(\Gamma(f, W_{\delta'}g)
 , \big(S_\delta\big)^2 W_{\delta'} g\Big)_{x,v} \right|dt \\
&\lesssim {\delta'}^{\nu/2} \|f\|_{L^\infty([0,T]\times \RR^3_x; L^2_v)}
 \| |\!|\!| W_{\delta'}g|\!|\!| \|^2_{L^2(([0,T]\times \RR^3_x)}\,.
\end{align*}
By using \eqref{AMUXYtrilinear} for $\Big(\Gamma(f, W_{\delta'}g)
 , \big(S_\delta\big)^2 W_{\delta'} g\Big)_{x,v} $, we obtain 
\begin{align*}
	&\int_0^T \left|\Big(\Gamma(f,g),  \Big(W_{\delta'}S_\delta \Big)^2g\Big)_{x,v}
\right|dt 
\lesssim  \|f\|_{L^\infty([0,T]\times \RR^3_x; L^2_v)}
 \| |\!|\!| W_{\delta'}g|\!|\!| \|^2_{L^2(([0,T]\times \RR^3_x)}\,.
\end{align*}
Since $\big(v\cdot \nabla_x g, \big(W_{\delta'}S_\delta \big)^2g\big)_{x,v} =0$, by letting 
$\delta \rightarrow 0$ we obtain 
\begin{align*}
&(\lambda_0 - C_1  \|f\|_{L^\infty([0,T]\times \RR^3_x; L^2_v)}) \| |\!|\!| W_{\delta'}g|\!|\!| \|^2_{L^2(([0,T]\times \RR^3_x)} \\
&\quad \lesssim    {\delta'}^{\nu} 
\| |\!|\!| W_{\delta'}g|\!|\!| \|^2_{L^2(([0,T]\times \RR^3_x)}
+ 
\|g\|^2_{L^2([0,T]\times \RR^6_{x,v})} + \|g_0\|^2_{L^2(\RR_{x,v}^6)}\,,
\end{align*}
because of Lemma \ref{linear term} and \eqref{linear-comm-weight}.
Finally we obtain  \eqref{triple-norm-finite}  by letting $\delta' \rightarrow 0$. 

\section{Non-negativity of solutions}\label{S6}
\setcounter{equation}{0}
The method of the proof is the almost same as the one of Proposition 5.2 in \cite{amuxy4-3}. For the self-containedness,   
we reproduce it.  If $\{g^n\}$ is the sequence of approximate solutions in the proof of Theorem \ref{local-existence}, and if $f^n = \mu + \mu^{1/2}g^n$, then 
$\{f^n\}$ is constructed successively by the following linear Cauchy problem
\begin{equation}\label{4.4.3}
\left\{\begin{array}{l} \partial_t f^{n+1} + v\,\cdot\,\nabla_x
f^{n+1} =Q (f^n, f^{n+1}), \\ f^{n+1}|_{t=0} = f_0 =\mu + \mu^{1/2}
 g_0\geq 0\, , \enskip (n=0,1,2,\cdots, \, f^0 =\mu).
\end{array} \right.
\end{equation}
Hence the non-negativity of the solution to the original Cauchy problem \eqref{eq: Boltzmann} comes  from the following induction argument: 
Suppose  that
\begin{equation}\label{4.4.1+100}
f^n = \mu + \mu^{1/2} g^n \geq 0\,,
\end{equation}
for some $n\in\NN$. Then  (\ref{4.4.1+100}) is true
for $n+1$.

If we put $\tilde f^n = \mu^{-1/2}f^n = \mu^{1/2} + g^n$ then $\tilde f^n$ satisfies 
\begin{equation}\label{4.4.3-bis}
\left\{\begin{array}{l} \partial_t \tilde f^{n+1} + v\,\cdot\,\nabla_x
\tilde f^{n+1} =\Gamma (\tilde f^n, \tilde f^{n+1}), \\ \tilde f^{n+1}|_{t=0} =\tilde f_0 = \mu^{1/2} + 
 g_0\geq 0\, , \enskip (n=0,1,2,\cdots, \, \tilde f^0 =\mu^{1/2}).
\end{array} \right.
\end{equation}
It follows from \eqref{iterate-sol} and Lemma \ref{T-estimate} that $\int_0^T \| |\!|\!|\tilde f^n |\!|\!| \|^2_{L^\infty_x} dt < \infty$, and hence,
if $\tilde f^n_{\pm} = \pm \max(\pm \tilde f^n, 0)$ then we have
\[
\int_0^T \| |\!|\!|\tilde f^n_+ |\!|\!| \|^2_{L^\infty_x} dt+\int_0^T \| |\!|\!|\tilde f^n_- |\!|\!| \|^2_{L^\infty_x} dt
< \infty
\]
by means of the same argument as in the proof of Theorem 5.2 in \cite{amuxy4-3}. 
Take the convex function $\beta (s) = \frac 1 2 (s^- )^2= \frac 1 2
s\,(s^- )  $ with $s^-=\min\{s, 0\}$.  Let $\varphi(v,x) = (1+|v|^2 +|x|^2)^{\alpha/2}$ with $\alpha >3/2$,
and notice that
\begin{align*}
\beta_s (\tilde f^{n+1}) \varphi(v,x)^{-1}&: =\left(\frac{d}{ds}\,\,\beta
\right)(\tilde f^{n+1}) \varphi(v,x)^{-1}\\
&={\tilde f_{ -}^{n+1}} \varphi(v,x)^{-1}\in
L^\infty([0,T];L^2(\RR^6_{x,v}).
\end{align*}
Multiply the first equation of \eqref{4.4.3-bis}
by $\beta_s (\tilde f^{n+1})\varphi(v,x)^{-2}$ $
= \tilde f_{-}^{n+1}\varphi(v,x)^{-2}$ and integrate over
$[0,t] \times \RR^6_{x,v}$, ($t \in (0,T]$). Then, in view of $\beta(f^{n+1}(0)) = \tilde f_{0, -}^2/2 =0$, we have
\begin{align*}
&\int_{\RR^6} \beta ( \tilde f^{n+1}(t)) \varphi(v,x)^{-2}dxdv
\\
&\qquad =\int_0^t \int_{\RR^6}
\Gamma(\tilde f^n(\tau),\, \tilde f^{n+1}(\tau) )\,\, \beta_s(\tilde f^{n+1}(\tau)) \varphi(v,x)^{-2} \,\,dxdvd\tau \\
&\qquad\qquad-\int_0^t \int_{\RR^6} { v\,\cdot\, \nabla_x \enskip  (  \beta
(\tilde f^{n+1}(\tau)) \varphi(v,x)^{-2}) }dxdv d\tau\\
&\qquad\qquad + \int_0^t \int_{\RR^6} {\big (\varphi(v,x)^{2}\,\,v\, \cdot\,
\nabla_x \,\varphi(v,x)^{-2} \big) }\enskip  \beta (\tilde f^{n+1}(\tau))\varphi(v,x)^{-2}dxdvd\tau, 
\end{align*}
where the first term on the right hand side is well defined because 
$$\int_0^T \| |\!|\!|\tilde f^{n+1} |\!|\!| \|^2_{L^\infty_x} dt 
+ \int_0^T \| |\!|\!|\tilde f^{n+1}_{-} |\!|\!| \|^2_{L^\infty_x} dt < \infty.
$$
Since the second term vanishes
and $|v\, \cdot\, \nabla_x\, \varphi(v,x)^{-2} | \leq C \varphi(v,x)^{-2}$,
we obtain
\begin{align*}
&\int_{\RR^6} \beta ( \tilde f^{n+1}(t)) \varphi(v,x)^{-2}dxdv
\\
&\qquad \le \int_0^t \Big(\int_{\RR^6}
\Gamma(\tilde f^n(\tau),\, \tilde f^{n+1}(\tau) )\,\, \beta_s(\tilde f^{n+1}(\tau)) \varphi(v,x)^{-2} \,\,dxdv\Big)d\tau \\
&\qquad\qquad \qquad \qquad \qquad +C \int_0^t 
\int_{\RR^6} \beta ( \tilde f^{n+1}(\tau)) \varphi(v,x)^{-2}dxdvd\tau\,.
\end{align*}
The integrand $(\cdot)$ of the  first term on the right hand side is equal to
\begin{align*}
&\int_{\RR^6}  \Gamma(\tilde f^n,  \tilde f_{ -}^{n+1} ) \tilde f_{ -}^{n+1} \varphi(v,x)^{-2} dxdv\\
&\qquad\qquad\qquad + \int   B \, \mu_{*}^{1/2}( {\tilde f_*}^n)' ( \tilde f_{+}^{n+1})' \tilde 
 f_{-}^{n+1} \varphi(v,x)^{-2}dvdv_* d\sigma dx \\
 &=A_1 + A_2\,.
\end{align*}
>From the induction hypothesis, the second term $A_2$ is non-positive.

On the other hand, we have
\begin{align*}
&A_1 = \int( \Gamma  (\tilde f^n, \varphi(v,x)^{-1}\tilde  f_{-}^{n+1}),  \varphi(v,x)^{-1} \tilde f_{-}^{n+1})_{L^2(\RR_v^3)} dx + R
\\
&= - \int (\cL_1(\varphi(v,x)^{-1}\tilde  f_{-}^{n+1}), \varphi(v,x)^{-1}\tilde  f_{-}^{n+1})_{L^2(\RR_v^3)} dx\\ 
 & + \int( \Gamma  ( g^n, \varphi(v,x)^{-1}\tilde  f_{-}^{n+1}),  \varphi(v,x)^{-1} \tilde f_{-}^{n+1})_{L^2(\RR_v^3)} dx +R ,
\end{align*}
where $R$ is a remainder term. It follows from Corollary \ref{cor-weight-commutator} with $f = \tilde f^n$ that for any $\kappa >0$
\begin{align*}
\int_0^t |R| d\tau \leq&  \kappa \int_0^t \int_{\RR_x^3}
|\!|\!| \varphi(v,x)^{-1}\tilde  f_{-}^{n+1}(\tau) |\!|\!|^2 dx d\tau  \\
&+  C_\kappa 
\int_0^t 
\int_{\RR^6} \beta ( \tilde f^{n+1}(\tau)) \varphi(v,x)^{-2}dxdvd\tau\,.
\end{align*}
By means of Lemma \ref{linear term} and Corollary  \ref{AMUXYtrilinear} with \eqref{iterate-sol}, we obtain
\begin{align*}
\int_0^t A_1 d\tau 
&\le -(\lambda_0 - C(\kappa + \varepsilon_0))
\int_0^t \int_{\RR_x^3}
|\!|\!| \varphi(v,x)^{-1}\tilde  f_{-}^{n+1}(\tau) |\!|\!|^2 dx  d\tau \\
&+  C_\kappa 
\int_0^t 
\int_{\RR^6} \beta ( \tilde f^{n+1}(\tau)) \varphi(v,x)^{-2}dxdvd\tau\,.
\end{align*}
Finally we get 
\begin{align*}
\int_{\RR^6} \beta ( \tilde f^{n+1}(t)) \varphi(v,x)^{-2}dxdv
\lesssim  \int_0^t 
\int_{\RR^6} \beta ( \tilde f^{n+1}(\tau)) \varphi(v,x)^{-2}dxdvd\tau,
\end{align*}
which implies that 
$\tilde f^{n+1}(t,x,v) \ge 0$ for $(t,x,v) \in [0,T]\times \RR^6$.

\section{Appendix}\label{AP}
\setcounter{equation}{0}
\subsection{Fundamental inequalities and Besov embedding theorems}\label{ap-1}
First of all, the following lemma plays the central role in this paper.
\begin{lem}\label{MS-general}
Let $\nu \in (0,2)$ and $\gamma > \max\{-3,-3/2-\nu\}$. For any $\alpha  \ge 0$ and any 
$\beta \in \RR$ there exists a $C = C_{\alpha, \beta} >0$ such that  
\begin{align}\label{upper-fundamental123}\notag
&\left\vert ( \Gamma(f,g),h )_{L^2_v} \right\vert \le C \Big(\Vert \mu^{1/10}f \Vert_{L^2_v} |\!|\!| g |\!|\!| |\!|\!| h |\!|\!| \\
& \quad \qquad \qquad + \|f\|_{L^2_{-\alpha} }
\|g\|_{L^2_{(\gamma+\nu)/2+\alpha - \beta}}
\|h \|_{L^2_{(\gamma+\nu)/2+\beta}}\Big). 
\end{align}
\end{lem}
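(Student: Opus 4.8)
The plan is to pass to the weak form and estimate the resulting trilinear integral with all velocity weights followed explicitly. Since $\mu_*^{1/2}\mu^{1/2}=(\mu_*')^{1/2}(\mu')^{1/2}$, the substitution $\Gamma(f,g)=\mu^{-1/2}Q(\mu^{1/2}f,\mu^{1/2}g)$ gives
\[
(\Gamma(f,g),h)_{L^2_v}=\iiint_{\RR^3\times\RR^3\times\SS^2}B(v-v_*,\sigma)\,\mu_*^{1/2}\,(f_*'g'-f_*g)\,h\,d\sigma\,dv_*\,dv .
\]
The one structural decision made up front is to split the background Gaussian once and for all as $\mu_*^{1/2}=\mu_*^{1/10}\cdot\mu_*^{2/5}$: the light factor $\mu_*^{1/10}$ stays with $f_*$ and is what eventually produces $\|\mu^{1/10}f\|_{L^2_v}$, while the heavy factor $\mu_*^{2/5}$ is kept in reserve to swallow the polynomial factors $\la v-v_*\ra^{\gamma}$ and $\la v_*\ra^{m}$ that appear along the way. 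After the usual add-and-subtract $f_*'g'-f_*g=f_*'(g'-g)+(f_*'-f_*)g$ and the regular pre-/post-collisional change of variables $(v,v_*,\sigma)\mapsto(v',v_*',\sigma')$, which leaves $B$ invariant, one separates a \emph{main part} carrying a genuine collisional difference (destined for the triple norm) from lower-order contributions (which will only need weighted $L^2$ norms).

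For the main part I would follow the AMUXY machinery. Decompose the angular kernel dyadically, $b(\cos\theta)=\sum_{k\ge0}b_k$ with $b_k$ supported where $\theta\sim2^{-k}$; on each block use the regular change of variables together with the cancellation lemma and a second-order Taylor expansion in $\theta$, exploiting $\nu<2$ so that the gain $\theta^{2}$ beats the singularity $\theta^{-2-\nu}$ after integrating in $\sigma$. Resumming the blocks exhibits the main part as an anisotropic fractional bilinear form in $(g,h)$ of $H^{\nu/2}$-type, which by the lower bound in \eqref{up-down-triple} is bounded by $|\!|\!| g|\!|\!|\,|\!|\!| h|\!|\!|$; meanwhile $\Phi_\gamma(|v-v_*|)$ and the auxiliary $v_*$-weights are absorbed by the reserved Gaussian $\mu_*^{2/5}$ (legitimate because $\gamma>-3$ makes the singular part of $\Phi_\gamma$ locally integrable and the Gaussian tames its growth), leaving just $\mu_*^{1/10}f_*$ to be taken out in $L^2_{v_*}$. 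This yields the first term of \eqref{upper-fundamental123}. I expect this to be the main obstacle: producing rigorously the anisotropic $H^{\nu/2}$ structure on \emph{both} $g$ and $h$ with all weights intact is precisely the quantitative content of the AMUXY cancellation and commutator estimates, and it has to be reworked with the weights tracked rather than simply quoted.

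For the lower-order contributions no angular smallness beyond the difference's own is required: the difference of background Maxwellians satisfies $|(\mu_*')^{1/2}-\mu_*^{1/2}|\lesssim|v_*'-v_*|\times(\text{Gaussian})$ with $|v_*'-v_*|\le|v-v_*|\sin(\theta/2)$, so the angular integral converges, and the change of variables then replaces the primed factors by unprimed ones via Peetre's inequality $\la v'\ra,\la v_*'\ra\lesssim\la v\ra\la v_*\ra$. One also splits the $v_*$-integral according to the size of $|v-v_*|$, handling $\Phi_\gamma$ directly (or by the Gaussian) on the large part and using that, after accounting for the $\nu/2$ fractional smoothness available, the singular part of $\Phi_\gamma$ near $v=v_*$ is under control precisely because $\gamma+\nu>-3/2$ — this, together with an interpolation of the type $\|h\|_{H^{\nu/2-\varepsilon}_{\ell}}\le\kappa\|h\|_{H^{\nu/2}}+C_{\kappa}\|h\|_{L^2_{N}}$ (as used elsewhere in the paper) that trades fractional smoothness for polynomial decay, is essentially the only place the hypothesis $\gamma+\nu>-3/2$ is needed. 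The two free parameters then merely record the resulting freedom, via Cauchy--Schwarz and Peetre's inequality, to move decay off $f$ (the role of $\alpha\ge0$) and to reallocate the total velocity weight $\gamma+\nu$ among $f$, $g$, $h$ subject to the sum of the three weight exponents being $\gamma+\nu$ (the role of $\beta\in\RR$); one lands exactly on $\|f\|_{L^2_{-\alpha}}\|g\|_{L^2_{(\gamma+\nu)/2+\alpha-\beta}}\|h\|_{L^2_{(\gamma+\nu)/2+\beta}}$. Once the main part is in place, these lower-order estimates and the weight bookkeeping are routine.
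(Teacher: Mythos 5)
Your high-level picture — isolate a genuinely collisional ``main part'' that feeds the triple norm and relegate the rest to weighted $L^2$ norms — matches the spirit of the paper, but the concrete decomposition you commit to at the outset does not carry it out. You split $f_*'g'-f_*g=f_*'(g'-g)+(f_*'-f_*)g$ and declare that $\mu_*^{1/10}$ ``stays with $f_*$'' to produce $\|\mu^{1/10}f\|_{L^2_v}$ in the main term; but the main term you have written is $\int B\,\mu_*^{1/2}f_*'(g'-g)h$, in which $f$ is evaluated at $v_*'$, not at $v_*$, so the Gaussian factor $\mu_*^{1/10}$ (which lives at $v_*$) cannot be attached to it by a simple Cauchy--Schwarz. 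Applying the pre/post-collisional change of variables to fix this only trades $\mu_*^{1/2}$ for $(\mu_*')^{1/2}$, and you are back to the same mismatch. Relatedly, your treatment of the ``lower-order'' term invokes the bound $|(\mu_*')^{1/2}-\mu_*^{1/2}|\lesssim|v_*'-v_*|\cdot(\text{Gaussian})$, yet $(\mu_*')^{1/2}-\mu_*^{1/2}$ never appears in the decomposition you actually wrote — your remainder is $\int B\,\mu_*^{1/2}(f_*'-f_*)gh$, with a difference in $f$ (which has no smoothness to give) rather than in the Maxwellian. These two inconsistencies indicate the add-and-subtract you chose does not expose the structure you subsequently try to exploit.

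What the paper actually does is different and avoids both problems. It first splits the kinetic factor $\Phi_\gamma=\Phi_c+\Phi_{\bar c}$ (small vs.\ large relative velocity). For the far part it compares $\Gamma_{\bar c}(f,g)$ with $Q_{\bar c}(\mu^{1/2}f,g)$: precisely this comparison produces the $(\mu_*')^{1/2}-\mu_*^{1/2}$ difference attached to $f_*$ (Lemma~\ref{differ-Gam-Q}, specialized to $\bar c$), with $Q_{\bar c}(\mu^{1/2}f,g)$ then estimated by the cited AMUXY bound. For the near part it uses the four-term split $I_1+\cdots+I_4$ built around the Taylor difference $(\mu_*')^{1/4}-\mu_*^{1/4}$, with $I_4$ being the genuinely collisional piece estimated against $\|\mu^{1/10}f\|_{L^2}\,|\!|\!|g|\!|\!|\,|\!|\!|h|\!|\!|$, while $I_1,I_2,I_3$ produce the weighted $L^2$ remainder; the two free parameters $\alpha\ge 0,\ \beta\in\RR$ then arise from Peetre's inequality applied to the weights in $I_1$--$I_3$ and in the far-part difference, not from a generic ``reallocation'' as you describe. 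Your reading of the role of $\gamma>-3/2-\nu$ (local integrability near $v=v_*$ after spending $\nu/2$ of smoothness) is essentially correct, and you rightly flag the quantitative Fourier-space cancellation estimates as the heart of the matter — but these are precisely what the paper carries out in Propositions~\ref{upper-c}, \ref{IV-prop-2.5}, Lemmas~\ref{A-1}, \ref{A-2} and Lemma~\ref{differ-Gam-Q}, and they would not slot into the decomposition you propose without substantial reworking.
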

This lemma will be proved in the next subsection together with another upper bound 
estimate. 
If we put $\alpha = \beta =0$ then we obtain \cite[Theorem 1.2]{AMUXY3}, namely,
\begin{cor}\label{AMUXYtrilinear}
Let $\nu \in (0,2)$ and $\gamma > \max\{-3,-3/2-\nu\}$. Then it holds 
\begin{align}\label{upper-fundamental1}
\left\vert ( \Gamma(f,g),h )_{L^2_v} \right\vert \lesssim \Vert f \Vert_{L^2_v} |\!|\!| g |\!|\!| |\!|\!| h |\!|\!|. 
\end{align}
\end{cor}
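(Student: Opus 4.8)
The plan is to obtain Corollary~\ref{AMUXYtrilinear} as the special case $\alpha=\beta=0$ of Lemma~\ref{MS-general}, so that the only genuine thing to verify is that the extra weighted term produced by \eqref{upper-fundamental123} is harmless. First I would set $\alpha=\beta=0$ in \eqref{upper-fundamental123}, which gives
\[
\left\vert ( \Gamma(f,g),h )_{L^2_v} \right\vert \le C\Big( \Vert \mu^{1/10}f \Vert_{L^2_v}\, |\!|\!| g |\!|\!|\, |\!|\!| h |\!|\!| + \|f\|_{L^2_v}\,\|g\|_{L^2_{(\gamma+\nu)/2}}\,\|h\|_{L^2_{(\gamma+\nu)/2}} \Big).
\]

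For the first term on the right I would simply note that $\mu^{1/10}$ is bounded on $\RR^3_v$, so $\Vert \mu^{1/10}f \Vert_{L^2_v} \le \Vert \mu^{1/10}\Vert_{L^\infty_v}\Vert f \Vert_{L^2_v}\lesssim \Vert f \Vert_{L^2_v}$, and this term is already of the asserted shape $\Vert f \Vert_{L^2_v}\,|\!|\!| g |\!|\!|\,|\!|\!| h |\!|\!|$. For the second term I would invoke the lower bound in the two-sided estimate \eqref{up-down-triple}, which yields $\Vert g \Vert_{L^2_{(\gamma+\nu)/2}} \lesssim |\!|\!| g |\!|\!|$ and $\Vert h \Vert_{L^2_{(\gamma+\nu)/2}} \lesssim |\!|\!| h |\!|\!|$; this is precisely where the standing hypothesis $\gamma > \max\{-3,-\nu-3/2\}$ enters, through \cite[Proposition 2.2]{AMUXY2}. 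Hence the second term is also bounded by a constant times $\Vert f \Vert_{L^2_v}\,|\!|\!| g |\!|\!|\,|\!|\!| h |\!|\!|$, and adding the two bounds gives \eqref{upper-fundamental1}.

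Since all the analytic content is concentrated in Lemma~\ref{MS-general}, whose proof is deferred to the next subsection, there is no real obstacle in deducing this corollary; the only step that matters is the one identified above, namely recognizing that the triple norm dominates the weighted norm $\|\cdot\|_{L^2_{(\gamma+\nu)/2}}$ on $\RR^3_v$, so that the bilinear remainder term in weighted $L^2_v$ collapses into the triple-norm expression.
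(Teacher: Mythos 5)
Your argument is correct and is exactly the paper's reading of the corollary: set $\alpha=\beta=0$ in Lemma~\ref{MS-general}, absorb $\Vert\mu^{1/10}f\Vert_{L^2_v}\lesssim\Vert f\Vert_{L^2_v}$, and use the lower bound $\Vert\cdot\Vert_{L^2_{(\nu+\gamma)/2}}\lesssim|\!|\!|\cdot|\!|\!|$ from \eqref{up-down-triple} to fold the weighted $L^2_v$ remainder into the triple-norm bound. The paper states the deduction without spelling out these two routine absorptions; you have simply made them explicit, and both are valid under the standing hypothesis $\gamma>\max\{-3,-\nu-3/2\}$.
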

In the case $\gamma+\nu \le 0$, we employ the following, by setting
$\alpha = -(\gamma+ \nu)/2$ and $\beta =(\gamma+ \nu)/2$ or $0$.
\begin{cor}\label{another-type}
When $\gamma + \nu \le 0$ we have 
\begin{align}\label{cut-part-123-bis}\notag
\Big|\Big(\Gamma ( f, g)\, & , \,h \Big)_{L^2_v }\Big|
\lesssim \|\mu^{1/10}f\|_{L^2} |\!|\!|g|\!|\!| |\!|\!|h |\!|\!|\\
& + \|f\|_{L^2_{(\gamma+ \nu)/2} }\min \big\{
\|g\|_{L^2_{(\gamma+ \nu)/2}}
\|h \|_{L^2},  \, \|g\|_{L^2}
\|h \|_{L^2_{(\gamma+ \nu)/2}}\big \}\,.
\end{align}
\end{cor}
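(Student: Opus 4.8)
The plan is to obtain Corollary~\ref{another-type} as an immediate specialization of Lemma~\ref{MS-general}. The idea is that the ``regular'' term $\|\mu^{1/10}f\|_{L^2}\,|\!|\!|g|\!|\!|\,|\!|\!|h|\!|\!|$ in \eqref{upper-fundamental123} is independent of the free parameters $\alpha\ge0$, $\beta\in\RR$, whereas the ``weighted'' term $\|f\|_{L^2_{-\alpha}}\,\|g\|_{L^2_{(\gamma+\nu)/2+\alpha-\beta}}\,\|h\|_{L^2_{(\gamma+\nu)/2+\beta}}$ can be steered by the choice of $(\alpha,\beta)$. So I would run Lemma~\ref{MS-general} twice, once for each of the two products appearing inside the minimum of \eqref{cut-part-123-bis}, and then retain the smaller of the two resulting bounds. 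The one place where the hypothesis $\gamma+\nu\le0$ is used is to make $\alpha=-(\gamma+\nu)/2$ a legitimate (nonnegative) choice in Lemma~\ref{MS-general}; I would flag that explicitly.

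First I would apply Lemma~\ref{MS-general} with $\alpha=-(\gamma+\nu)/2$ and $\beta=0$. The three weight exponents in \eqref{upper-fundamental123} then become $-\alpha=(\gamma+\nu)/2$ for $f$, $(\gamma+\nu)/2+\alpha-\beta=0$ for $g$, and $(\gamma+\nu)/2+\beta=(\gamma+\nu)/2$ for $h$, which yields
\begin{equation*}
\left|\left(\Gamma(f,g),h\right)_{L^2_v}\right|\lesssim \|\mu^{1/10}f\|_{L^2}\,|\!|\!|g|\!|\!|\,|\!|\!|h|\!|\!|+\|f\|_{L^2_{(\gamma+\nu)/2}}\,\|g\|_{L^2}\,\|h\|_{L^2_{(\gamma+\nu)/2}}.
\end{equation*}
Next I would apply it again with $\alpha=\beta=-(\gamma+\nu)/2$; this time the exponents are $(\gamma+\nu)/2$ for $f$, $(\gamma+\nu)/2+\alpha-\beta=(\gamma+\nu)/2$ for $g$, and $(\gamma+\nu)/2+\beta=0$ for $h$, so that
\begin{equation*}
\left|\left(\Gamma(f,g),h\right)_{L^2_v}\right|\lesssim \|\mu^{1/10}f\|_{L^2}\,|\!|\!|g|\!|\!|\,|\!|\!|h|\!|\!|+\|f\|_{L^2_{(\gamma+\nu)/2}}\,\|g\|_{L^2_{(\gamma+\nu)/2}}\,\|h\|_{L^2}.
\end{equation*}

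Finally, since the first summand on the right-hand side is the same in both displays, keeping whichever of the two second summands is smaller gives precisely \eqref{cut-part-123-bis}. I do not anticipate any real obstacle here: once Lemma~\ref{MS-general} is in hand, the argument is entirely a bookkeeping of the weight indices, and the only substantive point is the admissibility $\alpha=-(\gamma+\nu)/2\ge0$, which is exactly where the soft-potential condition $\gamma+\nu\le0$ is invoked.
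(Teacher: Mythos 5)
Your proposal is correct and takes essentially the same route as the paper: Corollary~\ref{another-type} is read off from Lemma~\ref{MS-general} by making two admissible choices of $(\alpha,\beta)$ with $\alpha=-(\gamma+\nu)/2\ge 0$ (where the hypothesis $\gamma+\nu\le 0$ is used), and then taking the minimum of the two weighted terms. One small remark: the paper's prose says ``$\beta=(\gamma+\nu)/2$ or $0$,'' but plugging $\beta=(\gamma+\nu)/2$ into \eqref{upper-fundamental123} gives $\|g\|_{L^2_{-(\gamma+\nu)/2}}\|h\|_{L^2_{\gamma+\nu}}$, not the term displayed in the corollary; your choice $\beta=-(\gamma+\nu)/2$ is the one that actually produces $\|g\|_{L^2_{(\gamma+\nu)/2}}\|h\|_{L^2}$, so the paper's text appears to have a sign slip and your computation is the correct version of what was intended.
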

We also need the lower estimate of $(\cL_1 f,f)$ and the upper estimate of $ ({\cL}_2 f,f)$, respectively.
\begin{lem}\label{linear term} Let $\nu \in (0,2)$ and $\gamma > -3$. Then
there exists a constant $\lambda_0>$ such that
\begin{align*}
&(\cL_1 f,f)_{L^2_v} \ge \frac{1}{2} (\cL f,f)_{L^2_v } 
\ge \lambda_0 |\!|\!| (\mathbf{I}-\mathbf{P})f |\!|\!|^2,\\
& \quad \left\vert (\cL_2f, g)_{L^2_v} \right\vert \lesssim \Vert \mu^{10^{-3}}f\Vert_{L^2_v} \Vert \mu^{10^{-3}}g\Vert_{L^2_v} \,.
\end{align*}
\end{lem}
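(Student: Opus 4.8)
The plan is to handle the three assertions separately, all of them reducing to the symmetrised quadratic‑form representations of $\cL_1$ and $\cL$ together with coercivity and compactness facts from the AMUXY series. Writing $\psi=\mu^{-1/2}f$ and using $\Gamma(f,h)=\mu^{-1/2}Q(\mu^{1/2}f,\mu^{1/2}h)$ with the collision invariance $\mu(v')\mu(v'_*)=\mu(v)\mu(v_*)$, a direct computation gives
\[
\cL_1 f(v)=-\mu^{1/2}(v)\int\!\!\int B(v-v_*,\sigma)\,\mu(v_*)\bigl(\psi(v')-\psi(v)\bigr)\,d\sigma\,dv_*,
\]
and the analogous formula for $\cL_2 f$ with $\psi(v')-\psi(v)$ replaced by $\psi(v'_*)-\psi(v_*)$. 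Testing against $f$ and symmetrising the resulting triple integral over the pre/post‑collisional involution $(v,v_*,\sigma)\leftrightarrow(v',v'_*,\tfrac{v-v_*}{|v-v_*|})$ and over $(v,v_*,\sigma)\mapsto(v_*,v,-\sigma)$ — both of which preserve $B(v-v_*,\sigma)\mu(v)\mu(v_*)\,d\sigma\,dv\,dv_*$, the second owing to the symmetrisation convention on $b$ — one obtains (all integrals in $d\sigma\,dv\,dv_*$)
\[
(\cL_1 f,f)_{L^2_v}=\tfrac12\iiint B\,\mu\mu_*\,(\psi'-\psi)^2,\qquad (\cL f,f)_{L^2_v}=\tfrac14\iiint B\,\mu\mu_*\,(\psi+\psi_*-\psi'-\psi'_*)^2.
\]
In particular both $\cL_1$ and $\cL$ are nonnegative.

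From these identities the first inequality is elementary. Since $\psi+\psi_*-\psi'-\psi'_*=(\psi-\psi')+(\psi_*-\psi'_*)$, we have $(\psi+\psi_*-\psi'-\psi'_*)^2\le2(\psi-\psi')^2+2(\psi_*-\psi'_*)^2$, and the involution $(v,v_*,\sigma)\mapsto(v_*,v,-\sigma)$ (which exchanges $v'\leftrightarrow v'_*$ and preserves the measure) gives $\iiint B\mu\mu_*(\psi'-\psi)^2=\iiint B\mu\mu_*(\psi'_*-\psi_*)^2$; hence $(\cL f,f)_{L^2_v}\le2(\cL_1 f,f)_{L^2_v}$, i.e.\ $(\cL_1 f,f)_{L^2_v}\ge\frac12(\cL f,f)_{L^2_v}$. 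The remaining inequality $(\cL f,f)_{L^2_v}\ge\lambda_0|\!|\!|(\mathbf{I}-\mathbf{P})f|\!|\!|^2$ is the non‑isotropic coercivity (spectral gap) of the linearised non‑cutoff collision operator, which I would quote from \cite{AMUXY1,AMUXY2}: its proof there combines the coercivity of $\cL_1$ in the triple norm up to a lower‑order weighted $L^2$ term — which itself follows from the first identity above and the sandwich bound \cite[Proposition~2.2]{AMUXY2} — with the compactness of $\cL_2$ and the finite dimensionality of $\ker\cL$ via a standard contradiction argument.

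For the bound on $\cL_2$ the point is that the fixed Maxwellian occupies the second slot. Starting from the formula for $\cL_2 f$ above and rewriting $(\cL_2 f,g)_{L^2_v}$ so that the $\sigma$‑integrand becomes a difference vanishing as $\theta\downarrow0$, the non‑integrable angular singularity cancels and $\cL_2$ is exhibited as an integral operator $\cL_2 f(v)=\int k_2(v,v_*)f(v_*)\,dv_*$ whose kernel has Gaussian decay in both variables — the non‑cutoff analogue of Grad's decomposition, available from \cite{AMUXY1,AMUXY2}. Cauchy–Schwarz against this kernel then yields $|(\cL_2 f,g)_{L^2_v}|\lesssim\|\mu^{\kappa}f\|_{L^2_v}\|\mu^{\kappa}g\|_{L^2_v}$ for any small $\kappa>0$, so $\kappa=10^{-3}$ is comfortably admissible. (One could instead feed $\mu^{1/2}$ into the decay‑forcing slot of Lemma~\ref{MS-general}, but recovering the \emph{two‑sided} Gaussian weight still needs the kernel representation, since $|\!|\!|\cdot|\!|\!|$ is not controlled by a Gaussian‑weighted $L^2$ norm.)

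The main obstacle is the coercivity estimate $(\cL f,f)_{L^2_v}\ge\lambda_0|\!|\!|(\mathbf{I}-\mathbf{P})f|\!|\!|^2$: in the non‑cutoff regime the gain and loss parts cannot be separated, so the triple norm — rather than a plain weighted Sobolev norm — is the natural object, and establishing the spectral gap in it is delicate; however this is precisely one of the results of \cite{AMUXY2}, so here it may simply be invoked. A secondary, purely bookkeeping, point is to verify that every change of variables used in the symmetrisations is compatible with the convention making $b$ supported on $\{\theta\le\pi/2\}$.
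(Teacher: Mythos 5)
Your proof is correct and takes the same route as the paper, which simply cites \cite[Proposition 2.1]{AMUXY2} for the chain of inequalities involving $\cL$, $\cL_1$ and the triple norm, and \cite[Lemma 2.15]{AMUXY2} for the bound on $\cL_2$. Your explicit derivation of the quadratic-form identities $(\cL_1 f,f)=\tfrac12\iiint B\mu\mu_*(\psi'-\psi)^2$ and $(\cL f,f)=\tfrac14\iiint B\mu\mu_*(\psi+\psi_*-\psi'-\psi'_*)^2$ and the elementary convexity step $(\cL_1 f,f)\ge\tfrac12(\cL f,f)$ is a sound verification of the first link in the chain that the paper absorbs into the citation, while the coercivity in the triple norm and the two-sided Gaussian-weighted $\cL_2$ estimate are, as you say, exactly what must be quoted from AMUXY.
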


The first and second inequalities of the above lemma are from \cite[Proposition 2.1]{AMUXY2}
 and \cite[Lemma 2.15]{AMUXY2}, respectively. 

We catalogue a couple of lemmas which are frequently used in this paper.
\begin{lem}\label{bounded operators}
Let $ 1\le p \le \infty$ and $f\in L^p_x$, then there exists a constant $C>0$ independent of $p,q$ and $f$ such that
\begin{align*}
\Vert \Delta_q f \Vert_{L^p_x} \le C \Vert f \Vert_{L^p_x}, \quad \Vert S_q f \Vert_{L^p_x} \le C\Vert f \Vert_{L^p_x}. 
\end{align*}
In short, $\Delta_q$ and $S_q$ are bounded operators on $L^p_x$.
\end{lem}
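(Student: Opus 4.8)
The plan is to realise each of the operators $\Delta_q$ and $S_q$ as convolution against a rescaled copy of a fixed Schwartz kernel, and then to invoke Young's convolution inequality; the scaling structure of the $L^1$ norm is precisely what will make the resulting constant independent of both $p$ and $q$.

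First I would handle $\Delta_q$. Directly from the definition in Section~\ref{S2}, for $q\ge 0$ one has $\Delta_q f = h_q * f$ with $h_q(x):=2^{3q}h(2^qx)$, while $\Delta_{-1}f=\tilde h*f$ and $\Delta_q f=0$ for $q\le -2$. Since $\phi,\chi\in C^\infty_0$, their inverse Fourier transforms $h=\mathcal{F}^{-1}\phi$ and $\tilde h=\mathcal{F}^{-1}\chi$ are Schwartz functions, hence belong to $L^1(\mathbb{R}^3_x)$, and the change of variables $z=2^qy$ gives $\Vert h_q\Vert_{L^1_x}=\Vert h\Vert_{L^1_x}$ for every $q$. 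Young's inequality then yields $\Vert \Delta_q f\Vert_{L^p_x}\le \Vert h\Vert_{L^1_x}\Vert f\Vert_{L^p_x}$ for $q\ge 0$ and $\Vert \Delta_{-1}f\Vert_{L^p_x}\le \Vert \tilde h\Vert_{L^1_x}\Vert f\Vert_{L^p_x}$, both uniformly in $1\le p\le\infty$.

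For $S_q$ the naive bound $\Vert S_q f\Vert_{L^p_x}\le \sum_{q'\le q-1}\Vert \Delta_{q'}f\Vert_{L^p_x}$ is useless, since it grows with $q$; instead I would use the telescoping identity $\chi(\xi)+\sum_{0\le q'\le q-1}\phi(2^{-q'}\xi)=\chi(2^{-q}\xi)$, a standard consequence of the partition of unity (see \cite{BCD}), which shows $S_q f=\chi(2^{-q}D)f=\tilde h_q*f$ with $\tilde h_q(x):=2^{3q}\tilde h(2^qx)$. As before $\Vert \tilde h_q\Vert_{L^1_x}=\Vert \tilde h\Vert_{L^1_x}$, so Young's inequality gives $\Vert S_q f\Vert_{L^p_x}\le \Vert \tilde h\Vert_{L^1_x}\Vert f\Vert_{L^p_x}$. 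Taking $C:=\max\{\Vert h\Vert_{L^1_x},\Vert \tilde h\Vert_{L^1_x}\}$ then completes the proof.

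This lemma is entirely routine, so I do not anticipate a genuine obstacle; the only point requiring a little care is the passage from $S_q$, written as a sum of dyadic blocks, to its representation as a single rescaled convolution operator $\chi(2^{-q}D)$, since it is this reformulation that keeps the constant independent of $q$.
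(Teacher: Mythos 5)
Your proof is correct and coincides with the standard argument from \cite{BCD}: write each operator as convolution against a rescaled $L^1$ kernel and apply Young's inequality, with the key observation that the naive term-by-term bound on $S_q$ fails and one must instead use the identity $S_q=\chi(2^{-q}D)$ (which, as you note, follows directly from $\chi+\sum_{q'\ge0}\phi(2^{-q'}\cdot)=1$ applied at $2^{-q}\xi$). The paper itself gives no proof, referring the reader to \cite{BCD}, so your argument supplies exactly the omitted details.
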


\begin{lem}\label{embedding}
Let $1\le p, r \le \infty$. Then
\begin{enumerate}
\item $B^{s_1}_{pr} \hookrightarrow B^{s_2}_{pr}$ when $s_2 \le s_1$. This inclusion does not hold for the homogeneous Besov space.
\item $B^{3/p}_{p,1} \hookrightarrow L^\infty$ and $\dot{B}^{3/p}_{p,1} \hookrightarrow L^\infty$ when $1\le p < \infty$.
\end{enumerate}
\end{lem}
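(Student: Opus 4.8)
The plan is to deduce both statements from Bernstein's inequality together with the triangle inequality; the lemma is entirely standard, so the argument is short. Recall Bernstein's inequality: if $u \in \mathcal{S}'(\RR^3)$ has Fourier transform supported in a ball $\{|\xi| \le \lambda\}$ (resp.\ an annulus $\{\lambda/C \le |\xi| \le C\lambda\}$), then $\|u\|_{L^b_x} \lesssim \lambda^{3(1/a - 1/b)} \|u\|_{L^a_x}$ for $1 \le a \le b \le \infty$, with the implicit constant depending only on $a$, $b$, $C$ (see Lemma~2.1 of \cite{BCD}). For part (1), I would note that since $q \ge -1$ and $s_2 - s_1 \le 0$ we have $2^{q(s_2-s_1)} \le 2^{s_1-s_2}$ for every such $q$, hence $2^{qs_2}\|\Delta_q f\|_{L^p_x} \le 2^{s_1-s_2}\,2^{qs_1}\|\Delta_q f\|_{L^p_x}$; taking the $\ell^r$ norm over $q \ge -1$ gives $\|f\|_{B^{s_2}_{pr}} \le 2^{s_1-s_2}\|f\|_{B^{s_1}_{pr}}$, i.e.\ $B^{s_1}_{pr} \hookrightarrow B^{s_2}_{pr}$. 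For the homogeneous spaces this breaks down because the index set is all of $\ZZ$ and the weight $2^{q(s_2-s_1)}$ is unbounded as $q \to -\infty$ when $s_2 < s_1$; I would record the strict failure of inclusion by exhibiting a standard lacunary series whose terms are supported in shrinking low-frequency annuli, chosen so that its $\dot B^{s_1}_{p,\infty}$ norm is finite while its $\dot B^{s_2}_{p,\infty}$ norm diverges (cf.\ the discussion in \cite{BCD}).

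For part (2), write $f = \sum_{q \ge -1}\Delta_q f$ with convergence in $\mathcal{S}'$. Applying Bernstein with $a = p$, $b = \infty$ to each dyadic block gives $\|\Delta_q f\|_{L^\infty_x} \lesssim 2^{3q/p}\|\Delta_q f\|_{L^p_x}$ for $q \ge 0$ and $\|\Delta_{-1}f\|_{L^\infty_x} \lesssim \|\Delta_{-1}f\|_{L^p_x}$, so that
\[
\sum_{q \ge -1}\|\Delta_q f\|_{L^\infty_x} \lesssim \sum_{q \ge -1} 2^{3q/p}\|\Delta_q f\|_{L^p_x} = \|f\|_{B^{3/p}_{p,1}} < \infty ;
\]
the partial sums therefore converge uniformly to a bounded function, which must coincide with $f$ since both are the $\mathcal{S}'$-limit, and $\|f\|_{L^\infty_x} \lesssim \|f\|_{B^{3/p}_{p,1}}$. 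For the homogeneous version I would run the identical argument with $q$ ranging over $\ZZ$, using that every $\dot\Delta_q f$ has annular Fourier support so Bernstein applies for all $q \in \ZZ$, and $\sum_{q \in \ZZ} 2^{3q/p}\|\dot\Delta_q f\|_{L^p_x} = \|f\|_{\dot B^{3/p}_{p,1}} < \infty$; the low-frequency tail then converges in $L^\infty_x$, which both realizes $f$ as a genuine element of $L^\infty_x$ and furnishes $\|f\|_{L^\infty_x} \lesssim \|f\|_{\dot B^{3/p}_{p,1}}$.

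The main—and essentially only—point requiring care is not an estimate but a matter of convergence and realization: upgrading the convergence of the Littlewood--Paley series from $\mathcal{S}'$ to $L^\infty_x$, and, in the homogeneous case, the standard caveat that $\dot B^{3/p}_{p,1}$ with $p < \infty$ must first be realized inside $\mathcal{S}'(\RR^3)$ rather than merely modulo polynomials before the embedding into $L^\infty_x$ is meaningful. Both are handled exactly as in \cite{BCD}, so no genuine obstacle arises.
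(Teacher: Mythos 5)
Your proof is correct and uses the standard Bernstein-inequality argument; the paper gives no proof of this lemma, delegating it to \cite{BCD} and \cite{XK}, where the same approach is used. You also correctly flagged the one genuinely delicate point, namely upgrading $\mathcal{S}'$-convergence of the Littlewood--Paley series to $L^\infty$-convergence and, in the homogeneous case, the realization of $\dot B^{3/p}_{p,1}$ inside $\mathcal{S}'$ rather than merely modulo polynomials.
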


\begin{lem}\label{Besov embedding}
Let $ 1 \le p,q,r \le \infty$ and $s>0$. Then we have 
\begin{align*}
\Vert \nabla_x \cdot \Vert_{\tilde{L}^q_T(\dot{B}^s_{pr})} \sim \Vert \cdot \Vert_{\tilde{L}^q_T(\dot{B}^{s+1}_{pr})}, \quad \Vert \cdot \Vert_{\tilde{L}^q_T(\dot{B}^s_{pr})} \lesssim \Vert \cdot \Vert_{\tilde{L}^q_T(B^s_{pr})}. 
\end{align*}
\end{lem}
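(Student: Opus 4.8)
The plan is to prove both assertions at the level of a single dyadic block, pointwise in the time variable $t$ (and in $v$, if a velocity norm sits between $L^q_T$ and $L^p_x$), and then sum in the frequency index against the outer $\ell^r$ norm; since the Littlewood--Paley operators act only on $x$, every pointwise-in-$(t,v)$ estimate passes through the $L^q_T$ (and $L^\beta_v$) integration for free. To keep notation clean I would write the dyadic blocks as $\Delta_j,\dot\Delta_j$, reserving $q$ for the time exponent as in the statement.

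For the first equivalence, the key observation is that $\dot\Delta_j$ commutes with $\nabla_x$, so $\dot\Delta_j(\nabla_x f)=\nabla_x\dot\Delta_j f$, a function whose Fourier transform is supported in the annulus $2^j\mathcal{C}$. On such an annulus one has the two-sided Bernstein bound $\|\nabla_x g\|_{L^p_x}\sim 2^j\|g\|_{L^p_x}$ with constants independent of $j$: the upper bound is the usual Bernstein inequality, and for the lower bound one writes, for any $g$ with $\widehat g$ supported in $2^j\mathcal{C}$, $\widehat g=\sum_k \frac{-i\xi_k}{|\xi|^2}\,\widetilde\phi(2^{-j}\xi)\,\widehat{\partial_k g}$ for a fixed $\widetilde\phi\in C_0^\infty$ supported in an annulus with $\widetilde\phi\equiv1$ on $\mathcal{C}$; by scaling the symbol equals $2^{-j}M_k(2^{-j}\xi)$ with $M_k(\eta)=\frac{-i\eta_k}{|\eta|^2}\widetilde\phi(\eta)\in C_0^\infty$, so Young's convolution inequality (whose constant $\|\mathcal{F}^{-1}M_k\|_{L^1}$ is independent of $j$) gives $\|g\|_{L^p_x}\lesssim 2^{-j}\|\nabla_x g\|_{L^p_x}$. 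Applying this with $g=\dot\Delta_j f$, multiplying by $2^{js}$, taking the $L^q_T L^p_x$ (or $L^q_T L^\beta_v L^p_x$) norm and then $\ell^r$ over $j$ turns $2^{js}\|\dot\Delta_j(\nabla_x f)\|$ into $2^{(s+1)j}\|\dot\Delta_j f\|$ up to uniform constants, which is exactly $\|\nabla_x f\|_{\tilde L^q_T(\dot B^s_{pr})}\sim\|f\|_{\tilde L^q_T(\dot B^{s+1}_{pr})}$. The only point requiring a little care is the uniformity in $j$ of all implied constants, which the scaling of the multipliers ensures.

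For the second inequality, split the sum defining the homogeneous norm at $j=0$. For $j\ge0$ one has $\dot\Delta_j=\Delta_j$ identically, so that part is bounded by $\|f\|_{B^s_{pr}}$ verbatim. For $j\le-2$, $\mathrm{supp}\,\phi(2^{-j}\cdot)\subset B(0,2/3)$, a region on which $\chi\equiv1$ (this follows from the partition-of-unity identity, since every $\phi(2^{-q}\cdot)$ with $q\ge0$ vanishes there); hence $\dot\Delta_j f=\phi(2^{-j}D)\chi(D)f=\phi(2^{-j}D)\Delta_{-1}f$, and Young's inequality gives $\|\dot\Delta_j f\|_{L^p_x}\lesssim\|\Delta_{-1}f\|_{L^p_x}$ uniformly in $j$. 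For $j=-1$, $\mathrm{supp}\,\phi(2\cdot)\subset B(0,4/3)$, where $\chi+\phi\equiv1$, so $\dot\Delta_{-1}f=\phi(2D)(\Delta_{-1}+\Delta_0)f$ and $\|\dot\Delta_{-1}f\|_{L^p_x}\lesssim\|\Delta_{-1}f\|_{L^p_x}+\|\Delta_0 f\|_{L^p_x}$. Summing over $j<0$, $\sum_{j<0}2^{jsr}\|\dot\Delta_j f\|_{L^p_x}^r\lesssim\big(\sum_{j<0}2^{jsr}\big)\big(\|\Delta_{-1}f\|_{L^p_x}^r+\|\Delta_0 f\|_{L^p_x}^r\big)\lesssim\|f\|_{B^s_{pr}}^r$, where the geometric series converges precisely because $s>0$ (for $r=\infty$ replace it by $\sup_{j<0}2^{js}=2^{-s}<\infty$). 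Since all these bounds are pointwise in $t$ and $v$, taking $L^q_T$ and (if present) $L^\beta_v$ norms throughout yields $\|f\|_{\tilde L^q_T(\dot B^s_{pr})}\lesssim\|f\|_{\tilde L^q_T(B^s_{pr})}$. The crux of this half is the low-frequency bookkeeping for $j\le-1$ together with the use of $s>0$ to sum $\sum_{j<0}2^{jsr}$; everything else is routine.
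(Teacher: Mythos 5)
Your proof is correct, and since the paper simply cites \cite{XK} for this lemma rather than proving it, there is no in-paper argument to compare against; your write-up is the standard self-contained proof. Both halves are sound: the first equivalence follows from the two-sided Bernstein estimate on frequency annuli, with the lower bound obtained via the $j$-uniformly scaled multipliers $M_k(\eta)=-i\eta_k|\eta|^{-2}\widetilde\phi(\eta)$ (one minor verbal slip: the $L^1$ norm of $\mathcal{F}^{-1}$ of the symbol $2^{-j}M_k(2^{-j}\cdot)$ is $2^{-j}\|\mathcal{F}^{-1}M_k\|_{L^1}$, not $\|\mathcal{F}^{-1}M_k\|_{L^1}$, but that factor of $2^{-j}$ is exactly what you need for the Bernstein lower bound, so the conclusion stands). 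The second half correctly handles $j\ge 0$ by the identity $\dot\Delta_j=\Delta_j$, treats the low frequencies $j\le -1$ by expressing $\dot\Delta_j f$ through $\Delta_{-1}f$ (and $\Delta_0 f$ when $j=-1$) using the support constraints on $\phi$ and $\chi$, and uses $s>0$ to sum the geometric tail; all the block-level estimates are pointwise in $(t,v)$, so they pass through the outer $L^q_T$ (and any intermediate $L^\beta_v$) integrations before the $\ell^r$ sum, which is the correct order for the Chemin--Lerner norm.
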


\begin{lem}\label{Besov and Chemin-Lerner}
Let $ 1 \le p,\alpha, \beta, r \le \infty$ and $s>0$. If $r\le \min\{\alpha, \beta\} $ then
\begin{align*}
\Vert f \Vert_{L^{\alpha}_TL^{\beta}_v (B^s_{pr})} \le \Vert f \Vert_{\tilde{L}^{\alpha}_T \tilde{L}^{\beta}_v (B^s_{pr})} \ and\  \Vert f \Vert_{L^{\alpha}_TL^{\beta}_v (\dot{B}^s_{pr})} \le \Vert f \Vert_{\tilde{L}^{\alpha}_T \tilde{L}^{\beta}_v (\dot{B}^s_{pr})}.
\end{align*}
\end{lem}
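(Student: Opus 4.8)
The plan is to derive both inequalities from Minkowski's integral inequality, using crucially that $r$ is the smallest of the four exponents. Write $F_q:=2^{qs}\Delta_q f$ in the nonhomogeneous case, and $F_q:=2^{qs}\dot\Delta_q f$ (with $q$ running over $\ZZ$) in the homogeneous one. Unwinding the definitions, $\|f\|_{L^{\alpha}_T L^{\beta}_v(B^s_{pr})}$ is the result of applying to the family $\big(F_q(x,v,t)\big)$, from inside out, the norms $\|\cdot\|_{L^p_x}$, then $\ell^r_q$, then $\|\cdot\|_{L^{\beta}_v}$, then $\|\cdot\|_{L^{\alpha}_T}$; while $\|f\|_{\tilde L^{\alpha}_T\tilde L^{\beta}_v(B^s_{pr})}$ applies them in the order $\|\cdot\|_{L^p_x}$, $\|\cdot\|_{L^{\beta}_v}$, $\|\cdot\|_{L^{\alpha}_T}$, $\ell^r_q$. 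So the whole task is to move the $\ell^r_q$-norm from second innermost to outermost, first past $L^{\beta}_v$ and then past $L^{\alpha}_T$.

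First I would carry out the $v$-step: for a.e.\ fixed $t$, set $g_q(v):=\|F_q(\cdot,v,t)\|_{L^p_x}$; since $r\le\beta$, Minkowski's integral inequality gives $\big\|\,\|g_q\|_{\ell^r_q}\,\big\|_{L^{\beta}_v}\le\big\|\,\|g_q\|_{L^{\beta}_v}\,\big\|_{\ell^r_q}$, i.e.\ the inner $\ell^r_q$ may be pulled out of $L^{\beta}_v$ at the price of increasing the norm. Applying $\|\cdot\|_{L^{\alpha}_T}$ to both sides (which is order preserving) and then performing the $t$-step: with $h_q(t):=\big\|\,\|F_q(\cdot,v,t)\|_{L^p_x}\,\big\|_{L^{\beta}_v}$ and $r\le\alpha$, Minkowski again gives $\big\|\,\|h_q\|_{\ell^r_q}\,\big\|_{L^{\alpha}_T}\le\big\|\,\|h_q\|_{L^{\alpha}_T}\,\big\|_{\ell^r_q}$. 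Chaining the two estimates produces exactly $\|f\|_{L^{\alpha}_T L^{\beta}_v(B^s_{pr})}\le\|f\|_{\tilde L^{\alpha}_T\tilde L^{\beta}_v(B^s_{pr})}$. The homogeneous assertion follows verbatim, replacing $\Delta_q$ by $\dot\Delta_q$ and $\{q\ge -1\}$ by $\ZZ$.

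Two routine matters I would also record. When one of $p,\alpha,\beta,r$ is $\infty$ the iterated norms carry the usual essential-supremum conventions and Minkowski's inequality survives in those limiting forms, so nothing changes. To legitimize the applications of Minkowski (measurability of the partial norms and Tonelli), I would first truncate the $q$-summation to a finite range $-1\le q\le Q$, for which all quantities are manifestly measurable in $(x,v,t)$ and finite whenever the right-hand side is finite---and if it is not, there is nothing to prove---and then let $Q\to\infty$ by monotone convergence. There is no genuine obstacle here: the entire content is that the hypothesis $r\le\min\{\alpha,\beta\}$ makes $\ell^r$ the cheapest norm to place outermost.
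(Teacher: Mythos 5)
Your proof is correct, and it is the standard argument: the inequality is a double application of Minkowski's integral inequality, moving the $\ell^r_q$-norm outward past $L^\beta_v$ (using $r\le\beta$) and then past $L^\alpha_T$ (using $r\le\alpha$), with the usual limiting conventions at $\infty$ and a finite-truncation/monotone-convergence step to justify measurability. The paper itself does not spell out a proof for this lemma but defers to \cite{DLX}; the argument given there is the same Minkowski-based one you present, so there is nothing to reconcile.
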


For the proof of Lemma \ref{bounded operators}, Lemma \ref{embedding} - Lemma \ref{Besov embedding} and Lemma \ref{Besov and Chemin-Lerner}, readers may refer to \cite{BCD}, \cite{XK} and \cite{DLX}, respectively.

We prove a useful lemma to deal with terms consisting of $\Vert |\!|\!| f |\!|\!| \Vert_{L^\infty_x}$.
\begin{lem}\label{T-estimate}
For each $T>0$, we have
\begin{align*}
\left( \int^T_0 \Vert |\!|\!| f |\!|\!| \Vert_{L^\infty_x}^2 dt \right)^{1/2} \lesssim \Vert f \Vert_{\dot{\mathcal{T}}^{3/2}_{T,2,2}}.
\end{align*}
\end{lem}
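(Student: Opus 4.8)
The plan is to reduce the claim to a Bernstein-type inequality for the scalar function $x\mapsto |\!|\!|\dot\Delta_qf(x,\cdot,t)|\!|\!|$, and then to sum the dyadic pieces using the triangle inequality in $L^2(0,T)$. The starting observation is that $|\!|\!|\cdot|\!|\!|$ is a (semi)norm acting on the $v$-variable: writing $|\!|\!| h|\!|\!|^2=J_1^{\Phi_\gamma}(h)+J_2^{\Phi_\gamma}(h)$, each $J_i^{\Phi_\gamma}(h)^{1/2}$ is the $L^2$-norm, against a fixed positive measure in $(v,v_*,\sigma)$, of a quantity depending linearly on $h$, so $|\!|\!|\cdot|\!|\!|=\|(L_1\cdot,L_2\cdot)\|_{L^2}$ for linear maps $L_1,L_2$; by \eqref{up-down-triple} it is in fact a norm. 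In particular the triangle inequality and, more importantly, Minkowski's integral inequality are available for $|\!|\!|\cdot|\!|\!|$ against $x$-convolutions.

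First I would fix $t$ and set $g_q(x,t):=|\!|\!|\dot\Delta_qf(x,\cdot,t)|\!|\!|$. Choosing a fattened cutoff $\tilde\phi\in C_0^\infty$ with $\tilde\phi\phi=\phi$ and putting $\tilde h=\mathcal{F}^{-1}\tilde\phi$, one has $\dot\Delta_qf=2^{3q}\tilde h(2^q\cdot)*_x\dot\Delta_qf$, whence Minkowski's integral inequality for the $v$-norm gives
\[
g_q(x,t)\le 2^{3q}\int_{\RR^3}|\tilde h(2^qz)|\,g_q(x-z,t)\,dz .
\]
Cauchy–Schwarz in $z$ (i.e.\ the Young-type embedding $L^2*L^2\hookrightarrow L^\infty$) then yields
\[
\|g_q(\cdot,t)\|_{L^\infty_x}\lesssim 2^{3q}\,\|\tilde h(2^q\cdot)\|_{L^2_x}\,\|g_q(\cdot,t)\|_{L^2_x}=2^{3q/2}\,\|\tilde h\|_{L^2}\,\|g_q(\cdot,t)\|_{L^2_x},
\]
with constant independent of $q$ and $t$ — this is the Bernstein step.

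Finally, using the triangle inequality for $|\!|\!|\cdot|\!|\!|$ together with $f(\cdot,v,t)=\sum_{q\in\ZZ}\dot\Delta_qf(\cdot,v,t)$, one gets $\big\||\!|\!| f(\cdot,\cdot,t)|\!|\!|\big\|_{L^\infty_x}\le\sum_{q}\|g_q(\cdot,t)\|_{L^\infty_x}\lesssim\sum_{q}2^{3q/2}\|g_q(\cdot,t)\|_{L^2_x}$; applying the triangle inequality in $L^2(0,T)$ to exchange the $t$-integration with the summation over $q$ then gives
\[
\Big(\int_0^T\big\||\!|\!| f|\!|\!|\big\|_{L^\infty_x}^2\,dt\Big)^{1/2}\lesssim\sum_{q\in\ZZ}2^{3q/2}\Big(\int_0^T\|g_q(\cdot,t)\|_{L^2_x}^2\,dt\Big)^{1/2}=\sum_{q\in\ZZ}2^{3q/2}\big\||\!|\!|\dot\Delta_qf|\!|\!|\big\|_{L^2_TL^2_x}=\|f\|_{\dot{\mathcal{T}}^{3/2}_{T,2,2}},
\]
which is the assertion. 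The only point requiring care is the first paragraph: one must genuinely know $|\!|\!|\cdot|\!|\!|$ is a norm in $v$ so that Minkowski's integral inequality can be applied to the $x$-convolution defining $\dot\Delta_q$; once that is in hand the remainder is the standard Littlewood–Paley/Bernstein bookkeeping. I would also note that the computation is formal until the right-hand side is known to be finite, in which case $\sum_q\dot\Delta_qf$ converges in the relevant sense and every manipulation above is legitimate.
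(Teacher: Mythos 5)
Your proposal is correct and follows essentially the same route as the paper: both combine the generalized Minkowski inequality (justified by the fact that $|\!|\!|\cdot|\!|\!|$ is an $L^2$-type norm in $(v,v_*,\sigma)$ of linear transforms of its argument) with the Bernstein step implementing $\dot B^{3/2}_{2,1}(\RR^3_x)\hookrightarrow L^\infty_x$. The paper carries these out at the level of the explicit $B$-integral defining the triple norm, whereas you repackage the same manipulations as a Bernstein inequality for the scalar $g_q(x,t)=|\!|\!|\dot\Delta_qf(x,\cdot,t)|\!|\!|$; the underlying content is identical.
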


\begin{proof}
By defition of $|\!|\!| f |\!|\!|$ and using both generalized Minkowski's inequality and Besov embedding $ \dot{B}^{3/2}_x \hookrightarrow L^\infty_x$, we calculate that
\begin{align*}
&\left( \int^T_0 \Vert |\!|\!| f |\!|\!| \Vert_{L^\infty_x}^2 dt \right)^{1/2} \\
&\le \left[ \int^T_0 \int B \left\{ \mu_* \sup_x (f'-f)^2 + (\mu^{1/2} - \mu'^{1/2}) \sup_xf^2_* \right\} dvdv_* d\sigma dt \right]^{1/2} \\
& \lesssim  \left[ \int^T_0 \int B \left\{ \mu_* \left( \sum_l 2^{\frac{3}{2}l}\left( \int \vert \dot{\Delta}_l  (f'-f)\vert^2 dx\right)^{1/2} \right)^2 \right.\right.\\
& \left.\left. \qquad+ (\mu^{1/2} - \mu'^{1/2}) \left( \sum_l 2^{\frac{3}{2}l}\left( \int \vert \dot{\Delta}_l  f_*\vert^2 dx\right)^{1/2} \right)^2  \right\} dvdv_* d\sigma dt \right]^{1/2} \\
&\le \sum_l \left[  \int^T_0 \int B \left\{ \mu_* 2^{3l}\vert \dot{\Delta}_l (f'-f)\vert^2 \right.\right. \\ 
&\left.\left.\qquad \qquad+ (\mu^{1/2} - \mu'^{1/2}) 2^{3l} \vert \dot{\Delta}_l f_*\vert^2 \right\} dxdvdv_* d\sigma dt \right]^{1/2}  = \Vert f \Vert_{\dot{\mathcal{T}}^{3/2}_{T,2,2}}.
\end{align*}
\end{proof}

\subsection{Proof of Lemma \ref{MS-general}}
\label{ap-2}

In this subsection we also give another upper bound for $\Gamma$, which is a  supplementary variant of \cite[Lemma 4.7]{amuxy4-3} concerning 
the range of the index of the Sobolev space. 

\begin{prop}\label{upper-recent}
Let $0<\nu<2$, $\gamma > \max\{-3, -\nu-3/2\}$.  For any $\ell \in \RR$ and $m \in
[-\nu/2,\nu/2]$ we have
\begin{align}\label{different-g-h}
\Big | \Big(\Gamma( f, \, g),\, h \Big ) _{L^2} \Big|
\lesssim
\|f\|_{L^2}
\|g\|_{H^{\nu/2+m}_{(\ell +\gamma+\nu)^+}}\|h\|_{H^{\nu/2-m}_{-\ell}}\,.
\end{align}
\end{prop}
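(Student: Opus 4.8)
\textbf{Plan of proof for Proposition \ref{upper-recent}.}
The statement is a refinement of \cite[Lemma 4.7]{amuxy4-3}: we want to distribute the weight ``budget'' $(\gamma+\nu)$ and $\nu/2$ of Sobolev regularity asymmetrically between $g$ and $h$, while keeping $f$ only in plain $L^2$. The plan is to follow the proof in \cite{amuxy4-3} but to track the weights and fractional derivatives more carefully. First I would recall the Carleman-type representation of $\Gamma(f,g)=\mu^{-1/2}Q(\mu^{1/2}f,\mu^{1/2}g)$ and split the collision kernel dyadically in the deviation angle, writing $b(\cos\theta)\sim K\theta^{-2-\nu}$ cut into pieces $b_k$ supported on $\theta\sim 2^{-k}$. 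On each piece one performs the usual change of variables and the pre-post collisional symmetry, producing a ``cancellation'' contribution (where one exploits $g'-g$ or $h'-h$ and a Taylor expansion in $\theta$) and a ``regular'' contribution.

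The key estimates are: (i) for the cancellation part, the gain of $\theta^2$ from the second-order Taylor expansion beats the singularity $\theta^{-2-\nu}$ up to $\theta^{-\nu}$, and after summing the dyadic pieces one recovers exactly $\nu/2$ derivatives, which can be split as $\nu/2+m$ on $g$ and $\nu/2-m$ on $h$ by an interpolation/Leibniz argument for the fractional operator $\langle D_v\rangle^{\nu/2}$; this is where the parameter $m\in[-\nu/2,\nu/2]$ enters and why its range is exactly this interval. (ii) For the weight, the factor $\mu_*^{1/2}$ (coming from $\mu^{-1/2}\mu_*^{1/2}\mu^{1/2}$ rearranged, together with the Maxwellian decay of $f$ in the $v_*$ variable after one more change of variables) localizes $|v-v_*|$ and lets one trade powers of $\langle v\rangle$ freely; the collision kernel $\Phi_\gamma(|v-v_*|)=|v-v_*|^\gamma$ then contributes the weight $(\gamma+\nu)$, and the arbitrary shift by $\ell$ is absorbed since $\langle v\rangle^\ell\langle v_*\rangle^{-\ell}\lesssim \langle v-v_*\rangle^{|\ell|}$ is controlled by the Gaussian in $v_*$. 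Combining, one gets $\|f\|_{L^2}$, $g$ measured in $H^{\nu/2+m}_{(\ell+\gamma+\nu)^+}$, and $h$ in $H^{\nu/2-m}_{-\ell}$, using $(\cdot)^+$ because when $\gamma+\nu\le0$ no positive weight is needed on $g$ but one cannot gain a negative one for free.

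Concretely I would: (1) reduce via the standard decomposition (e.g. as in \cite{AMUXY2,amuxy4-3}) to bounding $\sum_k \iiint b_k B \mu_*^{1/2}|(\mu^{1/2}f)_*|\,|\,(\mu^{1/2}g)'-(\mu^{1/2}g)|\,|(\mu^{1/2}h)|$ plus a symmetric term and a lower-order term; (2) on the cancellation term, Taylor expand, bound $|v'-v|\lesssim \theta|v-v_*|$, use Cauchy--Schwarz in $(v,v_*,\sigma)$ after inserting the weight $\langle v\rangle^{a}\langle v\rangle^{-a}$ with $a$ chosen to split the budget, and sum the geometric-type series in $k$ to produce $\langle D_v\rangle^{\nu/2}$ acting on $g$ and $h$; (3) distribute these $\nu/2$ derivatives as $\nu/2+m$ and $\nu/2-m$ using the elementary bound $\|\langle D\rangle^{\nu/2}(uv)\|\lesssim\|\langle D\rangle^{\nu/2+m}u\|\,\|\langle D\rangle^{\nu/2-m}v\|$-type interpolation (or directly keep the derivative on whichever factor and interpolate); (4) handle the regular/non-singular remainder by the cruder pointwise bounds on $b$ restricted away from $\theta=0$, which only costs $L^2$-type norms with the same weights. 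The main obstacle I expect is step (3) together with the bookkeeping in step (2): making the asymmetric split of both the $\nu/2$ fractional derivative and the weight $(\gamma+\nu)$ simultaneously rigorous — i.e. verifying that the commutator of $\langle v\rangle^{\text{weight}}$ with the collision integral and with $\langle D_v\rangle^m$ produces only lower-order terms absorbed by the stated norms — is the delicate part; everything else is a weighted re-run of the known argument in \cite{amuxy4-3}.
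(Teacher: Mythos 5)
Your route is recognizably different from the paper's. The paper writes $Q=Q_c+Q_{\bar c}$ (cutoff in $|v-v_*|$) and, for $Q_c$, passes entirely to the Fourier side via the Bobylev formula, decomposing the frequency integral according to the relative sizes of $\xi^-$, $\la\xi_*\ra$, $|\xi|$, Taylor-expanding $\hat\Phi_c$, and reducing everything to a kernel estimate on $K(\xi,\xi_*)$; you propose instead a physical-side dyadic decomposition in $\theta$ with Taylor expansion of $g'$, $h'$. Your heuristic for the weight (that $\Phi_\gamma$ plus the Gaussians in $v_*$ supply $(\gamma+\nu)^+$ and absorb the shift $\ell$) is correct in spirit, but the paper handles the weight shift $\ell$ separately, via the commutator lemma Prop.~\ref{IV-prop-2.5} for $\la v\ra^\ell Q_c(f,g)-Q_c(f,\la v\ra^\ell g)$, which gains a full derivative and is therefore of lower order whenever $\nu<2$; decoupling the weight transfer from the derivative split is considerably cleaner than trying to do both inside a single dyadic-in-$\theta$ sum as you propose.

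The genuine gap is precisely the step you yourself flag: how to redistribute the $\nu/2$ fractional regularity asymmetrically, as $\nu/2+m$ on $g$ and $\nu/2-m$ on $h$. The mechanism you name, a fractional Leibniz/interpolation inequality $\|\la D\ra^{\nu/2}(uv)\|\lesssim\|\la D\ra^{\nu/2+m}u\|\,\|\la D\ra^{\nu/2-m}v\|$, is not a valid estimate, and even in spirit it is the wrong tool: what must be bounded after Cauchy--Schwarz is not a pointwise product but the bilinear pairing $\iint b\,(g'-g)\,h$ against a singular kernel, and Kato--Ponce type inequalities do not move derivatives across the two arguments of such a form. The paper's device is to insert the factor $\la\xi-\xi_*\ra^{\nu/2+m}\la\xi\ra^{\nu/2-m}$ directly into the Fourier kernel bound (the passage from \eqref{later-use1} to \eqref{kernel-estimate}), pair $\la\xi-\xi_*\ra^{\nu/2+m}\hat g(\xi-\xi_*)$ against $\la\xi\ra^{\nu/2-m}\hat h(\xi)$, and verify that the residual kernel is Schur-summable precisely because $\gamma+\nu>-3/2$. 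If you wish to stay on the physical side you need the analogous asymmetrically-weighted Schur/Young test on each dyadic piece in $\theta$; your alternative ``prove the endpoints and interpolate'' could in principle be made to work, but requires a weighted analytic interpolation and is at least as much effort as the kernel method. As written, the proposal lacks the one concrete ingredient that actually makes the $m$-shift go through.
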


We decompose the kinetic factor of the cross-section {into} two parts,
\begin{align*}
\Phi(|z|)=|z|^\gamma
= |z|^\gamma \varphi_0(|z|)+ |z|^\gamma
\big(1-\varphi_0(|z|)\big) = \Phi_c(|z|) + \Phi_{\bar c}(|z|),
\end{align*}
where $\varphi_0\in C^\infty_0(\RR)$, Supp $\varphi_0\subset [-1, 1]; \varphi(t)=1$ for
$|t|\leq 1/2$, and put
\[
B_{c} = \Phi_{c} (|v-v_*|) b\Big(\frac{v-v_*}{|v-v_*|} \cdot \sigma\Big)\,,\quad
B_{\bar c} = \Phi_{\bar c} (|v-v_*|) b\Big(\frac{v-v_*}{|v-v_*|} \cdot \sigma\Big)\,.
\]
{Accordingly}, we write
\[
Q(f,g) = Q_c(f,g) + Q_{\bar c}(f,g)\,,
\]
and
\[
\Gamma(f,g) = \Gamma_c(f,g) + \Gamma_{\bar c}(f,g)\,.
\]

\begin{prop} \label{upper-c}
Let $0< \nu<2,  \gamma > \max\{-3, -\nu-3/2\}$.  If $m \in\  [-\nu/2,\nu/2]$
then we have
\[
| (Q_c (f,  g),h )|  \lesssim \|f\|_{L^{2} }\Vert g\Vert_{H^{\nu/2+ m}} \|h\|_{H^{\nu/2 -m}}\,.
\]
\end{prop}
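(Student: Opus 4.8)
The plan is to follow the proof of \cite[Lemma 4.7]{amuxy4-3}, refining the bookkeeping so that the $\nu$ units of gained regularity are distributed asymmetrically, $\nu/2+m$ onto $g$ and $\nu/2-m$ onto $h$, rather than $\nu/2$ onto each. First I would pass to the weak form: after the pre--post collisional change of variables (unit Jacobian) and the standard reductions of the non-cutoff theory, $(Q_c(f,g),h)_{L^2_v}$ is written as the sum of a \emph{regular} term, in which $g$ and $h$ enter only through the differences $g'-g$ and $h'-h$ while $f$ (evaluated at $v_*$ or $v'_*$) carries neither a difference nor a derivative, and a \emph{cancellation} term of the form $\int_{\RR^3}(f*\mathcal{S}_c)\,g\,h\,dv$, where $\mathcal{S}_c$ is the bounded kernel produced by the cancellation lemma from $\int_{\mathbb{S}^2}B_c(v-v_*,\sigma)(f'_*-f_*)\,d\sigma$. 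Since $\Phi_c$ is compactly supported near the origin, $\mathcal{S}_c$ is compactly supported with only a $|z|^\gamma$-type singularity at $z=0$, hence $\mathcal{S}_c\in L^1(\RR^3)$ precisely because $\gamma>-3$.

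For the regular term, the key point is that the displacement $|v'-v|\sim|v-v_*|\sin(\theta/2)$ is uniformly bounded on $\operatorname{supp}\Phi_c$, so no velocity weights intervene. I would expand $g$ and $h$ on the Fourier side and use the elementary bounds $|e^{ia}-1|\lesssim\min(1,|a|)$ (and, when $\nu>1$, the second-order version $|e^{ia}+e^{-ia}-2|\lesssim\min(1,|a|^2)$, which exploits the curvature cancellation after the symmetrization of the $\sigma$-integral already built into the paper's kernel); the resulting $\theta$-integral $\int_0^{\pi/2}\theta^{-2-\nu}(\cdots)\,\theta\,d\theta$ is absolutely convergent for $0<\nu<2$ and can be split so as to assign the weights $|\xi|^{\nu/2+m}$ to $\widehat g$ and $|\eta|^{\nu/2-m}$ to $\widehat h$ exactly when $m\in[-\nu/2,\nu/2]$; the remaining $v_*$-integration is absorbed into $\|f\|_{L^2_v}$ by Cauchy--Schwarz, giving the contribution $\|f\|_{L^2_v}\|g\|_{H^{\nu/2+m}_v}\|h\|_{H^{\nu/2-m}_v}$.

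For the cancellation term I would use Parseval together with $|\widehat{\mathcal{S}_c}(\xi)|\lesssim\langle\xi\rangle^{-(3+\gamma)}$ (valid because $\mathcal{S}_c$ carries a $|z|^\gamma$ singularity at the origin):
\[
\Big|\int_{\RR^3}(f*\mathcal{S}_c)\,g\,h\,dv\Big|\lesssim\|f\|_{L^2_v}\,\|gh\|_{H^{-(3+\gamma)}_v}\lesssim\|f\|_{L^2_v}\,\|g\|_{H^{\nu/2+m}_v}\,\|h\|_{H^{\nu/2-m}_v},
\]
the last step being the Sobolev product estimate in $\RR^3$; it applies because $(\nu/2+m)+(\nu/2-m)+(3+\gamma)=\nu+3+\gamma>3/2$, i.e. $\gamma>-3/2-\nu$, the remaining requirements $\nu/2\pm m\ge-(3+\gamma)$ being automatic since $\gamma>-3$ and $\nu/2\pm m\ge0$. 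This is exactly where the hypothesis $\gamma>\max\{-3,-\nu-3/2\}$ is used in full.

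The step I expect to be the main obstacle is the regular term: propagating the asymmetric allocation of derivatives consistently through the Fourier analysis and the angular integral, in particular verifying the endpoint cases $m=\pm\nu/2$, and arranging the non-cutoff reduction so that $f$ (or $f$ evaluated at $v'_*$) never acquires a difference or a derivative --- otherwise one would be forced to control $f$ in $H^{\nu/2}$ rather than merely in $L^2$. The cancellation-lemma computation of $\mathcal{S}_c$ and the decay bound on $\widehat{\mathcal{S}_c}$ are routine; compared with \cite[Lemma 4.7]{amuxy4-3}, the novelty lies solely in these weight- and index-tracking refinements needed to reach the full range of $m$.
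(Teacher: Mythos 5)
Your proposed route is genuinely different from the paper's. The paper works directly with the Bobylev formula $(Q_c(f,g),h)=\int b\,[\hat\Phi_c(\xi_*-\xi^-)-\hat\Phi_c(\xi_*)]\hat f(\xi_*)\hat g(\xi-\xi_*)\overline{\hat h(\xi)}$, splits the integration region by $|\xi^-|\lessgtr\tfrac12\la\xi_*\ra$, Taylor-expands $\hat\Phi_c$ in the small region, and then does pointwise kernel estimates that produce factors $\la\xi-\xi_*\ra^{\nu/2+m}\la\xi\ra^{\nu/2-m}/\la\xi_*\ra^{3+\gamma+\nu}$, closed by Cauchy--Schwarz and a change of variables $\xi\to\xi^+$ in the large-$|\xi^-|$ region. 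Your cancellation-term estimate via $|\widehat{\mathcal S_c}(\xi)|\lesssim\la\xi\ra^{-(3+\gamma)}$, Parseval, and the Sobolev trilinear estimate does look correct and is a legitimate way to capture the role of $\gamma>-3/2-\nu$; that part would indeed give the asymmetric bound.

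The gap is in the ``regular term,'' and it is more serious than you flag. First, the decomposition you posit --- a single term in which both $g$ \emph{and} $h$ appear only through the differences $g'-g$, $h'-h$ while $f$ appears undifferenced --- does not arise from the cancellation lemma. The two standard reductions give either $\iiint B_c f_* g\,(h'-h)$ (weak form; difference on $h$ only) or $\iiint B_c f'_*(g'-g)h+\int(f*\mathcal S_c)gh$ (difference on $g$ only, plus the cancellation term); any attempt to symmetrize so that both $g$ and $h$ carry differences generates further cross-terms with differences on $f$. Second, even assuming you had such a term, the trilinear estimate you need is asymmetric: at the endpoint $m=\nu/2$ you must pay $H^\nu$ for $g$ and $L^2$ for $h$, which a Cauchy--Schwarz on a $(g'-g)(h'-h)$ structure cannot deliver --- that naturally yields the symmetric $|\!|\!|g|\!|\!|\,|\!|\!|h|\!|\!|$ bound. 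The $\theta$-integral you describe, $\int\theta^{-1-\nu}\min(1,a\theta)\min(1,b\theta)\,d\theta$, is not $\lesssim a^{\nu/2+m}b^{\nu/2-m}$ uniformly for $a\ge b$ and $m$ near $\nu/2$ once $\nu>1$, so ``splitting the angular integral'' does not by itself close the endpoint cases. This is precisely what the remark following the proposition is pointing at: \cite[Proposition 2.1]{amuxy4-3} reached only $m\ge\nu/2-1$, and the extension to the full $m\in[-\nu/2,\nu/2]$ is the content that requires the careful region/Taylor/kernel bookkeeping of the paper's proof (notably the bound \eqref{later-use1} in the regime $\la\xi_*\ra\gtrsim|\xi|$, and the change of variables $\xi_*\to u=\xi_*-\xi^-$ or $\xi\to\xi^+$ in the $A_{2,1}$ analysis). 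To make your route work you would have to produce a concrete replacement for that bookkeeping; as written, the plan restates the theorem at the point where the proof must do work.
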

\begin{rema}
For any $q \in [1,2)$,  let $\gamma > \max\{-3, -\nu -3 + 3/q\}$ and 
$-\nu/2 \le m \le \min\{\nu/2, 3/2 -\nu/2\}$.
Then we have
\[
| (Q_c (f,  g),h )|  \lesssim \|f\|_{L^q }\Vert g\Vert_{H^{\nu/2+ m}} \|h\|_{H^{\nu/2 -m}}\,.
\]
\end{rema}

This proposition is an improvement of \cite[Proposition 2.1]{amuxy4-3}, concerning 
the lower bound of $m$, where it was assumed $m \ge \nu/2 -1$. 
For the self-containedness and the convenience of readers, we repeat the detail proof,
including the general case pointed out at the above remark.

For the proof of Proposition \ref{upper-c}, we shall follow some of
the arguments from \cite{amuxy4-3}. First of all, by using the formula from
the Appendix of \cite{advw}, and as in \cite{amuxy4-3}, one has
\begin{align*}
( Q_c(f, g), h ) =& \int b \Big({\frac\xi{ | \xi |}} \cdot \sigma \Big) [ \hat\Phi_c (\xi_* - \xi^- ) - \hat \Phi_c (\xi_* ) ] \hat f (\xi_* ) \hat g(\xi - \xi_* ) \overline{{\hat h} (\xi )} d\xi d\xi_*d\sigma .\\
= & \int_{ | \xi^- | \leq {\frac 1 2} \la \xi_*\ra }  \cdots\,\, d\xi d\xi_*d\sigma
+ \int_{ | \xi^- | \geq {\frac 1 2} \la \xi_*\ra } \cdots\,\, d\xi d\xi_*d\sigma \,\\
=& A_1(f,g,h)  +  A_2(f,g,h) \,\,,
\end{align*}
where $\hat f (\xi )$ is the Fourier transform of $f$ with respect to $v\in\RR^3$ and
$ \xi^-=\frac{1}{2}(\xi-|\xi|\sigma)$.

Then, we write $A_2(f,g,h)$ as
\begin{align*}
A_2 &=  \int b \Big({\frac \xi{ | \xi |}} \cdot \sigma \Big) {\bf 1}_{ | \xi^- | \ge {\frac 1 2}\la \xi_*\ra }
\hat\Phi_c (\xi_* - \xi^- ) \hat f (\xi_* ) \hat g(\xi - \xi_* ) \overline{{\hat h} (\xi )} d\xi d\xi_*d\sigma .\\
&- \int b \Big({\frac\xi{ | \xi |}} \cdot
 \sigma \Big){\bf 1}_{ | \xi^- | \ge {\frac 1 2}\la \xi_*\ra } \hat \Phi_c (\xi_* ) \hat f (\xi_* ) \hat g(\xi - \xi_* ) \overline{{\hat h} (\xi )} d\xi d\xi_*d\sigma \\
&= A_{2,1}(f,g,h) - A_{2,2}(f,g,h)\,.
\end{align*}
While for $A_1$, we use the Taylor expansion of $\hat \Phi_c$ at
order $2$ to have
$$
A_1 = A_{1,1} (f,g,h) +A_{1,2} (f,g,h)
$$
where
$$
A_{1,1} = \int b\,\, \xi^-\cdot (\nabla\hat\Phi_c)( \xi_*)
{\bf 1}_{ | \xi^- | \leq {\frac 1 2} \la \xi_*\ra }  \hat f (\xi_* ) \hat g(\xi - \xi_* ) \overline{\hat{h}(\xi)} d\xi d\xi_*d\sigma,
$$
and $A_{1,2} (f,g,h)$ is the remaining term corresponding to the second order term in the Taylor expansion of $\hat\Phi_c$. The $A_{i,j}$ with
$i,j=1,2$ are estimated by the following lemmas.
\begin{lem}\label{A-1}
For any $q \in [1,2]$,  let $\gamma > \max\{-3, -\nu -3 + 3/q\}$. Furthermore, assume that 
$m \le 3/2 -\nu/2$ if $q <2$. Then 
we have
\[
| A_{1,1} |+| A_{1,2}|  \lesssim \|f\|_{L^q} \Vert f\Vert_{H^{\nu/2+ m}} \|h\|_{H^{\nu/2 -m}}\,.
\]
\end{lem}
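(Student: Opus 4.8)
The plan is to adapt the argument of \cite[Proposition~2.1]{amuxy4-3}, of which this lemma is a mild variant, while tracking how the hypotheses on $\gamma$, $q$ and $m$ are used. Recall the second-order Taylor decomposition behind $A_1=A_{1,1}+A_{1,2}$: from
\[
\hat\Phi_c(\xi_*-\xi^-)-\hat\Phi_c(\xi_*)=-\,\xi^-\cdot\nabla\hat\Phi_c(\xi_*)+\int_0^1(1-s)\,(\xi^-)^{\top}\big(\mathrm{Hess}\,\hat\Phi_c\big)(\xi_*-s\xi^-)\,\xi^-\,ds
\]
the term $A_{1,1}$ carries the linear factor $\xi^-\cdot\nabla\hat\Phi_c(\xi_*)$ and $A_{1,2}$ the quadratic remainder. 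Two ingredients drive everything. First, the sharp pointwise bounds $|\partial_\xi^\alpha\hat\Phi_c(\xi)|\lesssim\la\xi\ra^{-3-\gamma-|\alpha|}$ for $|\alpha|\le 2$, valid because $z^\alpha|z|^\gamma\varphi_0(|z|)$ has an integrable singularity of order $|z|^{\gamma+|\alpha|}$ at the origin and compact support — here $\gamma>-3$ is exactly what makes $\Phi_c\in L^1$, so that $\hat\Phi_c$ is a well-defined bounded function; moreover on the Taylor support $\{|\xi^-|\le\frac12\la\xi_*\ra\}$ one has $\la\xi_*-s\xi^-\ra\sim\la\xi_*\ra$, so the factors $\nabla\hat\Phi_c$ and $\mathrm{Hess}\,\hat\Phi_c$ in $A_{1,1}$, $A_{1,2}$ are bounded there by $\la\xi_*\ra^{-4-\gamma}$ and $\la\xi_*\ra^{-5-\gamma}$ respectively. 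Second, using $|\xi^-|=|\xi|\sin(\theta/2)$ and $1-\cos\theta\sim\theta^2$,
\[
\int_{\mathbb{S}^2}b\Big(\tfrac{\xi}{|\xi|}\cdot\sigma\Big)\mathbf 1_{\{|\xi^-|\le\frac12\la\xi_*\ra\}}\,|\xi^-|^{2}\,d\sigma\ \lesssim\ \min\big(|\xi|^{2},\,|\xi|^{\nu}\la\xi_*\ra^{2-\nu}\big),
\]
because $\sin^2(\theta/2)$ removes two powers from the singularity $b\sim\theta^{-2-\nu}$, leaving $\theta^{-\nu}$, which is integrable against $\sin\theta\,d\theta$ since $\nu<2$, while the indicator confines $\theta\lesssim\la\xi_*\ra/|\xi|$. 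For $A_{1,1}$ one first uses the invariance of $b(\frac{\xi}{|\xi|}\cdot\sigma)$ under rotations about the $\frac{\xi}{|\xi|}$-axis to replace $\xi^-$ by its component $\frac{|\xi|}{2}(1-\cos\theta)\frac{\xi}{|\xi|}$ along that axis, which supplies the same $\theta^2$ gain but with one fewer power of $|\xi|$, so the corresponding $\sigma$-average is $\lesssim\min(|\xi|,\,|\xi|^{\nu-1}\la\xi_*\ra^{2-\nu})$.

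Combining these, the lemma reduces to a Fourier-side estimate of the shape
\[
\iint w(\xi,\xi_*)\,|\hat f(\xi_*)|\,|\hat g(\xi-\xi_*)|\,|\hat h(\xi)|\,d\xi\,d\xi_*\ \lesssim\ \|f\|_{L^q}\,\|g\|_{H^{\nu/2+m}}\,\|h\|_{H^{\nu/2-m}},
\]
with $w=\min(|\xi|^2,|\xi|^\nu\la\xi_*\ra^{2-\nu})\la\xi_*\ra^{-5-\gamma}$ for $A_{1,2}$ and the analogue with $\la\xi_*\ra^{-4-\gamma}$ and one fewer power of $|\xi|$ for $A_{1,1}$. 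The endgame is routine: split according to which branch of the $\min$ is active, distribute the $|\xi|$-weight in $w$ onto $\hat g$ and $\hat h$ via Peetre's inequality $\la\xi\ra^{\nu/2+m}\lesssim\la\xi-\xi_*\ra^{\nu/2+m}\la\xi_*\ra^{\nu/2+m}$ (licit since $|m|\le\nu/2$, so $\nu/2\pm m\ge0$), so as to absorb $\la\xi-\xi_*\ra^{\nu/2+m}|\hat g(\xi-\xi_*)|$ and $\la\xi\ra^{\nu/2-m}|\hat h(\xi)|$, and then apply Young's convolution inequality to pull these out in $L^2$ as $\|g\|_{H^{\nu/2+m}}$ and $\|h\|_{H^{\nu/2-m}}$. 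What remains is a weight $\la\xi_*\ra^{-\beta}|\hat f(\xi_*)|$ to be controlled by Hölder and the Hausdorff--Young inequality, $\|\la\cdot\ra^{-\beta}\hat f\|_{L^1}\le\|\la\cdot\ra^{-\beta}\|_{L^q}\|\hat f\|_{L^{q'}}\lesssim\|\la\cdot\ra^{-\beta}\|_{L^q}\|f\|_{L^q}$, the $L^q$-norm being finite when $q\beta>3$. In the principal regime $|\xi|\gtrsim\la\xi_*\ra$ one gets $\beta=3+\nu+\gamma$, i.e.\ $\gamma>3/q-3-\nu$, which together with $\gamma>-3$ is the stated hypothesis; the remaining regime(s) are discussed below. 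For $q=2$, where Young for $L^1*L^2$ would fail, one uses instead $\|\hat f\|_{L^2}=\|f\|_{L^2}$ together with a Schur-type bound on the kernel.

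The delicate point — and the main obstacle — is this exponent bookkeeping. In the regime $|\xi|\gg\la\xi_*\ra$ the cancellation for $A_{1,1}$ leaves, after the Sobolev weights are extracted, a leftover kernel $\sim|\xi|^{-1}\la\xi_*\ra^{-2-\nu-\gamma}$, whose $\xi_*$-weight is by itself too weak for $L^q$-integrability; but $|\xi|^{-1}\le\la\xi_*\ra^{-1}$ there, and trading the spurious $|\xi|^{-1}$ for $\la\xi_*\ra^{-1}$ restores $\beta=3+\nu+\gamma$. Conversely, in the regime where $|\xi|$, $|\xi-\xi_*|$ and $\la\xi_*\ra$ are all comparable (and $|\xi-\xi_*|$ may be $O(1)$), the leftover weight on $\hat f$ turns out to be only bounded, not $L^q$-integrable; for $q=2$ this is harmless — one integrates $\hat f$ in $L^2$ over the bounded $\xi-\xi_*$ range, using only $\gamma>-3$ and $|m|\le\nu/2$ — but for $q<2$ it forces the auxiliary restriction $m\le 3/2-\nu/2$, which is precisely why that hypothesis appears only when $q<2$. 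Carrying out these elementary but intricate case splits — as in \cite{amuxy4-3} — completes the proof.
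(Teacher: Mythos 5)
Your overall strategy — the second-order Taylor expansion of $\hat\Phi_c$, the spherical cancellation for $A_{1,1}$, the pointwise bounds $|\partial^\alpha\hat\Phi_c(\xi)|\lesssim\langle\xi\rangle^{-3-\gamma-|\alpha|}$, the $\min$-form of the $\sigma$-integral, the Peetre distribution of Sobolev weights, and the H\"older/Hausdorff--Young treatment of $\hat f$ — coincides with the paper's proof (the $\min$ formulation is just the paper's three-regime split \eqref{later-use1} written compactly). For the regimes $\sqrt2|\xi|\leq\langle\xi_*\rangle$ and $\langle\xi_*\rangle\leq|\xi|/2$ your ``distribute weights and apply Young/H\"older'' plan does close, because there $\langle\xi-\xi_*\rangle\sim\langle\xi_*\rangle$ (resp.\ $\sim\langle\xi\rangle$), and the trade $|\xi|^{-1}\leq\langle\xi_*\rangle^{-1}$ that you flag for $A_{1,1}$ is exactly the right move.

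The step that would fail as you describe it is the intermediate regime $\langle\xi_*\rangle\sim|\xi|$. You say ``$|\xi|$, $|\xi-\xi_*|$ and $\langle\xi_*\rangle$ are all comparable (and $|\xi-\xi_*|$ may be $O(1)$), \dots\ for $q=2$ this is harmless — one integrates $\hat f$ in $L^2$ over the bounded $\xi-\xi_*$ range, using only $\gamma>-3$ and $|m|\le\nu/2$.'' This is inaccurate on three counts: (i) in this regime $\langle\xi-\xi_*\rangle$ is \emph{not} comparable to $\langle\xi_*\rangle$ but can range from $O(1)$ up to $O(\langle\xi_*\rangle)$, i.e.\ the $\xi-\xi_*$ range is not bounded; (ii) after extracting the weights $\langle\xi\rangle^{\nu/2-m}\langle\xi-\xi_*\rangle^{\nu/2+m}$ the leftover kernel is $\langle\xi_*\rangle^{-(3+\gamma+\nu/2-m)}\langle\xi-\xi_*\rangle^{-(\nu/2+m)}\mathbf 1_{\langle\xi-\xi_*\rangle\lesssim\langle\xi_*\rangle}$, which genuinely depends on both variables, so Young's convolution inequality on the $\xi_*$-variable alone cannot produce $\|f\|_{L^q}\|g\|_{L^2}\|h\|_{L^2}$; and (iii) for $q=2$ the constraint that is actually used is $\gamma>-\nu-3/2$, not merely $\gamma>-3$. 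The correct mechanism is the Cauchy--Schwarz split that the paper calls $\mathcal D^{1/2}\mathcal E^{1/2}$: squaring and separating $|\hat f|^2$ from $|\widetilde{\hat g}|^2|\widetilde{\hat h}|^2$ gives $\mathcal D=\int|\hat f(\xi_*)|^2\langle\xi_*\rangle^{-(6+2\gamma+\nu-2m)}\bigl(\int_{\langle\eta\rangle\lesssim\langle\xi_*\rangle}\langle\eta\rangle^{-(\nu+2m)}d\eta\bigr)d\xi_*$; performing the inner $\eta$-integral produces $\langle\xi_*\rangle^{3-\nu-2m}$ when $\nu/2+m<3/2$, hence the net weight $\langle\xi_*\rangle^{-(3+2\gamma+2\nu)}$, which requires $\gamma>-\nu-3/2$ for $q=2$ (via Plancherel) and $3+\gamma+\nu>3/q$ together with $m\le 3/2-\nu/2$ for $q<2$ (via \eqref{p-estimate}). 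Your write-up gets the end conclusion ($m\le 3/2-\nu/2$ only for $q<2$) right, but the claimed route to it is the one piece that is not carried correctly and should be replaced by this Cauchy--Schwarz argument.
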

\begin{proof}
Considering firstly $A_{1,1}$, by writing
\[
\xi^- = \frac{|\xi|}{2}\left(\Big(\frac{\xi}{|\xi|}\cdot \sigma\Big)\frac{\xi }{|\xi|}-\sigma\right)
+ \left(1- \Big(\frac{\xi}{|\xi|}\cdot \sigma\Big)\right)\frac{\xi}{2},
\]
we see that the integral corresponding to the first term on the right hand side vanishes because of the symmetry
on $\SS^2$.
Hence, we have
\[
A_{1,1}= \int_{\RR^6} K(\xi, \xi_*)
\hat f (\xi_* ) \hat g(\xi - \xi_* ) \overline{\hat h(\xi )} d\xi d\xi_* \,,
\]
where
\[
K(\xi,\xi_*) = \int_{\SS^2}
 b \Big({\frac \xi{ | \xi |}} \cdot \sigma \Big)
\left(1- \Big(\frac{\xi}{|\xi|}\cdot \sigma\Big)\right)\frac{\xi}{2}\cdot
(\nabla\hat\Phi_c)( \xi_*)
{\bf 1}_{ | \xi^- | \leq {\frac 1 2} \la \xi_*\ra } d \sigma \,.
\]
Note that $| \nabla \hat \Phi_c (\xi_*) | \lesssim {\frac 1{\la
\xi_*\ra^{3+\gamma +1}}}$, from the Appendix of \cite{AMUXY2}. If $\sqrt 2
|\xi| \leq \la \xi_* \ra$, then $|\xi^-| \leq \la \xi_* \ra/2$ and
this implies  the fact that $0 \leq \theta \leq \pi/2$, and we have
\begin{align*}
|K(\xi,\xi_*)| &\lesssim \int_0^{\pi/2} \theta^{1-\nu} d \theta\frac{ \la \xi\ra}{\la \xi_*\ra^{3+\gamma +1}}
\lesssim \frac{1  }{\la \xi_*\ra^{3+\gamma}}\left(
\frac{\la \xi \ra}{\la \xi_*\ra}\right) \,.
\end{align*}
On the other hand, if $\sqrt 2 |\xi| \geq \la \xi_* \ra$, then
\begin{align*}|K(\xi,\xi_*)| &\lesssim \int_0^{\pi\la \xi_*\ra /(2|\xi|)} \theta^{1-\nu} d \theta\frac{ \la \xi\ra}{\la \xi_*\ra^{3+\gamma +1}}
\lesssim \frac{1  }{\la \xi_*\ra^{3+\gamma}}\left(
\frac{\la \xi \ra}{\la \xi_*\ra}\right)^{\nu-1}\,.
\end{align*}
Hence we obtain
\begin{align}\label{later-use1}
&|K(\xi,\xi_*)|
\lesssim
\frac{1}{\la \xi_*\ra^{3+\gamma}}\left\{
\frac{\la \xi \ra}{\la \xi_*\ra}{\bf 1}_{\sqrt 2 |\xi| \leq \la \xi_* \ra}\right. \notag \\
&\qquad \left. +{\bf 1}_{ \sqrt 2 |\xi| \geq  \la \xi_* \ra \geq |\xi|/2}
+
\left(
\frac{\la \xi \ra}{\la \xi_*\ra}\right)^{\nu}
{\bf 1}_{\la \xi_* \ra \leq |\xi|/2}\right\}\,.
\end{align}
Notice that
\begin{equation}\label{equivalence-relation}
\left \{ \begin{array}{ll}
\la \xi \ra \lesssim \la \xi_* \ra \sim \la \xi-\xi_*\ra   &\mbox{on supp ${\bf 1}_{\la \xi_* \ra\geq \sqrt 2 |\xi|}$}\\
\la \xi \ra \sim \la \xi-\xi_*\ra   &\mbox{on supp ${\bf 1}_{\la \xi_*\ra \leq |\xi |/2 } $} \\
\la \xi \ra \sim \la \xi_* \ra \gtrsim   \la \xi-\xi_*\ra  &
\mbox{on supp ${\bf 1}_{\sqrt 2 |\xi| \geq \la \xi_*\ra \geq | \xi|/2 }$\,.}
\end{array}
\right.
\end{equation}
Take  an $\varepsilon >0$ such that  $3 +\gamma +\nu >  3/q + \varepsilon$. Then 
we have 
\begin{align*}
\frac{1}{\la \xi_*\ra^{3+\gamma}}
\frac{\la \xi \ra}{\la \xi_*\ra}{\bf 1}_{\sqrt 2 |\xi| \leq \la \xi_* \ra}
\lesssim \frac{\la \xi -\xi_*\ra^{\nu/2+m} \la \xi \ra^{\nu/2-m} \la \xi \ra^{-3/2-\varepsilon}}{
\la \xi_* \ra^{3/2+\gamma +\nu -\varepsilon}}
\end{align*}
in view of  $\nu/2-m -3/2- \varepsilon < 1$. Replacing the factor
$(\la\xi \ra/\la \xi_*\ra)^{\nu}{\bf 1}_{\la \xi_* \ra \leq |\xi|/2}$ on the right hand side of \eqref{later-use1}
by
\[
 \frac{\la \xi \ra^{\nu/2-m} \la \xi-\xi_*\ra^{\nu/2+m}}{\la \xi_*\ra^{\nu}}\,,
\]
we obtain
\begin{align}\label{kernel-estimate}
|K(\xi,\xi_*)| \lesssim  &
 \frac{\la \xi -\xi_*\ra^{\nu/2+m} \la \xi \ra^{\nu/2-m} \la \xi \ra^{-3/2-\varepsilon}}{
\la \xi_* \ra^{3/2+\gamma +\nu -\varepsilon}}+ \frac{\la \xi\ra^{\nu/2-m}  \la \xi-\xi_*\ra^{\nu/2+m} }{\la \xi_*\ra^{3+\gamma +\nu}} \notag
\\ 
&+ \frac{{\bf 1}_{ \la \xi -\xi *\ra \lesssim \la \xi_{*}  \ra}}{\la \xi_*\ra^{3+\gamma +\nu/2-m}
\la \xi-\xi_*\ra^{\nu/2+m} }   \la \xi\ra^{\nu/2-m}  \la \xi-\xi_*\ra^{\nu/2+m}
\,.
\end{align}
Putting $\tilde {\hat g}(\xi)=  \la \xi \ra^{\nu/2+m}  \hat g(\xi), \tilde {\hat h}(\xi)=  \la \xi \ra^{\nu/2 -m} \hat h(\xi)$,
we have 
\begin{align*}
|A_{1,1}|^2 &\lesssim \left(\int_{\RR^3} \frac{ |\tilde {\hat h}(\xi)|}{\la \xi\ra^{3/2 +\varepsilon}}
\left(\int_{\RR^3} \frac{|\hat f(\xi_*)|}{\la \xi_* \ra^{3/2+\gamma +\nu -\varepsilon}}|\tilde {\hat g}(\xi-\xi_*)|d\xi_*
\right)d\xi\right)^2 \\
&+\int_{\RR^6}
\frac{|\hat f(\xi_*)| }{\la \xi_*\ra^{3+\gamma +\nu}}|\tilde {\hat g}(\xi-\xi_*)|^2 d\xi d\xi_*
\int_{\RR^6}
\frac{|\hat f(\xi_*)| }{\la \xi_*\ra^{3+\gamma +\nu}}|\tilde {\hat h}(\xi)|^2 d\xi d\xi_*\\
&+
 \int_{\RR^6}
\frac{|\hat f(\xi_*)|^2 }{\la \xi_*\ra^{6+2\gamma +\nu-2m}} \frac{{\bf 1}_{ \la \xi -\xi *\ra \lesssim \la \xi_{*}  \ra}}{
\la \xi-\xi_*\ra^{\nu+2m} }d\xi d\xi_*
\int_{\RR^6}
|\tilde {\hat g}(\xi-\xi_*)|^2 |\tilde {\hat h}(\xi)|^2 d\xi d\xi_*\\
&= \cK^2 +\cA \cB +  \cD \cE\,,
\end{align*}
by the Cauchy-Schwarz inequality. 
It follows from H\"older inequality that
\begin{align*}
\cK &\lesssim \|g\|_{H^{\nu/2+m}}\|h\|_{H^{\nu/2-m}}\left( \int_{\RR^3} 
\frac{|\hat f(\xi_*)|^2}{\la \xi_* \ra^{3+2(\gamma +\nu -\varepsilon)}}d\xi_*\right)^{1/2}\\
&\lesssim
\|f\|_{L^q} \|g\|_{H^{\nu/2+m}}\|h\|_{H^{\nu/2-m}}\,,
\end{align*}
where we have used the fact that 
\begin{align}\label{p-estimate}
&\int_{\RR^3} 
\frac{|\hat f(\xi_*)|^2}{\la \xi_* \ra^{\ell}}d\xi_* \lesssim \|f\|^2_{L^q} \enskip \nonumber\\
&\qquad \mbox{if} \enskip \ell > -3 + 6/q
\enskip \mbox{for $q \in [1,2)$ and $\ell \ge 0$ for $q=2$}
\end{align}
by means of 
$\|\hat f\|_{L^{q'}} \le \|f\|_{L^q}$ with $1/q +1/q'=1$ for $q \in [1,2]$ .

Since it follows from $3+\gamma +\nu > 3/q$ that $\la \xi_*\ra^{-(3+\gamma +\nu)} \in L^q$, the H\"older inequality again shows
\[
\cA \lesssim  \int_{\RR^3}\frac{|\hat f(\xi_*)|}{\la \xi_*\ra^{3+\gamma +\nu}} d\xi_*  \|g\|^2_{H^{\nu/2+m}}
\lesssim \|f\|_{L^q}\|g\|^2_{H^{\nu/2+m}}
\,,\enskip \cB \lesssim \|f\|_{L^q} \|h\|^2_{H^{\nu/2-m}}\,.
\]
Note that
\[
\int\frac{{\bf 1}_{ \la \xi -\xi *\ra \lesssim \la \xi_{*}  \ra}}{
\la \xi-\xi_*\ra^{\nu+2m} } d\xi \enskip \lesssim
\left \{ \begin{array}{lcl}
\displaystyle  \frac{1}{\la \xi_*\ra^{-3+\nu+2m} }& \mbox{if} & \nu/2+m <3/2\\
 \log \la \xi_*\ra & \mbox{if} & \nu/2+m \ge 3/2\,.
\end{array}
\right.
\]
Since  $3+2(\gamma +\nu) >0$ when $q =2$, together with  $6+2 \gamma+ \nu-2m >0$,
we get
$\cD \le \|f \|_{L^2}^2 $, which concludes the desired bound for $A_{1,1}$ when $q=2$.
In the case where $q \in [1,2)$, it follows from \eqref{p-estimate} that $\cD \lesssim \|f\|^2_{L^q}$ because of 
$\nu/2+m \le 3/2$.

Now we consider  $A_{1,2} (f, g, h)$, which comes from the second order term of the Taylor expansion. Note that
$$
A_{1,2} = \int  b \Big({\frac \xi{ | \xi |}} \cdot \sigma \Big)\int^1_0 d\tau (\nabla^2\hat \Phi_c) (\xi_* -\tau\xi^- ) (\xi^-)^2  \hat f (\xi_* ) \hat g(\xi - \xi_* ) \bar{\hat h} (\xi ) d\sigma d\xi d\xi_*\, .
$$
Again from the Appendix of \cite{AMUXY2}, we have
$$
| (\nabla^2\hat \Phi_c) (\xi_* -\tau\xi^- ) | \lesssim {\frac 1{\la  \xi_* -\tau \xi^-\ra^{3+\gamma +2}}}
\lesssim
 {\frac 1{\la \xi_*\ra^{3+\gamma +2}}},
$$
because $|\xi^-| \leq \la \xi_*\ra/2$.
Similar to $A_{1,1}$, we can obtain
\[
|A_{1,2}| \lesssim
 \int_{\RR^6} \tilde K(\xi, \xi_*)
\hat f (\xi_* ) \hat g(\xi - \xi_* ) \bar{\hat h} (\xi ) d\xi d\xi_* \,,
\]
where $\tilde K(\xi,\xi_*)$ has the following upper bound
\begin{align}\label{later-use2}
\notag \tilde K(\xi,\xi_*) &\lesssim \int_0^{\min(\pi/2, \,\, \pi\la \xi_*\ra /(2|\xi|))} \theta^{1-\nu} d \theta
\frac{ \la \xi\ra^2}{\la \xi_*\ra^{3+\gamma +2}}\\
&\lesssim
\frac{1  }{\la \xi_*\ra^{3+\gamma}}\left\{
\left(
\frac{\la \xi \ra}{\la \xi_*\ra}\right)^2{\bf 1}_{\sqrt 2 |\xi| \leq \la \xi_* \ra}
+{\bf 1}_{ \sqrt 2 |\xi| \geq  \la \xi_* \ra \geq |\xi|/2}\right.\notag\\
&\qquad\qquad\qquad\qquad\qquad\qquad\left.+\left(
\frac{\la \xi \ra}{\la \xi_*\ra}\right)^{\nu}
{\bf 1}_{\la \xi_* \ra \leq |\xi|/2}\right\}\,,
\end{align}
from which we obtain the same inequality as \eqref{kernel-estimate} for $\tilde K(\xi,\xi_*)$.
Hence we obtain the desired bound for $A_{1,2}$. And this completes
the proof of the lemma.
\end{proof}

\begin{lem}\label{A-2}
For any $q \in [1,2]$,  let $\gamma > \max\{-3, -\nu -3 + 3/q\}$. Furthermore, assume that 
$m \le 3/2 -\nu/2$ if $q <2$. Then 
\[
 | A_{2,1} |+| A_{2,2}|
 \lesssim \|f\|_{L^q} \Vert f\Vert_{H^{\nu/2+ m}} \|h\|_{H^{\nu/2 -m}}\,.
\]
\end{lem}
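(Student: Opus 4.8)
The plan is to observe that both $A_{2,1}$ and $A_{2,2}$ are bilinear forms of the shape $\int K(\xi,\xi_*)\,\hat f(\xi_*)\,\hat g(\xi-\xi_*)\,\overline{\hat h(\xi)}\,d\xi\,d\xi_*$, and to show that in each case the effective kernel $K$ is dominated by the right-hand side of \eqref{kernel-estimate}. Once that is done, the asserted estimate follows word for word from the computation already made for $A_{1,1}$ in the proof of Lemma~\ref{A-1}: a single Cauchy--Schwarz splits $|A|^2$ into the $\cK$-, $\cA\cB$- and $\cD\cE$-type pieces, which are then closed by H\"older together with $\|\hat f\|_{L^{q'}}\lesssim\|f\|_{L^q}$ and \eqref{p-estimate}. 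So the only genuinely new task is the derivation of these two kernel bounds on the region $\{|\xi^-|\ge\tfrac12\langle\xi_*\rangle\}$.

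For $A_{2,2}$ I would first use $|\hat\Phi_c(\xi_*)|\lesssim\langle\xi_*\rangle^{-(3+\gamma)}$ (Appendix of \cite{AMUXY2}; this needs $\gamma>-3$, which is why the $\max$ occurs). On $\{|\xi^-|\ge\tfrac12\langle\xi_*\rangle\}$ one has $|\xi|\sin(\theta/2)=|\xi^-|\ge\tfrac12\langle\xi_*\rangle$, hence $\langle\xi_*\rangle\lesssim|\xi|$ and $\theta\gtrsim\langle\xi_*\rangle/|\xi|$, so $\int_{\mathbb{S}^2}b\big(\tfrac{\xi}{|\xi|}\cdot\sigma\big)\,\mathbf{1}_{|\xi^-|\ge\frac12\langle\xi_*\rangle}\,d\sigma\lesssim(\langle\xi\rangle/\langle\xi_*\rangle)^{\nu}\,\mathbf{1}_{\langle\xi_*\rangle\lesssim\langle\xi\rangle}$. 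Multiplying these two bounds, $K_{2,2}\lesssim\langle\xi_*\rangle^{-(3+\gamma+\nu)}\langle\xi\rangle^{\nu}\,\mathbf{1}_{\langle\xi_*\rangle\lesssim\langle\xi\rangle}$, and after the usual frequency-zone splitting (cf. \eqref{equivalence-relation}) this is dominated by the right-hand side of \eqref{kernel-estimate}: on $\langle\xi_*\rangle\ll\langle\xi\rangle$ one has $\langle\xi\rangle\sim\langle\xi-\xi_*\rangle$ and rewrites $\langle\xi\rangle^{\nu}\sim\langle\xi\rangle^{\nu/2-m}\langle\xi-\xi_*\rangle^{\nu/2+m}$ (licit since $\nu/2\pm m\ge 0$), which is the second term of \eqref{kernel-estimate}; on $\langle\xi_*\rangle\sim\langle\xi\rangle$ one has $K_{2,2}\lesssim\langle\xi_*\rangle^{-(3+\gamma)}$, which matches the third term (here $\langle\xi-\xi_*\rangle\lesssim\langle\xi_*\rangle$).

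For $A_{2,1}$ the obstruction --- and the main difficulty of the lemma --- is that the decay now sits in $\hat\Phi_c(\xi_*-\xi^-)$, i.e. in $\xi_*-\xi^-$ rather than in $\xi_*$; the crude bound $|\hat\Phi_c(\xi_*-\xi^-)|\le 1$ would destroy the $\langle\xi_*\rangle^{-(3+\gamma)}$ factor needed for the $\cA$-, $\cB$-pieces, so one must recover it from the $\sigma$-integration. I would follow the proof of \cite[Proposition 2.1]{amuxy4-3}: perform the $\sigma$-integral first, using the change of variables sending $\sigma\in\mathbb{S}^2$ to the post-collisional variable (equivalently, the $\mathbb{S}^2$-symmetry used for $A_{1,1}$) to convert the singular angular measure $b\,d\sigma$ into a weighted surface integral over $\xi^-$ with weight $\sim|\xi^-|^{-(2+\nu)}$; the restriction $|\xi^-|\ge\tfrac12\langle\xi_*\rangle$, combined with the decay of $\hat\Phi_c$, then supplies the missing $\langle\xi_*\rangle$-decay, and after bounding $|\xi^-|\le|\xi|$ and splitting into the frequency zones of \eqref{equivalence-relation} one again reaches a kernel $K_{2,1}$ dominated by the right-hand side of \eqref{kernel-estimate}. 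The estimate then closes exactly as in the first paragraph; in particular the hypothesis $q(3+\gamma+\nu)>3$ (i.e. $\gamma>-\nu-3+3/q$) is what makes $\langle\xi_*\rangle^{-(3+\gamma+\nu)}\in L^q$, and the extra assumption $m\le 3/2-\nu/2$ when $q<2$ is used only to keep $\nu+2m<3$, so that $\int_{\langle\xi-\xi_*\rangle\lesssim\langle\xi_*\rangle}\langle\xi-\xi_*\rangle^{-(\nu+2m)}\,d\xi\lesssim\langle\xi_*\rangle^{3-\nu-2m}$ with no logarithmic loss, the exponent in the $\cD$-piece then matching the hypothesis via \eqref{p-estimate}.

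I therefore expect the $\sigma$-integration step for $A_{2,1}$ to be the crux: extracting genuine $\langle\xi_*\rangle^{-(3+\gamma+\nu)}$-type decay out of the shifted factor $\hat\Phi_c(\xi_*-\xi^-)$ in the presence of the singular kernel $b$ and of the $\sigma$-dependent shift $\xi^-$, and checking that the result is controlled by the already-analysed kernel \eqref{kernel-estimate}. Everything else --- the treatment of $A_{2,2}$, the final Cauchy--Schwarz/H\"older bookkeeping, and verifying that the extended ranges $m\in[-\nu/2,\nu/2]$, $q\in[1,2]$ cause no new trouble --- runs in parallel with, or is identical to, the proof of Lemma~\ref{A-1}.
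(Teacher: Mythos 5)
Your treatment of $A_{2,2}$ is essentially the paper's: bound the spherical integral of $b$ against $\hat\Phi_c(\xi_*)$, obtain $\langle\xi_*\rangle^{-(3+\gamma)}(\langle\xi\rangle/\langle\xi_*\rangle)^{\nu}\mathbf{1}_{\sqrt2|\xi|\ge\langle\xi_*\rangle}$, split by frequency zone, and match the second and third terms of \eqref{kernel-estimate}. That part is fine.

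The gap is in $A_{2,1}$, where you propose to ``again reach a kernel $K_{2,1}$ dominated by the right-hand side of \eqref{kernel-estimate}.'' No pointwise kernel bound of that strength holds. On $\{|\xi^-|\ge\tfrac12\langle\xi_*\rangle\}$ the factor $\hat\Phi_c(\xi_*-\xi^-)$ is $O(1)$ wherever $\xi^-$ is near $\xi_*$ on the sphere $\{\xi^-=\tfrac12(\xi-|\xi|\sigma)\}$, which is a configuration the constraint allows; performing the $\sigma$-integral only supplies a two-dimensional measure over this sphere, and near the bad configuration one loses roughly a factor $\langle\xi_*\rangle^{2+\gamma}$ (worse for small $\nu$), so the resulting kernel fails to have $\langle\xi_*\rangle^{-(3+\gamma+\nu)}$-decay needed for the $\mathcal{A},\mathcal{B},\mathcal{D}$-pieces, already for $q=1$ and moderate $\gamma$. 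The paper handles this by a different mechanism: it first isolates $A_{2,1,p}$ via the split $|\xi_*\cdot\xi^-|\ge\tfrac12|\xi^-|^2$ (the complement has $|\xi_*-\xi^-|\gtrsim|\xi_*|$ and is handled as $A_{2,2}$), then decomposes $A_{2,1,p}$ into three frequency-zone pieces, and crucially applies Cauchy--Schwarz \emph{before} integrating out $\sigma$, keeping $d\sigma$ inside both squared factors. It then performs the genuinely three-dimensional change of variables $\xi\to\xi^+$ (with Jacobian $\ge1/8$) or $\xi_*\to u=\xi_*-\xi^-$ inside the $(\xi,\sigma)$- or $(\xi_*,\sigma)$-integrals, so that $\hat\Phi_c(u)^2$ is integrated against a $3$-dimensional measure in $u$ rather than the $2$-dimensional spherical measure; this is what restores the full $\langle u\rangle^{-(3+\gamma)}$-type decay. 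The $m\le3/2-\nu/2$ restriction then enters precisely in the $\int_{\langle\xi^+-u\rangle\le\langle u\rangle}\langle\xi^+-u\rangle^{-(\nu+2m)}d\xi^+\lesssim\langle u\rangle^{3-\nu-2m}$ step, and the hypothesis on $\gamma$ in the $\sup_u$ or in the $L^q$-bound of $\langle\xi_*\rangle^{-(3+\gamma+\nu)}$. Without this ``Cauchy--Schwarz first, then 3D change of variables'' structure, your proof does not close.
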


\begin{proof}
In view of the definition of $A_{2,2}$, the fact that $|\xi| \sin(\theta/2) =|\xi^-| \geq \la \xi_*\ra/2$ and
$\theta \in [0,\pi/2]$ imply $\sqrt 2 |\xi| \geq \la \xi_*\ra$.
We can then
directly compute the spherical integral appearing inside $A_{2,2}$ together with $\hat \Phi_c$
as follows:
\begin{align}\label{A-2-2}
&\notag \left|\int  b \Big({\frac\xi{ | \xi |}} \cdot \sigma \Big)\hat \Phi_c(\xi_*) {\bf 1}_{ | \xi^- | \ge {\frac 1 2}\la \xi_*\ra } d\sigma \right|
 \lesssim  {\frac 1{\la \xi_* \ra^{3+\gamma }}} \frac{\la  \xi\ra^{\nu} }{\la \xi_*\ra^{\nu}}{\bf 1}_{\sqrt 2 |\xi| \geq \la \xi_* \ra} \\
 &\quad \lesssim
\frac{\la \xi\ra^{\nu/2-m}  \la \xi-\xi_*\ra^{\nu/2+m} }{\la \xi_*\ra^{3+\gamma +\nu}}\notag \\
&\quad + \frac{{\bf 1}_{ \la \xi -\xi *\ra \lesssim \la \xi_{*}  \ra}}{\la \xi_*\ra^{3+\gamma +\nu/2-m}
\la \xi-\xi_*\ra^{\nu/2+m} }   \la \xi\ra^{\nu/2-m}  \la \xi-\xi_*\ra^{\nu/2+m}\,,
\end{align}
which yields the desired estimate for $A_{2, 2}$.

We now turn to
$$
A_{2,1}=  \int b\,\, {\bf 1}_{ | \xi^- | \ge {\frac 1 2} \la  \xi_*\ra }\hat \Phi_c (\xi_* - \xi^-) \hat f (\xi_* ) \hat g(\xi - \xi_* ) \bar{\hat h} (\xi ) d\sigma d\xi d\xi_* .
$$
Firstly, note that we can  work on the set $| \xi_* \,\cdot\,\xi^-| \ge {\frac 1 2} | \xi^-|^2$. In fact, on the complementary of this
set, we have
 $| \xi_* \,\cdot\,\xi^-| \leq {\frac 1 2} | \xi^-|^2$ so that
 $|\xi_* -\xi^-| \gtrsim | \xi_*|$, and in this case,
we can proceed in the same way as for $A_{2,2}$. Therefore, it suffices to estimate
\begin{align*}
A_{2,1,p}&=  \int b\,\, {\bf 1}_{ | \xi^- | \ge {\frac 1 2} \la \xi_*\ra }{\bf 1}_{| \xi_* \,\cdot\,\xi^-| \ge {\frac 1 2} | \xi^-|^2}\hat \Phi_c (\xi_* - \xi^-) \hat f (\xi_* ) \hat g(\xi - \xi_* ) \overline{{\hat h} (\xi )} d\sigma d\xi d\xi_* \,.
\end{align*}
By
\[
{\bf 1}= {\bf 1}_{\la \xi_* \ra \geq |\xi|/2} {\bf 1}_{\la\xi-\xi_* \ra \leq \la \xi_* - \xi^- \ra}
+  {\bf 1}_{\la \xi_* \ra \geq |\xi|/2} {\bf 1}_{\la\xi-\xi_* \ra > \la \xi_* - \xi^-\ra}
+  {\bf 1}_{\la \xi_* \ra < |\xi|/2}
\]
we decompose
\begin{align*}
A_{2,1,p}
=
A_{2,1,p}^{(1)} + A_{2,1,p}^{(2)} +A_{2,1,p}^{(3)} \,.
\end{align*}
On the sets for
above integrals, we have $\la \xi_* -\xi^- \ra \lesssim \,
\la \xi_* \ra$, because $| \xi^- | \lesssim | \xi_*|$
that follows from  $| \xi^-|^2 \le 2 | \xi_* \cdot\xi ^-| \lesssim |\xi^-|\, | \xi_*|$.
Furthermore, on the sets for $A_{2,1,p}^{(1)}$ and $A_{2,1,p}^{(2)}$  we have $\la \xi \ra \sim \la \xi_* \ra$,
so that $\sup \Big(b\,\, {\bf 1}_{ | \xi^- | \ge {\frac 1 2} \la \xi_*\ra } {\bf 1}_{\la \xi_* \ra \geq |\xi|/2}\Big) \lesssim
{\bf 1}_{|\xi^- |\leq |\xi|/\sqrt 2}$ and
$\la \xi_* -\xi^- \ra \lesssim \,
\la \xi \ra$.
Hence we have, in view of $\nu/2-m \geq 0$,
\begin{align*}
|A_{2,1,p}^{(1)} | ^2  \lesssim& \int \frac{
|\hat \Phi_c (\xi_* - \xi^-) |^2 |\hat f (\xi_* )|^2  }
{\la \xi_* -\xi^- \ra^{\nu-2m}}\frac{ {\bf 1}_{\la\xi-\xi_* \ra \leq \la \xi_* - \xi^- \ra}}{\la\xi-\xi_* \ra^{\nu+2m}}d\xi d\xi_* d \sigma\\
& \times \int  |\la\xi-\xi_* \ra^{\nu/2+m}\hat g(\xi - \xi_* )|^2 |\la\xi \ra^{\nu/2-m}{\hat h} (\xi ) |^2 d\sigma d\xi d\xi_*\,.
\end{align*}
Note that  $3+2(\gamma +\nu) >0$ when $q =2$, together with  $6+2 \gamma+ \nu-2m >0$.
Then,  with $u = \xi_* -\xi^-$ we have
\begin{align*}
|A_{2,1,p}^{(1)} | ^2  \lesssim&  \int |\hat f(\xi_*)|^2 \left\{
\sup_{u}
{\la u \ra^{-( 6 +2\gamma+ \nu-2m)}}    \int \frac{ {\bf 1}_{\la \xi^+ -u \ra \leq \la u \ra}}{\la  \xi^+ -u \ra^{\nu+2m}}d\xi^+\right\} d\xi_*\\
&\qquad\qquad\qquad\times \|g\|^2_{H^{\nu/2+m}} \|h\|_{H^{\nu/2-m}}^2\\
\lesssim &
\|f\|^2_{L^2}  \|g\|^2_{H^{\nu/2+m}} \|h\|_{H^{\nu/2-m}}^2 \,,
\end{align*}
because $d\xi \sim d \xi^+$ on the support of ${\bf 1}_{|\xi^- |\leq |\xi|/\sqrt 2}$\,.
In the case where $q <2$, we use the condition $\nu/2+m \le 3/2$. If $q=1$ then 
$\gamma +\nu >0$, and by the change of variables $\xi_*-\xi^- \rightarrow u$ we have
\begin{align*}
|A_{2,1,p}^{(1)} | ^2  \lesssim&  \|\hat f\|^2_{L^\infty}  \|g\|^2_{H^{\nu/2+m}} \|h\|_{H^{\nu/2-m}}^2 \int
{\la u \ra^{-( 6 +2\gamma+ \nu-2m)}}    \int \frac{ {\bf 1}_{\la w \ra \leq \la u \ra}}{\la  w \ra^{\nu+2m}}dw  du \\
\lesssim &
\|f\|^2_{L^1} \|g\|^2_{H^{\nu/2+m}} \|h\|_{H^{\nu/2-m}}^2 \,.
\end{align*}
If $1 <q <2$, then $3+\gamma +\nu >3/q$ and 
  by the H\"older inequality and the change of variables $u = \xi_* -\xi^-$ we have
\begin{align*}
|A_{2,1,p}^{(1)} | ^2  \lesssim& \|g\|^2_{H^{\nu/2+m}} \|h\|_{H^{\nu/2-m}}^2 \left( \int |\hat f(\xi_*)|^{q/(q-1)} d\xi_*\right)^{2(q-1)/q} \\
\times & \left(\int \left(
{\la u \ra^{-( 6 +2\gamma+ \nu-2m)}}    \int \frac{ {\bf 1}_{\la \xi^+ -u \ra \leq \la u \ra}}{\la  \xi^+ -u \ra^{\nu+2m}}d\xi^+\right)^{q/(2-q)}
 du\right)^{2/q-1}\\
\lesssim &
\|f\|^{2}_{L^{q}}  \|g\|^2_{H^{\nu/2+m}} \|h\|_{H^{\nu/2-m}}^2 \,.
\end{align*}
As for $A_{2,1,p}^{(2)}$ we have by the Cauchy-Schwarz inequality
\begin{align*}
|A_{2,1,p}^{(2)} | ^2  \lesssim& \int \frac{
|\hat \Phi_c (\xi_* - \xi^-) | |\hat f (\xi_* )|  }
{\la \xi_* -\xi^- \ra^{\nu}}|{\la\xi-\xi_* \ra^{\nu/2+m}} \hat g(\xi -\xi_*)|^2 d \sigma d\xi d\xi_* \\
& \times \int \frac{
|\hat \Phi_c (\xi_* - \xi^-) | |\hat f (\xi_* )|  }
{\la \xi_* -\xi^- \ra^{\nu}} |\la\xi \ra^{\nu/2-m}{\hat h} (\xi ) |^2 d\sigma d\xi d\xi_*\,.
\end{align*}
Since it follows from $3 + \gamma + \nu <3/q$ and H\"older inequality that
\[
\int \frac{
|\hat \Phi_c (\xi_* - \xi^-) | |\hat f (\xi_* )|  }
{\la \xi_* -\xi^- \ra^{\nu}} d \xi_* d\sigma \lesssim 
\|f\|_{L^q}\,,
\]
we have the desired estimates for $A_{2,1,p}^{(2)}$.

On the set $A_{2,1,p}^{(3)}$ we have $\la \xi \ra \sim \la \xi - \xi_*\ra$. Hence
\begin{align*}
|A_{2,1,p}^{(3)} | ^2  \lesssim& \int b\, {\bf 1}_{ | \xi^- | \ge {\frac 1 2} \la \xi_*\ra }\frac{
|\hat \Phi_c (\xi_* - \xi^-) | |\hat f (\xi_* )|  }
{\la \xi\ra^{\nu}}|{\la\xi-\xi_* \ra^{\nu/2+m}} \hat g(\xi -\xi_*)|^2 d \sigma d\xi d\xi_* \\
& \times \int b\,\, {\bf 1}_{ | \xi^- | \ge {\frac 1 2} \la \xi_*\ra }\frac{
|\hat \Phi_c (\xi_* - \xi^-) | |\hat f (\xi_* )|  }
{\la \xi \ra^{\nu}} |\la\xi \ra^{\nu/2-m}{\hat h} (\xi ) |^2 d\sigma d\xi d\xi_*\,.
\end{align*}
We use the change of variables in $\xi_*$, $u= \xi_* -\xi^-$.
Note that $| \xi ^-| \ge {\frac 1 2} \la u +\xi^-\ra $ implies  $|\xi^-| \geq \la u\ra/\sqrt {10}$.
If $q=1$ then $\gamma + \nu>0$ and  we have
\begin{align*}
&\int b\,\, {\bf 1}_{ | \xi^- | \ge {\frac 1 2} \la \xi_*\ra }\frac{
|\hat \Phi_c (\xi_* - \xi^-) | |\hat f (\xi_* )|  }
{\la \xi\ra^{\nu}} d \sigma d\xi_*\\
&\lesssim \|\hat f\|_{L^\infty} \int \left (\frac{|\xi|}{\la u\ra}\right)^{\nu}
\la u \ra^{-(3+\gamma)} \la \xi\ra^{-\nu} du\lesssim \|f\|_{L^1}\,.
\end{align*}
On the other hand, if $q >1$ 
then this integral is upper bounded  by
\begin{align*}
&\int b\,\, {\bf 1}_{ | \xi^- | \ge {\frac 1 2} \la \xi_*\ra }\frac{
|\hat \Phi_c (\xi_* - \xi^-) |}{\la \xi \ra^{\nu/q} \la \xi_* -\xi^-\ra^{\nu/q'}}\frac{\la \xi_*\ra^{\nu/q'} |\hat f (\xi_* )|  }
{\la \xi \ra^{\nu/q'}}
 d \sigma d\xi_*
\\
&\leq \left( \int b\, {\bf 1}_{ | \xi^- | \ge {\frac 1 2} \la \xi_* \ra }\frac{
|\hat \Phi_c (\xi_* - \xi^-) |^q}{\la \xi \ra^{\nu} \la \xi_* -\xi^-\ra^{\nu q/q'}} d\sigma d \xi_*\right)^{1/q}\\
&\qquad\qquad\times
\left(\int b\,   {\bf 1}_{ | \xi^- | \ge {\frac 1 2} \la \xi_*\ra }\frac{\la \xi_*\ra^{\nu} |\hat f (\xi_* )|^{q'}}
{\la \xi \ra^{\nu}} d \sigma d\xi_* \right)^{1/q'}\\
&\leq \left( \int b\, {\bf 1}_{ | \xi^- |  \gtrsim \la u \ra }\frac{
|\hat \Phi_c (u) |^q}{\la \xi \ra^{\nu} \la u \ra^{\nu q/q'}} d\sigma d u\right)^{1/q}
\|\hat f\|_{L^{q'}} \\
&\lesssim \int \frac{du}{\la u \ra^{q (3+\gamma+\nu)}} \|f\|_{L^q}\,,
\end{align*}
where $1/q +1/q' =1$. 
Hence
we also obtain the desired estimates for $A_{2,1,p}^{(3)}$. The proof of the lemma is complete.
\end{proof}

Proposition \ref{upper-c} is then a direct consequence of Lemmas
\ref{A-1} and \ref{A-2}.

The following lemma is a variant of \cite[Lemma 4.5]{amuxy4-3}, 
where the roles of $g$ and $h$ are exchanged.
\begin{lem}\label{differ-Gam-Q}
Let $0<\nu<2$ and $\gamma > \max \{-3, -\nu -3/2\}$. Then for any $\alpha \ge 0$ and  any $\beta, \beta' \in \RR$ we have
\begin{align}\label{diff-G-Q}\nonumber 
&\Big|\Big(\Gamma ( f, g)\,  , \,h \Big)_{L^2} -\Big( Q(\mu^{1/2} f, g), h\Big)_{L^2}\Big|\\
&\qquad \lesssim 
\|\mu^{1/10} f\|_{L^2} ^{1/2}
\|g\|_{L^2_{(\nu+\gamma)/2-\beta}}
\Big(\cD(\mu^{1/4} { \,|f|} ,  \la v \ra^\beta h) \Big)^{1/2} \notag \\
&\notag \qquad \qquad + \|f\|_{L^2_{-\alpha} }
\|g\|_{L^2_{(\nu +\gamma)/2+\alpha - \beta'}}
\|h \|_{L^2_{(\nu+\gamma)/2+\beta' }} \\
& \qquad \qquad \qquad + \|\mu^{1/10} f\|_{L^2}\|\mu^{1/10} g\|_{L^2}\|\mu^{1/10}h\|_{H^{\nu/2}}\,,
\end{align}
where
\begin{align*}
\mathcal{D}(f,g)=\iiint_{\mathbb{R}^3_v\times\mathbb{R}^3_{v_*}\times\mathbb{S}^2} B(v-v_*,\sigma)f_*(g-g')^2dvdv_*d\sigma.
\end{align*}
\end{lem}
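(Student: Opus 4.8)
The plan is to reduce the left-hand side to a single collision integral carrying a cancelling weight, and then to peel off a Dirichlet-form contribution from a non-singular remainder. Since $\Gamma(f,g)=\mu^{-1/2}Q(\mu^{1/2}f,\mu^{1/2}g)$ and $\sqrt{\mu'}\sqrt{\mu_*'}=\sqrt{\mu}\sqrt{\mu_*}$ on the collision manifold, the loss parts of $\Gamma(f,g)$ and $Q(\mu^{1/2}f,g)$ coincide while only the weight in the gain parts differs, so that
\begin{align*}
\Big(\Gamma(f,g)-Q(\mu^{1/2}f,g),\,h\Big)_{L^2}
&=\iiint_{\RR^3\times\RR^3\times\SS^2} B(v-v_*,\sigma)\,(\sqrt{\mu_*}-\sqrt{\mu_*'})\,f_*'\,g'\,h\,dvdv_*d\sigma\\
&=:\mathcal I.
\end{align*}
The decisive point is that $\sqrt{\mu_*}-\sqrt{\mu_*'}$ vanishes with the deviation angle, $|\sqrt{\mu_*}-\sqrt{\mu_*'}|\lesssim |v-v_*|\sin(\theta/2)(\mu_*^{1/8}+(\mu_*')^{1/8})$, and after the $\SS^2$-integration its leading part gains an extra factor $(1-\cos\theta)\lesssim\theta^2$ by the oddness over $\SS^2$, exactly as for $A_{1,1}$ in the proof of Lemma~\ref{A-1}. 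Thus one power of the non-integrable singularity of $b$ is absorbed for free; the second power must be soaked up either by a difference $h-h'$ (producing $\mathcal{D}$) or, where no such difference survives, by a now convergent angular integral.

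First I would symmetrise $\mathcal I$ by the pre-post collisional change of variables $(v,v_*,\sigma)\mapsto(v',v_*',\sigma)$, under which $B$, $|v-v_*|$ and $\theta$ are invariant while $\sqrt{\mu_*}-\sqrt{\mu_*'}$ changes sign; writing $f_*'g'h-f_*gh'=(f_*'g'-f_*g)h+f_*g(h-h')$ and reinserting the first summand into $\mathcal I$ gives the identity $\mathcal I=\mathcal I_1+\mathcal I_2$ with
\begin{align*}
\mathcal I_1&:=-\iiint B\,(\sqrt{\mu_*}-\sqrt{\mu_*'})\,f_*g\,h\,dvdv_*d\sigma,\\
\mathcal I_2&:=\iiint B\,(\sqrt{\mu_*}-\sqrt{\mu_*'})\,f_*g\,(h-h')\,dvdv_*d\sigma,
\end{align*}
in which $f_*$ is now always evaluated at $v_*$, exactly the weight site of $\mathcal{D}$. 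In $\mathcal I_2$ I would insert the weight through $h-h'=\langle v\rangle^{-\beta}(\langle v\rangle^{\beta}h-\langle v'\rangle^{\beta}h')+(\langle v\rangle^{-\beta}-\langle v'\rangle^{-\beta})\langle v'\rangle^{\beta}h'$; on the piece carrying $\langle v\rangle^{\beta}h-\langle v'\rangle^{\beta}h'$, apply Cauchy--Schwarz in $(v,v_*,\sigma)$, keeping $(\mu^{1/4}|f|)_*$ together with $(\langle v\rangle^{\beta}h-\langle v'\rangle^{\beta}h')^2$ to reconstitute $\mathcal{D}(\mu^{1/4}|f|,\langle v\rangle^{\beta}h)^{1/2}$, and leaving $|\sqrt{\mu_*}-\sqrt{\mu_*'}|^{2}(\mu^{1/4}|f|)_*^{-1}|f_*|^{2}|g|^{2}\langle v\rangle^{-2\beta}$ on the complementary factor; interpolating between $|\sqrt{\mu_*}-\sqrt{\mu_*'}|\le\sqrt{\mu_*}+\sqrt{\mu_*'}$ and the vanishing bound above (this is where the exponents $1/4$ and $1/10$ get calibrated) leaves, after changing variables back, precisely $\|\mu^{1/10}f\|_{L^2}^{1/2}\|g\|_{L^2_{(\nu+\gamma)/2-\beta}}$, the surviving Gaussian in $v_*$ being just enough to dominate the soft-potential kinetic weight $|v-v_*|^{\gamma}$ near $v=v_*$. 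The piece carrying the extra cancellation $\langle v\rangle^{-\beta}-\langle v'\rangle^{-\beta}$ is no longer singular and is grouped with $\mathcal I_1$.

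For $\mathcal I_1$ the only surviving $\sigma$-dependence sits in $B$ and $\sqrt{\mu_*'}$, so I would carry out the $\SS^2$-integral of $b\,(\sqrt{\mu_*}-\sqrt{\mu_*'})$ first; by the second-order Taylor expansion of $\sqrt{\mu_*}$ together with the oddness argument recalled above it is bounded by $|v-v_*|$ and $|v-v_*|^2$ times Gaussian-type factors, hence absolutely convergent, and $\mathcal I_1$ collapses to a genuinely non-singular double integral. This non-singular expression, together with the secondary piece of $\mathcal I_2$ (where a factor $h'$ and two cancellations survive), is then estimated by the mechanism of Lemmas~\ref{A-1} and~\ref{A-2} — splitting $\Phi_\gamma=\Phi_c+\Phi_{\bar c}$ as in Subsection~\ref{ap-2}, using the condition $\gamma>\max\{-3,-3/2-\nu\}$ for the short-range part, and redistributing the residual Gaussian and polynomial weights by Young's inequality via $|v'|\le|v|+|v_*|$: this produces the term $\|f\|_{L^2_{-\alpha}}\|g\|_{L^2_{(\nu+\gamma)/2+\alpha-\beta'}}\|h\|_{L^2_{(\nu+\gamma)/2+\beta'}}$ (the weights $\langle v\rangle^{-\alpha}$, $\langle v\rangle^{(\nu+\gamma)/2+\alpha-\beta'}$, $\langle v\rangle^{(\nu+\gamma)/2+\beta'}$ being inserted by Young), while the pieces where an angular singularity still survives are controlled, using a Sobolev regularity for $h$ as in Proposition~\ref{upper-c}, by $\|\mu^{1/10}f\|_{L^2}\|\mu^{1/10}g\|_{L^2}\|\mu^{1/10}h\|_{H^{\nu/2}}$.

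The hard part will be the weight bookkeeping in the Cauchy--Schwarz step for $\mathcal I_2$: since $\sqrt{\mu_*}-\sqrt{\mu_*'}$ depends on \emph{both} $v_*$ and $v_*'$, it must be distributed so that exactly $(\mu^{1/4}|f|)_*$ is left to rebuild $\mathcal{D}(\mu^{1/4}|f|,\langle v\rangle^{\beta}h)$ and yet enough Gaussian decay in $v_*$ survives on the other factor both to beat $|v-v_*|^{\gamma}$ near $v=v_*$ and to leave the small prefactor $\|\mu^{1/10}f\|_{L^2}^{1/2}$; the pre-post change of variables, the elementary and interpolated pointwise bounds on $\sqrt{\mu_*}-\sqrt{\mu_*'}$, and the crude inequalities $|v'|\le|v|+|v_*|$, $\langle v'\rangle\lesssim\langle v\rangle\langle v_*\rangle$ combined with Young's inequality are the devices that place the polynomial weights $\langle v\rangle^{\pm\beta}$, $\langle v\rangle^{\pm\alpha}$ where they are stated. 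For the routine parts this is the argument of Lemma~4.5 of \cite{amuxy4-3} with the roles of $g$ and $h$ exchanged, on which I would rely.
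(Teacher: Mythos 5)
Your algebraic reduction to $\mathcal I=\mathcal I_1+\mathcal I_2$ is correct, but a decisive algebraic step from the paper's proof is missing, and without it the Cauchy--Schwarz step you sketch for $\mathcal I_2$ does not close. The paper never applies Cauchy--Schwarz directly to a kernel containing $\sqrt{\mu_*}-\sqrt{\mu_*'}$: it first factorises
\[
\sqrt{\mu_*'}-\sqrt{\mu_*}=\bigl({\mu_*'}^{1/4}-\mu_*^{1/4}\bigr)\bigl({\mu_*'}^{1/4}+\mu_*^{1/4}\bigr)=2\mu_*^{1/4}\bigl({\mu_*'}^{1/4}-\mu_*^{1/4}\bigr)+\bigl({\mu_*'}^{1/4}-\mu_*^{1/4}\bigr)^2,
\]
and only then splits $h'=h+(h'-h)$, obtaining $D_1+D_2+D_3$ with an explicit $\mu_*^{1/4}$ riding with $f_*$ in $D_1$ and $D_3$. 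It is precisely this $\mu_*^{1/4}$ that makes $D_3$ work: Cauchy--Schwarz there leaves $({\mu_*'}^{1/4}-\mu_*^{1/4})^2\mu_*^{1/4}|f_*|$ on the non-Dirichlet side, which decays in $v_*$ uniformly. In your $\mathcal I_2$, Cauchy--Schwarz leaves instead $|\sqrt{\mu_*}-\sqrt{\mu_*'}|^2\mu_*^{-1/4}|f_*|$, and this kernel has \emph{no} decay in $v_*$. Concretely, take $v_*=(R,0,0)$, $v=(R,R^2,0)$, $\sigma_3=0$, $\sigma_1=2/R$; then $\theta\approx 2/R\in(0,\pi/2)$, $v_*'\approx(0,1,0)$, and $|\sqrt{\mu_*}-\sqrt{\mu_*'}|^2\mu_*^{-1/4}\approx \mu_*'\,\mu_*^{-1/4}\sim e^{R^2/8}\to\infty$. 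No interpolation between the elementary bound $|\sqrt{\mu_*}-\sqrt{\mu_*'}|\le\sqrt{\mu_*}+\sqrt{\mu_*'}$ and the vanishing bound repairs this, since both only supply the symmetric decay $\mu_*+\mu_*'$, not decay in $v_*$ specifically.

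The fix is to perform the factorisation \emph{inside} $\mathcal I_2$: the $2\mu_*^{1/4}(\mu_*^{1/4}-{\mu_*'}^{1/4})$ piece then reproduces the paper's $D_3$ and gives the stated $\mathcal{D}$-term, while the $(\mu_*^{1/4}-{\mu_*'}^{1/4})^2$ piece, after $|h-h'|\le|h|+|h'|$, feeds the $\|f\|_{L^2_{-\alpha}}\cdots$ term as in the paper's $D_2$. A parallel remark applies to $\mathcal I_1$: the second-order Taylor remainder of $\sqrt{\mu_*'}-\sqrt{\mu_*}$ is $\nabla^2\sqrt{\mu}$ evaluated at an intermediate point $v_*+\tau(v_*'-v_*)$, which is merely bounded and not Gaussian in $v_*$ alone, so the $\|\mu^{1/10}f\|_{L^2}$ factor in the last displayed term of the statement is again not recovered without the explicit $\mu_*^{1/4}$ that the factorisation extracts (the paper simply bounds the Taylor factors by constants and lets its extracted $\mu_*^{1/4}$ carry the Gaussian).
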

\begin{rema}
If $\gamma > -5/2$ then the last term of the right hand side of \eqref{diff-G-Q} disappears.
\end{rema}
Since it follows from Lemma 2.12 of \cite{amuxy4-3} and its proof that 
\[
\cD(\mu^{1/4} { \,|f|} ,  \la v \ra^\beta h) \lesssim \|\mu^{1/10} f\|_{L^2} |\!|\!|\la v\ra^\beta h|\!|\!|^2
\lesssim \|\mu^{1/10} f\|_{L^2} \| h \|^2_{H^{\nu/2}_{(\nu+\gamma)/2 + \beta} },
\]
we have the following;

\begin{cor}\label{convenient-123}
Let $0<\nu<2$ and $\gamma > \max \{-3, -\nu -3/2\}$. Then for any $\alpha \ge 0$ and  any $\beta, \beta' \in \RR$ we have
\begin{align}\label{diff-G-Q-09}\notag
&\Big|\Big(\Gamma ( f, g)\,  , \,h \Big)_{L^2} -\Big( Q(\mu^{1/2} f, g), h\Big)_{L^2}\Big|\\
& \quad \quad  \notag \lesssim
 \|\mu^{1/10} f\|_{L^2}\|g\|_{L^2_{(\gamma+\nu)/2 -\beta}}\|h\|_{H^{\nu/2}_{(\gamma+\nu)/2 +\beta}}\\
&\quad \qquad \qquad 
+ \|f\|_{L^2_{-\alpha} }
\|g\|_{L^2_{(\nu +\gamma)/2+\alpha - \beta'}}
\|h \|_{L^2_{(\nu+\gamma)/2+\beta' }} \,.
\end{align}
\end{cor}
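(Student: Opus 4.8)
The statement is a direct consequence of the preceding Lemma~\ref{differ-Gam-Q} together with the bound on $\cD$ recorded just above it. First I would apply \eqref{diff-G-Q} with the same parameters $\alpha\ge 0$ and $\beta,\beta'\in\RR$, which bounds $|(\Gamma(f,g),h)_{L^2}-(Q(\mu^{1/2}f,g),h)_{L^2}|$ by a sum of three terms. The second term there, $\|f\|_{L^2_{-\alpha}}\|g\|_{L^2_{(\nu+\gamma)/2+\alpha-\beta'}}\|h\|_{L^2_{(\nu+\gamma)/2+\beta'}}$, already coincides with the second term of \eqref{diff-G-Q-09}, so nothing more is needed for it.

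For the first term I would insert the estimate $\cD(\mu^{1/4}|f|,\la v\ra^\beta h)\lesssim \|\mu^{1/10}f\|_{L^2}\|h\|^2_{H^{\nu/2}_{(\nu+\gamma)/2+\beta}}$, which follows from Lemma~2.12 of \cite{amuxy4-3} and the upper bound \eqref{up-down-triple} on the triple norm. This converts $\|\mu^{1/10}f\|_{L^2}^{1/2}\|g\|_{L^2_{(\nu+\gamma)/2-\beta}}\bigl(\cD(\mu^{1/4}|f|,\la v\ra^\beta h)\bigr)^{1/2}$ into $\|\mu^{1/10}f\|_{L^2}\|g\|_{L^2_{(\nu+\gamma)/2-\beta}}\|h\|_{H^{\nu/2}_{(\nu+\gamma)/2+\beta}}$, which is precisely the first term of \eqref{diff-G-Q-09}.

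Finally I must absorb the third term of \eqref{diff-G-Q}, namely $\|\mu^{1/10}f\|_{L^2}\|\mu^{1/10}g\|_{L^2}\|\mu^{1/10}h\|_{H^{\nu/2}}$. When $\gamma>-5/2$ it is absent by the remark following Lemma~\ref{differ-Gam-Q}. In the remaining range $-\nu-3/2<\gamma\le-5/2$ (which forces $\nu>1$) I would note that $\mu^{1/10}\la v\ra^{-(\nu+\gamma)/2+\beta}$ is a Schwartz function, hence bounded, giving $\|\mu^{1/10}g\|_{L^2}\lesssim\|g\|_{L^2_{(\nu+\gamma)/2-\beta}}$; likewise $\mu^{1/10}\la v\ra^{-(\nu+\gamma)/2-\beta}$ is a Schwartz multiplier and therefore bounded on $H^{\nu/2}$, so $\|\mu^{1/10}h\|_{H^{\nu/2}}\lesssim\|h\|_{H^{\nu/2}_{(\nu+\gamma)/2+\beta}}$. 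Thus this term too is dominated by the first term of \eqref{diff-G-Q-09}, and adding up the three contributions gives the claim.

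The only delicate point is the weighted bookkeeping in the last step — verifying that the Gaussian factor $\mu^{1/10}$ dominates every polynomial weight occurring, and that multiplication by a Schwartz function is bounded on $H^{\nu/2}$ — but this is routine; all the genuine analysis sits inside Lemma~\ref{differ-Gam-Q} and Lemma~2.12 of \cite{amuxy4-3}.
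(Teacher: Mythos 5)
Your proposal is correct and follows the paper's route exactly: apply Lemma~\ref{differ-Gam-Q}, substitute the bound $\cD(\mu^{1/4}|f|,\la v\ra^\beta h)\lesssim \|\mu^{1/10}f\|_{L^2}\|h\|^2_{H^{\nu/2}_{(\nu+\gamma)/2+\beta}}$ stated just before the corollary, and absorb the residual term $\|\mu^{1/10}f\|_{L^2}\|\mu^{1/10}g\|_{L^2}\|\mu^{1/10}h\|_{H^{\nu/2}}$ into the first term of \eqref{diff-G-Q-09}. The paper leaves that last absorption implicit; your explicit check that Gaussian weights dominate any polynomial weight, and that multiplication by a Schwartz function is bounded on $H^{\nu/2}$, is precisely the right justification.
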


\begin{proof}[Proof of Lemma \ref{differ-Gam-Q}]
We write
\begin{align*}
\Big(\Gamma( f, g)\, & , \,h \Big)_{L^2} -\Big( Q(\mu^{1/2} f, g), h\Big)_{L^2}
= \int B\, \Big( {\mu'_{*}}^{1/2} -\mu_{*}^{1/2} \Big) \big( f_{*} \big)
g h' d\sigma dv_*dv \\
& = 2
\int B\, \Big( (\mu'_{*})^{1/4} -\mu_{*}^{1/4} \Big) \big( \mu_{*}^{1/4}
f_{*} \big)
gh d\sigma dv_*dv \\
&+
\int B\, \Big( (\mu'_{*})^{1/4} -\mu_{*}^{1/4} \Big) ^2 f_{*}
g  h'
d\sigma dv_*dv  \\
&+
2
\int B\, \Big( (\mu'_{*})^{1/4} -\mu_{*}^{1/4} \Big) \big( \mu_{*}^{1/4}
f_{*} \big)
g(h'-h) d\sigma dv_*dv \\
&= D_1 + D_2 + D_3\,.
\end{align*}
Note that
\begin{align*}
&\Big((\mu'_{*})^{1/4} -\mu_{*}^{1/4} \Big)^2\le 2
\Big ((\mu'_{*})^{1/8} -\mu_{*}^{1/8} \Big)^2\Big(  (\mu'_{*})^{1/4} +\mu_{*}^{1/4} \Big)\\
&\lesssim        \min(|v-v_*|\theta,1)  \min(|v'-v'_*|\theta,1)  (\mu'_{*})^{1/4}
+  \Big(\min(|v'-v_*|\theta,1) \Big)^2\mu_{*}^{1/4} \,.
\end{align*}
By this decomposition we estimate 
\[
|D_2| \lesssim  D^{(1)}_2 + D^{(2)}_2
\]
Since $|v-v_*| \sim |v- v'_*|$ on $\mbox{supp}~b$, we have 
\begin{align*}
\la v_*\ra &\lesssim \la v-v_*\ra + \la v \ra \lesssim \la v-v'_*\ra \left(1 + \frac{\la v \ra}{\la v-v'_*\ra}
\right)\\
&\lesssim \la v-v'_*\ra \la v'_*\ra \lesssim \la v-v_*\ra \la v'_*\ra \enskip \mbox{on supp}~b\,,
\end{align*}
and hence 
\[
(\mu'_{*})^{1/4}| f_* | \lesssim \la {v'_*} \ra^\alpha (\mu'_{*})^{1/4}| \la v _* \ra^{-\alpha} f_* |\la v- v_*\ra^\alpha
\enskip \mbox{for any} \enskip \alpha \ge 0.
\]
Noting $\la v\ra^{\beta'} \lesssim \la v'-v'_* \ra^{\beta'} \la v'_* \ra^{|\beta'|}$ on supp$\,\, b$ for any
$\beta' \in \RR$, by the Cauchy-Schwarz inequality  we  have 
\begin{align*}
(D^{(1)}_2)^2 &\lesssim \int B |v-v_*|^{2\alpha}{\bf 1}_{|v-v_*|\ge 1} \min(|v-v_*|^2 \theta^2 ,1)
\Big( \frac{f_* g}{\la v _* \ra^{\alpha} \la v\ra^{\beta'}}
\Big)^2d\sigma dv_*dv \\
&\qquad \times  \int B |v-v_*|^{2\beta'}{\bf 1}_{|v-v_*|\ge 1} \min(|v-v_*|^2 \theta^2 ,1)
( \mu_{*}^{1/8} h)^2d\sigma dv_*dv \\
&+   \int B |v-v_*|^{2}{\bf 1}_{|v-v_*| < 1}\theta^2 
( \mu_{*}^{1/100} \mu^{1/100}h)^2d\sigma dv_*dv \\
&\qquad \qquad \times \int B |v-v_*|^{2}{\bf 1}_{|v-v_*| <1} \theta^2
( \mu_*^{1/100} f_* \mu^{1/100}g)^2d\sigma dv_*dv \\
& \lesssim
\|f\|_{L^2_{-\alpha}}^2\|g\|_{L^2_{(\nu+\gamma)/2  +\alpha-  \beta'}}^2 \|h\|^2_{L^2_{(\nu +\gamma)/2+\beta'}}\,,
\end{align*}
where we used the fact that $\la v'_* \ra \sim \la v' \ra \sim \la v \ra \sim \la v_* \ra$
on supp$\,\, b \cap {\bf 1}_{|v-v_*| <1}$, and $2 \gamma +4 >-3$. 
As for $D^{(2)}_2$, we have 
\[
(D_2^{(2)})^2 \lesssim 
\|\mu^{1/10} f\|_{L^2}^2\|g\|_{L^2_{(\nu+\gamma)/2  -  \beta'}}^2 \|h\|^2_{L^2_{(\nu +\gamma)/2+\beta'}}\,,
\]
thanks to the factor $\mu_*^{1/4}$ instead of ${\mu'_*}^{1/4}$.

By the Cauchy-Schwarz inequality we have for any $\beta \in \RR$
\begin{align*}
|D_3| &\lesssim  \Big(
\int B \Big( (\mu'_{*})^{1/4} -\mu_{*}^{1/4} \Big)^2 { |\mu_*^{1/4} f_{*}|}
{\big(\la v \ra^{-\beta} g\big)}^2 d\sigma dv_*dv \Big)^{1/2}\\
&\quad \times
 \Big( \int B    \, {\mu_{*}^{1/4}|f_*| } \la v \ra^{2\beta}
\Big (  h'- h
\Big)^2 d\sigma dv_*dv \Big)^{1/2}\\
& = \Big( \widetilde D_{3}(f,\la v \ra^{\beta}g) \Big)^{1/2}\Big(\cD_\beta(\mu^{1/4} f, h)\Big)^{1/2}\,.
\end{align*}
We have
\begin{align*}
\cD_\beta(\mu^{1/4} f, h)
&\le 2 \Big( \cD(\, { |\mu^{1/4} f|},  \la v \ra^{\beta} h) + \int B \, { |\mu_*^{1/4}f_*| } \Big(\la v \ra^{\beta} -
\la v' \ra^{\beta}\Big)^2h^2 dv dv_*d\sigma \Big)\\
&\lesssim \cD(\,{ | \mu^{1/4}f|},  \la v \ra^\beta h) + \|\mu^{1/8}f \|_{L^2} \|h\|^2_{L^2_{\beta+\gamma/2}}\,,
\end{align*}
because it follows from the same arguments in the proof of Lemma 2.12 in \cite{amuxy4-3} that
\begin{equation}\label{wet}
\Big|\la v \ra^\beta -
\la v' \ra^\beta\Big| \lesssim \sin \frac{\theta}{2} \Big(\la v \ra^\beta \la v_*\ra^{ 2|\beta|+1}{\bf 1}_{|v-v_*| > 1}+
\la v \ra^{\beta-1} |v-v_*| {\bf 1}_{|v-v_*| \leq 1}\Big)\,.
\end{equation}
The similar method as for $D_2^{(2)}$  shows 
\[
\widetilde D_3 \lesssim 
\|\mu^{1/10} f\|_{L^2}\|g\|_{L^2_{(\nu+\gamma)/2  -  \beta}}^2\,.
\]

To estimate $D_1$ we use the Taylor formula
\begin{align*}
(\mu'_{*})^{1/4} -\mu_{*}^{1/4}
&= \big(\nabla \mu^{1/4}\big) (v_*)\cdot (v'_*-v_*) \\
& + 
\int_0^1(1-\tau)  \big( \nabla^2 \mu^{1/4}\big)(v_*+ \tau(v'_*-v_*) ) (v'_*-v_*)^2  d\tau\,.
\end{align*}
By writing
\[
v'_* - v_* = \frac{|v-v_*|}{2}\{(\sigma\cdot \vk)\vk -\sigma\} + \frac{v-v_*}{2}(1-\vk \cdot \sigma),
\enskip \vk = \frac{v-v_*}{|v-v_*|},
\]
we see that the integral corresponding the first term on the integral of $D_1$ vanishes becasue of the 
symmetry on $\SS^2$. Therefore, we have 
\begin{align*}
|D_1| &\lesssim 
\int B  \min(|v-v_*|\theta^2, 1) \mu_*^{1/4}
f _*  gh  d\sigma dv_*dv \\
&\qquad \qquad  +  \int B    \min(|v-v_*|^2\theta^2, 1) \mu_*^{1/4}
f_*  gh  d\sigma dv_*dv \\
&\lesssim \int B  {\bf 1}_{|v-v_*| \ge 1} \min(|v-v_*|^2\theta^2, 1) \mu_*^{1/4}
f _*  gh  d\sigma dv_*dv \\
&\qquad \qquad  +  \int   {\bf 1}_{|v-v_*| < 1}  |v-v_*|^{\gamma+1+\nu/2} \mu_*^{1/10}
f_* (\mu^{1/10} g)\frac{\mu^{1/10}  h}{|v-v_*|^{\nu/2}}  dv_*dv \\
&\lesssim
\|\mu^{1/10} f\|_{L^2}\|g\|_{L^2_{(\nu+\gamma)/2  -  \beta'}} \|h\|_{L^2_{(\nu +\gamma)/2+\beta'}}\\
&\qquad \qquad+ \|\mu^{1/10} f\|_{L^2}\|\mu^{1/10} g\|_{L^2}\|\mu^{1/10}h\|_{H^{\nu/2}}
\end{align*}
in view of $2\gamma +\nu+2 >-3$.
\end{proof}

Since we may replace $B  {\bf 1}_{|v-v_*| \ge 1}$ by $B_{\bar c}$ in the 
proof of Lemma \ref{differ-Gam-Q},
under the condition $\gamma >-3$, it follows that for any $\beta, \beta' \in \RR$ and for any $\alpha \ge 0$ we have
\begin{align}\label{diff-G-Q-56789}\notag 
&\Big|\Big(\Gamma_{\bar c} ( f, g)\,  , \,h \Big)_{L^2} -\Big( Q_{\bar c}(\mu^{1/2} f, g), h\Big)_{L^2}\Big|\\
&\quad \lesssim \notag
\|\mu^{1/10} f\|_{L^2} 
\|\la v \ra^{-\beta} g\|_{L^2_{(\nu+\gamma)/2}}|\!|\!|\la v \ra^{\beta}
 h|\!|\!|\\
& \qquad \qquad + \|f\|_{L^2_{-\alpha} }
\|g\|_{L^2_{(\nu+\gamma)/2+\alpha - \beta'}}
\|h \|_{L^2_{(\nu+\gamma)/2+\beta'}}. 
\end{align}
By means of Lemma 3.2 of \cite{AMUXY3}, we get, for any $\alpha \ge 0$ and any $\beta' \in \RR$,
\begin{align}\label{cut-part-123}\notag
\Big|\Big(\Gamma_{\bar c} ( f, g)\,  , \,h \Big)_{L^2}\Big|
&\lesssim \|\mu^{1/10}f\|_{L^2} |\!|\!|g|\!|\!| |\!|\!|h |\!|\!|\\
& \quad + \|f\|_{L^2_{-\alpha} }
\|g\|_{L^2_{(\nu +\gamma)/2+\alpha - \beta'}}
\|h \|_{L^2_{(\nu+\gamma)/2+\beta' }}. 
\end{align}

Now we write
\begin{align*}
\Big(\Gamma_{c} ( f, g)\,  , \,h \Big)_{L^2}  &= \int B_c \mu_*^{1/2}(f'_* g' -f_* g) h dv dv_* d\sigma\\
& =  \int B_c \Big ( {\mu'}_*^{1/4} - {\mu_*}^{1/4} \Big )  {(\mu^{1/4} f)}_* g h dv dv_* d\sigma\\
&\quad +  \int B_c \Big ( \mu_*^{1/4} - {\mu'_*}^{1/4} \Big ) ^2 f_* g h' dv dv_* d\sigma\\
&\quad + \int B_c \Big ( {\mu'}_*^{1/4} - {\mu_*}^{1/4} \Big )  {(\mu^{1/4} f)}_* g (h'-h) dv dv_* d\sigma\\
&\quad + 
\int B_c \mu_*^{1/4}\Big ( (\mu^{1/4} f)'_* g' -(\mu^{1/4}f)_* g\Big ) h dv dv_* d\sigma\\
& = I_1 + I_2 + I_3 + I_4\,.
\end{align*}
The estimation for $I_4$ is just the same as in the arguments starting from the line 14 at the page 1021 of \cite{AMUXY3},
by replacing $f$ by $\mu^{1/4}f$. Then we have
$ |I_4| \lesssim \|\mu^{1/10}f\|_{L^2} |\!|\!|g|\!|\!| |\!|\!|h |\!|\!|$.
On the other hand, the estimations for $I_1$, $I_2$, $I_3$ are quite the same as those for $D_1$, $D_2$, $D_3$,
respectively, in the proof of Lemma \ref{differ-Gam-Q}.
 Therefore, $\Big(\Gamma_{c} ( f, g)\,  , \,h \Big)_{L^2}$ has the same bound as the right hand side of
\eqref{cut-part-123}.
Thus the proof of Lemma \ref{MS-general} is complete.

It remains to prove Proposition \ref{upper-recent}.  To this end we state a variant of \cite[Proposition 2.5]{amuxy4-3} where the roles of $g,h$  are exchanged.

\begin{prop}\label{IV-prop-2.5}
Let $0<\nu<2$ and $\gamma>\max\{-3, -\nu -3/2\}$.  For any $ \ell, \beta, \delta \in \RR$ and any small $\varepsilon >0$
$$
\Big|\Big(\la v \ra^\ell \,Q_c(f, g)-Q_c
(f, \la v \ra^\ell g),\, h\Big)\Big|\lesssim
\|f\|_{L^2_{\ell-1-\beta-\delta}}
\|g\|_{L^2_\beta}\Vert h\Vert
_{H^{(\nu-1+\epsilon)^+}_\delta}.
$$
\end{prop}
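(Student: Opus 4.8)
The plan is to reduce the commutator to an integral featuring the weight difference $\la v\ra^\ell-\la v'\ra^\ell$, and then to recognise the resulting trilinear form as a ``gain-type'' collisional expression that is controlled by the same Fourier-side technology already used for Proposition \ref{upper-c} (equivalently, it is \cite[Proposition 2.5]{amuxy4-3} with its last two arguments interchanged).

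First I would compute the commutator explicitly. Since the loss parts of $\la v\ra^\ell Q_c(f,g)$ and of $Q_c(f,\la\cdot\ra^\ell g)$ are both equal to $-\la v\ra^\ell g(v)\int B_c\,f(v_*)\,d\sigma\,dv_*$, they cancel, so that
\begin{align*}
&\Big(\la v\ra^\ell Q_c(f,g)-Q_c(f,\la\cdot\ra^\ell g),\,h\Big)\\
&\qquad =\iiint B_c\,f(v'_*)\,g(v')\,\big(\la v\ra^\ell-\la v'\ra^\ell\big)\,h(v)\,d\sigma\,dv_*\,dv.
\end{align*}
On $\mathrm{supp}\,B_c$ one has $|v-v_*|\le 1$, hence $|v-v'|=|v-v_*|\sin(\theta/2)\le1$ and $|v'-v_*|\le1$, so the brackets $\la v\ra,\la v'\ra,\la v_*\ra,\la v'_*\ra$ are all comparable there. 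The elementary bound \eqref{wet}, whose super-unit term drops out, then gives $|\la v\ra^\ell-\la v'\ra^\ell|\lesssim\sin(\theta/2)\,|v-v_*|\,\la v\ra^{\ell-1}$, and using the comparability I would split $\la v\ra^{\ell-1}\sim\la v'_*\ra^{\ell-1-\beta-\delta}\la v'\ra^{\beta}\la v\ra^{\delta}$, assigning the three factors to $f$, $g$ and $h$ respectively. Writing $F=|\la\cdot\ra^{\ell-1-\beta-\delta}f|$, $G=|\la\cdot\ra^{\beta}g|$, $H=|\la\cdot\ra^{\delta}h|$, this reduces the claim to
\begin{align*}
\iiint \widetilde B_c\,F(v'_*)\,G(v')\,H(v)\,d\sigma\,dv_*\,dv\;\lesssim\;\|F\|_{L^2}\,\|G\|_{L^2}\,\|H\|_{H^{(\nu-1+\epsilon)^+}},
\end{align*}
where $\widetilde B_c:=b(\cos\theta)\,\sin(\theta/2)\,|v-v_*|^{\gamma+1}\varphi_0(|v-v_*|)$.

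The remaining estimate has exactly the structure of the trilinear bound $(Q_c(F,G),H)$ treated in Proposition \ref{upper-c}, except that $|v-v_*|^{\gamma}$ is replaced by the factor $|v-v_*|^{\gamma+1}$ (still locally integrable, as $\gamma+1>-3$) and one extra power of $\sin(\theta/2)$ is present, so that the angular weight behaves like $\theta^{-1-\nu}=\theta^{-2-(\nu-1)}$ instead of $\theta^{-2-\nu}$. I would therefore repeat the argument of Proposition \ref{upper-c}: expand the integral via the formula from the Appendix of \cite{advw}, split according to whether $|\xi^-|\le\tfrac12\la\xi_*\ra$ or not, Taylor-expand the Fourier transform of the truncated kinetic factor, and estimate the pieces as for $A_{1,1},A_{1,2},A_{2,1},A_{2,2}$ there. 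The gain of one power of $\theta$ lowers the $v$-regularity that must be spent from $\nu/2$ to at most $\nu-1$; putting all of this (sub-unit, since $\nu<2$) derivative budget on the last slot $H$ and none on $G$, at the cost of an arbitrarily small endpoint loss $\epsilon$, yields the stated inequality. Alternatively one invokes \cite[Proposition 2.5]{amuxy4-3} directly, whose proof applies verbatim after exchanging the roles of the second and third arguments.

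The main obstacle is this last step: keeping under control the combined singularities of $\widetilde B_c$ — the residual angular singularity together with the spatial factor $|v-v_*|^{\gamma+1}$, which is singular at $v=v_*$ when $\gamma<-1$ and $\gamma$ may be taken close to $-\nu-3/2$. As in the estimates for $A_{1,1}$ and $A_{2,1}$ above, this forces careful bookkeeping on the Fourier side over the regimes determined by $\sqrt2|\xi|$ versus $\la\xi_*\ra$ and $\la\xi_*\ra$ versus $|\xi|/2$, together with the bracket equivalences \eqref{equivalence-relation}, so that the locally integrable tail $\la\xi_*\ra^{-(3+\gamma+\nu)}$ survives the Cauchy–Schwarz and Hölder steps; that part is routine but lengthy, and is what I would expand in full.
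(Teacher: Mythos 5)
Your opening observation is correct and matches the paper's starting point: the loss parts cancel, so the commutator is the gain-only expression $\iiint B_c\,f'_*\,g'\,(\la v\ra^\ell-\la v'\ra^\ell)\,h\,d\sigma\,dv_*\,dv$. The gap appears in the next step. Once you invoke \eqref{wet} and pass to absolute values, you are left with a pure gain trilinear form $\iiint \widetilde B_c\,|F'_*|\,|G'|\,|H|\,d\sigma\,dv_*\,dv$ whose angular weight is $\theta^{-1-\nu}$. This is \emph{not} of the form $(Q_c(F,G),H)$, so Proposition~\ref{upper-c} cannot be cited for it: the $A_1$ estimates there depend on the Bobylev cancellation $\hat\Phi_c(\xi_*-\xi^-)-\hat\Phi_c(\xi_*)$ coming from the gain-minus-loss structure, and, inside that, on the vanishing of the $\sigma$-odd part of $\xi^-$, which together produce the convergent weight $\theta^{1-\nu}$ in \eqref{later-use1}. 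You have discarded the loss term, so neither cancellation is available; the bare $\sigma$-integral $\int_0^{\pi/2}\theta^{-1-\nu}\sin\theta\,d\theta$ diverges as soon as $\nu\ge1$. Worse, in your form the $\sigma$-dependence sits on $F'_*,G'$ (which only carry $L^2$ norms) while $H(v)$ is $\sigma$-independent, so the $H^{(\nu-1+\varepsilon)^+}$ regularity of $h$ has no handle on $\theta$ at all.

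The paper avoids this by first performing the involutive change of variables $(v,v_*)\mapsto(v',v'_*)$ to rewrite the commutator as $\iiint B_c\,f_*\,g\,(\la v'\ra^\ell-\la v\ra^\ell)\,h'$, and then splitting $h'=h+(h'-h)$ into $J_1+J_2$. In $J_1$ all of $f_*,g,h$ are $\sigma$-independent, so the second-order Taylor expansion \eqref{one-use} can exploit that $\int_{\SSS^2} b(\cos\theta)\big(\sigma-(\vk\cdot\sigma)\vk\big)\,d\sigma=0$: the first-order, $O(|v-v_*|\theta)$ piece drops out, leaving $O(|v-v_*|\theta^2)$. The $\sigma$-integral then converges for all $\nu<2$ without any help from $h$, and the $H^{(\nu-1+\varepsilon)^+}_\delta$ regularity of $h$ is spent solely on controlling the $|v-v_*|^{\gamma+1}$ singularity through a Hardy split. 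Only the remaining piece $J_2$, which carries the $\sigma$-dependent factor $h'-h$, invokes the Fourier-side machinery of \cite[Prop.~2.5]{amuxy4-3}. Because your $f'_*,g'$ break the spherical symmetry, you lose the crucial second power of $\theta$, and the plan as written does not close for $\nu\ge1$ (and $\gamma$ close to $-\nu-3/2$); it would be correct to first change variables, split off the $h$-piece, and use the symmetry cancellation as above.
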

\begin{proof}
It suffices to write 
\begin{align*}
\Big(\la v \ra^\ell \,Q_c(f, g)- Q_c
(f, \la v \ra^\ell g),\, h\Big)
&=  \int B_c \Big(\la v' \ra^\ell- \la v \ra^\ell
\Big)  f_* gh dv dv_* d\sigma \\
&+ \int B_c\Big(\la v' \ra^\ell- \la v \ra^\ell
\Big) f_* g \Big(h'-h\Big) dv dv_* d\sigma \notag \\
&= J_1 + J_2\,. \notag
\end{align*}
The estimation for $J_2$ is  quite the  same as in the proof of \cite[Proposition 2.5]{amuxy4-3}
As for $J_1$, we use the Taylor expansion, with $ v'_\tau = v+  \tau(v'-v)$, 
\begin{align}\label{one-use}
\la v' \ra^\ell - \la v \ra^\ell = \nabla \Big( \la v \ra^\ell\Big)\cdot (v'-v)
+  \int_0^1 (1-\tau)\nabla^2 \Big( \la v'_\tau \ra^\ell \Big)d\tau(v'-v)^2 \,.
\end{align}
Since we have 
\[ 
v' - v= \frac{|v-v_*|}{2} (\sigma - (\vk \cdot \sigma) \vk ) +
\frac{v-v_*}{2}(1 - \vk \cdot \sigma) 
\]
and the integral corresponding to the first term vanishes because of  the symmetry on $\SS^2$, 
it follows that
\begin{align*}
\Big|J_1\Big|&\lesssim  \left|\int b(\cos \theta)\Phi_c \sin^2( \theta/2)|v-v_*|\Big(
|\nabla \Big(\la v \ra^\ell \Big)|  \right. \\
& \left.\qquad \qquad \qquad \qquad \qquad \qquad \qquad \qquad + \int_0^1|\nabla^2 \Big( \la v'_\tau\ra^\ell\Big)|d\tau \Big)| f_* gh |dv dv_* d\sigma \right|\\
&\lesssim   \int_{|v-v_*|\lesssim 1} |v-v_*|^{\gamma+1+(\nu -1 +\varepsilon)^+} 
|\la v_* \ra^{\ell-1-\beta-\delta } f_*|\\
&\qquad \qquad \qquad \times 
\, |(\la v \ra^{\beta} g)| \frac{|(\la v \ra^{\delta}h)| }{ |v-v_*|^{(\nu -1 +\varepsilon)^+}}
 dv dv_* \,\\
&\lesssim \|f\|_{L^2_{\ell-1-\beta-\delta} } \|g\|^2 _{L^2_\beta}\|h\|^2 _{H^{(\nu -1 +\varepsilon)^+}_\delta}\,,
\end{align*}
which, together with the estimate for $J_2$, gives the desired estimate.

\end{proof}

\begin{proof}[Proof of Proposition \ref{upper-recent}] Here we only prove the case $m \in [-\nu/2, 0]$
because the other case was essentially given in \cite[Lemma 4.7]{amuxy4-3}. Note
\begin{align*}
\Big(Q_c(f,g),h\Big) &= \Big(Q_c(f, \la v \ra^{\ell} g), \la v \ra^{-\ell} h\Big) \\
&+ \Big( \la v \ra^{\ell}Q_c(f, \la v \ra^{\ell} g)  - Q_c(f, \la v \ra^{\ell} g)    , \la v \ra^{-\ell}h \Big) .
\end{align*}
It follows from Propositions \ref{upper-c} and \ref{IV-prop-2.5} with $\beta =\ell$, $\delta =0$ that 
\begin{align*}
\Big|\Big(Q_c(f,g),h\Big)\Big|  \lesssim \|f\|_{L^2}\|g \|_{H^{\nu/2+m}_\ell} \|h\|_{H^{\nu/2-m}_{-\ell}}\,,
\end{align*}
where we have used $m \le 0$.  By means of \cite[Theorem 2.1]{AMUXY2010} (see also (2.1) of 
\cite{amuxy4-3}) we have for any $m, \ell\in\RR$,
$$
\Big|\Big(Q_{\bar c} (f,  g),\, h \Big)\Big|  \lesssim \Vert f\Vert_{L^1_{\ell^++(\gamma+\nu)^+}} \Vert g\Vert_{H^{\nu/2 +m}_{(\ell+\gamma+\nu)^+}} \|h\|_{H^{\nu/2-m}_{-\ell}}\,.
$$
Above two estimates and Corollary \ref{convenient-123} concludes \eqref{different-g-h}
when $-\nu/2 \le m \le 0$. 
\end{proof}

\subsection{Commutator estimates with moments}\label{ap-3}

Let $W_\delta(v) = \la \delta v \ra^{-N}$ for $0< \delta <1$ and $N \ge 1$. 

\begin{align*}
&\Big(\mathcal{L}_1(g), W_\delta^2 g\Big)_{L^2(\RR^3_v) }-      \Big(\mathcal{L}_1(W_\delta g), W_\delta g \Big)_{L^2(\RR^3_v)}\\
&= \int B \mu_*^{1/2}  {\mu'}_*^{1/2}(W_\delta -W'_\delta)g' W_\delta g dv dv_* d\sigma\\
&=\frac{1}{2} \int B \mu_*^{1/2}  {\mu'}_*^{1/2}(W_\delta -W'_\delta)^2g' g dv dv_* d\sigma\\
&\le \frac{1}{2} \int B \mu_* (W'_\delta -W_\delta)^2g^2 dv dv_* d\sigma\,,
\end{align*}
where we used the change of variables $(v', v'_*, \sigma) \rightarrow (v, v_*, \vk)$ and the Cauchy-Schwarz inequality.
Note that we have 
\begin{align}\label{W-est}\notag
|W_\delta(v') -W_\delta(v) | &\le \int_0^1 |\nabla W_\delta(v'_\tau)|d\tau |v'-v|, \enskip \enskip v'_\tau = v+  \tau(v'-v) \\ \notag
&\lesssim \delta \int_0^1 W_\delta(v'_\tau)\la \delta v'_\tau\ra^{-1} d\tau |v-v_*|\sin \frac{\theta}{2}\\
\notag
&\lesssim \delta^{\nu/2} |v-v_*|^{\nu/2} \theta W_\delta(v)\la v_* \ra^{N+2-\nu/2}\\ 
&\lesssim \delta^{\nu/2} \la v\ra^{\nu/2} \theta W_\delta(v)\la v_* \ra^{N+2}\,,
\end{align}
because $\la \delta v'_\tau\ra^{-1}  \lesssim \la \delta v \ra^{-1} \la v_*\ra$ and $\delta |v-v_*| \le \la \delta v \ra \la v_* \ra$.
Since it follows from Lemma 2.5 of  \cite{AMUXY2} that
$$\int |v-v_*|^{\gamma +\nu} \mu_*^{1/2} d v_* \lesssim \la v \ra^{\gamma+\nu}, 
$$ 
we have
\begin{align}\label{linear-comm-weight}
\left|\Big(\mathcal{L}_1(g), W_\delta^2 g\Big)_{L^2(\RR^3_v) }-      \Big(\mathcal{L}_1(W_\delta g), W_\delta g \Big)_{L^2(\RR^3_v)}\right|
\lesssim \delta^{\nu} \|g\|^2_{L^2_{(\nu+\gamma)/2}}\,.
\end{align}

\vskip1cm

\begin{prop}\label{weight-commutator}
Let $\gamma > \max \{-3, -\nu -3/2\}$. Then we have 
\begin{align*}
\left| \Big(\Gamma(f,g), \right. &\left. W_\delta h\Big)_{L^2(\RR^3)}-
\Big(\Gamma(f, W_\delta g),  h\Big)_{L^2(\RR^3)} \right| \\
&\lesssim \delta^{\nu/2} \|f\|_{L^2}  \|W_\delta g\|_{L^2_{(\nu+\gamma)/2}} 
\Big(\|h\|_{L^2_{(\nu+\gamma)/2}} + \|h\|_{H^{\nu'/2}_{-N_1}}\Big)\\
& \qquad + \delta^{\nu/2} \|f\|_{L^2}^{1/2}  \|W_\delta g\|_{L^2_{(\nu+\gamma)/2}} 
\cD(\mu^{1/2}f, h)^{1/2}\,,
\end{align*}
for any $N_1 >0$ and any $0 \le \nu' \le \nu$ satisfying 
$\gamma + \nu' >-3/2$.
\end{prop}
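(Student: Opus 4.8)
The plan is to reduce the commutator to its gain part, split off the contribution where $h$ appears undifferentiated, and handle the angular singularity by a spherical cancellation argument in the spirit of \eqref{linear-comm-weight} and the commutator lemmas of the Appendix.

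First I would observe that, since $W_\delta=W_\delta(v)$ depends on $v$ only, the two loss terms cancel: using $(\Gamma(f,g),\psi)_{L^2_v}=\iiint B\,\mu_*^{1/2}(f_*'g'-f_*g)\,\psi\, d\sigma dv_*dv$ and $(W_\delta g)'=W_\delta'\,g'$, one gets
\[
\Big(\Gamma(f,g),W_\delta h\Big)_{L^2}-\Big(\Gamma(f,W_\delta g),h\Big)_{L^2}=\iiint B\,\mu_*^{1/2}\,f_*'\,g'\,(W_\delta-W_\delta')\,h\,d\sigma dv_*dv=:\mathcal I .
\]
Then I would write $h=h'+(h-h')$, splitting $\mathcal I=\mathcal I_1+\mathcal I_2$ with $\mathcal I_1$ carrying $h'$ and $\mathcal I_2$ carrying $h-h'$.

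For $\mathcal I_2$ I would apply the Cauchy--Schwarz inequality with weight $B\mu_*^{1/2}|f_*'|$, pairing $|h-h'|$ on one side with $|g'|\,|W_\delta-W_\delta'|$ on the other. After the pre-post collisional change of variables $(v,v_*,\sigma)\mapsto(v',v_*',\vk)$ the first factor turns into a $\mathcal D$-type quantity in $(\mu^{1/2}f,h)$, the Gaussian $\mu_*'^{1/2}$ being absorbed into $\mu_*^{1/2}$ up to harmless polynomial factors in $v_*$; in the second factor I would insert \eqref{W-est} in the form $|W_\delta-W_\delta'|\lesssim \delta^{\nu/2}|v-v_*|^{\nu/2}\theta\,W_\delta(v)\langle v_*\rangle^{N+2-\nu/2}$, whose square yields a factor $\theta^2$, so that $\int_{\mathbb S^2}b(\cos\theta)\theta^2\,d\sigma<\infty$ (valid for $\nu<2$), and $\int|v-v_*|^{\gamma+\nu}\mu_*^{1/4}\,dv_*\lesssim\langle v\rangle^{\gamma+\nu}$ (valid since $\gamma>-3$). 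This produces exactly the last term $\delta^{\nu/2}\|f\|_{L^2}^{1/2}\|W_\delta g\|_{L^2_{(\nu+\gamma)/2}}\,\mathcal D(\mu^{1/2}f,h)^{1/2}$; the $W_\delta g$ with the correct weight appears because after the change of variables $W_\delta(v)g(v)$ is evaluated at a common point.

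The hard part is $\mathcal I_1$. After the change of variables it reads $\mathcal I_1=\iiint B\,\mu_*'^{1/2}\,f_*\,g\,h\,(W_\delta'-W_\delta)\,d\sigma dv_*dv$, where $g$ and $h$ now sit at $v$; the obstacle is that the single power of $\theta$ coming from \eqref{W-est} is not enough when $\nu\in[1,2)$, since $\int_{\mathbb S^2}b(\cos\theta)\theta\,d\sigma=+\infty$. To overcome this I would write $\mu_*'^{1/2}=\mu_*^{1/2}+(\mu_*'^{1/2}-\mu_*^{1/2})$; the correction term contributes a second power of $\theta$ and is controlled as in $\mathcal I_2$. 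In the main term $\mu_*^{1/2}f_*\,g\,h$ is $\sigma$-independent, so one may carry out the $\sigma$-integration of $W_\delta(v')-W_\delta(v)$ separately and use the cancellation $\int_{\mathbb S^2}b(\cos\theta)(v'-v)\,d\sigma=-(v-v_*)\int_{\mathbb S^2}b(\cos\theta)\sin^2(\theta/2)\,d\sigma$ (finite for $\nu<2$): a Taylor expansion of $W_\delta$ then leaves the first-order term $-\nabla W_\delta(v)\!\cdot\!(v-v_*)\int b\sin^2(\theta/2)d\sigma$, which is $\lesssim\delta^{\nu/2}W_\delta(v)\langle v\rangle^{\nu/2}\langle v_*\rangle$ after extracting $\delta^{\nu/2}$ from $(\delta|v-v_*|)^2$ at the cost of polynomial weights, plus a second-order remainder carrying $|v-v_*|^2$. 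Splitting $B=B_c+B_{\bar c}$, the long-range part $B_{\bar c}$ and all factors $\langle v_*\rangle^{\,\cdot}\mu_*^{1/2}$ are tamed by Gaussian decay and give the term $\delta^{\nu/2}\|f\|_{L^2}\|W_\delta g\|_{L^2_{(\nu+\gamma)/2}}\|h\|_{L^2_{(\nu+\gamma)/2}}$; for the short-range part $B_c$ (where $|v-v_*|<1$) and very negative $\gamma$, the weight $|v-v_*|^{\gamma}$ cannot be paired with $f\in L^2$ directly, so I would split $|v-v_*|^{\gamma}=|v-v_*|^{\gamma+\nu'}|v-v_*|^{-\nu'}$ and absorb $|v-v_*|^{-\nu'}$ into the $\nu'/2$-smoothness of $h$, which is precisely where $\|h\|_{H^{\nu'/2}_{-N_1}}$ and the restriction $\gamma+\nu'>-3/2$ (ensuring $\int_{|v-v_*|<1}|v-v_*|^{2(\gamma+\nu')}\,dv_*<\infty$-type bounds) enter. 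Collecting $\mathcal I_1$ and $\mathcal I_2$ gives the asserted estimate; I expect the genuine difficulty to be this bookkeeping of the cancellation in $\mathcal I_1$ for $\nu\in[1,2)$ together with the verification that all moment integrals $\int|v-v_*|^{\gamma+\cdots}\mu_*^{1/4}\,dv_*$ converge under $\gamma>\max\{-3,-\nu-3/2\}$, everything for $\nu<1$ being immediate from \eqref{W-est}.
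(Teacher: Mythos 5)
Your overall decomposition of $\mathcal I_1$ (Taylor expansion of $W_\delta$, spherical cancellation of the first-order term to gain an extra power of $\theta$, separate treatment of $B_c$ and $B_{\bar c}$, and the $\nu'$-split for the $|v-v_*|<1$ region) is essentially the paper's treatment of the terms it calls $A_1$ and $A_2$, and is sound. The gap is in $\mathcal I_2$. After you apply Cauchy--Schwarz with weight $B\,\mu_*^{1/2}|f_*'|$ and then change variables $(v,v_*,\sigma)\mapsto(v',v_*',\vk)$, the $h$-factor becomes
\[
\iiint B\,{\mu'_*}^{1/2}\,|f_*|\,(h'-h)^2\,dv\,dv_*\,d\sigma ,
\]
in which the Gaussian sits at the \emph{post}-collisional velocity $v'_*$ while $f_*$ sits at $v_*$. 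This is \emph{not} $\mathcal D(\mu^{1/2}f,h)$, and your claim that ${\mu'_*}^{1/2}$ can be ``absorbed into $\mu_*^{1/2}$ up to harmless polynomial factors in $v_*$'' is false: take $v=0$, $|v_*|$ large, and $\sigma\perp\vk$ (so $\theta=\pi/2$, admissible after the symmetrization of $b$); then $|v'_*|^2=|v_*|^2/2$ and ${\mu'_*}^{1/2}/\mu_*^{1/2}=e^{|v_*|^2/8}$, which is exponentially large, not polynomial. Since $\cD$ is defined as $\iiint B\,f_*\,(g-g')^2$ with $f_*$ evaluated at $v_*$, the mismatch is fatal as written.

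To repair it you must align the Gaussian with $f_*$ \emph{before} the final Cauchy--Schwarz. The paper does this by first changing variables in the whole commutator and then splitting ${\mu'_*}^{1/2}h'=\mu_*^{1/2}h+({\mu'_*}^{1/2}-\mu_*^{1/2})h'+\mu_*^{1/2}(h'-h)$; the last summand produces exactly $\mathcal D(\mu^{1/2}|f|,h)$ with $\mu_*^{1/2}f_*$ at a common point, while the middle one carries an extra $\theta$-gain from $|{\mu'_*}^{1/2}-\mu_*^{1/2}|\lesssim\min(|v-v_*|\theta,1)(\mu_*^{1/4}+{\mu'_*}^{1/4})$. Equivalently, within your $\mathcal I_2$ you would need to perform the same split ${\mu'_*}^{1/2}=\mu_*^{1/2}+({\mu'_*}^{1/2}-\mu_*^{1/2})$ that you already do in $\mathcal I_1$; leaving this out and appealing to a polynomial comparison is where the argument breaks. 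Once that extra split is inserted, your route and the paper's are essentially the same.
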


\begin{proof}
Note that 
\begin{align*}
\Big(\Gamma(f,g), W_\delta h\Big)_{L^2(\RR^3)}-&
\Big(\Gamma(f, W_\delta g),  h\Big)_{L^2(\RR^3)}\\
&= \int B {\mu'}_*^{1/2}  (W'_\delta -W_\delta){f}_* g h' dv dv_* d\sigma\\
&=  \int B \mu_*^{1/2}  (W'_\delta -W_\delta){f}_* gh dv dv_* d\sigma\\
&+ \int B \Big({\mu'}_*^{1/2} - \mu_*^{1/2}\Big) f_* (W'_\delta -W_\delta)gh' dv dv_* d\sigma\\
&+\int B  \mu_*^{1/2} f_* (W'_\delta -W_\delta)g (h'-h) dv dv_* d\sigma\\
& =  A_1 + A_2 + A_3. 
\end{align*}

Note that
\begin{align*}
&|v-v_*|^\gamma \lesssim \la v\ra^\gamma \la v_* \ra^{|\gamma|}
\enskip \mbox{or} \enskip \la v\ra^\gamma \la v'_* \ra^{|\gamma|}
\enskip \mbox{if } \enskip |v-v_*| \ge 1\,,\\
& \la v \ra \sim \la v_* \ra \sim \la v' \ra \sim \la v_*'\ra 
\enskip \mbox{if } \enskip |v-v_*| < 1\,.
\end{align*}
We divide 
\begin{align*}
A_2 = \int_{|v-v_*| < 1} \cdots dvdv_*d\sigma + \int_{|v-v_*| \ge 1} \cdots dvdv_*d\sigma= A_{2,1} + A_{2,2}.
\end{align*}
Using  that 
\begin{align*}
|{\mu'_*}^{1/2} - \mu_*^{1/2}| \lesssim \min(|v-v_*|\theta, 1)\mu_*^{1/4}
+ \min(|v-v'_*|\theta, 1){\mu'}_*^{1/4}
\end{align*}
we estimate  as follows:  $|A_{2,1}| \lesssim |A_{2,1}^{*}| + |A^{*'}_{2,1}|$.  Then  for any $N_1 >0$  we have
\begin{align*}
|A_{2,1}^{*'}| &\lesssim \delta^{\nu/2} \int_{|v-v_*| <1}
|v-v_*|^{\gamma + 1 +\nu/2} b \theta^2\left| f_* \frac{W_\delta g} {\la v \ra^{N_1} }
\frac{h'}{\la v' \ra^{N_1}}\right|
dv dv_* d\sigma\\
&\lesssim 
\delta^{\nu/2} \left(\int \Big(\int_{|v-v_*| <1} |v-v_*|^{2\gamma + 2 +\nu} dv_* \Big)\left|\frac{W_\delta g} {\la v \ra^{N_1} }\right|^2dv\right)^{1/2} \\
&\qquad \qquad \qquad 
 \times \left( \int b \theta^2 |f_*|^2 \left|\frac{h'}{\la v' \ra^{N_1}}\right|^2 dv dv_* d\sigma\right ) ^{1/2}\\
&\lesssim \delta^{\nu/2} \|f\|_{L^2} \|W_\delta g \|_{L^2_{-N_1}} \|h\|_{L^2_{-N_1}}\,.
\end{align*}
Similarly we have the same upper bound for $A_{2,1}^{*}$.
As for $A_{2,2}$,  it follows from the Cauchy -Schwarz inequality that 
\begin{align*}
A_{2,2}^2 &\le 2\int_{|v-v_*|\ge 1} B f_*^2 (W_\delta - W_\delta')^2g^2 ({\mu'_*}^{1/2} + \mu^{1/2}_*)dv dv_* d\sigma \\
& \qquad \qquad \qquad \times \int B h^2 \Big ({\mu_*}^{1/4} - {\mu'}^{1/4}_*\Big)^2 dv dv_* d\sigma\\
&\lesssim \delta^{\nu}\int \la v\ra^{\gamma+\nu} b \theta^2
|f_* W_\delta g |^2dv dv_*d \sigma \|h\|^2_{L^2_{(\nu+\gamma)/2}}\\
&\lesssim \delta^{\nu}\|f\|^2_{L^2} \|W_\delta g\|^2_{L^2_{(\nu+\gamma)/2}} \|h\|^2_{L^2_{(\nu+\gamma)/2}}\,,
\end{align*}
where we have used the fact that, if  $|v-v_*| \ge 1$,
\[
|v-v_*|^{\gamma} (W_\delta - W_\delta')^2 \lesssim \delta^{\nu} \la v\ra^{\gamma+\nu} \theta^2 W_\delta^2
\min \{ \la v_* \ra^{|\gamma|+2N+4}\,,\, \la v'_* \ra^{|\gamma|+2N+4}\},
\]
because of \eqref{W-est} and it with $v_*$ replaced by $v_*'$. 
By means of the Cauchy-Schwarz inequality, we have 
\begin{align*}
A_3^2 & \le \int B  \mu_*^{1/2} f_* (W'_\delta -W_\delta)^2g^2  dv dv_* d\sigma\times 
\cD(\mu^{1/2}| f|, h).
\end{align*}
The first factor on the right hand side is estimated above from $C \delta^{\nu}$ times 
\begin{align*}
&\|\mu^{1/4} f \|_{L^1}\|W_\delta g\|^2_{L^2_{(\nu+\gamma)/2}}\\
&+\int \Big( \int_{|v-v_*|<1} |v-v_*|^{2(\gamma+\nu)}dv_*\Big)^{1/2} \|\mu^{1/4} f\|_{L^2}\|W_\delta g\|^2_{L^2_{-N_1}}
\end{align*}
because it follows again from \eqref{W-est} that
\[
|v-v_*|^{\gamma} (W_\delta - W_\delta')^2 \mu^{1/4}_* 
\lesssim \delta^{\nu}  \theta^2 W_\delta^2
\Big\{ \la v\ra^{\gamma+\nu} {\bf 1}_{|v-v_*|\ge 1} + \frac{|v-v_*|^{\gamma+\nu}}{\la v \ra^{N_1}}
{\bf 1}_{|v-v_*|<1} \Big\}.
\]
Therefore we also have the desired bound for $A_3$.  

In order to estimate $A_1$, we use the following Taylor expansion of the second order; with
$ v'_\tau = v+  \tau(v'-v)$,  
\begin{align}\label{W-est-sec}
W_\delta(v') -W_\delta(v)  &= \nabla W_\delta (v)\cdot (v'-v)\notag\\
& \qquad \qquad  + 
\int_0^1 (1-\tau)\nabla^2 W_\delta(v'_\tau) d\tau (v'-v)^2.
\end{align}
Similar as in \eqref{one-use},
we can estimate the factor $W'_\delta -W_\delta$ by
\begin{align}\label{second-w}
&\notag |\nabla W_\delta(v)| |v-v_*| \theta^2  + |\nabla^2 W_\delta(v'_\tau) | |v-v_*|^2\theta^2\\
&\notag \lesssim \delta W_\delta(v) \la \delta v \ra^{-1} |v-v_*| \theta^2 + \delta^2 W_\delta(v'_\tau)
\la \delta v'_\tau \ra^{-2} |v-v_*|^2 \theta^2\\
&\lesssim \delta^{\nu/2} W_\delta(v) |v-v_*|^{\nu} \theta^2 \la v_*\ra^{1-\nu/2} + \delta^{\nu}W_\delta(v)
|v-v_*|^{\nu}\theta^2  \la v_*\ra^{N+4 -\nu} .
\end{align}
Consequently, for any $0 \le \nu' \le \nu$ satisfying $\gamma + \nu' >-3/2$,  we have
\begin{align*}
|A_1|&\lesssim \delta^{\nu/2} \iint \la v \ra^{\gamma+\nu} {\bf 1}_{|v-v_*| \ge 1} \mu_*^{1/4}| f_* W_\delta g h |dv dv_*\\
& + \delta^{\nu/2} \iint |v-v_*|^{\gamma+(\nu+\nu')/2} {\bf 1}_{|v-v_*| < 1}\left| f_* 
\frac{W_\delta g }{\la v \ra^{N_1}} \frac{h}{|v-v_*|^{\nu'/2} \la v \ra^{N_1}} \right |dv dv_*\\
& \lesssim \delta^{\nu/2} \|\mu^{1/4}f\|_{L^1} \|W_\delta g\|_{L^2_{(\nu+\gamma)/2}}\|h\|_{L^2_{(\nu+\gamma)/2}}\\
& + \delta^{\nu/2} \Big(\int |f_*|^2 \int \left|\frac{h} {|v-v_*|^{\nu'/2} \la v \ra^{N_1}} \right |^2dv\Big) dv_*\Big)^{1/2}\\
&\quad \qquad \times 
\Big(\int \left|\frac{W_\delta g }{\la v \ra^{N_1}}\right|^2 \int_{|v-v_*|<1}
  |v-v_*|^{2\gamma+\nu + \nu'} dv_* \Big) dv\Big)^{1/2}\\
&\lesssim \delta^{\nu/2} \|f\|_{L^2}  \Big(\|W_\delta g\|_{L^2_{(\nu+\gamma)/2}}\|h\|_{L^2_{(\nu+\gamma)/2}}
+ \|W_\delta g\|_{L^2_{-N_1}} \|h\|_{H^{\nu'/2}_{-N_1}}\Big).
\end{align*}

\end{proof}
The following corollary is a variant of 
Lemma 4.9 in \cite{amuxy4-3}, which is used to prove the non-negativity of solutions. 
\begin{cor}\label{cor-weight-commutator}
Let $\gamma > \max \{-3, -\nu -3/2\}$ and let $\varphi(v,x) = (1+|v|^2+|x|^2)^{\alpha/2}$ for $\alpha  >3/2$. 
If we put $W_{\varphi} = \varphi(v,x)^{-1}$, then we have 
\begin{align*}
\left| \Big(\Gamma(f,g), \right. &\left. W_{\varphi} h\Big)_{L^2(\RR^3)}-
\Big(\Gamma(f, W_\varphi g),  h\Big)_{L^2(\RR^3)} \right| \\
&\lesssim \|f\|_{L^2}  \|W_\varphi g\|_{L^2_{\gamma/2}} 
\Big(\|h\|_{L^2_{(\nu+\gamma)/2}} + \|h\|_{H^{\nu'/2}_{-N_1}}\Big)\\
& \qquad +  \|f\|_{L^2}^{1/2}  \|W_\varphi g\|_{L^2_{\gamma/2}} 
\cD(\mu^{1/2}f, h)^{1/2}\,,
\end{align*}
for any $N_1 >0$ and any $0 \le \nu' \le \nu$ satisfying 
$\gamma + \nu' >-3/2$.
\end{cor}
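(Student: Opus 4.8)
The plan is to reproduce the proof of Proposition~\ref{weight-commutator} almost verbatim, with the $v$-weight $W_\delta$ replaced throughout by $W_\varphi(v,x)=\varphi(v,x)^{-1}=(1+|v|^2+|x|^2)^{-\alpha/2}$. Since $\Gamma$ acts only on the velocity variable, the space variable $x$ enters $W_\varphi$ merely as a parameter, and the whole estimate is a pure velocity estimate depending on $x$ only through the constant $1+|x|^2\ge 1$ added to the denominator; in particular all implied constants will be uniform in $x$. The two pointwise inputs that replace \eqref{W-est} and \eqref{W-est-sec}--\eqref{second-w} are
\[
|\nabla_v W_\varphi(v,x)|\le \alpha\,\langle v\rangle^{-1}W_\varphi(v,x),\qquad
|\nabla_v^2 W_\varphi(v,x)|\lesssim \langle v\rangle^{-2}W_\varphi(v,x),
\]
both uniform in $x$ because the extra $|x|^2$ only enlarges $\varphi$. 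The decisive difference from the $W_\delta$ case is that here each $v$-derivative of the weight gains a full power $\langle v\rangle^{-1}$ rather than the small factor $\delta\langle\delta v\rangle^{-1}$; this is exactly why the right-hand side of the corollary carries no small prefactor and why the weight on $g$ drops from $\langle v\rangle^{(\nu+\gamma)/2}$ to $\langle v\rangle^{\gamma/2}$.

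Concretely I would write, exactly as in the proof of Proposition~\ref{weight-commutator},
\[
\Big(\Gamma(f,g),W_\varphi h\Big)_{L^2(\RR^3)}-\Big(\Gamma(f,W_\varphi g),h\Big)_{L^2(\RR^3)}=A_1+A_2+A_3,
\]
where $A_2$ comes from replacing $(\mu'_*)^{1/2}$ by $\mu_*^{1/2}$, $A_3$ from the difference $h'-h$, and $A_1$ is the principal term, handled by the second-order Taylor expansion of $v\mapsto W_\varphi(v,x)$ along the collisional path, its first-order contribution vanishing by the $\SS^2$-symmetry, as in \eqref{one-use}. For $A_2$ and $A_3$ the mean value theorem gives $|W_\varphi(v',x)-W_\varphi(v,x)|\lesssim \theta\,|v-v_*|\,\langle v'_\tau\rangle^{-1}W_\varphi(v'_\tau,x)$ with $v'_\tau$ on the segment $[v,v']$; using the elementary relations valid on $\mathrm{supp}\,b$ (where $\theta\in[0,\pi/2]$), namely $\langle v'_\tau\rangle^{-1}\lesssim \langle v\rangle^{-1}\langle v_*\rangle^{C}$ and $W_\varphi(v'_\tau,x)\lesssim \langle v_*\rangle^{\alpha}W_\varphi(v,x)$ uniformly in $x$, together with $|v-v_*|^{1-\nu/2}\le\langle v\rangle^{1-\nu/2}\langle v_*\rangle^{1-\nu/2}$, one obtains
\[
|W_\varphi(v',x)-W_\varphi(v,x)|\lesssim \theta\,|v-v_*|^{\nu/2}\,\langle v\rangle^{-\nu/2}\,\langle v_*\rangle^{C}\,W_\varphi(v,x),
\]
and a companion second-order bound of the same type for the $A_1$ integrand. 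Feeding these into the estimates of Proposition~\ref{weight-commutator}, the Gaussian $\mu_*$ absorbs the surplus factors $\langle v_*\rangle^{C}$, the gain $\langle v\rangle^{-\nu/2}$ converts $\langle v\rangle^{(\nu+\gamma)/2}$ into $\langle v\rangle^{\gamma/2}$, and the triple integrals are separated by Cauchy--Schwarz exactly as there, the term producing $\cD(\mu^{1/2}f,h)^{1/2}$ coming from $A_3$; the near-diagonal region $|v-v_*|<1$ and the term with $\|h\|_{H^{\nu'/2}_{-N_1}}$ are treated by the same splitting, integrability being guaranteed by $\gamma+\nu'>-3/2$ (compare also \cite[Lemma~4.9]{amuxy4-3}).

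The main obstacle I anticipate is precisely the pointwise control of $\langle v'_\tau\rangle^{-1}$ (and $\langle v'_\tau\rangle^{-2}$ for the Taylor remainder in $A_1$) by $\langle v\rangle^{-1}$, respectively $\langle v\rangle^{-2}$, up to harmless powers of $\langle v_*\rangle$, uniformly along the collisional segment and uniformly in $x$: this is what makes the weight on $g$ come out as $\gamma/2$ instead of $(\nu+\gamma)/2$ and removes the small prefactor, and it rests on the slightly delicate but elementary estimate $|v'_\tau|^2\ge \tfrac18|v|^2-\tfrac94|v_*|^2$ valid on $\mathrm{supp}\,b$, from which $\langle v'_\tau\rangle\gtrsim\langle v\rangle/\langle v_*\rangle$ and the comparison $W_\varphi(v'_\tau,x)\lesssim\langle v_*\rangle^{\alpha}W_\varphi(v,x)$ both follow after distinguishing the cases $|v|\gtrsim|v_*|$ and $|v|\lesssim|v_*|$. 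Once these relations are in hand, the remaining bookkeeping of exponents is routine and parallels the proof of Proposition~\ref{weight-commutator}.
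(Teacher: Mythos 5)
Your proposal is correct and follows essentially the same route as the paper's proof: replace the pointwise bounds \eqref{W-est} and \eqref{second-w} for $W_\delta$ by analogous first- and second-order $v$-derivative bounds for $W_\varphi$ (uniform in $x$) and rerun the $A_1+A_2+A_3$ decomposition and Cauchy--Schwarz structure of Proposition \ref{weight-commutator} unchanged, with the gain on the $g$-weight coming from the full $\langle v\rangle^{-1}$ that each $v$-derivative of $W_\varphi$ produces. The only cosmetic difference is the form in which this replacement bound is recorded: the paper writes it as $\theta\,W_\varphi(v)\langle v_*\rangle^{\alpha+2}\big(\mathbf{1}_{|v-v_*|\ge 1}+|v-v_*|\mathbf{1}_{|v-v_*|<1}\big)$, splitting the near/far regions directly into the weight estimate, whereas you carry the interpolated form $\theta\,|v-v_*|^{\nu/2}\langle v\rangle^{-\nu/2}\langle v_*\rangle^{C}W_\varphi(v)$; both feed into the same subsequent algebra and yield the weight $\gamma/2$ on $g$.
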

\begin{proof}
Instead of \eqref{W-est} and \eqref{second-w}, respectively, 
it suffices to use 
\begin{align*}
|W_\varphi(v') -W_\varphi(v) | &\le \int_0^1 |\nabla W_\varphi(v'_\tau)|d\tau |v'-v|, \enskip \enskip v'_\tau = v+  \tau(v'-v) \\ \notag
&\lesssim \int_0^1 W_\varphi(v'_\tau)\la v'_\tau\ra^{-1} d\tau |v-v_*|\sin \frac{\theta}{2}\\
\notag
&\lesssim  \theta W_\varphi(v)\la v_* \ra^{\alpha+2}\Big( {\bf 1}_{|v-v_*|\ge1} + |v-v_*|{\bf1}_{|v-v_*|<1}\Big)
\end{align*}
and 
\begin{align*}
&|\nabla W_\varphi(v)| |v-v_*| \theta^2  + |\nabla^2 W_\varphi(v'_\tau) | |v-v_*|^2\theta^2\\
&\notag \lesssim W_\varphi(v) \la  v \ra^{-1} |v-v_*| \theta^2 +  W_\varphi(v'_\tau)
\la v'_\tau \ra^{-2} |v-v_*|^2 \theta^2\\
&\notag \lesssim \theta^2 W_\varphi(v) \la v_*\ra^{\alpha +4}\Big(
 {\bf 1}_{|v-v_*|\ge1}  +  |v-v_*|{\bf1}_{|v-v_*|<1}\Big).
\end{align*}
\end{proof}

\subsection{Commutator with $v$ derivative mollifier}\label{ap-4}
Since the kinetic factor of the collision cross section is singular, we must confine ourselves 
to a special order of  the mollifier. 
Let $1 \ge N_0 \ge  \nu/2$ and 
let $M^\delta(\xi) = \displaystyle  \frac{1}{(1+ \delta \la \xi \ra)^{N_0} }$ for $0 < \delta \le 1$.
\begin{prop}\label{IV-coro-2.15}
Assume that  $0 < \nu <2$ and $\gamma>\max\{-3, -\frac 32 -\nu\} $.
Then  for any $\nu' >0$ satisfying $\nu-1 \le \nu' <\nu$ and $\gamma +\nu' >-3/2$ we have
\begin{align*}
\Big 
| \Big (M^\delta(D_v)\,& Q_c (f,  g) - Q_c ( f,  M^\delta(D_v) \, g)  ,\, h \Big ) \Big |\\
&\qquad \qquad 
\lesssim  \| f\|_{L^2} \|M^\delta (D_v) g\Vert_{H^{\nu'/2}}  \, \Vert h\Vert_{H^{\nu'/2}}\,.
\end{align*}

\end{prop}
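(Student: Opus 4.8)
The plan is to mimic, on the Fourier side, the proof of Proposition~\ref{upper-c}, carrying along one extra symbol factor and then reusing the kernel estimates of Lemmas~\ref{A-1} and \ref{A-2}. Since in $M^\delta(D_v)Q_c(f,g)$ the multiplier acts on the output frequency $\xi$ while in $Q_c(f,M^\delta(D_v)g)$ it acts on the $g$-frequency $\xi-\xi_*$, the representation formula from the Appendix of \cite{advw} used at the start of the proof of Proposition~\ref{upper-c} gives
\[
\Big(M^\delta(D_v)Q_c(f,g)-Q_c(f,M^\delta(D_v)g),\,h\Big)=\int b\Big(\tfrac{\xi}{|\xi|}\cdot\sigma\Big)\,\big[\hat\Phi_c(\xi_*-\xi^-)-\hat\Phi_c(\xi_*)\big]\,\big[M^\delta(\xi)-M^\delta(\xi-\xi_*)\big]\,\hat f(\xi_*)\,\hat g(\xi-\xi_*)\,\overline{\hat h(\xi)}\;d\xi\,d\xi_*\,d\sigma .
\]
As there, I would split the $\xi_*$-integral into the region $|\xi^-|\le\frac12\la\xi_*\ra$, on which $\hat\Phi_c$ is Taylor expanded at $\xi_*$ to second order (producing the analogues of $A_{1,1},A_{1,2}$), and the region $|\xi^-|\ge\frac12\la\xi_*\ra$, on which $\hat\Phi_c(\xi_*-\xi^-)-\hat\Phi_c(\xi_*)$ is split (producing the analogues of $A_{2,1},A_{2,2}$); each of the four pieces now carries the additional factor $M^\delta(\xi)-M^\delta(\xi-\xi_*)$.

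The one genuinely new ingredient is a pointwise bound for that factor. Using $|\nabla_\eta M^\delta(\eta)|\lesssim\delta(1+\delta\la\eta\ra)^{-N_0-1}\lesssim M^\delta(\eta)\la\eta\ra^{-1}$, the mean value theorem along the segment joining $\xi-\xi_*$ and $\xi$, and the elementary inequality $1-(1-x)^{N_0}\le N_0 x$ for $x\in[0,1]$ (valid since $0<N_0\le1$), I would establish, uniformly in $0<\delta\le1$,
\[
\frac{\big|M^\delta(\xi)-M^\delta(\xi-\xi_*)\big|}{M^\delta(\xi-\xi_*)}\ \lesssim\ \min\Big\{1,\tfrac{\la\xi_*\ra}{\la\xi\ra}\Big\}\ \ \text{if}\ \ \la\xi\ra\ge\la\xi-\xi_*\ra,\qquad\frac{\big|M^\delta(\xi)-M^\delta(\xi-\xi_*)\big|}{M^\delta(\xi-\xi_*)}\ \lesssim\ \Big(\tfrac{\la\xi_*\ra}{\la\xi\ra}\Big)^{N_0}\ \ \text{if}\ \ \la\xi\ra\le\la\xi-\xi_*\ra,
\]
together with the trivial bound $|M^\delta(\xi)-M^\delta(\xi-\xi_*)|\le M^\delta(\xi-\xi_*)+M^\delta(\xi)$. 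The decisive point is that the weight $M^\delta(\xi-\xi_*)$ occurring here is exactly the one needed to turn $\hat g(\xi-\xi_*)$ into a contribution to $\|M^\delta g\|_{H^{\nu'/2}}$; so after peeling off $\la\xi-\xi_*\ra^{\nu'/2}M^\delta(\xi-\xi_*)$ from $\hat g$ and $\la\xi\ra^{\nu'/2}$ from $\hat h$, keeping $\hat f$ bare, one is left with the very kernels of Lemmas~\ref{A-1} and \ref{A-2}, multiplied by $\min\{1,\la\xi_*\ra/\la\xi\ra\}$ or $(\la\xi_*\ra/\la\xi\ra)^{N_0}$.

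Then I would rerun the Cauchy--Schwarz argument of Lemma~\ref{A-1}. In the regime $\la\xi\ra\lesssim\la\xi_*\ra$ (so $\la\xi-\xi_*\ra\sim\la\xi_*\ra$, cf. \eqref{equivalence-relation}) the commutator factor $(\la\xi_*\ra/\la\xi\ra)^{N_0}$ is large, but there the $\hat\Phi_c$-difference supplies the compensating decay $\la\xi\ra/\la\xi_*\ra$ (the first terms of \eqref{later-use1} and \eqref{later-use2}); since $N_0\le1$ the product is $\le1$, the resulting kernel is controlled by $\la\xi_*\ra^{-(3+\gamma+\nu')}$, and the $L^2$-Schur estimate against $\hat f$ closes precisely because $\gamma+\nu'>-3/2$, i.e. $3+2(\gamma+\nu')\ge0$ (and the physical-side weight integrals $\int_{|v-v_*|<1}|v-v_*|^{2\gamma+\nu+\nu'}dv_*$ converge, note $2\gamma+\nu+\nu'>-3$ since $\nu>\nu'$). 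In the regime $\la\xi_*\ra\ll|\xi|$ (so $\la\xi\ra\sim\la\xi-\xi_*\ra$) the commutator factor is $\lesssim\la\xi_*\ra/\la\xi\ra$, that is, it gains exactly one derivative on $h$ relative to Proposition~\ref{upper-c}; since that proposition only controls $h$ in $H^{\nu-\nu'/2}$, one derivative of gain is just what brings it down to $H^{\nu'/2}$, and this is the role of the hypothesis $\nu'\ge\nu-1$. The remaining sub-regime $|\xi|\sim\la\xi_*\ra$ is immediate since there the commutator factor is $O(1)$ and the kernel already has full decay; the four pieces $A_{2,1}^{(1)},A_{2,1}^{(2)},A_{2,1}^{(3)}$, $A_{2,2}$ are then transcribed from Lemma~\ref{A-2} in the same way.

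The main obstacle is exactly the bookkeeping in the small-output-frequency region $\la\xi\ra\ll\la\xi_*\ra$: the commutator factor is not small there, and one has to balance simultaneously (i) the decay of the kinetic-factor difference, (ii) the extra weight $M^\delta(\xi-\xi_*)$ gained from working with $M^\delta g$ instead of $g$, and (iii) enough residual decay in $\la\xi_*\ra$ to run Cauchy--Schwarz against $\hat f\in L^2$, and to verify that these interlock precisely under $\nu/2\le N_0\le1$, $\nu-1\le\nu'<\nu$, $\gamma+\nu'>-3/2$. Once this is pinned down, the rest is a line-by-line copy of Lemmas~\ref{A-1} and \ref{A-2} with $m$ replaced by the appropriate $\pm\nu'/2$-type exponents and the factor $M^\delta(\xi)-M^\delta(\xi-\xi_*)$ absorbed as above.
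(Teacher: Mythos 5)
Your proposal follows the same Fourier-side route as the paper's proof: the representation from the Appendix of \cite{advw} producing the factor $M^\delta(\xi)-M^\delta(\xi-\xi_*)$, the split into $|\xi^-|\le\frac12\la\xi_*\ra$ and its complement, the second-order Taylor expansion of $\hat\Phi_c$ giving the four pieces $\cA_{1,1},\cA_{1,2},\cA_{2,1},\cA_{2,2}$, the extraction of $M^\delta(\xi-\xi_*)$ as the weight to be paired with $\hat g$, and the balancing of the commutator factor against the kinetic-kernel decay \eqref{later-use3}, \eqref{later-use4} before Cauchy--Schwarz/Schur. Your identification of where $\nu'\ge\nu-1$ enters (the region $\la\xi_*\ra\lesssim|\xi|$, where the commutator gains one power of $\la\xi_*\ra/\la\xi\ra$ that must be traded against the kernel's $(\la\xi\ra/\la\xi_*\ra)^{\nu-1}$) and where $\gamma+\nu'>-3/2$ enters (convergence of $\int \la\xi_*\ra^{-2(3+\gamma+\nu')}|\hat f(\xi_*)|^2d\xi_*$) agrees with the paper's treatment. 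The only difference in presentation is that the paper cites the pointwise bound \eqref{simple-form} from Lemma 3.1 of \cite{AMUXY-Kyoto} (phrased via a trichotomy on $\la\xi_*\ra$ versus $|\xi|$, with the refined middle-region term $M^\delta(\xi_*)(1+\delta\la\xi-\xi_*\ra)^{N_0}$) while you propose deriving an equivalent bound (phrased via $\la\xi\ra$ versus $\la\xi-\xi_*\ra$) from the mean value theorem and $N_0\le1$; both versions suffice for the $\cA_{2,1,p}$ piece once the change of variables $u=\xi_*-\xi^-$ and the localization $\la\xi-\xi_*\ra\lesssim\la\xi_*\ra$ are taken into account, since the $\xi$-integration in $B_1$ is then over a ball of radius $\sim\la u\ra$.
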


\begin{proof}
We shall follow similar arguments {}of Proposition 3.4 from \cite{AMUXY-Kyoto}. By using the formula {}from
the Appendix of \cite{advw}, we have
\begin{align*}
( Q_c(f, g), h ) =& \iiint_{ \RR^3\times\RR^3\times\SS^2} b \Big({\frac{\xi}{ | \xi |}} \cdot \sigma \Big) [ \hat\Phi_c (\xi_* - \xi^- ) - \hat \Phi_c (\xi_* ) ] \\
& \qquad \qquad  \times \hat f (\xi_* ) \hat g(\xi - \xi_* ) \overline{{\hat h} (\xi )} d\xi d\xi_*d\sigma \,,
\end{align*}
where $\xi^-=\frac 1 2 (\xi-|\xi|\sigma)$.
Therefore
\begin{align*}
 \Big (M^\delta(D) \, Q_c (f,  g)& -  Q_c (f,  M^\delta(D)\, g)  , h \Big ) \\
= &\iiint b \Big({\frac{\xi}{ | \xi |}} \cdot \sigma \Big) [ \hat\Phi_c (\xi_* - \xi^- ) - \hat \Phi_c (\xi_* ) ] \\
&\quad \times
\Big(M^\delta (\xi) - M^\delta (\xi-\xi_*)\Big)
\hat f (\xi_* ) \hat g(\xi - \xi_* ) \overline{{\hat h} (\xi )} d\xi d\xi_*d\sigma \\
= & \iiint_{ | \xi^- | \leq { \frac 1 2} \la \xi_*\ra }  \cdots\,\, d\xi d\xi_*d\sigma
+ \iiint_{ | \xi^- | \geq {\frac 1 2} \la \xi_*\ra } \cdots\,\, d\xi d\xi_*d\sigma \,\\
=& \cA_1(f,g,h)  +  \cA_2(f,g,h) \,\,.
\end{align*}
Then, we write $\cA_2(f,g,h)$ as
\begin{align*}
\cA_2 &=  \iiint b \Big(\frac{\xi}{ | \xi |} \cdot \sigma \Big) {\bf 1}_{ | \xi^- | \ge {\frac1 2}\la \xi_*\ra }
\hat\Phi_c (\xi_* - \xi^- ) \cdots 
d\xi d\xi_*d\sigma \\
&\quad - \iiint b \Big(\frac{\xi}{ | \xi |} \cdot
 \sigma \Big){\bf 1}_{ | \xi^- | \ge {\frac12}\la \xi_*\ra } \hat \Phi_c (\xi_* )
\cdots 
 d\xi d\xi_*d\sigma \\
&= \cA_{2,1}(f,g,h) - \cA_{2,2}(f,g,h)\,.
\end{align*}
On the other hand,  for $\cA_1$ we use the Taylor expansion of $\hat \Phi_c$ of
order $2$ to have
$$
\cA_1 = \cA_{1,1} (f,g,h) +\cA_{1,2} (f,g,h),
$$
where
\begin{align*}
\cA_{1,1} &= \iiint b\,\, \xi^-\cdot (\nabla\hat\Phi_c)( \xi_*)
{\bf 1}_{ | \xi^- | \leq \frac{1}{2} \la \xi_*\ra }
\Big(M^\delta (\xi) - M^\delta (\xi-\xi_*)\Big)\\
& \qquad \times
\hat f (\xi_* ) \hat g(\xi - \xi_* ) \bar{\hat h} (\xi ) d\xi d\xi_*d\sigma,
\end{align*}
and $\cA_{1,2} (f,g,h)$ is the remaining term corresponding to the second order term in the Taylor expansion of $\hat\Phi_c$.

We first consider $\cA_{1,1}$.
By writing
\[
\xi^- = \frac{|\xi|}{2}\left(\Big(\frac{\xi}{|\xi|}\cdot \sigma\Big)\frac{\xi }{|\xi|}-\sigma\right)
+ \left(1- \Big(\frac{\xi}{|\xi|}\cdot \sigma\Big)\right)\frac{\xi}{2},
\]
we see that the integral corresponding to the first term on the right hand side vanishes because of the symmetry
on $\SS^2$.
Hence, we have
\[
\cA_{1,1}= \iint_{\RR^6} K(\xi, \xi_*) \Big(M^\delta (\xi) - M^\delta (\xi-\xi_*)\Big)
\hat f (\xi_* ) \hat g(\xi - \xi_* ) \bar{\hat h} (\xi ) d\xi d\xi_* \,,
\]
where
\[
K(\xi,\xi_*) = \int_{\SS^2}
 b \Big({ \frac{\xi}{ | \xi |}} \cdot \sigma \Big)
\left(1- \Big(\frac{\xi}{|\xi|}\cdot \sigma\Big)\right)\frac{\xi}{2}\cdot
(\nabla\hat\Phi_c)( \xi_*)
{\bf 1}_{ | \xi^- | \leq {\frac1 2} \la \xi_*\ra } d \sigma \,.
\]
Note that $| \nabla \hat \Phi_c (\xi_*) | \lesssim \frac{1}{\la
\xi_*\ra^{3+\gamma +1}}$, {}from the Appendix of \cite{AMUXY2}. If $\sqrt 2
|\xi| \leq \la \xi_* \ra$, then $\sin(\theta/2) $ $| \xi|= |\xi^-| \leq \la \xi_* \ra/2$
because $0 \leq \theta \leq \pi/2$, and we have
\begin{align*}
|K(\xi,\xi_*)| &\lesssim \int_0^{\pi/2} \theta^{1-\nu} d \theta\frac{ \la \xi\ra}{\la \xi_*\ra^{3+\gamma +1}}
\lesssim \frac{1  }{\la \xi_*\ra^{3+\gamma}}\left(
\frac{\la \xi \ra}{\la \xi_*\ra}\right) \,.
\end{align*}
On the other hand, if $\sqrt 2 |\xi| \geq \la \xi_* \ra$, then
\begin{align*}|K(\xi,\xi_*)| &\lesssim \int_0^{\pi\la \xi_*\ra /(2|\xi|)} \theta^{1-\nu} d \theta\frac{ \la \xi\ra}{\la \xi_*\ra^{3+\gamma +1}}
\lesssim \frac{1  }{\la \xi_*\ra^{3+\gamma}}\left(
\frac{\la \xi \ra}{\la \xi_*\ra}\right)^{\nu-1}\,.
\end{align*}
Hence we obtain
\begin{align}\label{later-use3}
|K(\xi,\xi_*)| &\lesssim \frac{1  }{\la \xi_*\ra^{3+\gamma}}\left\{
\left( \frac{\la \xi \ra}{\la \xi_*\ra}\right){\bf 1}_{ \la \xi_*
\ra \geq
\sqrt 2 |\xi| } \right.\notag\\
&\qquad\left.+{\bf 1}_{ \sqrt 2 |\xi| \geq  \la \xi_* \ra \geq
|\xi|/2} + \left( \frac{\la \xi \ra}{\la \xi_*\ra}\right)^{\nu-1}
{\bf 1}_{ |\xi|/2 \ge \la \xi_* \ra }\right\}\,. 
\end{align}
Similarly to $\cA_{1,1}$,
we  can also write
\[
\cA_{1,2}= \iint_{\RR^6} \tilde K(\xi, \xi_*) \Big(M^\delta (\xi) - M^\delta (\xi-\xi_*)\Big)
\hat f (\xi_* ) \hat g(\xi - \xi_* ) \bar{\hat h} (\xi ) d\xi d\xi_* \,,
\]
where
\[
\tilde K(\xi,\xi_*) = \int_{\SS^2}
 b \Big({\frac{\xi}{ | \xi |}} \cdot \sigma \Big)
\int^1_0(1-\tau)  (\nabla^2\hat \Phi_c) (\xi_* -\tau\xi^- ) \cdot\xi^- \cdot\xi^-
{\bf 1}_{ | \xi^- | \leq {\frac 12} \la \xi_*\ra } d\tau  d \sigma\,.
\]
Again {}from the Appendix of \cite{AMUXY2}, we have
$$
| (\nabla^2\hat \Phi_c) (\xi_* -\tau\xi^- ) | \lesssim {\frac{ 1}{\la  \xi_* -\tau \xi^-\ra^{3+\gamma +2}}}
\lesssim
 {\frac{1}{\la \xi_*\ra^{3+\gamma +2}}},
$$
because $|\xi^-| \leq \la \xi_*\ra/2$, which leads to
\begin{align}\label{later-use4}\notag
|\tilde K(\xi,\xi_*)| &\lesssim \frac{1  }{\la
\xi_*\ra^{3+\gamma}}\left\{ \left( \frac{\la \xi \ra}{\la
\xi_*\ra}\right)^2 {\bf 1}_{ \la \xi_* \ra \geq \sqrt 2 |\xi|
}\right.\\
&\qquad\left. +{\bf 1}_{ \sqrt 2 |\xi| \geq  \la \xi_* \ra \geq
|\xi|/2} + \left( \frac{\la \xi \ra}{\la \xi_*\ra}\right)^{\nu} {\bf
1}_{ |\xi|/2 \ge \la \xi_* \ra }\right\}\,. 
\end{align}
We employ (3.4)
of Lemma 3.1 in \cite{AMUXY-Kyoto} with $p=1$ and $\lambda =0$, that is, 
\begin{align}\label{simple-form}
&\left|M^\delta (\xi) - M^\delta (\xi-\xi_*)\right|\leq C M^\delta (\xi-\xi_*)\left\{ \Big(
\frac{\la \xi_*\ra}{\la \xi \ra }\Big) {\bf 1}_{\la \xi_* \ra \ge \sqrt 2 |\xi|} \right.\notag\\
&+ \left( {M^\delta (\xi_*)\big(1 + \delta \la \xi - \xi_*\ra\big)^{N_0}}
+1 \right) {\bf 1}_{   \sqrt 2 |\xi| >\la \xi_* \ra \ge   |\xi|/2  }
+\left.  \frac{\la \xi_*\ra}{\la \xi \ra }{\bf 1}_{|\xi|/2>\la \xi_* \ra }\right\}\,.  
\end{align}
It follows from 
\eqref{later-use3} and \eqref{later-use4} that
we have 
$$
|\cA_{1}|\lesssim  |\cA_{1,1}| + |\cA_{1,2}|
\lesssim A_1+ A_2 + A_3,
$$
with
\begin{equation}\label{A_1}
A_1=\iint_{\RR^6}
\left |\frac{\hat f(\xi_*)}{\la \xi_*\ra^{3+\gamma}}\right | 
\left |M^\delta (\xi-\xi_*)\hat g(\xi -\xi_*)\right | |\hat h(\xi)|
{\bf 1}_{ \la \xi_* \ra \geq \sqrt 2 |\xi| }d\xi_* d\xi\, ,
\end{equation}
and
\begin{align*}
A_2=&
\iint_{\RR^6}
\left |\frac{\hat f(\xi_*)}{\la \xi_*\ra^{3+\gamma}}\right | \left |M^\delta (\xi-\xi_*)\hat g(\xi -\xi_*)\right | |\hat h(\xi)|\\
&\times\left( {M^\delta (\xi_*)\big(1 + (\delta \la \xi - \xi_*\ra)^{N_0}\big)}
+1 \right) {\bf 1}_{   \sqrt 2 |\xi| >\la \xi_* \ra \ge   |\xi|/2  }d\xi_*d\xi\, ;\\
A_3=&\iint_{\RR^6}
\left |\frac{\hat f(\xi_*)}{\la \xi_*\ra^{3+\gamma}}\right | \left |M^\delta (\xi-\xi_*)\hat g(\xi -\xi_*)\right | |\hat h(\xi)|
\left(  \frac{\la \xi\ra}{\la \xi_* \ra } \right)^{\nu-1}  {\bf 1}_{|\xi|/2>\la \xi_* \ra } d\xi_* d\xi\, .
\end{align*}
Setting $\hat G(\xi) = \la \xi \ra^{\nu'/2} M^\delta (\xi) \hat g(\xi)$ and $\hat H(\xi) = \la \xi \ra^{\nu'/2}\hat h(\xi)$,  we get
\begin{align*}
& A_1 \le \int_{\RR^3} \frac{|\hat H(\xi)|}{\la \xi \ra^{3/2 + \varepsilon}}
\left(\int_{\RR^3} \left( \frac{\la \xi\ra}{\la \xi_* \ra} \right)^{3/2 + \varepsilon-\nu'/2} {\bf 1}_{ \la \xi_* \ra \geq \sqrt 2 |\xi| }
 \frac{|\hat f(\xi_*) |\, |\hat G ( \xi -\xi_*)|}{\la \xi_* \ra^{3/2+\gamma+\nu' -\varepsilon}
}
d\xi_* \right) d\xi \\
& \qquad
\lesssim \|h\|_{H^{\nu'/2}}
\|f\|_{L^2} \|M^\delta g\|_{H^{\nu'/2}} \,, \notag
\end{align*}
because  $3/2 + \varepsilon-\nu'/2\ge 0$ and $3/2+\gamma+\nu' -\varepsilon \ge 0$ for a sufficiently small $\varepsilon >0$. 
Here we have used the fact that $\la \xi_* \ra \sim \la \xi-\xi_*\ra$ if $\la \xi_* \ra \geq \sqrt 2 |\xi|$.
Noticing the third formula of \eqref{equivalence-relation}, we get
\begin{align*}
&\left| A_2 \right|^2 \lesssim
\left \{\int_{\RR^3}\frac{|\hat f(\xi_*)|^2 d\xi_*}{
\la \xi_*\ra^{6+2\gamma +\nu'}}
\int_{\la \xi -\xi_*\ra \lesssim \la \xi_* \ra} \left(
  \frac{\la \xi_* \ra^{-2N_0}}{\la \xi- \xi_* \ra^{\nu'-2N_0}}        +
\frac{1}{\la \xi -\xi_* \ra^{\nu'}} \right) d\xi \right\} \notag \\
&\quad \qquad \times \left( \iint_{\RR^6}|\hat G ( \xi -\xi_*)|^2 |\hat H(\xi)|^2 d\xi d\xi_* \right) \,.\notag\\
&\quad \lesssim
\int_{\RR^3}\frac{|\hat f(\xi_*)|^2} {
\la \xi_*\ra^{3+2(\gamma +\nu')}}
d\xi_* \|M_\lambda^\delta g\|_{H^{\nu'/2}}^2 \|h\|_{H^{\nu'/2}}^2 \lesssim
\|f\|_{L^2}^2 \|M_\lambda^\delta g\|_{H^{\nu'/2}}^2 \|h\|_{H^{\nu'/2}}^2\,,
\end{align*}
because $3+2(\gamma +\nu')>0$.
Since  $\nu' \ge \nu-1$ and $6 +2(\gamma + \nu')>3$, we have
\begin{align*}
&\left| A_3 \right|^2 \lesssim 
\left(\int_{\RR^3} |\hat f(\xi)|^2{d\xi_*}
\int_{\RR^3} |\hat H(\xi)|^2d\xi\right)\\
& \times \left(\int_{\RR^3}  \frac{d\xi_*}{ \la \xi_* \ra^{6+ 2(\gamma +\nu')}}
\int_{\RR^3} \left(\frac{\la \xi_*\ra}{\la \xi \ra}\right)^{2\{2s'-(\nu-1)\}}
{\bf 1}_{ |\xi|/2 \ge \la \xi_* \ra } |\hat G ( \xi -\xi_*)|^2 d\xi
\right)\notag \\
& \qquad
\lesssim
\|f\|_{L^2}^2 \|M_\lambda^\delta g\|_{H^{\nu'/2}}^2 \|h\|_{H^{\nu'/2}}^2\, . \notag
\end{align*}
The above three estimates yield the desired estimate for $\cA_1(f,g,h)$.

Next consider $\cA_2(f,g,h) = \cA_{2,1}(f,g,h) -
\cA_{2,2}(f,g,h)$. Since $\theta \in [0,\pi/2]$ and $|\xi^-|= |\xi| \sin(\theta/2)$ $\geq$
$\la \xi_*\ra/2$, we have $\sqrt 2 |\xi| \geq
\la \xi_*\ra$. Write
\[
\cA_{2,j}= \iint_{\RR^6} K_j(\xi, \xi_*) \Big(M_\lambda^\delta (\xi) - M_\lambda^\delta (\xi-\xi_*)\Big)
\hat f (\xi_* ) \hat g(\xi - \xi_* ) \bar{\hat h} (\xi ) d\xi d\xi_* \,.
\]
Then we have
\begin{align*}
&|K_2(\xi, \xi_*)| = \left|\int  b \Big({\frac{\xi}{ | \xi |}} \cdot \sigma \Big)\hat \Phi_c(\xi_*) {\bf 1}_{ | \xi^- | \ge {\frac12}\la \xi_*\ra } d\sigma\right|\\
& \lesssim  {\frac{1}{\la \xi_* \ra^{3+\gamma }}} \frac{\la  \xi\ra^{\nu} }{\la \xi_*\ra^{\nu}}{\bf 1}_{\sqrt 2 |\xi| \geq \la \xi_* \ra} \notag \\
 &    \lesssim
\frac{1  }{\la \xi_*\ra^{3+\gamma}}\left\{
{\bf 1}_{ \sqrt 2 |\xi| \geq  \la \xi_* \ra \geq |\xi|/2}
+
\left(
\frac{\la \xi \ra}{\la \xi_*\ra}\right)^{\nu}
{\bf 1}_{ |\xi|/2 \ge \la \xi_* \ra }\right\}  \notag \,,
\end{align*}
which shows the desired estimate for $\cA_{2,2}$, by  exactly the same way as
the estimation on $A_2$ and $A_3$.

As for $\cA_{2,1}$, it suffices to work under the condition
 $|\xi_* \cdot \xi^-| \ge \frac1 2 |\xi^-|^2$.
In fact, on the complement of this
set, we have
 $|\xi_* -\xi^-| > | \xi_*|$, and $\hat \Phi_c(\xi_*-\xi^-)$ is
the same as $\hat \Phi_c(\xi_*)$.
Therefore, we consider $\cA_{2,1,p}$ which is defined by replacing $K_1(\xi, \xi_*)$ by
\[
K_{1,p}(\xi,\xi_*) = \int_{\SS^2}
 b \Big({\frac{\xi}{ | \xi |}} \cdot \sigma \Big)
\hat \Phi_c ( \xi_*-\xi^-)
{\bf 1}_{ | \xi^- | \geq {\frac 12} \la \xi_*\ra }{\bf 1}_{| \xi_* \,\cdot\,\xi^-| \ge {\frac1 2} | \xi^-|^2} d \sigma \,.
\]
By writing
\[
{\bf 1}= {\bf 1}_{\la \xi_* \ra \geq |\xi|/2} {\bf 1}_{\la\xi-\xi_* \ra \leq{2}\la \xi_* - \xi^- \ra}
+  {\bf 1}_{\la \xi_* \ra \geq |\xi|/2} {\bf 1}_{\la\xi-\xi_* \ra > {2}\la \xi_* - \xi^-\ra}
+  {\bf 1}_{\la \xi_* \ra < |\xi|/2},
\]
we decompose respectively
\begin{align*}
\cA_{2,1,p}
=
B_1+ B_2 +B_3\,.
\end{align*}
On the sets corresponding to the above integrals, we have $\la \xi_* -\xi^- \ra \lesssim \,
\la \xi_* \ra$, because $| \xi^- | \lesssim | \xi_*|$
that follows {}from  $| \xi^-|^2 \le 2 | \xi_* \cdot\xi ^-| \lesssim |\xi^-|\, | \xi_*|$.
Furthermore, on the sets for $B_1$ and $B_2$  we have $\la \xi \ra \sim \la \xi_* \ra$,
so that
$\la \xi_* -\xi^- \ra \lesssim \ \la \xi \ra$ and $b\,\, {\bf 1}_{ | \xi^- | \ge {\frac12} \la \xi_*\ra } {\bf 1}_{\la \xi_* \ra \geq |\xi|/2}$
is bounded.
Putting again
 $\hat G(\xi) = \la \xi \ra^{\nu'/2} M^\delta (\xi) \hat g(\xi)$ and $\hat H(\xi) = \la \xi \ra^{\nu'/2}\hat h(\xi)$,
by means of \eqref{simple-form}  we have
\begin{align*}
|B_1| ^2
\lesssim& \left[\iiint
\left
|\frac{\hat \Phi_c (\xi_* - \xi^-)}{\la \xi_* - \xi^-\ra^{s'}} \right|^2  | \hat f (\xi_* )|^2 b\,\, {\bf 1}_{ | \xi^- | \ge {\frac1 2} \la \xi_*\ra } {\bf 1}_{\la \xi_* \ra \geq |\xi|/2} \right.\\
&\quad \times
\left\{M^\delta (\xi_*)^2
\left( \frac{{\bf 1}_{ \la \xi -\xi *\ra \lesssim \la \xi_{ *}-\xi^-  \ra}}{
\la \xi-\xi_*\ra^{\nu' } }   +
 \frac{ \delta^{2N_0} {\bf 1}_{ \la \xi -\xi *\ra \lesssim \la \xi_{ *}-\xi^-  \ra}}{
\la \xi-\xi_*\ra^{\nu'-2N_0 } } \right) \right.\\
&\quad \left. + \left.
\frac{{\bf 1}_{ \la \xi -\xi *\ra \lesssim \la \xi_{ *} -\xi^- \ra}}{
\la \xi-\xi_*\ra^{\nu'} } \right \}
d\xi d\xi_* d \sigma \right] \left(\iiint  |    \hat G(\xi - \xi_* )|^2 |{\hat H} (\xi ) |^2 d\sigma d\xi d\xi_*\right) \,.
\end{align*}
Putting $u = \xi_* - \xi^-$, we have 
 $\la u \ra \lesssim \la \xi_*\ra$,
and 
$
M^\delta (\xi_*)^2 \lesssim {(1+ \delta \la u \ra)^{-2N_0}}$.
Therefore,
\begin{align*}
&|B_1 | ^2
\lesssim  \int |\hat f(\xi_*)|^2 \left\{ \sup_u 
{\la u \ra^{-( 6 +2\gamma+\nu')}}   \int_{\la \xi^+ -u \ra \leq \la u \ra} 
 \Big ( \frac{ \delta^{2N_0}{(1+ \delta \la u
\ra)^{-2N_0}} }{
\la \xi^+-u\ra^{\nu'-2N_0 } } + \right. \\
& \qquad \qquad \left. + \frac{1}{ \la
\xi^+-u\ra^{\nu'} } \Big) d\xi^+ \right \}d\xi_* \,\,
 \|M^\delta(D)g\|^2_{H^{\nu'/2}} \|h\|_{H^{\nu'/2}}^2\\
& \qquad \lesssim \|f\|^2_{L^2}  \|M^\delta (D)g\|^2_{H^{\nu'/2}}
\|h\|_{H^{\nu'/2}}^2 \sup_u \frac{1}{\la u \ra^{ 3 +2(\gamma+ \nu')} }\,.
\end{align*}
Here we have used the change of variables
$\xi \rightarrow \xi^+$ whose Jacobian is
\begin{align*}
&\Big|\frac{\partial \xi^+}{\partial \xi} \Big|=\frac{ \Big|I+ \frac{\xi}{|\xi|}\otimes
\sigma\Big|} {8}\\
& =\frac{|1+ \frac{\xi}{|\xi|}\cdot\sigma|}{8}=\frac{\cos^2
(\theta/2)}{4}\ge \frac{1}{8}, \qquad \theta\in [0,\frac{\pi}{2}].  \notag
\end{align*}
As for $B_2$, we first note that, on the set of the  integration,
$\xi^+ = \xi-\xi_* +u$ implies
\[ \frac{\la \xi -\xi_*\ra}{2} \le  \la \xi -\xi_*  \ra - |u| \le \la \xi^+ \ra
\le \la \xi - \xi_*\ra +|u| \lesssim   \la \xi - \xi_*\ra \,,\]
so that
\[(
\enskip M^\delta(\xi) \sim \enskip ) \enskip M^\delta(\xi^+) \sim
M^\delta (\xi-\xi_*)\,,
\]
 and hence we have by the Cauchy-Schwarz inequality
\begin{align*}
|B_2| ^2  \lesssim&  \iiint |\hat f(\xi_*)|^2\, |\hat G(\xi -\xi_*)|^2 d \sigma d\xi d\xi_* \\
& \qquad \times \iiint \frac{
|\hat \Phi_c (\xi_* - \xi^-) |^2  }
{\la \xi_* -\xi^- \ra^{2\nu'}} |{ \hat H} (\xi ) |^2 d\sigma d\xi d\xi_*\\
\lesssim &
\|f\|^2_{L^2}  \|M^\delta(D)g\|^2_{H^{\nu'/2}} \|h\|_{H^{\nu'/2}}^2\,,
\end{align*}
because   $6 +2(\gamma + \nu')>3$.
On the  set of the  integration for  $B_3$ we recall  $\la \xi \ra \sim \la \xi - \xi_*\ra$ and
\[
|M^\delta(\xi) -M^\delta(\xi-\xi_*) |\lesssim \frac{\la \xi_*\ra}{\la \xi \ra} M^\delta(\xi -\xi_*)\,,
\]
so that
\begin{align*}
|B_3 | ^2  \lesssim& \iiint b\,\, {\bf 1}_{ | \xi^- | \ge {\frac12} \la \xi_*\ra }
\left(\frac{\la \xi_* \ra}{\la \xi \ra}\right)^{\nu}
|\hat f(\xi_*)|^2 |\hat G(\xi -\xi_*)|^2 d \sigma d\xi d\xi_* \\
& \times \iiint b\,\, {\bf 1}_{ | \xi^- | \ge {\frac1 2} \la \xi_*\ra }
\left(\frac{\la \xi_* \ra}{\la \xi \ra}\right)^{2-\nu}
\frac{
|\hat \Phi_c (\xi_* - \xi^-) |^2}
{\la \xi\ra^{2\nu'} } |{\hat H} (\xi ) |^2 d\sigma d\xi d\xi_*\,.
\end{align*}
We use the change of variables  $\xi_*  \rightarrow u= \xi_* -\xi^-$.
Note that $| \xi ^-| \ge {\frac1 2} \la u +\xi^-\ra $ implies  $|\xi^-| \geq \la u\ra/\sqrt {10}$,
and that
\[
\la \xi_* \ra \lesssim \la \xi_* - \xi^- \ra + |\xi| \sin \theta/2\,,
\]
which yields
\[
\left(\frac{\la \xi_* \ra}{\la \xi \ra}\right)^{2-\nu}
\lesssim 
\left(\frac{\la u \ra}{\la \xi \ra}\right)^{2-\nu} + \theta^{2-\nu}.
\]
Then we have
\begin{align*}
&
\iint  b\,\, {\bf 1}_{ | \xi^- | \ge {\frac12} \la \xi_*\ra }  
\left(\frac{\la \xi_* \ra}{\la \xi \ra}\right)^{2-\nu}
\frac{
|\hat \Phi_c (\xi_* - \xi^-) |^2}
{\la \xi\ra^{2\nu'}} d\sigma d\xi_*
\lesssim
\int
\frac{{\bf 1}_{\la u\ra \lesssim |\xi|}}{\la u \ra^{6+2(\gamma +\nu')}}\left( \frac {\la u \ra}
{\la \xi \ra}\right)^{2\nu'} \\
&\qquad \qquad \times
\Big( \int  b\, {\bf 1}_{ | \xi^- |  \gtrsim \la u \ra } \left(\frac{\la u \ra}
{\la \xi \ra}\right)^{2-\nu} d\sigma +
\int  b \theta^{2-\nu}  {\bf 1}_{ | \xi^- |  \gtrsim \la u \ra } d\sigma
  \Big)du\\
&\qquad \qquad \lesssim \int \frac{du} {\la u \ra^{6+2(\gamma +\nu')}} < \infty\,,
\end{align*}
because
\begin{align*}
\int  b \theta^{2-\nu}  {\bf 1}_{ | \xi^- |  \gtrsim \la u \ra } d\sigma
\left \{
\begin{array}{ll}
\lesssim \left( \frac {\la u \ra}
{\la \xi \ra}\right)^{2-2\nu} \enskip &\mbox{if $\nu >1$}\\
\lesssim \log \frac {\la \xi\ra} {\la u \ra}&\mbox{if $\nu = 1$}\\
< \infty \enskip &\mbox{if $\nu <1$}.
\end{array}
\right.
\end{align*}
Thus we have the same bound for $B_3$. 
The proof of the proposition is then completed.
\end{proof}
Let us recall Proposition 2.9 {}from \cite{AMUXY2010}.

\begin{prop}\label{prop2.9_amuxy3} 
Let $M(\xi)$ be a positive symbol in $S^{0}_ {1,0}$ in the form of
$M(\xi) = \tilde M(|\xi|^2)$.  Assume that there exist
constants $c, C>0$ such that
 for any $s, \tau>0$
$$
c^{-1}\leq \frac{s}{\tau}\leq c \,\,\,\,\,\,\mbox{implies}
\,\,\,\,\,\,\,\,C^{-1}\leq \frac{\tilde M(s)}{ \tilde M(\tau)}\leq C,
$$
and  $M(\xi)$ satisfies
$$
|M^{(\alpha)}(\xi)| = |\partial_\xi^\alpha M(\xi)| \leq C_{\alpha}
M(\xi) \la \xi \ra^{-|\alpha|}\, ,
$$
for any $\alpha\in\NN^3$.  Then,  if $0<\nu<1$, for any $N >0$ there exists a $C_N >0$ such that
\begin{align}\label{10.8-2}
&\left|( M(D_v) Q_{\bar c}(f,\, g)-  Q_{\bar c}(f,\, M(D_v) g) ,\,\, h)_{L^2}\right | \hskip4cm
\notag \\
&\qquad \qquad \leq C_N \|f\|_{L^1_{\gamma^+}} \Big(\|M(D_v)\,
g\|_{L^2_{\gamma^+}} + \| g \|_{H^{-N}_{\gamma^+}} \Big)
\|h\|_{L^2}.
\end{align}
Furthermore, if $1 < \nu <2$,  for any $N>0$ and any $\varepsilon >0$ , there exists
a  $C_{N, \varepsilon}>0 $ such that
\begin{align}\label{10.8-3}
\left |(M(D_v) Q_{\bar c}(f,\, g)-Q_{\bar c}(f,\, M(D_v) g),\,\, h)_{L^2} \right |  \hskip4cm \notag \\
\leq C_{N, \varepsilon} \|f\|_{L^1_{(\nu+ \gamma-1)^+}} \Big(
\|M(D_v) g\|_{H^{\nu-1+\varepsilon} _{(\nu+ \gamma-1)^+}}  +
\|g\|_{H^{-N}_{\gamma^+}}\Big)
 \|h\|_{L^2}\, . 
\end{align}
When $\nu = 1$ we have the same  estimate  as (\ref{10.8-3}) with
$(\nu+ \gamma-1)$ replaced by $(\gamma+ \kappa)$ for any small
$\kappa >0$.
\end{prop}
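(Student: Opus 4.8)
This is Proposition 2.9 of \cite{AMUXY2010}, so I would not reproduce the proof in full; I only indicate the route, which runs in close parallel to the proof of Proposition \ref{IV-coro-2.15} above, with $Q_c$ replaced by $Q_{\bar c}$ and with $f$ estimated in a weighted $L^1$ rather than $L^2$ norm.

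The plan is to pass to the Fourier side via the formula from the Appendix of \cite{advw} and write
\begin{align*}
&\big( M(D_v) Q_{\bar c}(f,g) - Q_{\bar c}(f, M(D_v)g), h \big)_{L^2}\\
&\quad = \iiint b\Big(\tfrac{\xi}{|\xi|}\cdot\sigma\Big)\big[\hat\Phi_{\bar c}(\xi_* - \xi^-) - \hat\Phi_{\bar c}(\xi_*)\big]\big(M(\xi) - M(\xi - \xi_*)\big)\,\hat f(\xi_*)\,\hat g(\xi - \xi_*)\,\overline{\hat h(\xi)}\,d\xi\,d\xi_*\,d\sigma,
\end{align*}
with $\xi^- = \tfrac12(\xi - |\xi|\sigma)$. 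The commutator produces the extra factor $M(\xi) - M(\xi - \xi_*)$, that is, the mollifier symbol is sampled at two frequencies differing by the frequency $\xi_*$ that is transferred onto $f$; since $f$ appears only through $\hat f(\xi_*)$, this is precisely the frequency one can afford to lose, and it is absorbed into a weighted $L^1$ norm of $f$ once $\hat f$ is pulled out in $L^\infty$.

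I would then repeat, almost verbatim, the analysis of $A_{1,1}$, $A_{1,2}$ and $\cA_{2,j}$ from the proof of Proposition \ref{IV-coro-2.15}. First, bound $|M(\xi) - M(\xi - \xi_*)|$ by $M$ (evaluated at $\xi$ or at $\xi - \xi_*$) times a gain $\la\xi_*\ra/\la\xi\ra$, using the slowly-varying hypothesis together with $|\nabla M|\lesssim M\la\cdot\ra^{-1}$ --- this is the inequality (3.4) of \cite{AMUXY-Kyoto} already invoked in \eqref{simple-form}. Second, use the bounds on $\hat\Phi_{\bar c}$ and its derivatives from the Appendix of \cite{AMUXY2}; since $\mathrm{supp}\,\Phi_{\bar c}\subset\{|z|\ge 1/2\}$ the kinetic factor carries no singularity, so one expands $\hat\Phi_{\bar c}(\xi_* - \xi^-) - \hat\Phi_{\bar c}(\xi_*)$ by Taylor's formula in $\xi^-$ --- to first order for $0 < \nu < 1$, to second order for $1 \le \nu < 2$ --- and kills the odd term by the symmetry of the $\sigma$-integration on $\SS^2$. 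The resulting kernel $K(\xi,\xi_*)$ is integrable in $\xi_*$ against the relevant $\la\xi_*\ra$-weight, $\hat f(\xi_*)$ factors out, and the remaining bilinear form in $\hat g,\hat h$ is handled by the Cauchy--Schwarz inequality after splitting into the frequency regimes $\la\xi_*\ra\gtrsim|\xi|$, $\la\xi_*\ra\sim|\xi|$, $\la\xi_*\ra\ll|\xi|$, exactly as for $B_1$, $B_2$, $B_3$ above.

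The hard part is the borderline $\nu = 1$ and its two-sided neighbourhood. For $0 < \nu < 1$ a single factor $|\xi^-|\sim|\xi|\sin(\theta/2)$ from the first-order Taylor term makes $\int_0^{\pi/2}\theta^{-2-\nu}\min(|\xi|\theta,1)\,d\theta$ converge, so no regularity on $g$ is needed and one gets \eqref{10.8-2}. For $\nu > 1$ this integral diverges; one must use the second-order term, costing $|\xi^-|^2\sim|\xi|^2\theta^2$, and even then $\int_0^{\pi/2}\theta^{-\nu}\,d\theta$ is only borderline, forcing the transfer of a derivative $\nu-1+\varepsilon$ onto $M(D_v)g$, with $\varepsilon$ absorbing the logarithmic threshold --- this is \eqref{10.8-3}. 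At $\nu = 1$ the same logarithm reappears at the level of the velocity weight, which is exactly why $(\nu+\gamma-1)$ must there be replaced by $(\gamma+\kappa)$. Keeping track of these thresholds simultaneously with the low-frequency homogeneity $|\xi|^{-3-\gamma}$ of $\hat\Phi_{\bar c}$ is the delicate bookkeeping; the rest is mechanical.
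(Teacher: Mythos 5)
The paper does not prove this statement; it merely records Proposition~2.9 of \cite{AMUXY2010}, so a sketch is a fair response, but the route you propose is not the one in \cite{AMUXY2010} and it breaks down. You transplant the $Q_c$ argument of Proposition~\ref{IV-coro-2.15} to $Q_{\bar c}$: pass to the Fourier side via \cite{advw}, Taylor-expand $\hat\Phi_{\bar c}(\xi_*-\xi^-)-\hat\Phi_{\bar c}(\xi_*)$, and invoke ``the bounds on $\hat\Phi_{\bar c}$ and its derivatives from the Appendix of \cite{AMUXY2}.'' That appendix gives bounds only on $\hat\Phi_c$, namely $|\partial_\xi^\alpha\hat\Phi_c(\xi)|\lesssim\la\xi\ra^{-(3+\gamma+|\alpha|)}$, and these rest on the compact support of $\Phi_c$. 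No analogue exists for $\hat\Phi_{\bar c}$, and none can: $\Phi_{\bar c}(|z|)=|z|^\gamma\bigl(1-\varphi_0(|z|)\bigr)$ vanishes near $z=0$ but behaves like $|z|^\gamma$ at infinity, so it is never in $L^1$ for $\gamma>-3$, and $\hat\Phi_{\bar c}$ is singular at $\xi=0$ --- of order $|\xi|^{-(3+\gamma)}$ for $-3<\gamma<0$, and not even locally integrable as a function for $\gamma\ge 0$. The Taylor remainders would then bring in $\nabla^k\hat\Phi_{\bar c}(\xi_*)\sim|\xi_*|^{-(3+\gamma+k)}$ near $\xi_*=0$, and your claim that ``the resulting kernel $K(\xi,\xi_*)$ is integrable in $\xi_*$'' already fails for $\gamma\ge -1$, in particular for all hard potentials. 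The entire point of the $c/\bar c$ decomposition is that $\Phi_c$ has a tame Fourier transform while $\Phi_{\bar c}$ is tame in $v$-space; the two halves are estimated by different tools.

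A second indication the route is off is the shape of the conclusion: $f$ is controlled in $\|f\|_{L^1_{\gamma^+}}=\|\la v\ra^{\gamma^+}f\|_{L^1_v}$. On the Fourier side an unweighted $\|f\|_{L^1}$ becomes $\|\hat f\|_{L^\infty}$, but a physical moment $\la v\ra^{\gamma^+}$ corresponds to derivatives of $\hat f$, not to a $\la\xi_*\ra$-weight that a kernel estimate could absorb. Accordingly, the proof of this proposition (and of the companion Theorem~2.1 of \cite{AMUXY2010} invoked in the proof of Proposition~\ref{upper-recent}) is carried out directly in velocity space, using that $\Phi_{\bar c}$ is smooth and bounded above and below on its support, the kernel representation of $M(D_v)$ together with the slowly-varying hypothesis, and a decomposition of the deviation angle to see the $\nu=1$ threshold; the weighted-$L^1$ control of $f$ then comes from integrating $f_*$ out directly in $v_*$. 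Your observations about that threshold and about killing the odd term by symmetry on $\SS^2$ are correct in spirit, but they are reached through a Taylor expansion in $v'-v$ in physical space, not through Fourier-side expansion of a kinetic-factor symbol that is too singular at the origin to support the estimates you need.
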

\subsection{Boundedness of $M^\delta(D_v)$ on the triple norm}\label{ap-5}

Instead of $M^\delta(\xi)$,  we consider a little more general symbol $M(\xi) \in 
S^0_{1,0}$ satisfying conditions in Proposition \ref{prop2.9_amuxy3}.

\begin{lem}\label{M-bounded-triple}
Let $\gamma >-3$ and $0 < \nu <2$. Then we have 
\begin{align*}
|\!|\!|M(D_v) g|\!|\!|^2 \lesssim |\!|\!|g|\!|\!|^2.
\end{align*}
\end{lem}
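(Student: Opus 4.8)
The plan is to prove the estimate $|\!|\!|M(D_v)g|\!|\!|^2 \lesssim |\!|\!|g|\!|\!|^2$ by controlling the two pieces $J_1^{\Phi_\gamma}$ and $J_2^{\Phi_\gamma}$ of the triple norm separately, exploiting the upper and lower bounds in \eqref{up-down-triple} together with the commutator estimate of Proposition \ref{prop2.9_amuxy3} applied to $f = \mu^{1/2}$. The second term $J_2^{\Phi_\gamma}(M(D_v)g) = \iiint B(v-v_*,\sigma)(M(D_v)g)_*^2(\sqrt{\mu'}-\sqrt{\mu})^2\,dvdv_*d\sigma$ is the easy one: the factor $(\sqrt{\mu'}-\sqrt{\mu})^2$ is controlled by $\min(|v-v_*|\theta,1)^2$ times a Gaussian in $v$ (or $v'$), so after integrating in $v$ and $\sigma$ this term is bounded by $\|\langle v\rangle^{(\gamma+\nu)/2}M(D_v)g\|_{L^2_v}^2 \lesssim \|M(D_v)g\|_{H^{\nu/2}_{(\gamma+\nu)/2}}^2$; since $M(\xi)\in S^0_{1,0}$ is bounded and its symbol is comparable on dyadic shells, $M(D_v)$ is bounded on the weighted Sobolev space $H^{\nu/2}_{(\gamma+\nu)/2}$ (commute the weight past $M(D_v)$ at the cost of lower-order terms handled by the comparability hypothesis), whence $J_2^{\Phi_\gamma}(M(D_v)g) \lesssim \|g\|_{H^{\nu/2}_{(\gamma+\nu)/2}}^2 \lesssim |\!|\!|g|\!|\!|^2$ by the upper bound in \eqref{up-down-triple}.

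The first term $J_1^{\Phi_\gamma}(M(D_v)g)$ is the substantive one, and the natural route is through the identity $|\!|\!|h|\!|\!|^2 \sim (\mathcal L_1 h, h)_{L^2_v}$ plus the macro-micro structure, or more directly: recall that $\mathcal D(\sqrt\mu, h) := \iiint B\mu_*(h-h')^2 = J_1^{\Phi_\gamma}(h)$ is precisely the anisotropic Dirichlet form appearing in the coercivity estimates of AMUXY, and that $(\Gamma(\sqrt\mu^{-1}\cdot \text{something})...)$ — more cleanly, by Proposition 2.1 of \cite{AMUXY2} quoted as Lemma \ref{linear term}, we have $|\!|\!|h|\!|\!|^2 \lesssim (\mathcal L_1 h, h)_{L^2_v} + \|h\|_{L^2}^2$ for the microscopic part, but here we want a two-sided control that does not lose the projection. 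The cleanest approach: write $J_1^{\Phi_\gamma}(M(D_v)g)$ using that $M(D_v)$ acting on $g$ can be pulled inside via a commutator with $Q(\mu,\cdot)$ or with $\mathcal L_1$. Concretely, using the definition $|\!|\!|h|\!|\!|^2 = \mathcal D(\sqrt\mu,h) + J_2^{\Phi_\gamma}(h)$ and the commutator estimate \eqref{10.8-2}–\eqref{10.8-3} of Proposition \ref{prop2.9_amuxy3} with $f = \sqrt\mu \in L^1_{\gamma^+}$ (and the analogous statement for $Q_c$, i.e. Proposition \ref{IV-coro-2.15}), one obtains
\[
(M(D_v)\mathcal L_1 g, M(D_v)g) = (\mathcal L_1 M(D_v)g, M(D_v)g) + (\text{commutator term}),
\]
where the commutator term is bounded by $\|\sqrt\mu\|_{L^1_*}\big(\|M(D_v)g\|_{H^{\cdot}_*} + \|g\|_{H^{-N}_*}\big)\|M(D_v)g\|_{L^2}$, all of which are $\lesssim \|g\|_{H^{\nu/2}_{(\gamma+\nu)/2}}^2 \lesssim |\!|\!|g|\!|\!|^2$. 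Combining this with the coercivity lower bound $\mathcal D(\sqrt\mu, M(D_v)g) \lesssim (\mathcal L_1 M(D_v)g, M(D_v)g) + \|M(D_v)g\|_{L^2_{(\gamma+\nu)/2}}^2$ and the boundedness of $M(D_v)$ on $L^2$ and on $H^{\nu/2}_{(\gamma+\nu)/2}$, one closes the estimate.

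First I would assemble the ingredients: (i) the two-sided bound \eqref{up-down-triple}; (ii) boundedness of $M(D_v)$ on $H^{\nu/2}_{\ell}$ for the relevant weights $\ell$, which follows from $M\in S^0_{1,0}$ and the dyadic-comparability hypothesis by commuting the weight $\langle v\rangle^\ell$ through $M(D_v)$ (the commutator $[\langle v\rangle^\ell, M(D_v)]$ is of order $-1$ relative to the weight, hence lower order); (iii) the commutator estimates of Propositions \ref{IV-coro-2.15} and \ref{prop2.9_amuxy3} with the specific choice $f=\mu^{1/2}$, together with the splitting $\Phi_\gamma = \Phi_c + \Phi_{\bar c}$. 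Then I would carry out the $J_2$ estimate (routine, via the pointwise bound on $(\sqrt{\mu'}-\sqrt\mu)^2$ and integration), and finally the $J_1$ estimate via the commutator-with-$\mathcal L_1$ computation sketched above. The main obstacle is bookkeeping in step (iii): one must verify that the lower-order and Sobolev-index requirements in Propositions \ref{IV-coro-2.15} and \ref{prop2.9_amuxy3} (the constraints $\nu-1 \le \nu' < \nu$, $\gamma + \nu' > -3/2$, and the weight exponents $\gamma^+$, $(\gamma+\nu)/2$) are all compatible with the standing hypothesis $\gamma > \max\{-3,-3/2-\nu\}$ and that every error term produced by the commutators is genuinely controlled by $\|g\|_{H^{\nu/2}_{(\gamma+\nu)/2}}^2$, which by \eqref{up-down-triple} is $\lesssim |\!|\!|g|\!|\!|^2$; the rapid decay of $\mu^{1/2}$ makes all the $L^1$-weighted norms of $f$ finite, so no difficulty arises there.
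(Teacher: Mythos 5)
Your plan has two genuine gaps, and the paper takes a route that bypasses both.

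First, you repeatedly invoke the inequality $\Vert g\Vert_{H^{\nu/2}_{(\gamma+\nu)/2}}^2\lesssim|\!|\!|g|\!|\!|^2$, but that is the \emph{wrong} direction of \eqref{up-down-triple}: the triple norm is bounded \emph{above} by $\Vert g\Vert_{H^{\nu/2}_{(\nu+\gamma)/2}}$, while the lower bound only controls the weaker $\Vert g\Vert_{H^{\nu/2}_{\gamma/2}}$ (smaller weight) and $\Vert g\Vert_{L^2_{(\nu+\gamma)/2}}$. For soft potentials the gap between the weights $\gamma/2$ and $(\gamma+\nu)/2$ is real, so neither your $J_2$ bound (which anyway only needs $J_2^{\Phi_\gamma}(Mg)\sim\Vert Mg\Vert^2_{L^2_{(\nu+\gamma)/2}}\lesssim\Vert g\Vert^2_{L^2_{(\nu+\gamma)/2}}\lesssim|\!|\!|g|\!|\!|^2$, with no Sobolev index needed) nor your bound on the commutator error terms actually closes by what you wrote.

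Second, the $J_1$ estimate is circular as sketched. After the coercivity step you reduce matters to $(\mathcal{L}_1 Mg,Mg)$, and your commutator identity rewrites this as $(\mathcal{L}_1 g,M^2g)+([\mathcal{L}_1,M]g,Mg)$. The commutator term can in principle be handled by Propositions \ref{IV-coro-2.15} and \ref{prop2.9_amuxy3} (after also tracking the extra commutation with the multiplications by $\mu^{\pm1/2}$ implicit in $\Gamma$ versus $Q$), but the leading term $(\mathcal{L}_1 g,M^2g)$ is controlled by the trilinear estimate only as $|\!|\!|g|\!|\!|\,|\!|\!|M^2g|\!|\!|$, and controlling $|\!|\!|M^2g|\!|\!|$ is exactly the lemma being proved (for a multiplier in the same class). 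You cannot absorb an $\varepsilon|\!|\!|M^2g|\!|\!|^2$ term into the left-hand side $|\!|\!|Mg|\!|\!|^2$, and there is no a priori qualitative bound available to seed an absorption argument; the recursion $M\to M^2\to M^4\to\cdots$ does not terminate.

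The paper avoids both problems by working directly on the Fourier side. Modulo the elementary reduction $J_1^{\Phi_\gamma}(g)\sim J_1^{\Phi_0}(\langle v\rangle^{\gamma/2}g)$ (up to controlled lower-order terms), it inserts the explicit Plancherel formula (2.21) of \cite{AMUXY2} for $J_1^{\Phi_0}$, splits $M(\xi)\hat g(\xi)-M(\xi^+)\hat g(\xi^+)=M(\xi)\bigl(\hat g(\xi)-\hat g(\xi^+)\bigr)+\bigl(M(\xi)-M(\xi^+)\bigr)\hat g(\xi^+)$, and uses $|M(\xi)-M(\xi^+)|\lesssim M(\xi)\theta^2$ together with the boundedness of $M$ to conclude $J_1^{\Phi_0}(Mg)\lesssim J_1^{\Phi_0}(g)+\Vert g\Vert^2_{H^{\nu/2}}$ directly, with no reference to $\mathcal{L}_1$, no commutator machinery, and no norm that exceeds the lower bound in \eqref{up-down-triple}.
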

\begin{proof}
The proof is based on arguments in the subsection 2.3 of \cite{AMUXY2}.
Since $J_2^{\Phi_\gamma}(g) \sim \|g\|^2_{L^2_{(\nu+\gamma)/2}}$
it suffices to show 
\begin{align}\label{bound-M-tri}
 J_1^{\Phi_\gamma}(M g) \lesssim J_1^{\Phi_\gamma}(g) + \|g\|^2_{L^2_{(\nu+\gamma)/2}}
+ \|g\|^2_{H^s_{\gamma/2}}\,,
\end{align}
in view of Proposition 2.2 of \cite{AMUXY2}, that is,
there exist two generic constants $C_1, C_2>0$ such that
\[
C_1 \left\{\left\|
g\right\|^2_{H^s_{\gamma/2}(\RR^3_v)}+\left\|
g\right\|^2_{L^2_{(\nu+\gamma)/2}(\RR^3_v)}\right\}
\leq |\!|\!| g |\!|\!|^2 \leq  C_2 \left\|
g\right\|^2_{H^s_{(\nu+\gamma)/2}(\RR^3_v)}\,.
\]
Since $J_1^{\Phi_\gamma} (g)  \sim J_1^{\Phi_0} (\la v \ra^{\gamma/2} g) $ modulo
$ \|g\|^2_{L^2_{(\nu+\gamma)/2}}$, and we have
\begin{align*}
 J_1^{\Phi_0} (\la v \ra^{\gamma/2}M g) &\le 2 \Big(J_1^{\Phi_0} (M \la v \ra^{\gamma/2} g)
+ J_1^{\Phi_0} ([\la v \ra^{\gamma/2}, M] g )\Big)\\
&\lesssim J_1^{\Phi_0} (M \la v \ra^{\gamma/2} g) + \|([\la v \ra^{\gamma/2}, M] g\|^2_{H^s_s}\,,
\end{align*}
it suffices to show \eqref{bound-M-tri} with $\gamma =0$. We recall (2.21) of \cite{AMUXY2};
\begin{align}\label{part1-intermid}
J_1^{\Phi_0}(g)&=\iiint b(\cos\theta)  {\mu} _\ast
 ( g'-  g)^2dv_*d\sigma dv \notag \\
&=\frac{1}{(2\pi)^3}\iint b \Big(\frac{\xi}{|\xi|}\cdot \sigma \Big)
\Big(\widehat {\mu} (0) | \widehat {g} (\xi ) -
\widehat { g} (\xi^+) |^2 \\
& \qquad \qquad \qquad + 2 \textrm{Re}\,
\Big(\widehat{ \mu }(0) - \widehat { \mu} (\xi^-) \Big)
 \widehat {g} (\xi^+ )
\overline{\widehat {g}} (\xi ) \Big)d\xi d\sigma . \notag
\end{align}
Since $\widehat { \mu}(\xi)$
is real-valued,
it follows that
\begin{align*}
\textrm{Re}\, \Big(\widehat { \mu} (0)
- \widehat { \mu} (\xi^-) \Big) \widehat { g} (\xi^+ )
\overline{\widehat { g} } (\xi )
&= \Big (\int
\big (1- \cos (v\cdot \xi^-)\big)
\mu(v) dv\Big)\,
\textrm{Re}\,\widehat { g} (\xi^+ )
\overline{\widehat { g} } (\xi )\\
&\lesssim \min \{ \la \xi \ra^2 \theta^2 \,, \, 1 \} |\widehat { g} (\xi^+ )
\overline{\widehat { g} } (\xi )|.
\end{align*}
Therefore the second term of the right hand side of \eqref{part1-intermid} is estimated by
$\|g\|^2_{H^{\nu/2}}$ with a constant factor.  Similarly we have 
\begin{align*}
J_1^{\Phi_0}( M g)&=\iiint b(\cos\theta)  {\mu} _\ast
 ( (Mg)'-  Mg)^2dv_*d\sigma dv\\
&\lesssim \iint b \Big(\frac{\xi}{|\xi|}\cdot \sigma \Big)
\widehat {\mu} (0) | M(\xi) \widehat {g} (\xi ) -M(\xi^+)
\widehat { g} (\xi^+) |^2 d\xi d\sigma +\|g\|^2_{H^{\nu/2}} \\
&\lesssim J_1^{\Phi_0}(g) +  \iint b \Big(\frac{\xi}{|\xi|}\cdot \sigma \Big)
 | M(\xi)  -M(\xi^+)|^2|\widehat {g} (\xi )|^2d\xi d\sigma +\|g\|^2_{H^{\nu/2}}\\
&\lesssim J_1^{\Phi_0}(g) +\|g\|^2_{H^{\nu/2}}\,,
\end{align*}
because $ | M(\xi)  -M(\xi^+)|\lesssim M(\xi) \theta^2$ ( see (2.3.5) of \cite{AMUXY2010},
for example).
\end{proof}

\subsection{Mollifier with respect to $x$ variable}\label{ap-6}

\begin{lem}\label{x-moll}
Let $S \in C_0^\infty(\RR)$ satisfy $0 \le S \le 1$ and 
\[
S(\tau) = 1, \enskip |\tau| \le 1; \enskip S(\tau) = 0, \enskip |\tau| \ge 2.
\]
Put  $S_\delta(D_x) = S(\delta D_x)$ for $\delta >0$.  Then
\begin{align}\label{1st}
&\left|\int_0^T \right. 
\Big(S_\delta(D_x)\left. \Gamma(f, g) - \Gamma(f, S_\delta(D_x)g), \, \, h\Big)_{L^2(\RR^6_{x,v})}dt \right|
\notag \\
& \lesssim 
\|\nabla f \|_{L^\infty([0,T]\times \RR_x^3; L^2_v )}
\|\la v \ra^{|\gamma|/2 + \nu} g\|_{L^2([0,T]\times \RR^6_{x,v})}\notag\\
&\qquad \qquad \qquad \qquad \times 
\|\delta h\|_{L^2([0,T]\times \RR^3_x; 
H^{\nu}_{\gamma/2}(\RR^3_v))}.
\end{align}

\end{lem}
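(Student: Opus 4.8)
\textbf{Proof proposal for Lemma~\ref{x-moll}.}
The plan is to exploit the fact that $\Gamma$ acts only on the velocity variable, so that the $x$-mollifier $S_\delta(D_x)$ commutes with $\Gamma$ \emph{except} for the dependence of the first argument $f$ on $x$. Writing the Bony-type identity for the composition, the commutator $S_\delta(D_x)\Gamma(f,g)-\Gamma(f,S_\delta(D_x)g)$ is precisely the commutator $[S_\delta(D_x),\, \Gamma(f,\cdot)]g$ where $\Gamma(f,\cdot)$ is viewed as a multiplication-type operator in $x$ with values in operators in $v$. First I would pass to the Fourier side in $x$: denoting by $\eta,\eta_*$ the dual variables of $x$, the kernel of the commutator involves the factor $S(\delta\eta)-S(\delta\eta_*)$ acting against $\hat f(\eta-\eta_*,\cdot)$ (the $x$-Fourier transform of $f$) and $\hat g(\eta_*,\cdot)$. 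The elementary bound $|S(\delta\eta)-S(\delta\eta_*)|\le \|S'\|_{L^\infty}\,\delta|\eta-\eta_*|$ is the key gain: it converts the commutator into an operator whose symbol carries one factor of $\delta|\eta-\eta_*|$, i.e.\ morally $\delta\nabla_x f$ in place of $f$, which explains the appearance of $\|\nabla_x f\|$ and $\delta h$ on the right-hand side.

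The main steps, in order, would be: (i) write the commutator as a double integral in $(\eta,\eta_*)$ and insert the factorization $S(\delta\eta)-S(\delta\eta_*)=\delta(\eta-\eta_*)\cdot\int_0^1 (\nabla S)(\delta\eta_*+\tau\delta(\eta-\eta_*))\,d\tau$; (ii) for each fixed $\tau$ and each fixed pair of $x$-frequencies, estimate the $v$-trilinear form $\big(\Gamma(\widehat{\nabla_x f}(\eta-\eta_*,\cdot),\hat g(\eta_*,\cdot)),\hat h(\eta,\cdot)\big)_{L^2_v}$ by the velocity-space trilinear estimate, choosing the weights so as to land on $\|h\|_{H^\nu_{\gamma/2}}$ on the $h$-slot; the natural choice is Proposition~\ref{upper-recent} (or its corollaries) with an appropriate $\ell$ and $m$ so that $g$ carries the weight $\la v\ra^{|\gamma|/2+\nu}$ and no derivative, while $h$ carries $H^\nu_{\gamma/2}$; (iii) reassemble the $\eta$-integrals using Young's/Cauchy--Schwarz inequality in the $x$-frequency variables, bounding the $\widehat{\nabla_x f}$ factor in $L^\infty_\eta L^2_v$ by $\|\nabla_x f\|_{L^\infty_x L^2_v}$ (via Hausdorff--Young, since $L^1_x\hookrightarrow$ suffices after recognizing the convolution structure — in practice one just uses that $S_\delta$ and the pieces are $L^2_x$-bounded and the gain $\delta|\eta-\eta_*|$ is absorbed into $\delta h$); (iv) integrate in $t\in[0,T]$ and apply Cauchy--Schwarz in $t$ to split into the stated $L^2_T$-norms of $g$ and $h$.

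The velocity-side input is the real workhorse: one needs a bound of the form $|(\Gamma(F,G),H)_{L^2_v}|\lesssim \|F\|_{L^2_v}\,\|\la v\ra^{|\gamma|/2+\nu}G\|_{L^2_v}\,\|H\|_{H^\nu_{\gamma/2}}$, which is not literally one of the displayed corollaries but follows from Proposition~\ref{upper-recent} by taking $m=\nu/2$ (so the $H^{\nu/2-m}=L^2$ derivative index on one slot becomes full $H^\nu$ after redistributing, or rather by taking $m=-\nu/2$ and exchanging the roles via the weight bookkeeping) together with the interpolation inequality quoted from \cite{HMUY} to trade $H^{\nu/2}_\ell$ for $H^{\nu}$ plus lower-order weighted $L^2$ — alternatively, one checks directly from \eqref{up-down-triple} and Corollary~\ref{AMUXYtrilinear} that $|\!|\!|G|\!|\!|\lesssim \|\la v\ra^{(\nu+\gamma)/2}G\|_{H^{\nu/2}}$ and then absorbs half a derivative from $G$ onto $H$. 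I expect the main obstacle to be precisely this weight-and-derivative bookkeeping on the velocity side: making sure the exponents $|\gamma|/2+\nu$ on $g$ and $\nu$, $\gamma/2$ on $h$ come out exactly as stated (rather than some larger weight), especially in the soft-potential regime $\gamma<0$ where $|\gamma|/2$ rather than $\gamma/2$ is needed, and handling the borderline derivative count $\nu$ (not $\nu/2$) on $h$, which forces one to use the full strength of Proposition~\ref{upper-recent} at the endpoint $m=\pm\nu/2$ rather than an interior value. The $x$-side manipulations and the time integration are routine once the velocity estimate is correctly set up.
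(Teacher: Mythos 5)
Your high-level strategy is the right one and matches the paper's: Taylor-expand to convert the commutator into an expression carrying the factor $\delta\nabla_x f$, then apply the velocity-space trilinear estimate of Proposition~\ref{upper-recent} at the endpoint $m=-\nu/2$, $\ell=-\gamma/2$ (which puts $L^2_{(\gamma/2+\nu)^+}\subset L^2_{|\gamma|/2+\nu}$ on $g$ and $H^\nu_{\gamma/2}$ on $h$), and finally integrate in $x$ and $t$. Your velocity bookkeeping in step (ii) is essentially correct, if somewhat meandering.

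The genuine gap is in step (iii). If you expand the $x$-Fourier symbol $S(\delta\eta)-S(\delta\eta_*)$ and apply the velocity estimate pointwise in $(\eta,\eta_*)$, the natural reassembly (Young's inequality for the $\eta$-convolution, with $\hat g,\hat h\in L^2_\eta$) forces $\widehat{\nabla f}\in L^1_\eta L^2_v$, which is strictly \emph{stronger} than $\nabla f\in L^\infty_x L^2_v$; and your appeal to Hausdorff--Young goes the wrong way ($\|\hat u\|_{L^\infty}\lesssim\|u\|_{L^1}$, not $\lesssim\|u\|_{L^\infty}$). Placing $\widehat{\nabla f}$ in $L^\infty_\eta$ instead leaves a divergent $\eta$-integral. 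The paper avoids all of this by staying in physical space: writing the commutator as the convolution $\int_0^1\!\int K_\delta(x-y)\,\Gamma\big(\nabla f(x+\tau(y-x)),\delta g(y)\big)\,dy\,d\tau$ with $K_\delta(z)=-\delta^{-4}z\,\cF^{-1}(S)(z/\delta)$, applying \eqref{different-g-h} pointwise in $(t,x,y,\tau)$, pulling out $\|\nabla f\|_{L^\infty_{t,x}L^2_v}$ before any $x$-integration, and then using $\|K_\delta\|_{L^1_x}=\|K_1\|_{L^1_x}$ together with Young's and Cauchy--Schwarz to close. Your aside about ``recognizing the convolution structure'' gestures at this, but as written the Fourier-side argument does not produce the claimed $L^\infty_x$ norm of $\nabla f$.
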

\begin{proof} The proof is similar as the one of Lemma 3.4 in \cite{AMUXY2010}.
If $$K_\delta(z) = \displaystyle   -\frac{z}{\delta^4}
\cF^{-1}(S)\Big(\frac{z}{\delta}\Big)$$ then we have 
\begin{align*}
&\left|\int_0^T 
\Big(S_\delta(D_x)\Gamma(f, g) - \Gamma(f, S_\delta(D_x)g), \, \, h\Big)_{L^2(\RR^6)}dt \right|\\
&=\left| \int_0^1 \left\{
\int_{\RR_x^3 \times \RR_y^3} K_\delta (x-y) \right.\right. \\
& \left. \left. \times \int_0^T \Big(\Gamma \big (\nabla f(t, x + \tau(y-x)), \cdot), \delta g(t, y, \cdot)\big), \, 
h(t, x, \cdot)\Big)_{L^2(\RR^3_v)} dt dx dy\right \}d\tau\right|\\
&\lesssim \delta \|\nabla f \|_{L^\infty([0,T]\times \RR_x^3; L^2_v)}\int_0^T\int_{\RR^3_x}
\Big(| K_\delta | * \|g(t,\cdot) \|_{L^2_{|\gamma|/2 +\nu}}  \Big)(x)  \, \|h(t,x) \|_{H^\nu_{\gamma/2}} dx dt\,
\end{align*}
where we have used 
\eqref{different-g-h}.  We get \eqref{1st} since $\|K_\delta\|_{L^1_x} = \|K_1\|_{L^1_x} $. 
\end{proof}

\bigskip

\noindent
{\bf Acknowledgements:} The research of the first author was supported in part
by  Grant-in-Aid for Scientific Research No.25400160,
Japan Society for the Promotion of Science.


\begin{thebibliography}{99}
\bibitem{A} R.\ Alexandre, Integral estimates for a linear singular operator linked with the Bolzmann operator. I. Small singularities $0<\nu<1$, Indiana Univ.\ Math.\ J.\ \textbf{55} (2006) 1975-2021.

\bibitem{advw} R.\ Alexandre, L.\ Desvillettes, C.\ Villani, B.\ Wennberg, Entropy dissipation and long-range interactions, Arch.\ Ration.\ Mech.\ Anal.\ \textbf{152} (2000) 327-355.

\bibitem{AMUXY2010} R.\ Alexandre, Y.\ Morimoto, S.\ Ukai, C.J.\ Xu, T.\ Yang,  Regularizing effect and local existence
for non-cutoff Boltzmann equation, Arch.\ Ration.\ Mech.\ Anal.\ \textbf{198} (2010) 39-123.

\bibitem {AMUXY1} R.\ Alexandre, Y.\ Morimoto, S.\ Ukai, C.J.\ Xu, T.\ Yang, Global existence and full regularity of the Boltzmann equation without angular cutoff, Commun.\ Math.\ Phys.\ \textbf{304} (2011) 513-581.

\bibitem{amuxy4-2}  R.\ Alexandre, Y.\ Morimoto, S.\ Ukai, C.J.\ Xu, T.\ Yang,  Boltzmann equation without angular cutoff in the whole space: II, global existence for hard potential, Analysis and Applications \textbf{9} (2011) 113-134.

\bibitem{amuxy4-3} R.\ Alexandre, Y.\ Morimoto, S.\ Ukai, C.J.\ Xu, T.\ Yang, Boltzmann equation without angular cutoff in the whole space:  Qualitative properties of solutions, Arch.\ Ration.\ Mech.\ Anal.\ \textbf{202} (2011) 599-661.


\bibitem{AMUXY2} R.\ Alexandre, Y.\ Morimoto, S.\ Ukai, C.J.\ Xu, T.\ Yang, The Boltzmann equation without angular cutoff in the whole space: I. Global existence for soft potential, J.\ Funct.\ Anal.\ \textbf{262} (2012) 915-1010.

\bibitem {AMUXY3} R.\ Alexandre, Y.\ Morimoto, S.\ Ukai, C.J.\ Xu, T.\ Yang,  Local existence with mild regularity for the Boltzmann equation, Kinet.\ Relat.\ Models \textbf{6} (2013) 1011-1041.

\bibitem{AMUXY-Kyoto} R.\ Alexandre, Y.\ Morimoto, S.\ Ukai, C.J.\ Xu, T.\ Yang,  Smoothing effect of weak solutions for the spatially homogeneous Boltzmann equation without angular cutoff, Kyoto J.\ Math.\ \textbf{52} (2012) 433-463.

\bibitem {AM} D.\ Ars\'{e}nio, N.\ Masmoudi, A new approach to velocity averaging lemmas in Besov spaces, J.\ Math.\ Pures.\ Appl.\ (9)  \textbf{101}(4) (2014) 495-551.

\bibitem {BCD} H.\ Bahouri, J.Y.\ Chemin, R.\ Danchin, Fourier analysis and nonlinear partial differential equations, Grundlehren Math.\ Wiss.\  \textbf{343} Springer, Heidelberg, 2011.

\bibitem{CL} J.Y.\ Chemin, N.\ Lerner, Flot de champs de vecteurs non lipschitziens et \'{e}quations de Navier-Stokes, J.\ Differential Equations \textbf{121} (1995) 314-328.

\bibitem{DLX} R.J.\ Duan, S.Q.\ Liu, J.\ Xu, Global well-poseness in spatially critical Besov space for the Boltzmann equation, 2013, available at http://arxiv.org/abs/1310.2727.

\bibitem{GS} P.T.\ Gressman, R.M.\ Strain, Global classical solutions of the Boltzmann equation without angular cut-off, J.\ Amer.\ Math.\ Soc.\ \textbf{24} (2011) 771-847.

\bibitem{G} Y.\ Guo, The Boltzmann equation in the whole space, Indiana Univ.\ Math.\ J.\ \textbf{53} (2004) 1081-1094.

\bibitem{HMUY} Z.H.\ Huo, Y.\ Morimoto, S.\ Ukai, T.\ Yang, Regularity of solutions for spatially homogeneous Boltzmann equation without angular cutoff, Kinet.\ Relat.\ Models \textbf{1} (2008) 453-489.

\bibitem{LYY} T.P.\ Liu, T.\ Yang, S.H.\ Yu, Energy method for Boltzmann equation, Phys.\ D \textbf{188} (2004) 178-192.

\bibitem{LY} T.P.\ Liu, S.H.\ Yu, Boltzmann equation: micro-macro decompositions and positiivty of shock profiles, Comm.\ Math.\ Phys.\ \textbf{246} (2004) 133-179.

\bibitem{NI} T.\ Nishida, K.\ Imai, Global solutions to the initial value problem for the nonlinear Boltzmann equation, Publ.\ Res.\ Inst.\ Math.\ Sci.\ \textbf{12} (1976/77) 229-239.

\bibitem {SS} V.\ Sohinger, R.\ Strain, The Boltzmann equation, Besov spaces, and optimal time decay rates in $\mathbb{R}^n_x$, Adv.\ Math.\ \textbf{261} (2014) 274-332.

\bibitem{TL} H.\ Tang, Z.\ Liu, On the Cauchy problem for the Boltzmann equation in Chemin-Lerner type spaces, Discrete Contin.\ Dyn.\ Syst.\ \textbf{36} (2016) 2229-2256.


\bibitem{U1} S.\ Ukai, On the existence of global solutions of mixed problem for non-linear Boltzmann equation, Proc.\ Japan Acad.\ \textbf{50} (1974) 179-184.


\bibitem{U2} S.\ Ukai, Les solutions globales de l'\'{e}quation de Boltzmann dans l'espace tout entier et dans le demi-espace, C.\ R.\ Acad.\ Sci.\ Paris S\'{e}r.\ A-B \textbf{282} (1976) A317-A320.

\bibitem{UA} S.\ Ukai, K.\ Asano, On the Cauchy problem of the Boltzmann equation with a soft potential, Publ.\ Res.\ Inst.\ Math.\ Sci.\ \textbf{18} (1982) 477-519 (57-99).


\bibitem{V} C.\ Villani, A review of matematical topics in collisional kinetic theory, in: S.\ Friedlander, D.\ Serre (Eds.), Handbook of mathematical fluid dynamicas.\ Vol.\ I, North-Holland, Amsterdam, 2002, pp. 71-305. 

\bibitem {XK} J.\ Xu, S.\ Kawashima, Global classical solutions for partially dissipative hyperbolic system of balance laws, Arch.\ Ration.\ Mech.\ Anal.\ \textbf{211} (2014) 513-553.

\end{thebibliography}
\end{document}